\def\input /articoli/ltx-tex/macrotex {\input /articoli/ltx-tex/macrotex }

\def\LaTeX{%
  \let\Begin\begin
  \let\End\end
  \let\salta\relax
  \let\finqui\relax
  \let\futuro\relax}

\LaTeX


\relax

\documentclass[twoside,11pt]{article}
\setlength{\textheight}{24cm}
\setlength{\textwidth}{16cm}
\setlength{\oddsidemargin}{1.8mm}
\setlength{\evensidemargin}{1.8mm}
\setlength{\topmargin}{-15mm}
\parskip2mm

\usepackage[usenames,dvipsnames]{color}
\usepackage{amsmath}
\usepackage{amsthm}
\usepackage{amssymb}
\usepackage[mathcal]{euscript}
\usepackage{cite}
\usepackage{hyperref}
\usepackage{enumitem}
\definecolor{rosso}{rgb}{0.8,0,0}
\definecolor{darkgreen}{rgb}{0,0.5,0}



%
\newtheorem{theorem}{Theorem}[section]

\newtheorem{corollary}[theorem]{Corollary}
\newtheorem{definition}[theorem]{Definition}

\newtheorem{lemma}[theorem]{Lemma}

\relax

\let\non\nonumber

\def\Lip{Lip\-schitz}
\def\Holder{H\"older}
\def\Frechet{Fr\'echet}

\def\rhs{right-hand side}
\def\sfw{straightforward}

\def\pto{.}

\def\multical #1{\def\arg{#1}%
  \ifx\arg\pto \let\next\relax
  \else
  \def\next{\expandafter
    \def\csname cal#1\endcsname{{\cal #1}}%
    \multical}%
  \fi \next}


\def\multimathop #1 {\def\arg{#1}%
  \ifx\arg\pto \let\next\relax
  \else
  \def\next{\expandafter
    \def\csname #1\endcsname{\mathop{\rm #1}\nolimits}%
    \multimathop}%
  \fi \next}

\multical
QWERTYUIOPASDFGHJKLZXCVBNM.

\multimathop
ad dist div dom meas sign supp .


\def\Accorpa #1#2 #3 {\gdef #1{\eqref{#2}--\eqref{#3}}%
  \wlog{}\wlog{\string #1 -> #2 - #3}\wlog{}}

\def\Accorpadue #1#2 #3 {\gdef #1{\ref{#2}--\ref{#3}}%
  \wlog{}\wlog{\string #1 -> #2 - #3}\wlog{}}

\def\<#1>{\mathopen\langle #1\mathclose\rangle}
\def\norma #1{\mathopen \| #1\mathclose \|}

\def\iot {\int_0^t}

\def\intQt{\int_{Q_t}}
\def\intQ{\int_Q}
\def\iO{\int_\Omega}
\def\intQtT{\int_{Q_t^T}}

\def\dt{\partial_t}
\def\dn{\partial_{\bold {n}}}

\def\checkmmode #1{\relax\ifmmode\hbox{#1}\else{#1}\fi}


\def\erre{{\mathbb{R}}}




\def\genspazio #1#2#3#4#5{#1^{#2}(#5,#4;#3)}
\def\spazio #1#2#3{\genspazio {#1}{#2}{#3}T0}

\def\L {\spazio L}
\def\H {\spazio H}
\def\W {\spazio W}

\def\C #1#2{C^{#1}([0,T];#2)}


\def\Lx #1{L^{#1}(\Omega)}
\def\Hx #1{H^{#1}(\Omega)}



\let\theta\vartheta
\let\eps\varepsilon
\let\lam\lambda

\let\TeXchi\chi                         
\newbox\chibox
\setbox0 \hbox{\mathsurround0pt $\TeXchi$}
\setbox\chibox \hbox{\raise\dp0 \box 0 }
\def\chi{\copy\chibox}



\def\bQ{\beta_Q}
\def\bO{\beta_\Omega}
\def\phQ{\ph_Q}
\def\phO{\ph_\Omega}
\def\Uad{\calU_{\ad}}
\def\Vp{V^*}
\def\cd{M(\d)}
\def\s{\sigma}  
\def\m{\mu}	    
\def\ph{\varphi}	
\def\d{\delta}  
\def\r{\rho}    
\def\bph{\overline\ph}  
\def\bm{\overline\m}    
\def\bs{\overline\s}    
\def\z{\zeta}      
\def\ch{\chi}      
\def\J{{\cal J}}		
\def\Jred{{\cal J}_{\rm red}} 
\def\Jad{{\cal J}_{\rm ad}} 
\def\S{{\cal S}}   
\def\I2 #1{\int_{Q_t}|{#1}|^2}
\def\IO2 #1{\norma{{#1(t)}}^2}

\def\ov #1{{\overline{#1}}}
\def\bh{\bold h}
\def\aP{\alpha_{\P}}
\def\achi{\alpha_{\chi}}
\def\aeta{\alpha_{\eta}}
\def\aC{\alpha_{\CC}}
\def\bhph{\bph^{\bh}}
\def\bhmu{\bm^{\bh}}
\def\bhs{\bs^{\bh}}
\def\ept{{\eps,\tau}}
\def\X{{\cal X}}
\def\phdiff{\bhph-\bph}
\def\sdiff{\bhs-\bs}

\def\opt{(\ov{P}, \ov{\chi},\ov{\eta},\ov{C})}

\def\redopttau{(\ov{\P}_\tau, \ov{\chi}_\tau,\ov{\CC}_\tau)}

\def\redoptept{(\ov{\P}_\ept, \ov{\chi}_\ept,\ov{\CC}_\ept)}

\def\CPad{(CP)^{\rm ad}_\ept}
\def\CPept{(CP)_\ept}
\def\CPeps{(CP)_\eps}
\def\CPtau{(CP)_\tau}
\def\CPoo{({\ov {CP}})}
\def\x{{\bf x}}
\def\dx{{\rm d {\x}}}
\def\y{{\bf y}}
\def\dy{{\rm d {\y}}}
\def\ds{{\rm ds}}
\def\A{{\mathscr A}}

\def\B{{\mathscr B}}
\def\P{{\mathscr P}}
\def\CC{{\mathscr{C}}}

\usepackage{amsmath}
\DeclareFontFamily{U}{mathc}{}
\DeclareFontShape{U}{mathc}{m}{it}%
{<->s*[1.03] mathc10}{}

\DeclareMathAlphabet{\mathscr}{U}{mathc}{m}{it}

\begin{document}

\title{Parameter identification for nonlocal phase field models for
tumor growth
via optimal control and asymptotic analysis}
\author{}
\date{}
\maketitle

\begin{center}
\vskip-1cm
{\large\sc Elisabetta Rocca$^{(1)}$}\\
{\normalsize e-mail: {\tt elisabetta.rocca@unipv.it}}\\[.25cm]
{\large\sc Luca Scarpa$^{(2)}$}\\
{\normalsize e-mail: {\tt {luca.scarpa@univie.ac.at}}}\\[.25cm]
{\large\sc Andrea Signori$^{(3)}$}\\
{\normalsize e-mail: {\tt andrea.signori02@universitadipavia.it}}\\[.25cm]
$^{(1)}$
{\small  Universit\`a degli studi di Pavia, Dipartimento di Matematica,
and IMATI-C.N.R}\\
{\small via Ferrata 5, 27100 Pavia, Italy}\\[.2cm]
$^{(2)}$
{\small Faculty of Mathematics, University of Vienna}\\
{\small Oskar-Morgenstern-Platz 1, 1090 Vienna, Austria}
\\ [.2cm]
$^{(3)}$
{\small Dipartimento di Matematica e Applicazioni, Universit\`a di Milano--Bicocca}\\
{\small via Cozzi 55, 20125 Milano, Italy}

\end{center}
\Begin{abstract}\noindent
We introduce the problem of parameter identification for 
a coupled nonlocal 
Cahn-Hilliard-reaction-diffusion PDE system stemming from  a  
recently introduced tumor growth model. 
The inverse problem of identifying
relevant parameters is studied here by relying on techniques from
optimal control theory of PDE systems. 
The parameters to be identified play the role of controls, and
a suitable cost functional of tracking-type is introduced
to account for the discrepancy between some {\it a priori} knowledge
of the parameters and the controls themselves.
The analysis is carried out for several classes of models, 
each one depending on a specific relaxation (of parabolic or viscous type)
performed on the original system.
First-order necessary optimality conditions are obtained 
on the fully relaxed system, in both the two and three-dimensional case.
Then, the optimal control problem on
the non-relaxed models is tackled by means of asymptotic arguments,
by showing convergence of the respective adjoint systems and 
the minimization problems as each one of 
the relaxing coefficients vanishes.
This allows obtaining
the desired necessary optimality conditions, hence to
solve the parameter identification problem, for the original PDE system 
in case of physically relevant double-well potentials.
\vskip3mm

\noindent {\bf Key words:} tumor growth, Cahn-Hilliard equation, parameter identification,
inverse problem, optimal control, well-posedness, asymptotic analysis. 

\vskip3mm

\noindent {\bf AMS (MOS) Subject Classification:} {35B40, 35Q92, 35R30, 49J50, 92B05, 92C17. }

\End{abstract}

\pagestyle{myheadings}
\newcommand\testopari{\sc Rocca -- Scarpa -- Signori}
\newcommand\testodispari{}
\markboth{\testopari}{}

\relax
\relax

\section{Introduction}

One of the main examples of complex systems studied nowadays in both the biomedical and 
the mathematical literature refers to tumor growth processes.
In particular, there has been a recent surge in the development of 
phase field models for tumor growth.  
These models are one of the main examples of complex systems: they
describe the evolution of a tumor mass surrounded by healthy tissues 
by taking into account biological mechanisms such as proliferation of cells 
via nutrient consumption, apoptosis, chemotaxis, and active transport of specific chemical 
species. In this setting, the evolution of
the tumor is described by means of an order parameter $\ph$ which represents the 
local concentration of tumoral cells.
The interface between the tumoral and healthy cells 
is considered to be a (narrow) layer separating
the regions where $\ph=\pm1$, with $\ph=1$ being the tumor phase
and $\ph= -1$ being the healthy phase. 

The representation of the tumor
growth process is given here by a Cahn-Hilliard equation (cf.~\cite{cahn-hill}) with non zero mass source for $\ph$,
coupled with a reaction-diffusion equation for the nutrient $\sigma$
(cf., e.g., \cite{crist, garcke, hawk2, hil-zee}). We  just mention the fact that more sophisticated models may distinguish
between different tumor phases (e.g., proliferating and necrotic), or, treating
the cells as inertia-less fluids, include the effects of fluid flow into the evolution 
of the tumor, leading to (possibly multiphase) Cahn-Hilliard-Darcy or 
Cahn-Hilliard-Brinkman systems
\cite{CL,Dai, frig-lam-rocca-schim, garcke,WLFC,eben-garcke1,eben-garcke2,eben-lam,garcke-lam,garcke-lam3,garcke-lam4}.

In this paper we study the inverse problem of parameters identification for a nonlocal version of a recently introduced phase field model of tumor growth (cf.~\cite{garcke, SS}). 
The nonlocality of the system consists in the presence of 
a convolution term in the Cahn-Hilliard equation (see
\eqref{state:2} below),
which comes from the variational derivative of the following nonocal Helmholtz free energy functional (cf.~\cite{GL, GL1, GL2})
\begin{equation}
\label{energy:nonloc}
\mathcal{E}_{nonloc}(\ph):=\frac14\int_{\Omega\times\Omega} J({\bf x}-{\bf y})
|\ph({\bf x})-\ph({\bf y})|^2\, \dx\, \dy+\int_\Omega F(\ph({\bf x}))\, \dx\,.
\end{equation}
This replaces the standard local Ginzburg-Landau free energy 
\begin{equation}
\label{energy:loc}
{\mathcal E}_{loc}(\ph):=\frac12\int_{\Omega} 
|\nabla\ph({\bf x})|^2\, \dx+\int_\Omega F(\ph({\bf x}))\, \dx\,.
\end{equation}
Here $J$ stands for a spatial convolution kernel, such as the classical Bessel or Newtonian potentials, while typical choices for the non-convex interaction potentials $F$ are
\begin{align}
  \label{Fpol}
  &F_{pol}(r):=\frac14(r^2-1)^2\,, \quad r\in\erre\,,\\
  \label{Flog}
  &F_{log}(r):=\frac{\theta}{2}\left[(1+r)\ln(1+r)+(1-r)\ln(1-r)\right]-\frac{\theta_0}{2}r^2\,, \quad r\in(-1,1)\,,
  \quad0<\theta<\theta_0\,,\\
  \label{Fdobs}
  &F_{dob}(r):=\begin{cases}
  c(1-r^2) \quad&\text{if } r\in[-1,1]\,,\\
  +\infty \quad&\text{otherwise}\,,
  \end{cases}
  \qquad c>0\,.
\end{align}
Let us refer to \cite{gal-gior-grass} and reference therein for a description of the state of the art on nonlocal Cahn-Hilliard equations and to the recent contributions 
\cite{DST, DST2, DRST, MRT18} for the results concerning the rigorous passage from nonlocal to local Cahn-Hillliard equations when the kernel suitably peaks around zero. 

Here, $\Omega\subset\erre^d$ ($d=2,3$) is the space domain where the evolution 
takes place, and $T>0$ is a fixed final time. We consider 
the following nonlocal state system, deduced 
starting from 
the nonlocal free energy functional \eqref{energy:nonloc},
which has been recently studied from the well-posedness point of view in \cite{SS}:
\begin{align}
	\label{state:1}
	&\dt(\eps  \m
	+ \ph) - \Delta \m
	\,=\,
	(\P\s -\A) f(\ph)
	 &&\hbox{in $\, Q:=\Omega \times (0,T),$}
	\\[0.5mm]
	\label{state:2}
	&\m
	\,=\,
	\tau\dt \ph
	+a \ph -J * \ph
	+ F'(\ph)
	- \ch \s
 	&&\hbox{in $\,Q,$}
	\\[0.5mm]
	\label{state:3}
	&\dt \s
	- \Delta \s
	+\B(\s - \s_S)
	+\CC \s f(\ph)
	\,=\,
	- \eta \Delta \ph
	 &&\hbox{in $\,Q,$}
	\\[0.5mm]
	&\dn \m\,=\,\eta \dn \ph\,=\,\dn \s \,=\,0
 	 &&\hbox{on $\,\Sigma:=\partial\Omega \times (0,T),$}
	\label{state:4}
	\\[0.5mm] 
	&\m(0)\,=\, \m_0, \ \ph(0)\,=\,\ph_0, \ \s(0)\,=\,\s_0
  	 &&\hbox{in $\,\Omega$.}
	\label{state:5}
\end{align}
\Accorpa\statesys {state:1} {state:5}
Referring to \cite{garcke} for more details on the physical meaning of the involved parameters, let us precise our notation here. The variable $\sigma$ denotes the
the concentration of the surrounding nutrient, 
where $\sigma=1$ stands for a nutrient-rich concentration while $\sigma=0$ for a poor one.
The variable $\mu$ represents the chemical potential associated 
to the tumor phase-field variable $\ph$ and to the nutrient proportion $\sigma$.
Moreover, we use the symbols ${\bf n}$ and $\dn$ for the outer normal unit vector to 
$\partial \Omega$ and the outer normal derivative, respectively. 
The constants $\P, \A, \B, \CC$ are nonnegative real parameters taking into account the proliferation rate of the tumor cells, the apoptosis rate, 
the consumption rate of the nutrient with respect to a pre-existing concentration $\sigma_S$, 
and the nutrient consumption rate, respectively. 
Moreover, the nonnegative constants $\chi$ and $\eta$ 
model chemotaxis and active transport, respectively. 
The approximation coefficients $\tau$ and $\eps$ represent a viscosity and a parabolic regularization of the Cahn-Hilliard equation, respectively.

 In biological models, nonlocal interactions have been used to describe competition for space and degradation \cite{SRLC}, spatial redistribution \cite{Borsi, Lee}, and also cell-to-cell adhesion \cite{Armstrong, Chaplin, Gerisch}.   The model \statesys which we study falls roughly to the category of nonlocal cell-to-cell adhesion, as it is well-known that the Ginzburg-Landau energy leads to separation and surface tension effects, and heuristically this corresponds to the preference of tumor cells to adhere to each other rather than to the healthy cells.

For the nonlocal Cahn-Hilliard equation with source terms, analytic results such as well-posedness and long-time behavior have been obtained in \cite{DellaPorta2, Melchionna} for prescribed source terms or Lipschitz source terms depending on the order parameter.  The aim of \cite{SS} was to extend the study of the nonlocal Cahn-Hilliard equation to the case where source terms are coupled with another variable: the nutrient concentration $\sigma$. In \cite{SS} the authors prove  well-posedness and regularity of solutions of \statesys  when $\tau$ and $\eps$ are positive, moreover they let them tend to zero and prove existence of solutions of the corresponding limit problem. 

The contributions to the mathematical literature on local diffuse interface tumor growth models is quite wide (cf, e.g., \cite{CRW, col-gil-hil, col-gil-mar, col-gil-spr-fraccontrol, col-gil-roc-spr, col-gil-roc-spr2,col-gil-roc-spr-control,col-signori-spre, garcke, garcke-lam, garcke-lam2,eben-garcke1, eben-garcke2, garcke-lam-roc, garcke-lam-signori, frig-grass-roc, hil-zee, kahle-lam, MRS, signori1,signori2,signori3,signori4,signori5}) and it is devoted to different mathematical questions: not only are these related to well-posedness, but they also focus on
long-time behavior of solutions and optimal control, 
possibly including more accurate effects in the model, like stochastic ones \cite{orr-roc-scar}.
Nonetheless, the study of the nonlocal corresponding version is still at its beginning:
we can indeed quote only the few very recent contributions \cite{frig-lam-roc, frig-lam-signori, SS, FLOW, fritzlocal}.

Here we aim to continue the study pursued in \cite{SS} investigating the inverse problem of parameter identification in \statesys\ by means of optimal control theory.
 In order to do this we need to introduce a cost functional we are going to minimize.
This technique has been already used in the framework of diffuse interface tumor growth models in \cite{frig-lam-signori, kahle-lam, kahle-lam-latz}, where the problem of parameters identification has been tackled with similar methods for the corresponding local tumor growth model  
(in \cite{kahle-lam, kahle-lam-latz})  and for a nonlocal model introduced in \cite{hawk2} (in \cite{frig-lam-signori}). 

The parameters that we choose to identify are $\P$, $\ch$, $\eta$, and $\CC$.
The main idea is to introduce a {\it cost functional} of standard tracking-type form  as
\begin{align}
	\non
	\J(\ph,\P,\ch,\eta,\CC) &:= 
		\frac {\bO} 2 \norma{\ph(T)-\phO}_{L^2(\Omega)}^2		
		+ \frac {\bQ} 2 \norma{\ph-\phQ}_{L^2(Q)}^2	
		\\ & \quad 
		+\frac {\aP}2 |\P-\P_*|^2
		+\frac {\achi}2 |\ch-\ch_*|^2
		+\frac {\aeta}2 |\eta-\eta_*|^2
		+\frac {\aC}2 |\CC-\CC_*|^2,
		\label{cost}
\end{align}
for some prescribed target functions $\phO:\Omega \to \erre, \, \phQ:Q \to \erre$,
and some nonnegative weights $\bO,\bQ,\aP,\achi,\aeta,\aC$ (not all zero).
The nonnegative constants $\P_*,\chi_*,\eta_*,\CC_*$ represent instead
some a priori knowledge for the parameters.
Moreover, the set of {\it admissible controls} is defined as
\begin{align}
	 \non
	\Uad:= &\{(\P,\chi,\eta,\CC) \in \erre^4: 
		\\
		\label{Uad}
		& \qquad
		0\leq \P \leq \P_{\rm max}, \;
		0\leq \chi \leq \chi_{\rm max},\;
		0\leq \eta \leq \eta_{\rm max},\;
		0\leq \CC \leq \CC_{\rm max} \},
\end{align}
for some prescribed nonnegative constants 
$\P_{\rm max},\chi_{\rm max},\eta_{\rm max},\CC_{\rm max}$.
It is worth underlying that \eqref{Uad} is a nonempty compact (closed and bounded) 
subset of $\erre^4$.

The identification problem we are going to address in this work 
can be summarized as 
\begin{align*}
	(CP)_\ept \quad
	&\hbox{Minimize $\J(\ph,\P,\ch,\eta,\CC)$ subject to:}\\
	&\hbox{(i) $(\ph,\mu,\s)$ yields 
	a solution to \statesys;}
	\\ 
	& \hbox{(ii) $(\P,\chi,\eta,\CC) \in \Uad.$ }
\end{align*}
More specifically, we would like to tackle the following problem: 
given a set of data $\P_*,\chi_*,\eta_*,\CC_*$ representing some a priori 
knowledge of the parameters, 
identify the optimal parameter values $\P, \ch, \eta, \CC$ so that the resulting 
model predictions and the data are close in the sense defined in
the cost functional \eqref{cost}.
An alternative approach for parameter identification relies on the 
Bayeasian calibration which has been recently used in 
\cite{kahle-lam-latz} in the framework of local tumor growth models.

The second part of the paper concerns the asymptotic behavior of $(CP)_\ept$
as $\eps$ and/or $\tau$ approach zero in the state system above. 
In this direction, we will employ the symbols 
$\CPeps$, $\CPtau$, and $(\ov{CP})$ 
for the corresponding optimal control problems in which 
$\tau=0$, $\eps=0$ and $\eps=\tau=0$ in the order.
For instance, we have
\begin{align*}
	\CPeps \quad
	&\hbox{Minimize $\J(\ph,\P,\ch,\CC)$ subject to:}\\
	&\hbox{(i) $(\ph,\mu,\s)$ yields 
	a solution to \statesys\ with $\tau=0$;}
	\\ 
	& \hbox{(ii) $(\P,\chi,\eta,\CC) \in \Uad.$ }
\end{align*}
The problems $\CPtau$ and $(\ov{CP})$ are defined analogously. 
Different requirements on the structural data are in order,
depending on the asymptotic study under consideration.
Precise assumptions are rigorously stated in the sections below.

The main idea to solve the limit identification problems 
$\CPeps$, $\CPtau$, and $(\ov{CP})$ is the following. 
We know from \cite{SS} that the fully relaxed state system \statesys\ converges to 
the corresponding limiting one, as $\tau$ and/or $\eps$ vanishes.
Based on this, we introduce a suitable approximating cost functional 
in such a way that also the respective adjoint systems converge
in some sense with respect to $\tau$ and/or $\eps$. This allows 
to pass to the limit in the first-order conditions for optimality 
by letting $\tau$ and/or $\eps$ go to $0$, and deduce the 
corresponding first-order conditions also for the limiting models.
The main mathematical issue concerns
the nonuniqueness of optimal controls: this is a consequence of the 
highly nonlinear nature of the problem, and may give some 
difficulties in the convergence of optimal controls.
Indeed, it not necessarily true that optimal controls at $\eps,\tau>0$
converge to optimal controls at $\tau=0$ and/or $\eps=0$. 
To overcome this issue, the cost functional at the approximated level has 
to be carefully adapted in order to recover, among the other things, 
convergence of the optimal controls.

\noindent
{\bf Plan of the paper.} In Section~\ref{sec:main}
we state the assumptions on the problem data and resume previous results. 
In Section~\ref{sec:CP_ept} we study the optimization problem 
$(CP)_\ept$ as $\tau$ and $\eps$ are strictly positive and prove necessary optimality conditions, whereas in Section~\ref{sec:asymp} we solve the optimization problems 
$\CPeps$, $\CPtau$, and $(\ov{CP})$ through asymptotic arguments.

\section{Assumptions and previous results}
\label{sec:main}
\subsection{Assumptions}
Throughout the paper, $\Omega\subset\erre^d$, ($d=2,3$)
is a smooth bounded domain with boundary $\Gamma$, and $T>0$ is a fixed final time.
For every $t\in[0,T]$, we use the classical notation 
\begin{align*}
	Q_t:=\Omega\times(0,t)\,,
	\quad 
	\Sigma_t:=\Gamma\times(0,t)\,,
	\quad 
	Q_t^T:=\Omega\times(t,T)\,,
\end{align*}
and put for convenience
\begin{align*}
	Q:=Q_T\,,
	\quad 
	\Sigma:=\Sigma_T\,.
\end{align*}
For any Banach space $X$, we denote its dual space by $X^*$, 
the associated duality pairing by $\<{\cdot},{\cdot}>$ and if $X$ is a Hilbert space, 
we denote its inner product by $(\cdot, \cdot)_X$. 
For $1 \leq p \leq \infty$ and $k \geq 0$ we denote the usual Lebesgue and 
Sobolev spaces on $\Omega$ by $L^p(\Omega)$ and $W^{k,p}(\Omega)$, 
along with the corresponding norms $\norma{\cdot}_{p}$ and $\norma{\cdot}_{W^{k,p}(\Omega)}$.
For the case $p = 2$, these spaces become Hilbert spaces and 
we use the notation $H^k(\Omega) = W^{k,2}(\Omega)$.
Then, we define the functional spaces
\[
  H:=L^2(\Omega)\,, \qquad V:=H^1(\Omega)\,, \qquad
  W:=\{v\in H^2(\Omega): \partial_{\bf n}v=0\;\text{a.e.~on } \partial\Omega\}\,,
\]
endowed with their natural norms $\norma{\cdot}:=\norma{\cdot}_2$, $\norma{\cdot}_V$, and $\norma{\cdot}_W$.
The space $H$ will be identified to its dual, as usual, so that 
we have the following continuous, dense, and compact embeddings:
\[
  W\hookrightarrow V\hookrightarrow H\hookrightarrow V^*\,.
\]

Our starting point to solve the above-mentioned optimal control problems
are the well-posedness results of \statesys\ obtained in \cite{SS},
as well as the related addressed asymptotic analyses. 
In this direction, we postulate the following structural assumptions.

\begin{enumerate}[label={\bf A\arabic{*}}, ref={\bf A\arabic{*}}]
\item \label{ass:1}
	$\A,\B$ and $\P_{\rm max}, \chi_{\rm max}, \eta_{\rm max}, \CC_{\rm max}$ 
	are nonnegative constants.
\item \label{ass:2}
	$f:\erre\to\mathopen[0,+\infty\mathclose)$ is bounded and Lipschitz continuous.
\item \label{ass:3}
	 $\sigma_S\in L^\infty(Q)\,\,$ with $\,\,0\leq \sigma_S(\x,t) 
	 \leq 1 \quad\text{for a.e.~}(\x,t)\in Q$.
\item \label{ass:4}
	$F:(-\ell,\ell)\to[0,+\infty)$ is of class $C^4$, where $\ell\in(0,+\infty]$, and 
	\[
	F'(0)=0\,, \qquad
	\lim_{r\to(\pm \ell)^\mp}\left[F'(r) - \chi_{\rm \max}\eta_{\rm max} r\right]=\pm \infty\,. 
	\]
	Note that the latter condition allows both for 
	the logarithmic potential and 
	for any polynomial super-quadratic potential. Nonetheless,
	potentials of double-obstacle type are excluded.
	Let us note also that the limiting condition at $\pm\ell$
	is satisfied in particular by any $\eta\in[0,\eta_{\rm max}]$ and $\chi\in[0,\chi_{\rm max}]$.
\item \label{ass:5}
	The kernel $J\in W^{1,1}_{loc}(\erre^d)$ is even, i.e.~$J(\x)=J(-\x)$ 
	for almost every $\x\in\erre^d$.
	For any measurable $v:\Omega\to\erre$ we employ the notation
  	\[
 	(J*v)(\x):=\int_\Omega J(\x-\y)v(\y)\, \dy\,, \quad \x\in\Omega\,,
 	\]
	and set $a:=J*1$. Moreover, we assume that 
	\[
  	a_*:=\inf_{\x\in\Omega}\int_\Omega J(\x-\y)\,\dy=\inf_{\x\in\Omega}a(\x)\geq0\,,
 	 \]
 	 \[
  	a^*:= \sup_{\x\in\Omega} \int_\Omega |J(\x-\y)|\,\dy < +\infty, \qquad
 	 b^*:= \sup_{\x\in\Omega} \int_\Omega |\nabla J(\x-\y)|\, \dy < +\infty\,.
 	 \]
  	We set $c_a:=\max\{a^*-a_*,1\}$ and
  	we suppose that 
 	 there is a constant $C_0>0$ such that 
 	 \[
 	 a_* + F''(r)\geq C_0 \qquad\forall\,r\in (-\ell,\ell)\,.
 	 \]
	 Let us stress that this condition allows for non-convex potentials $F$,
	 as $a_*$ might be strictly greater than $C_0$.
\item \label{ass:6}
	The positive constants $\eps$ and $\tau$ are such that 
	$\eps\in(0,\eps_0)$ and $\tau\in(0,\tau_0)$, where
	$\eps_0$ and $\tau_0$ are defined as
	\[
	\eps_0:=\min\left\{\frac1{4c_a}, \frac{1}{\max\{1,a^*-\min\{a^*, C_0\}\}},
  	\frac{2C_0}{3(a^*+b^*)^2}\right\}\,, \qquad \tau_0:=1\,.
	\]
	This is only a technical requirement on the ``smallness'' of the perturbation
	parameters.
\item \label{ass:7}
	The convolution kernel satisfies the additional regularity property:
	\[
	\text{either}\qquad J\in W^{2,1}(\mathcal B_R) \qquad\text{or}\qquad
	\text{$J$ is admissible in the sense of Definition~\ref{def:adm}}\,,
	\]
	where $\mathcal B_R$ is the open ball in $\erre^d$ of radius 
	$R:=\operatorname{diam}(\Omega)$
	centred at $0$, and
	\begin{definition}
	\label{def:adm}
	A convolution kernel $J\in W^{1,1}_{\rm loc}(\erre^d)$ is said to be {\em admissible} if
	it satisfies:
	\begin{itemize}
	\item[(i)] $J\in C^3(\erre^d \setminus \{0\})$.
	\item[(ii)] $J$ is radially symmetric, i.e.~$J(\cdot)=\widetilde{J}(|\cdot|)$
	for a non-increasing $\widetilde J:\erre_+\to\erre$.
	\item[(iii)] there exists $R_0>0$ such that
	$r\mapsto{\widetilde J}''(r)$ and $r\mapsto{\widetilde J}'(r)/r$ are monotone on $(0,R_0)$.
	\item[(iv)] there exists $C_d>0$ such that 
	$|D^3 J(\x)|\leq C_d|\x|^{d-1}$ for every $\x\in\erre^d\setminus\{0\}$.
	\end{itemize}
	\end{definition}
	Let us recall that the $W^{2,1}$ regularity condition above, 
	despite being probably the most natural,
	prevents some relevant cases of kernels 
	such as the Newtonian or the Bessel potential from being considered.
	By contrast, these can be included 
	by using the notion of admissible kernel:
	see \cite{FG1,gal-gior-grass} and \cite[Def.~1]{bed-rod-bert} for details.
\end{enumerate}

When dealing with the aforementioned optimal control problems, 
we postulate that the cost 
functional $\J$ and the space of admissible controls $\Uad$ are defined by \eqref{cost}, and 
\eqref{Uad} and that the following are fulfilled.
\begin{enumerate}[label={\bf C\arabic{*}}, ref={\bf C\arabic{*}}]
  \item \label{ass:op:1}
  The target functions $\phO:\Omega \to \erre$ and $\phQ:Q \to \erre$ verify 
  $\phO \in \Lx2$ and
  $\phQ \in L^2(Q)$.
  \item \label{ass:op:2}
  $\bO,\bQ,\aP,\achi,\aeta,\aC$ are nonnegative constants not all zero.
  \item \label{ass:op:3}
  $\P_*,\chi_*,\eta_*,\CC_*$ are nonnegative constants.
  \item \label{ass:op:4}
  $f \in C^2_b(\erre)$, i.e.~$f$ is twice differentiable and $f$, $f'$ and $f''$ are bounded.
\end{enumerate}

\subsection{Well-posedness of the state system}
The assumptions \ref{ass:1}--\ref{ass:7}
above allow to prove the strong well-posedness of system \statesys\,
and to discuss the asymptotic behavior of the system as $\eps$ and $\tau$ go to zero,
as we will recall below.
Let us mention that in \cite{SS} 
weak well-posedness of the state system \statesys\ is also addressed, 
under less stringent assumptions on the data.
However, since in this framework we are interested in solving 
the optimal control problem $(CP)_\ept$,
we are forced to work with strong solutions instead,
which possess better stability properties with respect to the involved parameters. 
In particular, unlike weak solutions, strong solutions allow to consider 
also positive chemotaxis and active transport coefficients:
for further details, we refer to \cite{SS}.

Let us recall here the
the well-posedness of the state system \statesys\ in the strong sense.

\begin{theorem}[Strong well-posedness of the state system: $\ept >0$]
  \label{THM:WP:STRONG}
  Assume conditions \ref{ass:1}--\ref{ass:7}, let
  $\eps\in(0,\eps_0)$ and $\tau\in(0,\tau_0)$, and suppose that the
  initial data $(\varphi_0, \mu_0, \sigma_0)$ verify
  \begin{equation}
    \label{ass:in:data}
  \varphi_0\in H^2(\Omega)\,,\quad
  \mu_0,\sigma_0\in V \cap L^\infty(\Omega)\,,\quad
  \exists\,r_0\in(0,\ell):\;
  \norma{\varphi_0}_{L^\infty(\Omega)}\leq r_0\,.
  \end{equation}
  Then, for every $(\P,\chi,\eta,\CC)\in \Uad$
  there exists a unique strong solution $(\varphi,\mu,\sigma)$ such that
  \begin{align}
    \label{phi_reg2}
    &\varphi\in W^{1,\infty}(0,T; V)\cap H^1(0,T; H^2(\Omega))\,, \quad\partial_t\varphi\in L^\infty(Q)\,,
    \quad \eta \varphi\in L^2(0,T; W)\,,\\
    \label{phi_sep}
    &\exists\,r^*\in(r_0,\ell):\quad\sup_{t\in[0,T]}\norma{\varphi(t)}_{L^\infty(\Omega)}\leq r^*\,,\\
    \label{mu_reg2}
    &\mu, \sigma \in H^1(0,T; H)\cap L^\infty(0,T; V)\cap L^2(0,T; W)\cap L^\infty(Q)\,,
  \end{align}
  which fulfils equations \eqref{state:1}--\eqref{state:5} pointwise in $Q$.
Furthermore, there exists a constant $K>0$, 
depending only on the structural data in {\bf A1--A7}, 
on the initial data, and on $\eps$ and $\tau$, such that
\begin{align}
	& \non
	 \norma{\dt\ph}_{L^\infty(Q)}
	+ \norma{\ph}_{W^{1,\infty}(0,T; V)\cap H^1(0,T; H^2(\Omega))}
	+ \norma{\eta \ph}_{\L2 W}
	\\ & \quad \non
	+ \norma{\mu}_{H^1(0,T; H)\cap L^\infty(0,T; V)\cap L^2(0,T; W)\cap L^\infty(Q)}
	\\ & \quad
	+ \norma{\s}_{H^1(0,T; H)\cap L^\infty(0,T; V)\cap L^2(0,T; W)\cap L^\infty(Q)}
	\leq K \qquad\forall\,(\P,\chi,\eta,\CC)\in\Uad\,.
	\label{estimate:strong}
\end{align}
Moreover, for any pair of initial data 
  $\{(\varphi_0^i, \mu_0^i, \sigma_0^i)\}_{i=1,2}$
  satisfying \eqref{ass:in:data}
  there exists a constant $K>0$,
  depending on the structural data, $\eps$, and $\tau$,
  such that, for every pair of parameters 
  $\{(\P^i, \chi^i, \eta^i, \CC^i)\}_{i=1,2}\in\Uad$,
  and for any respective strong solutions
  $\{(\varphi_i, \mu_i, \sigma_i)\}_{i=1,2}$
  satisfying \eqref{phi_reg2}--\eqref{mu_reg2}, it holds that
  \begin{align*}
  &
  \norma{\varphi_1-\varphi_2}_{W^{1,\infty}(0,T; V)\cap H^1(0,T; H^2(\Omega))}
  + \norma{\mu_1-\mu_2}_{H^1(0,T; H)\cap L^\infty(0,T; V)\cap L^2(0,T; W)}\\
  &\qquad
  +\norma{\sigma_1-\sigma_2}_{H^1(0,T; H)\cap L^\infty(0,T; V)\cap L^2(0,T; W)}\\
  &\quad\leq K\left( \norma{\varphi_0^1-\varphi_0^2}_{H^2(\Omega)}
  +\norma{\mu_0^1-\mu_0^2}_V
  +\norma{\sigma_0^1-\sigma_0^2}_V \right)
  \\ & \qquad
   + K\left(|\P^1-\P^2 |+ |\chi^1-\chi^2 |+ |\eta^1-\eta^2 |+ |\CC^1-\CC^2 | \right). 
  \end{align*}
\end{theorem}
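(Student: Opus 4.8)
Most of this statement — existence, uniqueness, the regularity \eqref{phi_reg2}--\eqref{mu_reg2}, the strict separation \eqref{phi_sep}, and the global bound \eqref{estimate:strong} for a \emph{fixed} control — is exactly the strong well-posedness theory carried out in \cite{SS} under \ref{ass:1}--\ref{ass:7}; the genuinely new ingredient here is the \emph{joint} continuous dependence on the initial data and on the control quadruple $(\P,\ch,\eta,\CC)\in\Uad$, together with the fact that the constant in \eqref{estimate:strong} may be taken \emph{uniform} over $\Uad$. So the plan is: (i) recall the approximation scheme and the a priori estimates of \cite{SS}, checking their uniformity in $\Uad$; (ii) establish the stability estimate by a difference-and-Gronwall argument.

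For (i): fix $(\P,\ch,\eta,\CC)\in\Uad$, regularize the potential $F$ by a $C^4$ approximation preserving $a_*+F_\lambda''\ge C_0$, and run a Faedo--Galerkin scheme in the eigenfunctions of the Neumann Laplacian. The basic energy estimate is obtained by testing \eqref{state:1} by $\m$, \eqref{state:2} by $\dt\ph$, and \eqref{state:3} by $\s$: the structural points are that, by evenness of $J$ (\ref{ass:5}), $\int_\Omega(J*\ph)\dt\ph$ is a total time derivative, and that $\tfrac12\int_\Omega a|\ph|^2-\tfrac12\int_\Omega(J*\ph)\ph+\int_\Omega F(\ph)$ is bounded below and coercive thanks to $a_*+F''\ge C_0$; the control-dependent terms are dominated by $\P_{\max},\ch_{\max},\eta_{\max},\CC_{\max}$ and the boundedness of $f$ (\ref{ass:2}), so Young's inequality, the smallness of $\eps,\tau$ from \ref{ass:6}, and Gronwall give bounds \emph{independent of the specific control}. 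One then upgrades this as in \cite{SS}: the strict separation \eqref{phi_sep}, with a threshold $r^*$ independent of the control, via the one-sided growth condition in \ref{ass:4} and the coercivity of \ref{ass:5}; the $L^\infty(Q)$ bounds on $\m,\s$ by Moser/De Giorgi iteration on \eqref{state:1}, \eqref{state:3}; and the higher-order bounds \eqref{phi_reg2}, \eqref{mu_reg2} by differentiating \eqref{state:2} in time, using \ref{ass:7} to handle $J*\dt\ph$ and $\nabla(J*\ph)$, and applying elliptic regularity to \eqref{state:1}, \eqref{state:3} and to $\eta$ times \eqref{state:2}. Passing to the limit $\lambda\to0$ in the Galerkin limit — monotonicity of $r\mapsto a\,r+F'(r)$ handles the nonlinearity — yields a strong solution satisfying \eqref{estimate:strong} with $K$ uniform over $\Uad$ but, as expected, possibly degenerating as $\eps$ or $\tau\to0$.

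For (ii): let $(\ph_i,\m_i,\s_i)$, $i=1,2$, be strong solutions with data $(\ph_0^i,\m_0^i,\s_0^i)$ and controls $(\P^i,\ch^i,\eta^i,\CC^i)$, set $\ph:=\ph_1-\ph_2$, $\m:=\m_1-\m_2$, $\s:=\s_1-\s_2$, and write the system solved by $(\ph,\m,\s)$. Its right-hand sides split into \emph{linear-in-the-difference} terms — which include $F'(\ph_1)-F'(\ph_2)$ and $f(\ph_1)-f(\ph_2)$, controlled by the Lipschitz bounds on $F'$ and $f$ valid on the separation interval and by \ref{ass:op:4} — and \emph{control-difference} terms such as $(\P^1-\P^2)\s_1f(\ph_1)$, $(\ch^1-\ch^2)\s_1$, $(\eta^1-\eta^2)\Delta\ph_1$, $(\CC^1-\CC^2)\s_1f(\ph_1)$, each estimated in the relevant norm by $|\P^1-\P^2|,\dots,|\CC^1-\CC^2|$ using \eqref{estimate:strong} (in particular $\Delta\ph_1\in L^2(Q)$ and $\s_1,f(\ph_1)\in L^\infty(Q)$). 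Testing this difference system by $\m$, $\dt\ph$, $\s$ — and then its time-differentiated version by the corresponding higher-order test functions, closing the estimate via elliptic regularity exactly as in step (i) — and applying Gronwall's lemma yields the asserted inequality; uniqueness is the case of equal data and equal controls.

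The step I expect to be the real obstacle is the strict separation \eqref{phi_sep} together with the $L^\infty(Q)$ control of $\m$ and $\s$, uniform in the control: in the nonlocal model the $-\Delta\ph$ term is absent from \eqref{state:2}, so the classical Cahn--Hilliard arguments are unavailable and one must rely on the coercivity $a_*+F''\ge C_0$ and on the blow-up of $F'(r)-\ch_{\max}\eta_{\max}r$ at $\pm\ell$, while simultaneously controlling the coupling with the nutrient through $-\ch\s$ and $\CC\s f(\ph)$. The subsequent bootstrap to the $W^{1,\infty}(0,T;V)\cap H^1(0,T;H^2(\Omega))$ regularity of $\ph$, requiring \ref{ass:7} to estimate second derivatives of the convolution, is the other technically heavy point; everything else is by now routine for this class of systems.
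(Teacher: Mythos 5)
The paper does not prove this theorem at all: it is stated in Section~\ref{sec:main} as a result \emph{recalled} from \cite{SS} (``Let us recall here the well-posedness of the state system \statesys\ in the strong sense''), so there is no in-paper argument to compare yours against line by line. Your outline correctly recognizes this and reproduces the standard strategy one would expect \cite{SS} to follow (Galerkin plus regularization of $F$, energy estimate from testing \eqref{state:1} by $\m$, \eqref{state:2} by $\dt\ph$, \eqref{state:3} by $\s$, separation via the blow-up of $F'(r)-\chi_{\rm max}\eta_{\rm max}r$ at $\pm\ell$ combined with an $L^\infty$ bound on the remaining terms of \eqref{state:2}, bootstrap to the stated regularity, and a difference-plus-Gronwall argument for stability). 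You also correctly identify the two genuinely delicate points, namely the uniform-in-$\Uad$ separation/$L^\infty$ estimates in the absence of $-\Delta\ph$ in \eqref{state:2}, and the $H^1(0,T;H^2(\Omega))$ bootstrap requiring \ref{ass:7}.

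One small inaccuracy: in step (ii) you invoke \ref{ass:op:4} to control $f(\ph_1)-f(\ph_2)$, but \ref{ass:op:4} is not among the hypotheses of Theorem~\ref{THM:WP:STRONG} (it belongs to the optimal-control assumptions {\bf C1--C4}); the Lipschitz continuity of $f$ from \ref{ass:2}, together with the Lipschitz bound on $F'$ over the separation interval $[-r^*,r^*]$ guaranteed by \eqref{phi_sep} and the $C^4$ regularity in \ref{ass:4}, is what the difference estimate actually needs. This does not affect the validity of the sketch.
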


The (strong) well-posedness presented in the theorem above allows us to define the so-called
{\it control-to-state} operator $\S$,
assigning to every given admissible control $(\P,\chi,\eta,\CC)$ 
the unique corresponding state. Namely, we have
\begin{align*}
	\S: (\P,\chi,\eta,\CC) \mapsto (\ph,\m,\s),
\end{align*}
where $(\ph,\m,\s)$ is the unique solution to \statesys\ obtained from Theorem~\ref{THM:WP:STRONG}.
Moreover, let us draw a \sfw\ consequence of the separation result \eqref{phi_sep} established above
which will be useful later on.
\begin{corollary}
\label{COR:SEP}
In addition to the assumptions of Theorem \ref{THM:WP:STRONG}, suppose that
$F \in C^k(-\ell,\ell)$ for some $k\geq 4$.
Then, there exists a positive constant $K$, 
depending only on the structural data in {\bf A1--A7}, 
on the initial data, and on $\eps$ and $\tau$,
such that
\begin{align}
\label{separation:est}
\norma{\ph}_{L^\infty(Q)} + \max_{i=0,\ldots,k} \norma{F^{(i)}(\ph)}_{L^\infty(Q)} \leq K
\qquad\forall\,(\P,\chi,\eta,\CC)\in\Uad\,.
\end{align}
\end{corollary}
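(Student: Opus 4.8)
The plan is to combine the uniform separation property \eqref{phi_sep} from Theorem~\ref{THM:WP:STRONG} with the assumed $C^k$ regularity of $F$ on the open interval $(-\ell,\ell)$. The key observation is that \eqref{phi_sep} gives a single radius $r^*\in(r_0,\ell)$, \emph{independent of the control} $(\P,\chi,\eta,\CC)\in\Uad$, such that $\norma{\ph(t)}_{L^\infty(\Omega)}\leq r^*$ for all $t\in[0,T]$; in particular the values of $\ph$ stay in the compact interval $[-r^*,r^*]$ which is strictly contained in the domain of $F$.

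First I would note that since $F\in C^k(-\ell,\ell)$ and $r^*<\ell$, each derivative $F^{(i)}$ for $i=0,\dots,k$ is continuous on the compact set $[-r^*,r^*]$, hence bounded there:
\begin{align*}
	M_i := \max_{|s|\leq r^*}|F^{(i)}(s)| < +\infty\,, \qquad i=0,\dots,k\,.
\end{align*}
Then for \aaQ\ one has $|\ph(\x,t)|\leq r^*$ by \eqref{phi_sep}, so $|F^{(i)}(\ph(\x,t))|\leq M_i$, whence $\norma{F^{(i)}(\ph)}_{L^\infty(Q)}\leq M_i$. Summing over $i$ and recalling $\norma{\ph}_{L^\infty(Q)}\leq r^*$ (which is again \eqref{phi_sep}, just taking the supremum over $t$ as well), we obtain \eqref{separation:est} with the explicit choice $K := r^* + \sum_{i=0}^k M_i$, or more simply $K:=r^*+(k+1)\max_{i}M_i$.

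It remains to check the claimed dependence of $K$: the radius $r^*$ depends only on the structural data {\bf A1--A7}, the initial data, and $\eps,\tau$ (this is precisely what Theorem~\ref{THM:WP:STRONG} asserts about $r^*$), while each $M_i$ depends only on $r^*$ and on $F$, hence again only on the structural data. Therefore $K$ has the stated dependence and, crucially, is uniform over $(\P,\chi,\eta,\CC)\in\Uad$, which is exactly the content of the corollary. There is essentially no obstacle here: the entire statement is a short deduction from the uniform separation bound, and the only point deserving a word of care is making explicit that the separation radius $r^*$ in \eqref{phi_sep} does not depend on the admissible control — a fact already built into Theorem~\ref{THM:WP:STRONG}.
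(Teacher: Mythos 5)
Your proof is correct and is precisely the argument the paper has in mind: the corollary is stated there without proof as a ``\sfw\ consequence'' of the separation property \eqref{phi_sep}, and your deduction --- values of $\ph$ confined to the compact interval $[-r^*,r^*]\subset(-\ell,\ell)$, hence $F^{(i)}(\ph)$ bounded by $\max_{|s|\le r^*}|F^{(i)}(s)|$ for $i=0,\dots,k$ --- is exactly that consequence. You are also right to flag the uniformity of $r^*$ over $\Uad$ as the only point needing care; this uniformity is what makes \eqref{estimate:strong} and \eqref{separation:est} hold with a single constant $K$ for all admissible controls, and it is indeed built into Theorem~\ref{THM:WP:STRONG} (the separation radius coming from the structural data, the initial data, and $\eps,\tau$ only).
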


\subsection{Asymptotic behavior of the state system}
Let us recollect here the known results established in \cite{SS} 
concerning the asymptotic behavior of the state system \statesys\ as $\ept \to 0$. 
Despite part of the following results work also under more general 
assumptions on the potential (see \cite{SS}), 
in some cases it will be useful to introduce also the following assumption 
on the potential $F$.
\begin{enumerate}[label={\bf A\arabic{*}}, ref={\bf A\arabic{*}}, start=8]
\item \label{ass:8}
There exist two positive constants $c_F$ and $C_F$ such that
\begin{align*}
|F''(r)| \leq C_F ( 1+ |r|^2)\,,
\qquad F(r)\geq c_F|r|^4 - C_F\qquad\forall\,r \in \erre\,.
\end{align*}
It is worth noting that these conditions are met by the classical regular potential \eqref{Fpol}, whereas prevent the singular choices \eqref{Flog} and \eqref{Fdobs} to be considered.
\end{enumerate}

\begin{theorem}[Asymptotics: $\eps\to0$]
\label{THM:PREV:ASY:epszero}
  Assume \ref{ass:1}--\ref{ass:8}, let $\tau\in(0,\tau_0)$,
  and suppose that
  \begin{align}
  \label{coeff_eps0}
  &(\P_\tau,\chi_\tau,\eta_\tau,\CC_\tau)\in\Uad\,, \qquad \eta_\tau=0\,,\\
  \label{init_eps0}
  &\varphi_{0,\tau}\in V\,, \qquad F(\varphi_{0,\tau})\in L^1(\Omega)\,,\qquad
   \sigma_{0,\tau}\in H\,, 
   \qquad 
  0 \leq \s_{0,\tau}(\x) \leq 1 \ \ \hbox{for $a.e. \,\, \x \in \Omega$}\,.
  \end{align}
  For $\eps\in(0, \eps_0)$, let 
  the coefficients $(\P_\ept,\chi_\ept,\eta_\ept,\CC_\ept)$ and
  the initial data $(\varphi_{0,\ept}, \mu_{0,\ept}, \sigma_{0,\ept})$
  satisfy \eqref{coeff_eps0}--\eqref{init_eps0} and \eqref{ass:in:data}.
  Suppose also that, as $\eps\to0$, it holds
  \begin{align}
  \non
  &(\P_\ept,\chi_\ept,\eta_\ept,\CC_\ept)\to(\P_\tau,\chi_\tau,\eta_\tau,\CC_\tau),\\
  \label{conv_init_eps0}
  &\varphi_{0,\ept}\to\varphi_{0,\tau} \quad  \text{weakly in } V, \qquad
  \sigma_{0,\ept}\to\sigma_{0,\tau} \quad \text{strongly in } H, 
  \end{align}
  and that there exists a constant $M_0>0$, independent of $\eps$, such that,
  for all $\eps\in(0,\eps_0)$,
  \begin{equation}
  \label{bound_init_eps0}
  \eps^{1/2}\norma{\mu_{0,\ept}} 
  + \norma{F(\varphi_{0,\ept})}_{L^1(\Omega)}
   + \eps^{1/4}\left(\norma{\mu_{0,\ept}}_V + 
    \norma{\sigma_{0,\ept}}_V
    + \norma{F'(\varphi_{0,\ept})}\right)
    \leq M_0.
  \end{equation}
  Denote by $(\ph_\ept, \mu_\ept, \sigma_\ept)$ 
  the respective unique strong solution to the system \statesys\
  obtained from Theorem~\ref{THM:WP:STRONG},
  with respect to the 
  the coefficients $(\P_\ept,\chi_\ept,\eta_\ept,\CC_\ept)$ and the initial
  data $(\varphi_{0,\ept}, \mu_{0,\ept}, \sigma_{0,\ept})$.
  Then, there exists a unique triplet 
  $(\ph_\tau,\mu_\tau, \sigma_\tau)$, with
  \begin{align*}
	&\ph_\tau \in \H1 H \cap \L\infty V\,,
	\qquad
  	\mu_\tau \in \L2 {V}\,, \\
  	&\sigma_\tau \in \H1 {V^*} \cap \L2 V\cap L^\infty(Q)\,, 
  	\\
	& 0\leq\sigma_\tau(\x,t)\leq1 \quad \text{for a.e.~}\x\in\Omega\,,\quad\forall\,t\in[0,T]\,,\end{align*}
  such that 
  \begin{align*}
  & \<\partial_t \varphi_\tau, v>
   + \iO\nabla  \mu_\tau \cdot \nabla v
  = \iO(\P_\tau\sigma_\tau - \A)f(\varphi_\tau)v\,, \\
  & \<\partial_t\sigma_\tau,w>
  + \iO \nabla\sigma_\tau \cdot \nabla w
  + \B \iO (\sigma_\tau-\sigma_S)w
  + \CC_\tau\iO \sigma_\tau f(\varphi_\tau)w
  = 0\,,
\end{align*}
for every $v,w\in V$, almost everywhere in $(0,T)$, and
\begin{align*}
    &\mu_\tau=\tau\partial_t\varphi_\tau
    + a\varphi_\tau - J*\varphi_\tau 
    + F'(\varphi_\tau) 
    - \chi_\tau\sigma_\tau &&\text{a.e.~in } Q\,,
    \\
    &\varphi_\tau(0)=\varphi_{0,\tau}\,, \quad
    \sigma_\tau(0)=\sigma_{0,\tau} &&\text{a.e.~in } \Omega\,.
\end{align*}
  Moreover, as $\eps\to0$ it holds that 
\begin{align}
    \label{epstozero:1}
  	\ph_\ept  &\to \ph_\tau &&\hbox{weakly-$^*$ in $\H1 H \cap \L\infty V$,}\\
  	\label{epstozero:2}
	\mu_\ept  &\to \mu_\tau &&\hbox{strongly in  $\L2 V$,}\\
  	\label{epstozero:3}
	\sigma_\ept  &\to \sigma_\tau 
	\quad &&\hbox{weakly-$^*$ in $\H1 {V^*}  \cap \L2 V \cap L^\infty(Q)$,}
	\\
  	\label{epstozero:5}
  \eps \mu_\ept  &\to 0  &&\hbox{strongly in $C^0([0,T]; H)\cap \L2 V$,}
\end{align}
\Accorpa\epstozero {epstozero:1} {epstozero:5}
and 
\begin{align}
	\label{epstozero:strong:1}
	\ph_\ept &\to\ph_\tau &&\text{strongly in } 
	C^0([0,T]; H)\,, \\
  	\label{epstozero:strong:2}
  \sigma_\ept& \to\sigma_\tau &&\text{strongly in } L^\infty(0,T; H)\cap L^2(0,T; V)\,.
\end{align}  
\end{theorem}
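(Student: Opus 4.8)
The plan is the classical four-step scheme for a singular perturbation limit. First I would derive a priori estimates uniform in $\eps$; then extract a convergent subsequence via weak-$*$ compactness and the Aubin--Lions--Simon lemma; then identify the limit problem, passing to the limit in all nonlinear terms and showing that $\eps\mu_\ept$ vanishes; and finally establish uniqueness of the limit triplet, which upgrades subsequential convergence to convergence of the whole family and, combined with an energy-convergence argument, yields the strong convergences in \eqref{epstozero:2} and \eqref{epstozero:strong:1}--\eqref{epstozero:strong:2}. Throughout I would use that \eqref{coeff_eps0} forces $\eta_\ept=0$ for every $\eps$, so the active-transport term $\eta\Delta\ph$ is absent.

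For the estimates I would test \eqref{state:1} by $\mu_\ept$, \eqref{state:2} by $\dt\ph_\ept$, and \eqref{state:3} by $\s_\ept$, and add: by the evenness of $J$ (assumption \ref{ass:5}) the coupling terms combine into the time derivative of the augmented energy $\tfrac\eps2\norma{\mu_\ept}^2+\mathcal E_{nonloc}(\ph_\ept)+\tfrac12\norma{\s_\ept}^2$, while the dissipation produces $\norma{\nabla\mu_\ept}^2+\tau\norma{\dt\ph_\ept}^2+\norma{\nabla\s_\ept}^2$; using the boundedness of $f$ and $\s_S$ (assumptions \ref{ass:2}--\ref{ass:3}), the quartic coercivity of $F$ from \ref{ass:8}, Young's and Gronwall's inequalities, and absorbing $\chi_\ept\int\s_\ept\dt\ph_\ept$ into $\tau\norma{\dt\ph_\ept}^2$ (as $\tau>0$ is fixed), I get — thanks to the uniform bound on the initial augmented energy granted by \eqref{bound_init_eps0} — that $\eps^{1/2}\mu_\ept$ and $\s_\ept$ are bounded in $\L\infty H$ and $t\mapsto\mathcal E_{nonloc}(\ph_\ept(t))$ is bounded in $L^\infty(0,T)$, hence $\ph_\ept$ is bounded in $\L\infty{L^4(\Omega)}$ and $F'(\ph_\ept)$ in $L^\infty(0,T;L^1(\Omega))$, while $\mu_\ept$ is bounded in $\L2 V$ (its mean controlled via the integrated \eqref{state:1}--\eqref{state:2}) and $\dt\ph_\ept$ in $\L2 H$. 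A further test of \eqref{state:2} by $-\Delta\ph_\ept$, absorbing $\int(a+F''(\ph_\ept))|\nabla\ph_\ept|^2$ on the left via $a_*+F''\ge C_0$ (assumption \ref{ass:5}) and using the uniform $V$-bound on $\ph_{0,\ept}$, then gives $\ph_\ept$ bounded in $\L\infty V$. Finally the comparison principle for \eqref{state:3} (with $0\le\s_S\le1$, $0\le\s_{0,\ept}\le1$) yields $0\le\s_\ept\le1$, and comparison in the equation gives $\dt\s_\ept$ bounded in $\L2{V^*}$.

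With these bounds, weak-$*$ compactness produces a candidate limit $(\ph_\tau,\mu_\tau,\s_\tau)$ in the asserted spaces (in particular $\nabla\mu_\ept\wto\nabla\mu_\tau$ in $L^2(Q)$ and $0\le\s_\tau\le1$), while the Aubin--Lions--Simon lemma gives $\ph_\ept\to\ph_\tau$ strongly in $\C0 H$ and $\s_\ept\to\s_\tau$ strongly in $\L2 H\cap\C0{V^*}$, hence — using $0\le\s_\ept\le1$ — in every $L^p(Q)$ with $p<\infty$. The controlled growth of $f,f',F',F''$ under \ref{ass:8} then lets me pass to the limit in $(\P_\ept\s_\ept-\A)f(\ph_\ept)$, $\CC_\ept\s_\ept f(\ph_\ept)$, $\chi_\ept\s_\ept$ and $F'(\ph_\ept)$, whereas $J*\ph_\ept\to J*\ph_\tau$ by continuity of the convolution. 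Since $\norma{\mu_\ept}_{\L2 V}$ is already bounded, $\eps\mu_\ept\to0$ in $\L2 V$, and $\eps\mu_\ept\to0$ in $\C0 H$ as $\norma{\eps\mu_\ept}_{\C0 H}\le\eps^{1/2}\norma{\eps^{1/2}\mu_\ept}_{\C0 H}$; hence $\dt(\eps\mu_\ept+\ph_\ept)\to\dt\ph_\tau$ in $\mathcal D'(0,T;V^*)$, and the limiting weak formulation follows, with $\mu_\tau=\tau\dt\ph_\tau+a\ph_\tau-J*\ph_\tau+F'(\ph_\tau)-\chi_\tau\s_\tau$ a.e. in $Q$ and the initial conditions identified via \eqref{conv_init_eps0}. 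Uniqueness of $(\ph_\tau,\mu_\tau,\s_\tau)$ I would obtain by a contraction estimate on the difference of two solutions — testing the $\ph$-equation and the $\mu$-relation appropriately, exploiting the viscous term $\tau\dt\ph$ ($\tau>0$ fixed) and the growth bound \ref{ass:8}, and invoking Gronwall's lemma in dimension $d\le3$ — which makes the whole family, not just a subsequence, converge.

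The hard part is the strong convergence $\mu_\ept\to\mu_\tau$ in $\L2 V$ of \eqref{epstozero:2}: the uniform estimates give $\nabla\mu_\ept$ only weakly convergent and no control on $\dt\mu_\ept$. I would close this by an energy-convergence (lower-semicontinuity squeeze) argument. Every term of the integrated energy balance converges as $\eps\to0$: the reaction integrals by the strong convergences above; the final-time energy $\mathcal E_{nonloc}(\ph_\ept(T))+\tfrac12\norma{\s_\ept(T)}^2$ because $\ph_\ept(T)\wto\ph_\tau(T)$ in $V$, hence strongly in $L^4$ for $d\le3$, together with the $L^4$-continuity of $F$; and the initial augmented energy, whose $\eps$-part vanishes since the $\eps^{1/4}$-weights in \eqref{bound_init_eps0} give $\eps\norma{\mu_{0,\ept}}^2\le\eps\norma{\mu_{0,\ept}}_V^2=\eps^{1/2}\bigl(\eps^{1/4}\norma{\mu_{0,\ept}}_V\bigr)^2\to0$, the rest converging by \eqref{conv_init_eps0}. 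Comparing with the energy identity for the limit triplet (an equality, since $\tau>0$ preserves the viscous regularization) and using weak lower semicontinuity of the remaining terms forces $\int_0^T\norma{\nabla\mu_\ept}^2\to\int_0^T\norma{\nabla\mu_\tau}^2$, $\tau\int_0^T\norma{\dt\ph_\ept}^2\to\tau\int_0^T\norma{\dt\ph_\tau}^2$ and the analogue for $\nabla\s_\ept$, which upgrade to strong $L^2$ convergence. Strong convergence of $\mu_\ept$ in $\L2 V$ then follows from \eqref{state:2} (its mean converging thanks to strong $\dt\ph_\ept\to\dt\ph_\tau$ in $\L2 H$ and $F'(\ph_\ept)\to F'(\ph_\tau)$ in $L^1(Q)$), and \eqref{epstozero:strong:2} follows analogously, its $\L\infty H$ component from the pointwise-in-time energy identity for $\s_\ept$ combined with the $\C0{V^*}$ convergence, while \eqref{epstozero:strong:1} is already contained in the Aubin--Lions step.
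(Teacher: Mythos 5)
You should note first that the paper does not prove Theorem~\ref{THM:PREV:ASY:epszero} at all: it is explicitly recalled as a known result from the companion work \cite{SS} (``Let us recollect here the known results established in \cite{SS}\dots''), so there is no in-paper argument to compare yours against line by line. Judged on its own, your reconstruction follows the standard scheme one would expect in \cite{SS} --- uniform energy estimates, Aubin--Lions--Simon compactness, identification of the limit, uniqueness of the limit problem to upgrade to full-family convergence, and a lower-semicontinuity/energy-squeeze argument for the strong convergences --- and I do not see a fatal gap. Your observation that the $\eps^{1/4}$-weight in \eqref{bound_init_eps0} makes the initial kinetic term $\eps\norma{\mu_{0,\ept}}^2\leq\eps^{1/2}\bigl(\eps^{1/4}\norma{\mu_{0,\ept}}_V\bigr)^2$ vanish (the $\eps^{1/2}$-weight alone would only give boundedness) is exactly the point of that hypothesis, and the energy-convergence route to \eqref{epstozero:2} is the natural way to obtain strong $\L2V$ convergence of $\mu_\ept$ given that no bound on $\dt\mu_\ept$ is available.

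Three places where your sketch is thinner than a complete proof and deserve explicit care. First, in the basic estimate the term $\int_{Q_t}(\P_\ept\s_\ept-\A)f(\ph_\ept)\mu_\ept$ requires the full $L^2$-norm of $\mu_\ept$, whose mean must be extracted from \eqref{state:2}; this involves bounding $\norma{F'(\ph_\ept)}_{L^1(\Omega)}$ by the energy already under control (via \ref{ass:5} and \ref{ass:8}), and the order in which these bounds are closed must avoid circularity. Second, the energy identity for the limit triplet must be an \emph{equality}: this needs a chain rule for $t\mapsto\int_\Omega F(\ph_\tau(t))$ and the admissibility of $\mu_\tau\in\L2V$ as a test function against $\dt\ph_\tau\in\L2H$, which hold here precisely because $\tau>0$, but should be stated. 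Third, the $\L\infty H$ part of \eqref{epstozero:strong:2} does not follow from pointwise-in-time norm convergence alone; one needs equicontinuity (uniform-in-time convergence of the integrated energy balance for $\s_\ept$ together with $\C0{\Vp}$ convergence and $\s_\tau\in\C0H$) to make the convergence uniform on $[0,T]$. None of these is an obstruction, but each is where the actual work in \cite{SS} lies.
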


In a similar fashion we have the following result.
\begin{theorem}[Asymptotics: $\tau\to0$]
\label{THM:PREV:ASY:tauzero}   
Assume \ref{ass:1}--\ref{ass:7}, let $\eps\in(0,\eps_0)$, and suppose that
   \begin{align}
   \label{coeff_tau0}
   &(\P_\eps,\chi_\eps,\eta_\eps,\CC_\eps)\in\Uad\,, \qquad
   0\leq \chi_\eps < \sqrt{c_a}\,, \qquad
   (\chi_\eps+\eta_\eps+4c_a\chi_\eps)^2<8c_aC_0+4\chi_\eps\eta_\eps\,,\\
   \label{init_tau0}
  &\varphi_{0,\eps},\mu_{0,\eps},\sigma_{0,\eps} \in H\,,\qquad F(\varphi_{0,\eps})\in L^1(\Omega)\,.
  \end{align}
  For all $\tau\in(0,\tau_0)$, let the coefficients
  $(\P_\ept,\chi_\ept, \eta_\ept, \CC_\ept)$ and
  the initial data $(\varphi_{0,\eps\tau}, \mu_{0,\eps\tau}, \sigma_{0,\eps\tau})$
  satisfy \eqref{coeff_tau0}--\eqref{init_tau0} and \eqref{ass:in:data}.
 Suppose also that, as $\tau\to0$, it holds
  \begin{align}
  \non
  &(\P_\ept, \chi_\ept, \eta_\ept, \CC_\ept)\to (\P_\eps,\chi_\eps,\eta_\eps,\CC_\eps),\\
  \label{conv_init_tau0}
  &\varphi_{0,\ept}\to\varphi_{0,\eps}, \qquad
  \mu_{0,\ept}\to\mu_{0,\eps}, \qquad
  \sigma_{0,\ept}\to\sigma_{0,\eps} \quad\text{strongly in } H,
  \end{align}
  and that there exists a constant $M_0>0$, independent of $\tau$, such that,
  for all $\tau\in(0,\tau_0)$,
  \begin{equation}\label{bound_init_tau0}
  \tau^{1/2}\norma{\varphi_{0,\ept}}_V 
  + \norma{F(\varphi_{0,\ept})}_{L^1(\Omega)}\leq M_0.
  \end{equation}
  Denote by $(\ph_\ept,\mu_\ept, \sigma_\ept)$ 
  the respective unique strong solution to the system
  \statesys\ obtained from Theorem \ref{THM:WP:STRONG}
  with respect to the coefficients $(\P_\ept,\chi_\ept, \eta_\ept, \CC_\ept)$ and
  the initial data $(\varphi_{0,\ept}, \mu_{0,\ept}, \sigma_{0,\ept})$.
  Then, there exists a triplet
  $(\ph_\eps,\mu_\eps, \sigma_\eps)$, with
  \begin{align*}
	&\ph_\eps,\mu_\eps \in L^\infty(0,T; H) \cap L^2(0,T; V)\,,
	\qquad
  	\eps\mu_\eps + \varphi_\eps \in H^1(0,T; V^*)\cap \L2 V\,, \\
  	&\sigma_\eps \in \H1 {V^*} \cap \L2 V\,,
\end{align*}
  such that 
  \begin{align*}
  & \<\partial_t(\eps\mu_\eps+\varphi_\eps), v>
   + \iO\nabla  \mu_\eps \cdot \nabla v
  = \iO(\P_\eps\sigma_\eps - \A)f(\varphi_\eps)v\,, \\
  & \<\partial_t\sigma_\eps,w>
  + \iO \nabla \sigma_\eps \cdot \nabla w
  + \B \iO (\sigma_\eps-\sigma_S) w
  + \CC_\eps\iO \sigma_\eps f(\varphi_\eps)w
  = \eta_\eps\int_\Omega\nabla\varphi_\eps\cdot\nabla w\,,
\end{align*}
for every $v,w\in V$, almost everywhere in $(0,T)$, and
\begin{align*}
    \mu_\eps=a\varphi_\eps - J*\varphi_\eps 
    + F'(\varphi_\eps)
    - \chi_\eps\sigma_\eps \qquad&\text{a.e.~in } Q\,, \\
    \varphi_\eps(0)=\varphi_{0,\eps}\,, \quad
    \sigma_\eps(0)=\sigma_{0,\eps} \qquad&\text{a.e.~in } \Omega\,.
\end{align*}
  Moreover, as $\tau\to0$, along a non-relabelled subsequence it holds that 
  \begin{align}
  	\label{tautozero:1}
  	\ph_\ept  &\to\ph_\eps&&\hbox{weakly-$^*$ in } L^\infty(0,T; H) \cap \L2 V\,,\\
  	\label{tautozero:2}
  	\mu_\ept  &\to\mu_\eps &&\hbox{weakly-$^*$ in } L^\infty(0,T; H) \cap \L2 V\,,\\
	\label{tautozero:3}
	\eps\mu_\ept+\varphi_{\ept}&\to\eps\mu_\eps+\varphi_\eps
	&&\text{weakly in } H^1(0,T; V^*)\cap L^2(0,T; V)\,,\\
  	\label{tautozero:4}
  	\sigma_\ept  &\to \sigma_\eps &&\hbox{weakly in } \H1 {V^*}  \cap \L2 V\,,\\
  	\label{tautozero:6}
  	\tau\varphi_\ept&\to 0 &&\text{strongly in } H^1(0,T; H)\cap L^\infty(0,T; V)\,,
  \end{align}
  \Accorpa\tautozero {tautozero:1} {tautozero:6}
 and
  \begin{align}
  \label{tautozero:strong:1}
  \ph_\ept&\to\ph_\eps \quad\text{strongly in } L^2(0,T; H)\,, \qquad
  \mu_\ept\to\mu_\eps\quad\text{strongly in } L^2(0,T; H)\,,\\
  \label{tautozero:strong:2}
  \sigma_\ept&\to\sigma_\eps \quad\text{strongly in } C^0([0,T]; V^*)\cap L^2(0,T; H)\,.
  \end{align}
  Furthermore, if also  
  \begin{align*}
  \eta_\ept=0\,, \qquad
  0 \leq \s_{0,\ept}(\x) \leq 1 \ \ \hbox{for $a.e. \, \, \x \in \Omega$} \qquad\forall\,\tau\in(0,\tau_0)\,,
  \end{align*}
  then the solution $(\ph_\eps,\mu_\eps, \sigma_\eps)$
  of the system \statesys\ with $\tau=0$ is unique, 
  all the convergences hold along the entire sequence $\tau\to0$, and
  \begin{align*}
  0 \leq \sigma_\eps (\x,t) \leq 1 \quad
  \hbox{for $a.e. \, \,  \x \in \Omega, \quad\forall\, t \in [0,T]$}\,.
\end{align*}
\end{theorem}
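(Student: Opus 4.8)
The statement is a vanishing-viscosity result with $\eps\in(0,\eps_0)$ fixed and $\tau\downarrow0$, so the plan is the classical one. First I would revisit the a priori estimates underlying Theorem~\ref{THM:WP:STRONG} and show that, under the strengthened conditions \eqref{coeff_tau0} and \eqref{bound_init_tau0}, they are uniform in $\tau\in(0,\tau_0)$. Then, by Banach--Alaoglu, I would extract a subsequence realizing the weak convergences \eqref{tautozero:1}--\eqref{tautozero:6}, upgrade it to the strong convergences \eqref{tautozero:strong:1}--\eqref{tautozero:strong:2} needed to handle the nonlinear terms, and pass to the limit in the equations. Finally, under the additional hypotheses $\eta_\ept=0$, $0\le\sigma_{0,\ept}\le1$, I would prove $0\le\sigma_\eps\le1$ by a truncation argument and the uniqueness of the limit triplet by a continuous-dependence estimate for the $\tau=0$ system, which forces every subsequence to share the same limit, hence convergence of the whole sequence.

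\emph{Uniform estimates.} The basic estimate comes, as in the well-posedness proof, from testing \eqref{state:1} by $\mu_\ept$, \eqref{state:2} by $\dt\varphi_\ept$ and \eqref{state:3} by $\sigma_\ept$, summing, and integrating in time: this produces the time derivative of the nonlocal Ginzburg--Landau energy $\tfrac14\int_{\Omega\times\Omega}J(\x-\y)|\varphi_\ept(\x)-\varphi_\ept(\y)|^2+\int_\Omega F(\varphi_\ept)+\tfrac\eps2\|\mu_\ept\|^2+\tfrac12\|\sigma_\ept\|^2$, the dissipation $\|\nabla\mu_\ept\|^2+\tau\|\dt\varphi_\ept\|^2+\|\nabla\sigma_\ept\|^2$, and the chemotaxis and active-transport cross terms. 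The chemotaxis term $\chi_\ept\int_\Omega\sigma_\ept\dt\varphi_\ept$ is rewritten via the $\sigma$-equation so as to avoid any factor $\tau^{-1}$, while the cross terms involving $\nabla\varphi_\ept$ are controlled by the auxiliary estimate obtained from the spatial gradient of \eqref{state:2} (multiplied by $\nabla\varphi_\ept$ and using $a_*+F''\ge C_0$), which yields at once a uniform bound $\|\varphi_\ept\|_{L^2(0,T;V)}\le C$ and the degenerate bound $\tau^{1/2}\|\varphi_\ept\|_{L^\infty(0,T;V)}\le C$ (here $\tau\|\nabla\varphi_{0,\ept}\|^2$ is bounded through \eqref{bound_init_tau0}). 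The algebraic inequalities in \eqref{coeff_tau0} are exactly those making this coupled system of Young-type balances close independently of $\tau$; together with a standard treatment of the source term and of the spatial mean of $\mu_\ept$ (using the boundedness of $f$), Gronwall's lemma gives $\tau$-uniform bounds for $\varphi_\ept,\mu_\ept,\sigma_\ept$ in $L^\infty(0,T;H)\cap L^2(0,T;V)$ and the degenerate bounds $\tau^{1/2}\|\dt\varphi_\ept\|_{L^2(Q)}\le C$ and $\tau^{1/2}\|\varphi_\ept\|_{L^\infty(0,T;V)}\le C$; comparison in \eqref{state:1} and \eqref{state:3} then gives $\dt(\eps\mu_\ept+\varphi_\ept)$ and $\dt\sigma_\ept$ bounded in $L^2(0,T;V^*)$.

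\emph{Compactness and limit.} Banach--Alaoglu produces a subsequence realizing \eqref{tautozero:1}--\eqref{tautozero:6}, the convergence $\tau\varphi_\ept\to0$ in $H^1(0,T;H)\cap L^\infty(0,T;V)$ being immediate from the degenerate bounds. Aubin--Lions--Simon applied to $\eps\mu_\ept+\varphi_\ept$ and to $\sigma_\ept$, each bounded in $H^1(0,T;V^*)\cap L^2(0,T;V)$, gives strong convergence in $C^0([0,T];V^*)\cap L^2(0,T;H)$, hence \eqref{tautozero:strong:2}. For \eqref{tautozero:strong:1}, note from \eqref{state:2} that
\[
\varphi_\ept+\eps\bigl(a\varphi_\ept-J*\varphi_\ept+F'(\varphi_\ept)\bigr)=(\eps\mu_\ept+\varphi_\ept)+\eps\chi_\ept\sigma_\ept-\eps\tau\dt\varphi_\ept,
\]
whose right-hand side converges strongly in $L^2(Q)$ since $\eps\tau\dt\varphi_\ept=\eps\tau^{1/2}(\tau^{1/2}\dt\varphi_\ept)\to0$. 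Because $\eps<\eps_0$, assumption \ref{ass:6} guarantees that $\varphi\mapsto\varphi+\eps(a\varphi-J*\varphi+F'(\varphi))$ is strongly monotone on $L^2(\Omega)$ — the identity dominates the possibly negative contribution of $a-a_*+F''$ and of the convolution — hence invertible with Lipschitz-continuous inverse; applying this inverse to the strongly convergent right-hand side gives $\varphi_\ept\to\varphi_\eps$ strongly in $L^2(Q)$. Then $\mu_\ept=\eps^{-1}\bigl((\eps\mu_\ept+\varphi_\ept)-\varphi_\ept\bigr)\to\mu_\eps$ strongly in $L^2(Q)$, and from \eqref{state:2} also $F'(\varphi_\ept)$ converges strongly in $L^2(Q)$, the limit being $F'(\varphi_\eps)$ by a.e.\ convergence along a further subsequence (which also forces $\varphi_\eps\in(-\ell,\ell)$ a.e.\ and $F'(\varphi_\eps)\in L^2(Q)$). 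With all these convergences at hand, passing to the limit in the weak forms of \eqref{state:1} and \eqref{state:3} and in \eqref{state:2} is routine — the viscous term disappears — and the initial conditions pass through the $C^0([0,T];V^*)$ convergences; lower semicontinuity of norms yields the stated regularity of $(\varphi_\eps,\mu_\eps,\sigma_\eps)$.

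\emph{Final part and main obstacle.} Under the extra assumptions $\eta_\ept=0$ and $0\le\sigma_{0,\ept}\le1$, the limit $\sigma$-equation carries no $\nabla\varphi_\eps$ term, and testing it by $-(\sigma_\eps)^-$ and by $(\sigma_\eps-1)^+$, using $0\le\sigma_S\le1$, $f\ge0$ and Gronwall, yields $0\le\sigma_\eps\le1$ a.e.\ in $Q$. Uniqueness of $(\varphi_\eps,\mu_\eps,\sigma_\eps)$ then follows from a continuous-dependence argument for the $\tau=0$ system in the spirit of the well-posedness analysis — based on the coercivity $a_*+F''\ge C_0$, the Lipschitz continuity of $f$, the $L^\infty$-bounds on the nutrient, and a Gronwall argument — and uniqueness of the limit promotes all the subsequential convergences to the whole sequence $\tau\to0$ by the usual contradiction argument. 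The principal obstacle is the uniform-in-$\tau$ a priori estimates: for $\tau>0$ the viscous term $\tau\dt\varphi$ is precisely the mechanism controlling $\dt\varphi$ and providing compactness of $\varphi$, and it degenerates in the limit; one must verify that the residual dissipation ($\|\nabla\mu\|^2$, the parabolic regularity of $\eps\mu+\varphi$, $\|\nabla\sigma\|^2$), combined with the smallness/coercivity conditions \eqref{coeff_tau0} on $\chi_\eps$ and $\eta_\eps$ and with \ref{ass:5}--\ref{ass:6}, still closes the estimates — the careful bookkeeping of the cross terms and of the $\tau$-weighted bounds being where the real work lies. A secondary difficulty is identifying the limit of the possibly singular $F'(\varphi_\ept)$, handled here by exploiting the small-$\eps$ strong monotonicity to obtain \emph{strong} (hence a.e.) convergence of $\varphi_\ept$.
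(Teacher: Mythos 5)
This theorem is not proved in the paper at all: it is recalled verbatim from \cite{SS} (the paper explicitly introduces it with ``Let us recollect here the known results established in \cite{SS}''), so there is no in-paper proof against which to check your argument line by line. Judged on its own terms, your outline is the natural and, as far as I can tell, correct route: uniform-in-$\tau$ energy estimates whose closure is exactly what the algebraic conditions in \eqref{coeff_tau0} and the bound \eqref{bound_init_tau0} are designed for, weak compactness giving \eqref{tautozero:1}--\eqref{tautozero:6}, Aubin--Lions--Simon on $\eps\mu_\ept+\varphi_\ept$ and $\sigma_\ept$ for \eqref{tautozero:strong:2}, and --- the one genuinely clever step --- strong convergence of $\varphi_\ept$ via the strong monotonicity of $\varphi\mapsto\varphi+\eps(a\varphi-J*\varphi+F'(\varphi))$, which indeed holds for $\eps<\eps_0$ by \ref{ass:5}--\ref{ass:6} since $1+\eps(a+F'')-\eps a^*\geq 1-\eps(a^*-C_0)>0$. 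The truncation argument for $0\leq\sigma_\eps\leq1$ and the continuous-dependence/uniqueness step under $\eta_\ept=0$ are also standard and correctly placed.

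Two places where your sketch leaves real work unaccounted for. First, the uniform $L^\infty(0,T;H)$ bound on $\varphi_\ept$ claimed in \eqref{tautozero:1}: for singular potentials ($\ell<\infty$) it is free from $F(\varphi_\ept)\in L^1$, but for $\ell=+\infty$ assumption \ref{ass:4} alone gives only roughly linear growth of $F$, and the nonlocal energy $\tfrac14\iint J|\varphi(\x)-\varphi(\y)|^2$ does not control $\|\varphi\|_H$ (it vanishes on constants); one must extract the $H$-bound from the combination of the mean-value control coming from integrating \eqref{state:1} and the $L^2(0,T;V)$ bound, or impose extra coercivity --- this deserves a sentence. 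Second, ``rewritten via the $\sigma$-equation so as to avoid any factor $\tau^{-1}$'' compresses the entire bookkeeping that the inequalities $(\chi_\eps+\eta_\eps+4c_a\chi_\eps)^2<8c_aC_0+4\chi_\eps\eta_\eps$ exist to make work; you do flag this honestly as the main obstacle, but an actual proof would have to display the weighted Young inequalities and verify the constants. Neither point is a wrong turn; they are the places where a referee of the full write-up would look first.
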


Lastly, the joint asymptotics is given in the following statement.
\begin{theorem}[Asymptotics: $\ept\to 0$]
\label{THM:PREV:ASY:epstautozero}   
Assume \ref{ass:1}--\ref{ass:8} and suppose that
\begin{align}
  \label{coeff_ept0}
  &(\P,\chi,\eta, \CC)\in\Uad\,, \qquad \eta=0\,,\qquad
   0\leq \chi < \sqrt{c_a}\,, \qquad
   (\chi+\eta+4c_a\chi)^2<8c_aC_0+4\chi\eta\,,\\
  \label{init_ept0}
  &\varphi_0, \sigma_0 \in H\,, \qquad F(\varphi_0)\in L^1(\Omega)\,.
\end{align}
 For every $\eps\in(0,\eps_0)$ and $\tau\in(0,\tau_0)$, let
 the coefficients $(\P_\ept, \chi_\ept, \chi_\ept, \CC_\ept)$
 and the initial data $(\varphi_{0,\ept}, \mu_{0,\ept}, \sigma_{0,\ept})$
 satisfy \eqref{coeff_ept0}--\eqref{init_ept0} and \eqref{ass:in:data}.
 Suppose also that, as $(\eps,\tau)\to(0,0)$, it holds
  \begin{align}
  \non
  &(\P_\ept, \chi_\ept, \chi_\ept, \CC_\ept)\to(\P,\chi,\eta, \CC),\\
  &\label{conv_init_ept0}
  \varphi_{0,\ept}\to\varphi_0 \quad\text{strongly in } H\,, \qquad
  \sigma_{0,\ept}\to\sigma_0 \quad\text{strongly in } H\,,
  \end{align}
  and that there exists a constant $M_0>0$, independent of $(\eps,\tau)$, such that
  \begin{align}
  \label{bound_init_ept0}
  &\tau^{1/2}\norma{\varphi_{0,\ept}}_V +
  \eps^{1/2}\norma{\mu_{0,\ept}}
  + \norma{F(\varphi_{0,\ept})}_{L^1(\Omega)}\leq M_0
  \qquad\forall\,(\eps,\tau)\in(0,\eps_0)\times(0,\tau_0).
  \end{align}
  Denote by $(\ph_\ept,\mu_\ept, \sigma_\ept)$ 
  the respective unique strong solution to the system
  \statesys\ obtained from Theorem \ref{THM:WP:STRONG},
  with respect to the coefficients $(\P_\ept, \chi_\ept, \eta_\ept, \CC_\ept)$
  and the initial data $(\varphi_{0,\eps\tau},\mu_{0,\eps\tau},\sigma_{0,\eps\tau})$.
  Then, there exists a unique triplet $(\varphi,\mu,\sigma)$, with 
  \begin{align*}
	&\ph \in H^1(0,T; V^*) \cap L^2(0,T; V)\,,\qquad
         \mu \in L^2(0,T; V)\,, \\
  	&\sigma \in \H1 {V^*} \cap \L2 V\cap L^\infty(Q)\,, \qquad
	0\leq\sigma(\x,t)\leq1 \quad\text{for a.e.~}\x\in\Omega\,,\;\forall\,t\in[0,T]\,,
\end{align*}
  such that
\begin{align*}
  & \<\partial_t\varphi, v>
   + \iO\nabla  \mu \cdot \nabla v
  = \iO(\P\sigma - \A)f(\varphi)v\,, \\
  & \<\partial_t\sigma,w>
  + \iO \nabla \sigma\cdot \nabla w
  + \B \iO (\sigma-\sigma_S)w
  + \CC\iO \sigma f(\varphi)w
  = 0
\end{align*}
  for every $v,w\in V$, almost everywhere in $(0,T)$, and
  \begin{align*}
  \mu =a\varphi - J*\varphi +F'(\varphi) - \chi\sigma 
  \qquad&\text{a.e.~in } Q\,, \\
  \varphi(0)=\varphi_{0}\,, \quad
    \sigma(0)=\sigma_{0} \qquad&\text{a.e.~in } \Omega\,.
  \end{align*}
  Moreover, as $(\eps,\tau)\to(0,0)$ it holds that 
  \begin{align}
  	\label{epstautozero:1}
  	\ph_\ept  &\to \ph &&\hbox{weakly-$^*$ in } L^\infty(0,T; H) \cap \L2 V\,,\\
  	\label{epstautozero:2}
	\mu_\ept  &\to \mu &&\hbox{weakly in } \L2 V\,,\\
	\label{epstautozero:3}
	\eps\mu_\ept+\varphi_{\ept} &\to \varphi
	&&\text{weakly in } H^1(0,T; V^*)\cap L^2(0,T; V)\,,\\
  	\label{epstautozero:4}
  	\sigma_\ept  &\to \sigma &&\hbox{weakly-$^*$ in } \H1 {V^*}  \cap \L2 V \cap L^\infty(Q)\,,\\
	\eps^{1/2}\mu_\ept &\to 0 &&\text{weakly-$^*$ in } L^\infty(0,T; H)\,,\\
	\label{epstautozero:5}
	\eps\mu_\ept &\to 0 &&\text{strongly in } C^0([0,T]; H) \cap \L2 V\,,\\
  	\label{epstautozero:6}
  	\tau\varphi_\ept &\to 0&&\text{strongly in } H^1(0,T; H)\cap L^\infty(0,T; V)\,,
  \end{align}
  \Accorpa\epstautozero {epstautozero:1} {epstautozero:6}
  and
  \begin{align}
  \label{epstautozero:strong:1}
  \ph_\ept&\to \ph &&\text{strongly in } L^2(0,T; H)\,, 
  \\ 
  \sigma_\ept&\to \sigma && \text{strongly in } C^0([0,T]; V^*)\cap L^2(0,T; H)\,.
   \label{epstautozero:strong:2}
  \end{align}
\end{theorem}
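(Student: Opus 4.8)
\noindent
The plan is to run, for the joint limit $(\eps,\tau)\to(0,0)$, the same compactness-and-passage-to-the-limit scheme already used for Theorems~\ref{THM:PREV:ASY:epszero} and \ref{THM:PREV:ASY:tauzero}; the only genuinely new ingredient is a family of a~priori bounds uniform in $\eps$ \emph{and} $\tau$ at the same time. To get these I would test \eqref{state:1} by $\mu_\ept$, \eqref{state:2} by $\partial_t\varphi_\ept$ and \eqref{state:3} by $\sigma_\ept$, and add the three identities so that $\iO\mu_\ept\,\partial_t\varphi_\ept$ cancels. This produces the dissipation of $\tfrac{\eps}{2}\norma{\mu_\ept(t)}^2+\mathcal{E}_{nonloc}(\varphi_\ept(t))+\tfrac12\norma{\sigma_\ept(t)}^2$, controlled from below, modulo $\tau\norma{\partial_t\varphi_\ept}^2+\norma{\nabla\mu_\ept}^2+\norma{\nabla\sigma_\ept}^2$, by the coupling terms. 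The delicate point is that the chemotaxis contribution $-\iO\chi_\ept\sigma_\ept\,\partial_t\varphi_\ept$ and the active-transport term $\eta_\ept\iO\nabla\varphi_\ept\cdot\nabla\sigma_\ept$ are not sign-definite and, because $\tau\to0$, cannot be absorbed into $\tau\norma{\partial_t\varphi_\ept}^2$; they must be dominated through the smallness conditions \eqref{coeff_ept0} on $\chi$, the semiconvexity $a_*+F''\geq C_0$ of \ref{ass:5}, and the quartic coercivity $F(r)\geq c_F|r|^4-C_F$ of \ref{ass:8}. A Gronwall argument, exploiting the $(\eps,\tau)$-independent control \eqref{bound_init_ept0} of the data, then yields uniform bounds for $\varphi_\ept$ in $L^\infty(0,T;H)\cap L^4(Q)$, for $\eps^{1/2}\mu_\ept$ in $L^\infty(0,T;H)$, for $\tau^{1/2}\partial_t\varphi_\ept$ in $L^2(Q)$, for $\nabla\mu_\ept,\nabla\sigma_\ept$ in $L^2(Q)$, for $F(\varphi_\ept)$ in $L^\infty(0,T;L^1(\Omega))$, and for $\sigma_\ept$ in $L^2(0,T;V)\cap L^\infty(0,T;H)$. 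The remaining bounds come essentially for free: $\nabla\varphi_\ept$ in $L^2(Q)$ by differentiating \eqref{state:2} in space and using $a+F''(\varphi_\ept)\geq C_0$ together with the finiteness of $a^*,b^*$ in \ref{ass:5}; $F'(\varphi_\ept)$ in $L^2(Q)$ and the spatial mean of $\mu_\ept$, hence $\mu_\ept$ in $L^2(0,T;V)$, from \eqref{state:2} (recall $\iO(a\varphi-J*\varphi)=0$ by evenness of $J$); and $\partial_t(\eps\mu_\ept+\varphi_\ept),\partial_t\sigma_\ept$ in $L^2(0,T;V^*)$ from \eqref{state:1} and \eqref{state:3}.

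Next I would extract a subsequence along which $\varphi_\ept\wstarto\varphi$ in $L^\infty(0,T;H)\cap L^2(0,T;V)$, $\mu_\ept\wto\mu$ in $L^2(0,T;V)$, $\sigma_\ept\wto\sigma$ in $H^1(0,T;V^*)\cap L^2(0,T;V)$, and $\eps\mu_\ept+\varphi_\ept\wto\varphi$ in $H^1(0,T;V^*)\cap L^2(0,T;V)$; note that the uniform bound on $\eps^{1/2}\mu_\ept$ forces $\eps^{1/2}\mu_\ept\to0$, hence $\eps\mu_\ept\to0$, strongly in $C^0([0,T];H)\cap L^2(0,T;V)$, which already delivers the $\eps$-weighted convergences among \eqref{epstautozero:1}--\eqref{epstautozero:strong:2}. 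By Aubin--Lions--Simon, $\sigma_\ept\to\sigma$ and $\eps\mu_\ept+\varphi_\ept\to\varphi$ strongly in $C^0([0,T];V^*)\cap L^2(0,T;H)$; subtracting $\eps\mu_\ept\to0$ gives $\varphi_\ept\to\varphi$ strongly in $L^2(Q)$, and thus a.e.\ in $Q$ along a further subsequence.

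I would then pass to the limit in the variational form of \eqref{state:1}--\eqref{state:5}. The linear terms are immediate; for the nonlinear ones: $F'(\varphi_\ept)\to F'(\varphi)$ a.e.\ since $F\in C^4$, and with the uniform $L^2(Q)$ bound this identifies its weak $L^2(Q)$ limit as $F'(\varphi)$; $f(\varphi_\ept)\to f(\varphi)$ in every $L^p(Q)$ with $p<\infty$ by boundedness and continuity of $f$ plus a.e.\ convergence, so that, using $\sigma_\ept\to\sigma$ in $L^2(Q)$ and \eqref{conv_init_ept0}, one gets $(\P_\ept\sigma_\ept-\A)f(\varphi_\ept)\to(\P\sigma-\A)f(\varphi)$ and $\CC_\ept\sigma_\ept f(\varphi_\ept)\to\CC\sigma f(\varphi)$ in $L^1(Q)$; $a\varphi_\ept\to a\varphi$ and $J*\varphi_\ept\to J*\varphi$ in $L^2(Q)$ (the latter by continuity of the convolution on $H$); $\chi_\ept\sigma_\ept\to\chi\sigma$; $\tau\partial_t\varphi_\ept=\tau^{1/2}\bigl(\tau^{1/2}\partial_t\varphi_\ept\bigr)\to0$ in $L^2(Q)$; and $\eta_\ept\iO\nabla\varphi_\ept\cdot\nabla w\to0$ for every $w\in V$ because $\eta_\ept\to\eta=0$ while $\nabla\varphi_\ept$ stays bounded in $L^2(Q)$. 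This produces exactly the limiting system with the stated regularity, and the initial conditions survive because $\varphi_\ept\to\varphi$ and $\sigma_\ept\to\sigma$ in $C^0([0,T];V^*)$ give $\varphi(0)=\varphi_0$, $\sigma(0)=\sigma_0$ via \eqref{conv_init_ept0}.

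Finally, uniqueness of $(\varphi,\mu,\sigma)$ is the standard continuous-dependence estimate for nonlocal Cahn--Hilliard/reaction--diffusion systems: for two solutions with identical data, write the difference of the $\varphi$-equations, split $\varphi_1-\varphi_2$ into its spatial mean and zero-mean part (testing the latter with its inverse Neumann Laplacian and treating the mean through the equation itself), test the difference of the $\sigma$-equations with $\sigma_1-\sigma_2$, add, and close by Gronwall using the Lipschitz continuity of $f$ and $a_*+F''\geq C_0$. Uniqueness upgrades the subsequential convergence to convergence of the whole family, whence all of \eqref{epstautozero:1}--\eqref{epstautozero:strong:2}; and the bound $0\leq\sigma\leq1$ a.e.\ follows either by passing to the limit in the analogous estimate at $\eps,\tau>0$ (available when $\eta_\ept=0$ and $0\leq\sigma_{0,\ept}\leq1$, as in Theorem~\ref{THM:PREV:ASY:tauzero}) or directly on the limit equation by the weak maximum principle, testing with $(-\sigma)^+$ and $(\sigma-1)^+$ and using $0\leq\sigma_S\leq1$ from \ref{ass:3}, $\A,\B,\CC\geq0$ and $f\geq0$. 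I expect the only real obstacle to be the uniform a~priori estimates just described: securing the dissipative structure with constants independent of both relaxation parameters — i.e.\ absorbing the chemotaxis and active-transport couplings via the structural conditions \eqref{coeff_ept0} and the semiconvexity of $F$ rather than via the vanishing viscosity $\tau$ — after which the compactness, the limit passage, and the uniqueness argument are all routine.
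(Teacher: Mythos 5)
The paper contains no proof of this statement: Theorem~\ref{THM:PREV:ASY:epstautozero} is recalled verbatim from \cite{SS} as a known result about the state system, so there is no in-paper argument to compare yours against. Judged on its own terms, your proposal reconstructs what is essentially the standard route one would expect from \cite{SS}: the combined energy identity obtained by testing \eqref{state:1} by $\mu_\ept$, \eqref{state:2} by $\partial_t\varphi_\ept$, \eqref{state:3} by $\sigma_\ept$; uniform bounds exploiting \eqref{bound_init_ept0}; Aubin--Lions compactness on $\eps\mu_\ept+\varphi_\ept$ and $\sigma_\ept$ followed by subtraction of $\eps\mu_\ept\to0$ to get strong $L^2(Q)$ convergence of $\varphi_\ept$; identification of the nonlinearities via a.e.\ convergence; and uniqueness by a dual-norm Gronwall estimate. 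This is the right architecture and the convergences \eqref{epstautozero:1}--\eqref{epstautozero:strong:2} all follow from it.

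Two points where the sketch stays at the level of intention rather than proof. First, the crux you yourself identify --- absorbing $-\chi_\ept\iO\sigma_\ept\,\partial_t\varphi_\ept$ without the help of $\tau\norma{\partial_t\varphi_\ept}^2$ --- is not actually carried out; the standard device is to rewrite this term as $-\chi_\ept\frac{d}{dt}\iO\sigma_\ept\varphi_\ept+\chi_\ept\iO\varphi_\ept\,\partial_t\sigma_\ept$ (or, equivalently, to work with the total free energy of the coupled system including the cross term $-\chi\iO\sigma\varphi$) and then to use the $\sigma$-equation together with the smallness conditions $\chi<\sqrt{c_a}$ and $(\chi+\eta+4c_a\chi)^2<8c_aC_0+4\chi\eta$ to keep the perturbed energy coercive. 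Your plan is compatible with this but a referee would want to see the computation, since the admissible range of $\chi$ is exactly what these conditions encode. Second, under \ref{ass:8} one has $|F'(r)|\leq C(1+|r|^3)$, and the available bounds $\varphi_\ept\in L^\infty(0,T;L^4(\Omega))\cap L^2(0,T;L^6(\Omega))$ give $F'(\varphi_\ept)$ bounded in $L^{4/3}(Q)$ (and slightly better by interpolation), not in $L^2(Q)$ as you claim; this is harmless --- a.e.\ convergence plus a uniform $L^p$ bound with $p>1$ still identifies the weak limit as $F'(\varphi)$ and yields the pointwise identity for $\mu$ --- but the exponent should be corrected. Neither issue is a conceptual gap; both are places where the sketch would need to be fleshed out to become a complete proof.
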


\section{The optimization problem $(CP)_\ept$}
\label{sec:CP_ept}
This section is focused on the analysis of the optimal control problem 
$(CP)_\ept$, when $\ept>0$ are given.
More specifically, we show existence of optimal controls and 
necessary first-order condition for optimality.
Throughout the whole Section~\ref{sec:CP_ept}, 
we work with $\eps\in(0,\eps_0)$ and $\tau\in(0,\tau_0)$ fixed.
For this reason, we omit for brevity notation for the dependence on $\ept$
for the variables in play.

\subsection{Existence of a minimizer}
The first problem that 
we address concerns the existence of a minimizer of the optimal control problem $(CP)_\ept$,
with $\eps,\tau>0$ being fixed.
Its proof is rather standard and follows as a consequence of the direct method of calculus of variations.

\begin{theorem}[Existence of a minimizer]
\label{THM:EXISTECE:MIN}
Suppose that \ref{ass:1}--\ref{ass:7} and \ref{ass:op:1}--\ref{ass:op:3} are fulfilled.
Then, the optimization problem $(CP)_\ept$ admits a minimizer.
\end{theorem}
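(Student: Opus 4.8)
The plan is to apply the direct method of the calculus of variations. First I would take a minimizing sequence $\{(\P_n,\chi_n,\eta_n,\CC_n)\}_n \subset \Uad$ for the reduced cost functional $\Jred(\P,\chi,\eta,\CC) := \J(\S(\P,\chi,\eta,\CC),\P,\chi,\eta,\CC)$, and denote by $(\ph_n,\mu_n,\s_n) = \S(\P_n,\chi_n,\eta_n,\CC_n)$ the corresponding states. Such a sequence exists since $\Jred$ is bounded below (it is nonnegative), and I would note that a minimizer exists for the control problem if and only if $\Jred$ attains its infimum over $\Uad$.

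Next I would extract convergent subsequences. Since $\Uad$ is a compact subset of $\erre^4$ by the observation following \eqref{Uad}, up to a subsequence (not relabelled) we have $(\P_n,\chi_n,\eta_n,\CC_n) \to (\ov\P,\ov\chi,\ov\eta,\ov\CC) \in \Uad$. By the uniform bound \eqref{estimate:strong} of Theorem~\ref{THM:WP:STRONG}, the states $(\ph_n,\mu_n,\s_n)$ are bounded in the regularity spaces \eqref{phi_reg2}--\eqref{mu_reg2} uniformly in $n$ (the constant $K$ there depends only on the structural data, the initial data, and on $\eps,\tau$, all fixed here). Hence, using standard weak-$*$ compactness in the reflexive (or dual-of-separable) spaces appearing in \eqref{phi_reg2}--\eqref{mu_reg2}, together with the Aubin--Lions--Simon compactness lemma, I can pass to a further subsequence so that $(\ph_n,\mu_n,\s_n) \to (\ov\ph,\ov\mu,\ov\s)$ weakly-$*$ in those spaces and strongly in, e.g., $C^0([0,T];H)$ for $\ph_n$ and in $L^2(0,T;V)\cap C^0([0,T];H)$ for $\mu_n,\s_n$; in particular $\ph_n \to \ov\ph$ strongly in $L^2(Q)$ and a.e.\ in $Q$, and likewise $\s_n$, $\mu_n$. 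The separation property \eqref{phi_sep} passes to the limit, keeping $\ov\ph$ uniformly bounded away from $\pm\ell$, so that $F'(\ph_n) \to F'(\ov\ph)$ strongly in $L^2(Q)$ by continuity of $F'$ on a compact subinterval; the convolution term $J*\ph_n \to J*\ov\ph$ by \ref{ass:5} and the strong convergence of $\ph_n$; and $f(\ph_n) \to f(\ov\ph)$ by Lipschitz continuity \ref{ass:2}. The products $\P_n\s_n f(\ph_n)$, $\chi_n\s_n$, $\CC_n\s_n f(\ph_n)$, and $\eta_n\Delta\ph_n$ then converge (weakly, in the case of the last one, which suffices since it appears linearly) to the corresponding limit products, because a convergent numerical sequence times a (weakly or strongly) convergent function sequence converges to the product. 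Therefore $(\ov\ph,\ov\mu,\ov\s)$ solves \statesys\ with controls $(\ov\P,\ov\chi,\ov\eta,\ov\CC)$, and by uniqueness in Theorem~\ref{THM:WP:STRONG} we get $(\ov\ph,\ov\mu,\ov\s) = \S(\ov\P,\ov\chi,\ov\eta,\ov\CC)$, so the limit pair is admissible.

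Finally I would invoke weak lower semicontinuity. The tracking terms $\tfrac{\bO}{2}\norma{\ph(T)-\phO}^2$ and $\tfrac{\bQ}{2}\norma{\ph-\phQ}^2$ are continuous along the strong convergence $\ph_n(T)\to\ov\ph(T)$ in $H$ (which follows from $\ph_n \to \ov\ph$ in $C^0([0,T];H)$) and $\ph_n \to \ov\ph$ in $L^2(Q)$; alternatively, they are weakly lower semicontinuous as norms. The four finite-dimensional terms $\tfrac{\aP}{2}|\P-\P_*|^2$, etc., are continuous in the controls. Hence
\[
\Jred(\ov\P,\ov\chi,\ov\eta,\ov\CC) \leq \liminf_{n\to\infty} \Jred(\P_n,\chi_n,\eta_n,\CC_n) = \inf_{\Uad}\Jred,
\]
so $(\ov\P,\ov\chi,\ov\eta,\ov\CC)$ is a minimizer, and with its state $(\ov\ph,\ov\mu,\ov\s)$ it solves $(CP)_\ept$. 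No real obstacle is expected here: the only point requiring a little care is checking that all nonlinear terms in the state system pass to the limit, which is where the separation estimate \eqref{phi_sep}, the Lipschitz/boundedness hypotheses on $f$ and $F'$, and the strong $L^2(Q)$ convergence of $\ph_n$ and $\s_n$ (from Aubin--Lions) are used; the compactness of $\Uad$ makes the control part entirely routine.
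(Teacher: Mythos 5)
Your proposal is correct and follows essentially the same route as the paper's proof: the direct method, compactness of $\Uad$ in $\erre^4$, the uniform bounds \eqref{estimate:strong} combined with Aubin--Lions--Simon compactness to pass to the limit in the state system, identification of the limit via the solution operator, and weak sequential lower semicontinuity of $\J$. Your treatment of the nonlinear terms is somewhat more detailed than the paper's (which simply calls the passage to the limit "standard"), but there is no substantive difference in approach.
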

\begin{proof}
Without loss of generality we assume that all the constants $\aP,\achi,\aeta,\aC$ are positive.
In fact, if this is not the case, we can consider the corresponding control $\P,\chi,\eta$ and/or $\CC$ as a prescribed constant, redefine $\Uad$ accordingly and argue in the same way.
To begin with, notice that the cost functional is nonnegative so that we can 
consider a minimising sequence of elements of $\Uad$.
Namely, we take the minimising sequence $\{(\P_n,\chi_n,\eta_n,\CC_n)\}_n \subset \Uad$ and the corresponding 
sequence of states $\{(\ph_n,\m_n,\s_n)\}_n:=\{\S(\P_n,\chi_n,\eta_n,\CC_n)\}_n$
 all related to the same initial data 
$(\ph_0,\mu_0,\s_0)$.
Namely, we have
\begin{align*}
	0 \leq \ov \lam &:=\inf \Big \{\J(\ph,\P,\chi,\eta,\CC) \big|\; (\P,\chi,\eta,\CC)\in \Uad, \, \ph=\S_1(\P,\chi,\eta,\CC) \Big \}\,,
\end{align*}
and
\[
  \J(\ph_n,\P_n,\chi_n,\eta_n,\CC_n)\searrow \ov\lam\,,
\]
where $\S_1$ denotes the first component of the solution operator $\S$.
Noting that the bounds \eqref{estimate:strong}
are uniform in $n$ {thanks to the structure of $\Uad$,}
invoking standard compactness results (cf., e.g., \cite[Sec.~8, Cor.~4]{Simon})
it is \sfw\ to obtain
the existence of limits $\bph,$ $\opt\in \Uad$, and a not relabelled subsequence such that, as $n\to\infty$,
\begin{align*}
	\ph_n & \to \bph \quad \hbox{weakly-$^*$ in $\W{1,\infty} V \cap \H1{\Hx2},$}
	\\ & \hspace{2cm}
	\hbox{and strongly in $\C0 {C^0(\ov{\Omega})}$},
	\\
	\m_n & \to \bm \quad \hbox{weakly-$^*$ in $\H1 H \cap \L \infty V 
	\cap \L2 W \cap L^\infty(Q),$}
	\\
	\s_n & \to \bs \quad \hbox{weakly-$^*$ in $\H1 H \cap \L \infty V 
	\cap \L2 W \cap L^\infty(Q),$}
	\\
	\P_n & \to \ov \P, \quad \chi_n \to \ov \chi, \quad 
	\eta_n \to \ov \eta, \quad \CC_n \to \ov \CC.
\end{align*}
It is then a standard matter to pass to the limit in the variational formulation of \statesys\ written for 
$\{(\ph_n,\m_n,\s_n)\}_n$ to deduce that $(\bph,\bm,\bs)= \S \opt$.
Lastly, the weak sequential lower semincontinuity of $\J$ entails that $\opt$ is 
a minimizer for $(CP)_\ept$ together with its corresponding state $(\bph,\bm,\bs)$.
\end{proof}

\subsection{Linearized system}
We study here the linearized system, which can be formally obtained 
by differentiating the state system \eqref{state:1}--\eqref{state:5}
with respect to the control in a certain direction.

First, we fix some preliminary notation: let $(\P,\chi,\eta,\CC)\in\Uad$ be fixed, set 
\[
  (\bph, \bm,\bs):=\S(\P,\chi,\eta,\CC)\,,
\] 
and consider an arbitrary increment 
\[
\bh:=(h_\P,h_{\chi},h_{\eta},h_\CC)\in\erre^4 \qquad\text{such that}\qquad
(\P,\chi,\eta,\CC)+\bh\in\Uad\,.
\]
The variables of the linearized system are denoted by $(\xi,\nu,\zeta)$:
of course, they depend on the increment $\bh$, but we avoid 
keeping track of this explicitly for brevity of notation.

The linearized system reads:
\begin{align}
	\label{lin:1}
	&\dt(\eps  \nu+ \xi)- \Delta \nu
	=
	(\P \bs-\A) f'(\bph)\xi
	+\P\z f(\bph)  
	+ h_\P \bs f(\bph)  
	&&\hbox{in $\, Q$},
	\\
	\label{lin:2}
	&\nu
	=
	\tau\dt \xi
	+a \xi -J * \xi
	+ F''(\bph)\xi
	- \ch \z
	- h_{\ch} \bs
 	&&\hbox{in $\,Q$},
	\\
	&\dt \z
	- \Delta \z
	+\B\z
	+ \CC ( \z f(\bph) + \bs f'(\bph)\xi )
	+ h_\CC \bs f(\bph)
	\label{lin:3}
	=-\Delta(\eta\xi + h_\eta\bph)
	&&\hbox{in $\,Q$},
	\\
	&\dn \nu=\dn(\eta\xi + h_\eta\bph)=\dn \z =0
 	&&\hbox{on $\,\Sigma$},
	\label{lin:4}
	\\ 
	&\nu(0)=\xi(0)=\z(0)=0
  	&&\hbox{in $\,\Omega.$}
	\label{lin:5}
\end{align}
\Accorpa\linsys {lin:1} {lin:5}

\begin{theorem}[Well-posedness of the linearized system]
\label{THM:LIN}
Assume {\bf A1--A7} and {\bf C1--C4}.
Then, the linearized system \linsys\ admits
a unique solution $(\xi,\nu,\z)$ satisfying 
\begin{align*}
	\xi \in \H1 H \cap \L\infty V,\qquad			
	\nu,\z \in H^1(0,T; V^*)\cap \L2 V .
\end{align*}
\end{theorem}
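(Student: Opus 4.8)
The plan is to solve the linearized system \linsys\ by a Faedo--Galerkin approximation in the three variables $(\xi,\nu,\z)$, derive uniform a priori estimates, and then pass to the limit; uniqueness follows from linearity together with the same estimates applied to the difference of two solutions. Since the system is linear with coefficients frozen at the reference state $(\bph,\bm,\bs)$, the coefficients $f'(\bph)$, $f''(\bph)$, $F''(\bph)$, $\bs$, $f(\bph)$ are all bounded in $L^\infty(Q)$: indeed $f\in C^2_b(\erre)$ by \ref{ass:op:4}, $\bs\in L^\infty(Q)$ and $\bph$ is separated by Theorem~\ref{THM:WP:STRONG} and Corollary~\ref{COR:SEP}, so $F''(\bph)\in L^\infty(Q)$. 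This is the structural fact that makes everything work and should be invoked at the outset.

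First I would set up the Galerkin scheme on the eigenfunctions of $-\Delta$ with homogeneous Neumann conditions (a basis of $W$), writing the discrete version of \eqref{lin:1}--\eqref{lin:3}. For the a priori estimates the key manipulations are: (i) test the discrete \eqref{lin:1} by $\nu$, the discrete \eqref{lin:2} by $-\dt\xi$, and add, so that the cross term $\int\nu\dt\xi$ cancels; this produces $\frac{\eps}{2}\frac{d}{dt}\norma{\nu}^2$, $\frac12\frac{d}{dt}\norma{\nabla\xi}^2$-type quantities together with $\tau\norma{\dt\xi}^2$ on the left and, thanks to \ref{ass:5} (the coercivity $a_*+F''\geq C_0$), control of $\norma{\xi}^2$; the right-hand side terms $\int(\P\bs-\A)f'(\bph)\xi\,\nu$, $\int\P\z f(\bph)\nu$, $\int h_\P\bs f(\bph)\nu$, and the $\z$-terms coming from \eqref{lin:2} are all handled by Young's inequality using the $L^\infty(Q)$ bounds on the coefficients; (ii) test the discrete \eqref{lin:3} by $\z$, integrating the term $-\Delta(\eta\xi+h_\eta\bph)$ by parts to get $\eta\int\nabla\xi\cdot\nabla\z + h_\eta\int\nabla\bph\cdot\nabla\z$, absorbing $\nabla\z$ by Young into the $\norma{\nabla\z}^2$ produced by $-\Delta\z$, and using $\norma{\nabla\bph}_{L^\infty(0,T;H)}\leq K$ from \eqref{phi_reg2}; the term $\CC\bs f'(\bph)\xi$ is again bounded in $L^\infty$. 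Summing (i) and (ii), absorbing, and applying Gronwall's lemma yields $\xi\in H^1(0,T;H)\cap L^\infty(0,T;V)$ and $\z\in L^\infty(0,T;H)\cap L^2(0,T;V)$, together with $\tau\norma{\dt\xi}_{L^2(Q)}\leq C$. Then a comparison argument in \eqref{lin:1} and \eqref{lin:3} — testing against $v\in V$ and estimating each term — gives $\dt\nu,\dt\z\in L^2(0,T;V^*)$, hence $\nu,\z\in H^1(0,T;V^*)$; and reading \eqref{lin:2} as $\nu = \tau\dt\xi + a\xi - J*\xi + F''(\bph)\xi - \ch\z - h_\ch\bs$ with every summand in $L^2(0,T;V)$ (using $J\in W^{1,1}_{loc}$ from \ref{ass:5} so that $J*\xi\in L^2(0,T;V)$, and $F''(\bph)\xi\in L^2(0,T;V)$ since $\bph$ is separated and $F\in C^4$ so $F''(\bph)\in L^\infty(0,T;V)$ by the chain rule and \eqref{phi_reg2}) shows $\nu\in L^2(0,T;V)$.

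With these estimates uniform in the Galerkin index, standard weak and weak-$*$ compactness plus Aubin--Lions extract a subsequence converging to a limit $(\xi,\nu,\z)$ with the claimed regularity; passing to the limit in the (linear) discrete equations is routine, as is recovering the Neumann conditions and the zero initial data. For uniqueness, the difference of two solutions solves \linsys\ with $\bh=0$ and zero initial data; rerunning exactly the estimate in step (i)--(ii) above (now with no source terms) and Gronwall forces the difference to vanish.

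The main obstacle is the coupling between the Cahn--Hilliard pair $(\xi,\nu)$ and the nutrient $\z$, specifically the term $-\Delta(\eta\xi)$ on the right-hand side of \eqref{lin:3}: it contributes $\eta\int\nabla\xi\cdot\nabla\z$ after integration by parts, which is why the $L^\infty(0,T;V)$ bound on $\xi$ is needed and why the two energy estimates must be summed and the gradient terms absorbed together rather than treated separately. One must check that the constants produced by Young's inequality on this cross term can be chosen small enough to be absorbed by the $\frac12\frac{d}{dt}\norma{\nabla\xi}^2$ (from testing \eqref{lin:2} by $-\dt\xi$ one actually differentiates $\frac12\norma{\nabla\xi}^2$ via $a$ and $F''$, producing the needed dissipation) and the $\norma{\nabla\z}^2$ terms — this balancing, together with carefully tracking that the $\tau$- and $\eps$-dependent coefficients stay strictly positive (guaranteed by \ref{ass:6}), is the only delicate point; everything else is a routine linear parabolic estimate. \QED
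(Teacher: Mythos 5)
There is a genuine gap in your first energy estimate, and it is precisely at the point you flag as ``the only delicate point.'' You assert that testing \eqref{lin:2} by $-\dt\xi$ produces ``$\frac12\frac{d}{dt}\norma{\nabla\xi}^2$-type quantities'' and, later, that ``one actually differentiates $\frac12\norma{\nabla\xi}^2$ via $a$ and $F''$, producing the needed dissipation.'' In this \emph{nonlocal} setting that is false: equation \eqref{lin:2} contains no $-\Delta\xi$, only $a\xi-J*\xi+F''(\bph)\xi$, and pairing these terms with $\dt\xi$ yields (as in the paper's estimate \eqref{lin:est:2}) only $\frac{a_*-a^*}{2}\norma{\xi(t)}^2$-type quantities plus a term $\int F''(\bph)\xi\,\dt\xi$ that must be thrown to the right-hand side --- no gradient norm of $\xi$ appears anywhere. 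The coercivity $a_*+F''\geq C_0$ of \ref{ass:5} can only be exploited on $\intQt(a+F''(\bph))|\nabla\xi|^2$, and that integral arises only after an additional, fourth test: one must take the spatial gradient of \eqref{lin:2} and pair it with $-\nabla\xi$, which is exactly what the paper does. That extra test is what produces both the $\frac{\tau}{2}\norma{\nabla\xi(t)}^2$ term (hence $\xi\in\L\infty V$, part of the claimed regularity) and the dissipation $C_0\intQt|\nabla\xi|^2$; without it your scheme cannot absorb the cross terms $\eta\intQt\nabla\xi\cdot\nabla\z$, $\chi\intQt\nabla\z\cdot\nabla\xi$, nor close a Gronwall argument in which $\norma{\nabla\xi}$ appears on the right but never on the left. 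Note that the gradient test also generates new right-hand-side terms ($\intQt\nabla\nu\cdot\nabla\xi$, $\intQt F'''(\bph)\xi\nabla\bph\cdot\nabla\xi$, $\intQt(\nabla J*\xi)\cdot\nabla\xi$, \dots) which need the $\norma{\nabla\nu}^2_{L^2(Q_t)}$ dissipation from \eqref{lin:1} and Corollary~\ref{COR:SEP} to handle --- so it is not a cosmetic addition.

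A secondary, repairable slip: you deduce $\nu\in\L2 V$ by reading off \eqref{lin:2}, but the summand $\tau\dt\xi$ is only known to lie in $L^2(0,T;H)$, not $\L2 V$, so this comparison gives at best $\nu\in L^2(0,T;H)$. The correct (and simpler) route, which you in fact have available, is that testing \eqref{lin:1} by $\nu$ already puts $\intQ|\nabla\nu|^2$ on the left of the energy inequality, so $\nu\in\L2 V$ comes directly from the first estimate. Apart from these two points your architecture (Galerkin, summed energy estimates, Gronwall, comparison for the dual-space time derivatives, uniqueness by linearity) coincides with the paper's.
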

\begin{proof}[Proof of Theorem \ref{THM:LIN}]
In what follows we proceed formally by pointing out some
a priori estimates. Anyway, it is a standard matter to perform the same computations
within a Galerkin scheme and then pass to the limit as the discretization parameter approach infinity
to deduce the same results at the continuous level.
Moreover, let us notice that the symbols $M$ and $\cd$ will denote generic constants
depending only on structural data and possibly on an additional positive constant $\d$
and may change from line to line.

\noindent {\bf First estimate:}
To begin with we add to both sides of \eqref{lin:2} the term $(c_a+2)\xi$.
Next, we multiply \eqref{lin:1} by $\nu$, the new \eqref{lin:2} by $-\dt \xi$, 
the gradient of  \eqref{lin:2} by $-\nabla \xi$, \eqref{lin:3} by $\z$, 
integrate over $Q_t$ and by parts. Adding the resulting equalities we obtain that
\begin{align*}
	& \frac {\eps}2 \norma{\nu(t)}^2
	+ \I2 {\nabla \nu}
	+ \tau \I2 {\dt \xi}
	+ \frac {\tau} 2 \IO2 {\nabla \xi}
	+ (\tfrac {c_a}2+1) \IO2 {\xi}
	\\ & \qquad
	+  \intQt (a + F''(\bph) )|\nabla \xi|^2
	+ \intQt (a \xi - J *\xi)\dt\xi
	+ \frac 12 \IO2 {\z}
	+ \B \I2 \z
	+ \I2 {\nabla\z}
	\\ & \quad
	=
	\underbrace{\intQt (\P \bs-\A) f'(\bph)\xi\nu
	+ \intQt \P \z f(\bph) \nu
	+ \intQt h_\P  \bs f(\bph) {\nu}
	- \intQt F''(\bph)\xi \dt\xi}_{=:I_1}
	\\ & \qquad
	+ \underbrace{\intQt \chi \z \dt\xi
	+ \intQt h_{\chi} \bs \dt\xi
	- \intQt (c_a+2)\xi \dt \xi}_{=:I_2}
	\\ & \qquad 
	 + \underbrace{\intQt \nabla \nu \cdot \nabla \xi
	 -  \intQt  (\nabla a) \xi \cdot \nabla \xi
	+ \intQt (\nabla J*\xi )\cdot \nabla \xi
	- \intQt F'''(\bph)\xi \nabla \bph \cdot \nabla \xi}_{=:I_3}
	\\ & \qquad 
	+  \underbrace{\intQt\chi \nabla \z \cdot \nabla \xi
	+  \intQt h_{\chi}\nabla \bs \cdot \nabla \xi 
	- \intQt \CC (\z  f(\bph) + \bs f'(\bph)\xi)\z
	- \intQt h_\CC\bs  f(\bph) \z}_{=:I_4}
	\\ & \qquad 
	+  \underbrace{\intQt \eta\nabla \xi \cdot \nabla \z
	-  \intQt h_{\eta}\Delta \bph \, \z}_{=:I_5}.
\end{align*}
The first two terms of the second line can be easily estimated by using \ref{ass:5},
which produces
\begin{align}
	 \intQt (a + F''(\bph) )|\nabla \xi|^2
	\geq  C_0 \I2 {\nabla \xi},
	\label{lin:est:1}
\end{align}
and similarly for the next term we have
\begin{align}
	\label{lin:est:2}
	\intQt (a \xi - J *\xi)\dt\xi
	\geq \frac {a_*-a^*}2 \IO2 {\xi}
	\geq - \frac{c_a}2  \IO2 {\xi}.
\end{align}
Moreover, the terms on the \rhs\ can be easily estimated using 
Young and \Holder\ inequalities, the regularity of the state variables $\bph, \bs$
expressed in \eqref{estimate:strong}, the boundedness of $f$ and $f'$,
and Corollary~\ref{COR:SEP}. In fact, for every $\delta>0$, we obtain that
\begin{align*}
	|I_1| 
	&\leq 
	M \intQt ( |\bs||\xi| +|\z|+|\bs| )|{\nu}|
	+ M \norma{F''(\bph)}_{L^\infty(Q)} \intQt |\xi||\dt\xi|
	\\ & \leq 
	\d \I2 {\dt\xi} 
	+\cd \intQt (|\xi|^2{+|\nu|^2}+ |\z|^2 +1),\\
	|I_2| 
	&\leq 
	M \intQt (|\z|+|\bs|+ |\xi|)|\dt\xi|
	\leq
	\d \I2 {\dt\xi} 
	+\cd \intQt (|\z|^2+|\xi|^2+ 1).
\end{align*}
Similarly, using the continuous embedding $V\hookrightarrow L^6(\Omega)$, 
H\"older's inequality,
the regularity $\bph$ and again Corollary \ref{COR:SEP} we find that
\begin{align*}
	|I_3|
	& \leq
	M \intQt (|\nabla \nu|+|\xi|)|\nabla \xi|
	+ M \norma{F'''(\bph)}_{L^\infty(Q)} \intQt |\xi||\nabla \bph||\nabla \xi|
	\\ & \leq 
	\d \I2 {\nabla \nu}
	+ \cd \intQt (|\xi|^2+ |\nabla \xi|^2)
	+M \int_0^t {\|\bph(s)\|_{H^2(\Omega)}}\|\xi(s)\|_{V}\|\nabla\xi(s)\|\,ds
	\\
	& \leq 
	\d \I2 {\nabla \nu}
	+ \cd \intQt (|\xi|^2+ |\nabla \xi|^2).
\end{align*}
By analogous computations, we have that
\begin{align*}
	|I_4|& \leq
	M \intQt (|\nabla \z|+|\nabla \bs|)|\nabla \xi|
	+ M \intQt (|\z|+|\bs||\xi|+ |\bs|)|\z|
	\\ & \leq 
	\d \intQt |\nabla \z|^2
	+\cd \intQt (| {\nabla \xi}|^2+ |\xi|^2 +|\z|^2 +1),
	\\
	|I_5|& \leq
	M \intQt |\nabla \xi||\nabla \z|
	+ M \intQt |\Delta \bph | |\z|
	\leq
	\d \I2 {\nabla \z} 
	+ \cd \intQt (|\z|^2+|\nabla \xi|^2+ 1).
\end{align*}
Upon collecting the above estimates, we infer that
\begin{align*}
& \frac {\eps}2 \norma{\nu(t)}^2
	+ (1- \d)\I2 {\nabla \nu}
	+ (\tau - 2 \d)  \I2 {\dt \xi}
	+ \IO2 {\xi}
	+ \frac {\tau} 2 \IO2 {\nabla \xi}
	\\ & \qquad
	+ C_0 \intQt |\nabla \xi|^2
	+ \frac 12 \IO2 {\z}
	{+\B\I2 \z}
	+ (1- 2\d)\I2 {\nabla \z}
	\\ & \quad \leq
	\cd \intQt (|\xi|^2+|\nabla \xi|^2 + |\z|^2+ 1).
\end{align*}
Then, we choose $\d:= \min \{ \tfrac \tau 4, \tfrac 14 \}$
so that Gronwall's lemma yields that
\begin{align*}
	\norma{\xi}_{\H1 H \cap \L\infty V}
	+ 
	\norma{\nu}_{\L\infty H \cap \L2 V}
	+ 
	\norma{\z}_{\L\infty H \cap \L2 V}
	\leq M.
\end{align*}

\noindent {\bf Second estimate:}
In light of the above estimate, a comparison argument
in \eqref{lin:1} and \eqref{lin:3} produces 
\begin{align*}
	\norma{\dt(\eps \nu + \xi)}_{\L2 \Vp}
	+
	\norma{\dt \z}_{\L2 \Vp}
	\leq M,
\end{align*}
hence also 
\[
  \|\partial_t\nu\|_{L^2(0,T; V^*)}\leq M\,.
\]

\noindent {\bf Conclusion:} It is clear that these estimates 
are enough to pass to the limit in the linearized system.
Furthermore,
it is worth noting that the uniqueness directly follows from 
the linearity of the system and the estimates above.
The proof of Theorem \ref{THM:LIN} is then concluded.
\end{proof}

\subsection{\Frechet\ differentiability of $\S$}
\begin{theorem}[\Frechet\ differentiability]
	\label{THM:FRECHET}
	Assume {\bf A1--A7} and {\bf C1--C4}
	and let $( \P , \chi ,  \eta ,  \CC )\in\Uad$
	be fixed. 
	Then, the control-to-state operator 
	$\S:\erre^4 \to \X$ is \Frechet\ differentiable
	at $( \P , \chi ,  \eta ,  \CC )$, where
	\begin{align*}
	\X := \big( \H1 H \cap \L\infty V \big)
		\times
		\big( \L\infty H \cap \L2 V \big)^2.
	\end{align*}
	Moreover, for every increment $\bh\in\erre^4$, 
	we have $D\S( \P , \chi ,  \eta ,  \CC)[\bh]= (\xi,\nu,\z)$,
	where $(\xi,\nu,\z)$ is the unique solution to \linsys\ associated to $\bh$,
	obtained by Theorem \ref{THM:LIN}. 
\end{theorem}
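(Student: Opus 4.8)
The plan is to follow the classical scheme for \Frechet\ differentiability of control-to-state maps, exploiting the fact that the energy estimates used for the linearized system are structurally robust. Fix $(\P,\chi,\eta,\CC)\in\Uad$ and set $(\bph,\bm,\bs):=\S(\P,\chi,\eta,\CC)$. The candidate derivative at this control is the map $\bh\mapsto(\xi,\nu,\z)$, with $(\xi,\nu,\z)$ the unique solution of \linsys\ furnished by Theorem~\ref{THM:LIN}; since \linsys\ is linear both in the unknown and in the increment and is posed on the finite-dimensional space $\erre^4$, this map is linear and bounded into $\X$, with $\norma{(\xi,\nu,\z)}_\X\leq M|\bh|$ (a bound also directly readable from the a priori estimates in the proof of Theorem~\ref{THM:LIN}). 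It therefore suffices to show that, for every $\bh$ with $(\P,\chi,\eta,\CC)+\bh\in\Uad$ and $(\bhph,\bhmu,\bhs):=\S((\P,\chi,\eta,\CC)+\bh)$, the remainders $\rho^\ph:=\phdiff-\xi$, $\rho^\m:=(\bhmu-\bm)-\nu$ and $\rho^\s:=\sdiff-\z$ satisfy $\norma{(\rho^\ph,\rho^\m,\rho^\s)}_\X=o(|\bh|)$ as $|\bh|\to0$. The indispensable input here is the continuous-dependence estimate of Theorem~\ref{THM:WP:STRONG}, which gives the Lipschitz bound $\norma{\phdiff}_{W^{1,\infty}(0,T;V)\cap H^1(0,T;H^2(\Omega))}+\norma{\bhmu-\bm}_{H^1(0,T;H)\cap L^\infty(0,T;V)\cap L^2(0,T;W)}+\norma{\sdiff}_{H^1(0,T;H)\cap L^\infty(0,T;V)\cap L^2(0,T;W)}\leq M|\bh|$, together with the control-independent $L^\infty(Q)$ bound and separation estimate of Corollary~\ref{COR:SEP}.

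The second step is to write down the problem solved by $(\rho^\ph,\rho^\m,\rho^\s)$. Subtracting \linsys\ from the difference of the two copies of \statesys\ and Taylor-expanding every nonlinearity around $(\bph,\bs)$ — using $F'(\bhph)-F'(\bph)-F''(\bph)\xi=F''(\bph)\rho^\ph+O((\phdiff)^2)$, $f(\bhph)-f(\bph)-f'(\bph)\xi=f'(\bph)\rho^\ph+O((\phdiff)^2)$, the expansions of the products $((\P+h_\P)\bhs-\A)f(\bhph)$, $(\CC+h_\CC)\bhs f(\bhph)$, $(\chi+h_\chi)\bhs$, and the algebraic identity $(\eta+h_\eta)\bhph-\eta\bph-(\eta\xi+h_\eta\bph)=\eta\rho^\ph+h_\eta\phdiff$ — one checks that $(\rho^\ph,\rho^\m,\rho^\s)$ solves a system with exactly the same linear principal part as \linsys, with homogeneous initial and boundary data, and with forcing terms $R_1,R_2,R_3$ (together with the weakly interpreted elliptic source $-\Delta(\eta\rho^\ph+h_\eta\phdiff)$ in the $\sigma$-equation) collecting only second-order contributions of the type $f''(\cdot)(\phdiff)^2$, $f'(\cdot)\,\phdiff\,\sdiff$, $F'''(\cdot)(\phdiff)^2$, $h_\P\,\sdiff\,f(\bph)$, $h_\chi\,\sdiff$, $h_\CC\,\sdiff\,f(\cdot)$, $h_\eta\,\phdiff$, and similar. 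Invoking $f\in C^2_b(\erre)$ from \ref{ass:op:4}, the $L^\infty(Q)$ bounds on $F''(\bph),F'''(\bph),F^{(4)}(\bph)$ from Corollary~\ref{COR:SEP} (applicable since $F\in C^4$ by \ref{ass:4}), the strong regularity of $\bph,\bm,\bs$ in \eqref{estimate:strong}, and the Lipschitz bound above, I expect to obtain $\norma{R_1}_{\L2 H}+\norma{R_2}_{\L2 V}+\norma{R_3}_{\L2 H}+\norma{h_\eta\nabla\phdiff}_{\L2 H}\leq M|\bh|^2$.

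Finally, I would close the argument by running on the remainder system the very same testing procedure as in the proof of Theorem~\ref{THM:LIN}: after adding $(c_a+2)\rho^\ph$ to the second equation, test the first by $\rho^\m$, the modified second by $-\dt\rho^\ph$ and its gradient by $-\nabla\rho^\ph$, the third by $\rho^\s$, add everything and integrate over $Q_t$. The linear part is absorbed verbatim as before, thanks to \ref{ass:5}--\ref{ass:6} — in particular $(a+F''(\bph))|\nabla\rho^\ph|^2\geq C_0|\nabla\rho^\ph|^2$ and $(a\rho^\ph-J*\rho^\ph)\dt\rho^\ph\geq-\tfrac{c_a}2\norma{\rho^\ph(t)}^2$ — and Young's inequality, while the additional terms involving $R_1,R_2,R_3$ and $h_\eta\nabla\phdiff$ are treated by Young's inequality and contribute a forcing of order $M|\bh|^4$; Gronwall's lemma then yields $\norma{(\rho^\ph,\rho^\m,\rho^\s)}_\X\leq M|\bh|^2=o(|\bh|)$, which is exactly the \Frechet\ differentiability of $\S$ at $(\P,\chi,\eta,\CC)$ with $D\S(\P,\chi,\eta,\CC)[\bh]=(\xi,\nu,\z)$. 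The main obstacle is not any single estimate but the bookkeeping of the second step: one must verify that every term gathered in $R_1,R_2,R_3$ is genuinely quadratic — either $O((\phdiff)^2)$, $O(\phdiff\,\sdiff)$, or a difference multiplied by one of the scalars $h_\P,h_\chi,h_\eta,h_\CC$ — and that it lands in a space strong enough ($L^2(Q)$, and $L^2(0,T;V)$ with control of $\nabla R_2$ for the $\mu$-equation remainder) to be reabsorbed on the left-hand side; this is precisely where the control-uniform separation bounds of Corollary~\ref{COR:SEP} and the $C^2_b$-regularity of $f$ become indispensable, and where one must also check that no regularity is lost in the elliptic source $-\Delta(\eta\rho^\ph+h_\eta\phdiff)$ of the $\sigma$-equation.
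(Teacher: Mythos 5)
Your proposal is correct and follows essentially the same route as the paper: the paper likewise introduces the remainders $\phi=\bhph-\bph-\xi$, $\rho=\bhmu-\bm-\nu$, $\omega=\bhs-\bs-\z$, derives the system they satisfy with purely quadratic forcing terms via Taylor expansions with integral remainder (controlled through Corollary~\ref{COR:SEP}, the $C^2_b$ bound on $f$, and the Lipschitz stability estimate of Theorem~\ref{THM:WP:STRONG}), and then repeats the testing procedure of the linearized system to close a Gronwall argument yielding $\norma{(\phi,\rho,\omega)}_\X\leq M|\bh|^2$. The bookkeeping you flag as the main obstacle is indeed where the paper spends most of its effort, but your outline of it matches the actual computation.
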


\begin{proof}[Proof of Theorem \ref{THM:FRECHET}]
To begin with, let us recall that $\bh= (h_\P,h_{\chi},h_{\eta},h_\CC )$ and let us set
the corresponding control 
$(\P^{\bh},\chi^{\bh},\eta^{\bh},\CC^{\bh}):=(\P+h_\P,\chi+h_{\chi},\eta+h_{\eta},\CC+h_\CC )$.
Next, we denote by
\begin{align*}
	(\bph, \bm, \bs) &:= \S(\P,\chi,\eta,\CC),\\
	(\bhph, \bhmu, \bhs)& := \S(\P^{\bh},\chi^{\bh},\eta^{\bh},\CC^{\bh}),
	\\ 
	(\xi,\nu,\z)& := \hbox{Solution to the linearized system associated to $\bh$},
\end{align*}
and set
\begin{align*}
	\phi := \bhph - \bph - \xi, \quad
	\r := \bhmu - \bm - \nu, \quad
	\omega := \bhs - \bs - \z.	
\end{align*}
By comparing Theorem \ref{THM:WP:STRONG} 
with Theorem \ref{THM:LIN},
we deduce that the triplet $(\phi,\r,\omega)$ satisfies
\begin{align*}
	\phi  \in \H1 H \cap \L\infty V, \qquad
	 \r,\omega \in  \H1 \Vp \cap \L2 V.
\end{align*}
To prove the assertion, it is enough to show that
\begin{align*}
	\frac {\norma{\S(\P^{\bh},\chi^{\bh},\eta^{\bh},\CC^{\bh})
	-\S(\P,\chi,\eta,\CC)
	-(\xi,\nu,\zeta)
	}_{\X}}{|\bh|} 
	\to 0 \quad \hbox{as $|\bh|\to 0$,}
\end{align*}
where $|\bh|:=|\P^{\bh}|+|\chi^{\bh}|+|\eta^{\bh}|+|\CC^{\bh}|$.
By using the notation above, 
this amounts to showinig that
\begin{align}
	\label{fre:stima}
	\frac{\norma{(\phi,\r,\omega)}_\X}{|\bh|}	\to 0 \quad \hbox{as $|\bh|\to 0$,}
\end{align}
so that it suffices to check that there exist two constants $M>0$ and $\gamma>1$,
independent of $\bh$, such that 
\begin{align}
	\norma{(\phi,\r,\omega)}_\X \leq M |\bh|^\gamma.
	\label{fre:stima:2}
\end{align}
Taking the difference of the corresponding 
systems, we infer that the triplet $(\phi,\r,\omega)$
solves the following system
\begin{align}
	\label{fre:1}
	&\dt(\eps  \r+ \phi)- \Delta \r
	=
	L_\P
	&&\hbox{in $\, Q$},
	\\
	\label{fre:2}
	&\r
	=
	\tau\dt \phi
	+a \phi -J * \phi
	+  F''(\bph)\phi
	+L_{\chi}
 	&&\hbox{in $\,Q$},
	\\
	&\dt \omega
	- \Delta \omega
	+ \B \omega
	+L_\CC
	\label{fre:3}
	=
	L_{\eta}
	&&\hbox{in $\,Q$},
	\\
	&\dn \r=\eta\dn \phi=\dn \omega =0
 	&&\hbox{on $\,\Sigma$},
	\label{fre:4}
	\\ 
	&\r(0)= \phi(0)=\omega(0)=0
  	&&\hbox{in $\,\Omega,$}
	\label{fre:5}
\end{align}
where 
\begin{align*}
	L_\P&:= 
	\P f(\bph)\omega
	+ \P \big( f(\bhph)-f(\bph)\big) \big( \bhs-\bs \big)
	+ {(\P\bs-\A)}\big(	f(\bhph)-f(\bph)-f'(\bph)\xi	\big)
	\\ & \quad
	+ h_\P \big[	\big( f(\bhph)-f(\bph)\big) \big( \bhs-\bs \big)
	+  ( f(\bhph)-f(\bph)) \bs
	+( \bhs-\bs) f(\bph)
		\big],
	\\
	L_{\chi}&:= -\chi \omega - h_{\chi}(\bhs-\bs)
	+F'(\bhph) - F'(\bph) - F''(\bph)(\bhph-\bph),\\
	L_{\eta}&:= -\eta \Delta \phi - h_{\eta}\Delta(\bhph-\bph),
	\\
	L_\CC&:= 
	\CC f(\bph)\omega
	+ \CC \big( f(\bhph)-f(\bph)\big) \big( \bhs-\bs \big)
	+ \CC\big(	f(\bhph)-f(\bph)-f'(\bph)\xi	\big)\bs
	\\ & \quad
	+ h_\CC \big[	\big( f(\bhph)-f(\bph)\big) \big( \bhs-\bs \big)
	+  ( f(\bhph)-f(\bph)) \bs
	+( \bhs-\bs) f(\bph)
		\big].
\end{align*}
As an easy consequence of Theorem \ref{THM:WP:STRONG},
the following estimates hold:
\begin{align*}
	&\norma{\bph}_{W^{1,\infty}(0,T; V)\cap H^1(0,T; H^2(\Omega))} 
	+ \norma{\eta \bph}_{\L2 W}
	+ \norma{\dt\bph}_{L^\infty(Q)}
	\\ & \quad
	+ \norma{\bm }_{\H1 H \cap \L\infty V \cap \L2 W \cap L^\infty(Q)}
	+ \norma{\bs }_{\H1 H \cap \L\infty V \cap \L2 W\cap L^\infty(Q)}
	\leq K,
\end{align*}
and
\begin{align}
	& \non
	\norma{\bhph - \bph}_{W^{1,\infty}(0,T; V)\cap H^1(0,T; H^2(\Omega))}
	+ \norma{\bhmu-\bm}_{H^1(0,T; H)\cap L^\infty(0,T; V)\cap L^2(0,T; W)}
	\\ & \quad
	+ \norma{\bhs-\bs}_{H^1(0,T; H)\cap L^\infty(0,T; V)\cap L^2(0,T; W)}
	\leq K |\bh|,
	\label{fre:diff}
\end{align}
where the constant $K$ is independent of $\bh$.
In order to prove \eqref{fre:stima}, we multiply
\eqref{fre:1} by $\r$, \eqref{fre:2} to which we add to both sides 
$(c_a+2)\phi$ by $-\dt\phi$,
the gradient of \eqref{fre:2} by $\nabla \phi$, and \eqref{fre:3} by $\omega$.
Using the same argument employed in \eqref{lin:est:1}--\eqref{lin:est:2} in the proof of the 
linearized system, we deduce that,
upon integration over $Q_t$ and addition of the resulting equalities,
\begin{align}
	\non
	&\frac \eps 2 \IO2 {\r}
	+ \I2 {\nabla \r}
	+ \tau \I2 {\dt\phi}
	+\IO2 {\phi}
	+ C_0 \I2 {\nabla \phi}
	+\frac \tau 2 \IO2 {\nabla \phi}
	\\ & \qquad\non
	+ \frac 12 \IO2 {\omega}
	+\B \I2 { \omega}
	+ \I2 {\nabla \omega}
	\\ & \non
	\leq
	\intQt L_\P \r
	- \intQt F''(\bph)\phi \dt\phi
	- \intQt L_{\chi} \dt \phi
	- \intQt (c_a+2)\phi \dt \phi
	-\int_{Q_t}\nabla L_{\chi}\cdot\nabla\phi
	\\ & \qquad\label{est:fre}
	- \intQt \phi \nabla a \cdot \nabla \phi
	+ \intQt (\nabla J)* \phi \cdot \nabla \phi
	- \intQt F'''(\bph) \phi \nabla \bph \cdot \nabla \phi
	+ \intQt (L_\eta - L_\CC)\omega.
\end{align}
The second term on the \rhs\ can be estimated by using the separation property and Young's inequality which lead to
\begin{align*}
	\intQt F''(\bph)\phi \dt\phi 
	\leq M \intQt |\phi||\dt\phi|
	\leq \d \I2 {\dt\phi}
	+ \cd \I2 {\phi},
\end{align*}
for a positive constant $\d$ yet to be chosen.
Let us recall Taylor's formula with integral remainder for $f$, 
which is well-defined owing to the required regularity:
\begin{align}
	\label{taylor:f}
	f(\bhph)-f(\bph)-f'(\bph)\xi \,=\, f'(\bph) \phi + R_f^\bh (\phdiff)^2,
\end{align}
where the remainder $R_f^\bh$ is defined as
\begin{align*}
	R_f^\bh:= \int_0^1 f''(\bph+s (\bhph-\bph)) (1-s)\,\ds
\end{align*}
and it is uniformly bounded since $f\in C^2_b(\erre)$.
Using the Young and \Holder\ inequalities,
the boundedness and the \Lip\ continuity of $f$ and $f'$, the Taylor's formula \eqref{taylor:f},
the estimate \eqref{fre:diff},
as well as the regularity of $\bph$ and $\bs$, we have
\begin{align*}
	 &\intQt L_\P \r
	 - \intQt L_\CC \omega
	\\ & \quad
	\leq
	M \intQt (|\omega| + |\phdiff||\sdiff| + |\phi| + |\phdiff |^2 
	)(|\r|+|\omega|)
	\\ & \qquad
	+ M|\bh| \intQt (|\phdiff||\sdiff|+|\phdiff| + |\sdiff|)(|\r|+|\omega|)
	\\ &\quad \leq
	M \intQt (|\omega|^2 + |\r|^2 + |\phi|^2)
		+ M \iot \norma{\phdiff}_4^2(\norma{\r}	+\norma{\omega})
	\\ & \qquad
	+ M (1+|\bh|)\iot \norma{\phdiff}_4\norma{\sdiff}_4(\norma{\r}	+\norma{\omega})
	\\ & \qquad
	+ M|\bh| \iot (\norma{\phdiff}+\norma{\sdiff})(\norma{\r}	+\norma{\omega})
	\\ & \quad\leq
	M \intQt (|\omega|^2 + |\r|^2+ |\phi|^2)
	+ M (|\bh|^6 + |\bh|^4).
\end{align*}
Now, arguing again by Taylor's formula with integral remainder, we have that 
\[
F'(\bhph) - F'(\bph) - F''(\bph)(\bhph-\bph)=R_{F'}^\bh(\bhph-\bph)^2
\]
with remainder
\begin{align*}
	R_{F'}^\bh:= \int_0^1 F'''(\bph+s (\bhph-\bph)) (1-s)\,\ds
\end{align*}
which is bounded by a constant independent of $\bh$ by Corollary~\ref{COR:SEP}.
Taking these remarks into account,
we infer that
\begin{align*}
	&-\intQt L_{\chi} \dt \phi
	+ \intQt L_\eta \omega\\
	 & \quad \leq
	M \intQt |\omega||\dt\phi|
	+ M|\bh| \iot \norma{\sdiff}\norma{\dt\phi}
	+M\int_{Q_t}|\phdiff|^2|\partial_t\phi|
	\\ & \qquad
	+ M \intQt |\nabla  \phi||\nabla  \omega|
	+ M|\bh| \iot \norma{\Delta(\phdiff)}\norma{\omega}
	\\ 	&\quad \leq
	\d \intQt (| {\dt\phi}|^2 + |\nabla \omega|^2)
	\\&\qquad+\cd \intQt (|\omega|^2+ 
	|\bh|^2|\sdiff|^2 + |\phdiff|^4 + |\nabla \phi|^2 + |\bh|^2|\Delta(\phdiff)|^2)
	\\ 	&\quad \leq
	\d \intQt (| {\dt\phi}|^2 + |\nabla \omega|^2)
	+\cd \intQt (|\omega|^2+ |\nabla \phi|^2) + M(\delta)|\bh|^4.
\end{align*}
Furthermore, we have that 
\begin{align*}
	-\int_{Q_t}\nabla L_{\chi}\cdot\nabla\phi&=
	\chi \intQt \nabla \omega \cdot \nabla \phi
	+ h_{\chi}\intQt  \nabla (\sdiff) \cdot \nabla \phi\\
	& \quad -\int_{Q_t}\nabla(F'(\bhph) - F'(\bph) - 
	F''(\bph)(\bhph-\bph))\cdot\nabla\phi,
\end{align*}
where, by the Young inequality,
\begin{align*}
  \chi \intQt \nabla \omega \cdot \nabla \phi
	+ h_{\chi}\intQt  \nabla (\sdiff) \cdot \nabla \phi
 &\leq
  \d \I2 {\nabla \omega} 
  + \cd \intQt (|\nabla \phi|^2+ |\bh|^2|\nabla (\sdiff)|^2)\\
  &\leq \d \I2 {\nabla \omega} 
  + \cd \intQt |\nabla \phi|^2 + \cd|\bh|^4,
\end{align*}
and
\begin{align*}
  &\int_{Q_t}\nabla(F'(\bhph) - F'(\bph) - 
	F''(\bph)(\bhph-\bph))\cdot\nabla\phi\\
  &\quad=\int_{Q_t}(F''(\bhph)\nabla\bhph - F''(\bph)\nabla\bph - 
	F'''(\bph)\nabla\bph(\bhph-\bph) - F''(\bph)\nabla(\bhph-\bph))\cdot\nabla\phi\\
  &\quad=\int_{Q_t}(F''(\bhph)-F''(\bph)-F'''(\bph)(\bhph-\bph))\nabla\bph\cdot\nabla\phi
  +\int_{Q_t}(F''(\bhph)-F''(\bph))\nabla(\bhph-\bph)\cdot\nabla\phi.
\end{align*}
Hence, using again the Taylor formula with integral remainder for $F''$,
the separation property \eqref{separation:est},
and the estimate \eqref{fre:diff}, 
it is straightforward to see that 
\[
	-\int_{Q_t}\nabla(F'(\bhph) - F'(\bph) - 
	F''(\bph)(\bhph-\bph))\cdot\nabla\phi
	\leq M\int_{Q_t}|\nabla\phi|^2 + M|\bh|^4.
\]
Finally, the last line of \eqref{est:fre} can be bounded from above
using the H\"older inequality, the continuous embedding $V\hookrightarrow L^4(\Omega)$,
and the regularity of $\bph$
as
\begin{align*}
	& 
	\frac \tau 2 \I2 {\dt\phi}
	+ M \I2 \phi
	+ M \intQt (|\phi|^2+|\nabla \phi|^2)+
	M\int_0^t\|\nabla\bph\|_4^2\|\phi\|_4^2 
	\\
	& \quad \leq 
	\frac \tau 2 \I2 {\dt\phi}
	+ M\int_0^t\|\phi\|_V^2.
\end{align*}
Therefore, upon collecting the above estimates, picking $\d$ small enough, and invoking 
Gronwall's lemma, we conclude the proof since \eqref{fre:stima:2}
has been shown with $\gamma =2$.
\end{proof}

\subsection{Adjoint system}
In order to study first-order conditions for optimality for problem $(CP)_\ept$,
for a fixed admissible control $(\ov\P,\ov\chi,\ov\eta,\ov\CC)\in\Uad$
with corresponding state $(\bph,\bm,\bs)$,
we introduce and solve the auxiliary backward-in-time problem called 
{\it adjoint system}, in the new variables $(p,q,r)$. 
This system is formally obtained 
by taking the adjoint of the linearized system \eqref{lin:1}--\eqref{lin:5}, and reads
\begin{align}
	& \non
	-\dt (p +\tau q)
	+a q - J*q
	+\ov\eta \Delta r
	+ F''(\bph)q
	\\ & \hspace{3cm}
	+ \ov\CC\bs f'(\bph) r
	- (\ov\P\bs -\A)f'(\bph) p \,=\, 	\bQ (\bph-\phQ)
 	&& \hbox{in $\,Q$},
 	\label{adj:1}
	\\	
	\label{adj:2}
	& 	-\eps \dt p	- \Delta p 	- q 	\,=\,	0
	&& \hbox{in $\, Q$},
	\\
	\label{adj:3}
	& -\dt r
	- \Delta r
	+ (\B + \ov\CC f(\bph)) r
	- \ov\P f(\bph)p
	- \ov\ch q
	\,=0
	&&  \hbox{in $\,Q$},
	\\
	&\dn p\,=\,\dn r \,=\,0
 	&& \hbox{on $\,\Sigma$},
	\label{adj:4}
	\\ 
	& \eps p(T)\,=\,0, \quad (p+\tau q)(T)\,=\, \bO (\bph(T)-\phO), \quad r(T)\,=0
  	&& \hbox{in $\,\Omega.$}
	\label{adj:5}
\end{align}
\Accorpa\adjsys {adj:1} {adj:5}

Here we state the corresponding well-posedness result.
\begin{theorem}[Well-posedness of the adjoint system: $\ept >0$]
	\label{THM:ADJ:ept}
	Assume {\bf A1--A7} and {\bf C1--C4}
	and let $(\ov\P, \ov\chi , \ov\eta , \ov\CC )\in\Uad$
	be an admissible control, with corresponding state $(\bph,\bm,\bs)$.
	Then, the adjoint system \adjsys\ admits a unique solution $(p,q,r)$ such that
	\begin{align*}
		p,r  &\in \W {1,\infty} H \cap \H1 V \cap \L\infty W,
		\qquad
		q  \in \H1 H \cap \L\infty H.
	\end{align*}
\end{theorem}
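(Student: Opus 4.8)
The plan is to solve the adjoint system \adjsys\ backward in time by a Galerkin approximation, deriving a priori estimates that are uniform in the discretization parameter, then passing to the limit; uniqueness will follow from the linearity of the system together with the same estimates. Since the system is linear, the only substantive work is the a priori estimates, so I would proceed formally (working within a Galerkin scheme is routine). A preliminary observation: equation \eqref{adj:2} lets one regard $q$ as determined by $p$ via $q=-\eps\dt p-\Delta p$; substituting this into \eqref{adj:1} turns the pair $(p,q)$ into a single (fourth-order in space, viscous) backward parabolic equation for $p$, which is the natural vehicle for the estimates. One should keep in mind the terminal conditions: $\eps p(T)=0$ forces $p(T)=0$ since $\eps>0$, and then $(p+\tau q)(T)=\bO(\bph(T)-\phO)$ gives $\tau q(T)=\bO(\bph(T)-\phO)$, i.e. $q(T)=\tfrac{\bO}\tau(\bph(T)-\phO)\in V$ (in fact in $H^2$) thanks to \eqref{phi_reg2}.

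For the \textbf{first estimate} I would test \eqref{adj:1} by $q$, \eqref{adj:2} by $-\dt p$ (or by $p$ and also by $\Delta p$), and \eqref{adj:3} by $r$, integrate over $Q_t^T$, and add. The key structural point, exactly as in the proof of Theorem~\ref{THM:LIN}, is that the term $\iota\!\int(a+F''(\bph))|\nabla q|^2$-type contribution and the $\int(aq-J*q)$-type terms are controlled using \ref{ass:5}: $a_*+F''\geq C_0$ gives coercivity in $\nabla q$, while $J*q$ is absorbed via Young's inequality and $a^*<\infty$. The terms involving $\ov\eta\Delta r$ integrated against $q$ should be combined with the $-\Delta r$ in \eqref{adj:3}: integrating by parts, $\int\ov\eta\nabla r\cdot\nabla q$ pairs against the $\eta\nabla\xi\cdot\nabla\zeta$ structure of the linearized system and is handled by Young. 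All remaining terms have bounded coefficients: $F''(\bph)$, $f'(\bph)$, $F'''(\bph)$ are in $L^\infty(Q)$ by Corollary~\ref{COR:SEP}, and $\bs$ is bounded by \eqref{estimate:strong}; the source terms $\bQ(\bph-\phQ)$ and the terminal datum $\bO(\bph(T)-\phO)$ are controlled by \ref{ass:op:1} and \eqref{phi_reg2}. After choosing the Young parameters small (depending on $\eps,\tau$) and applying the backward Gronwall lemma, this yields
\[
\norma{p}_{\L\infty V}+\eps^{1/2}\norma{\dt p}_{\L2 H}+\norma{q}_{\L2 H}+\norma{r}_{\L\infty H\cap\L2 V}\le M,
\]
and, testing \eqref{adj:2} also by $p$ and by $-\Delta p$, one upgrades $p\in\L\infty V\cap\L2 W$ and recovers $q=-\eps\dt p-\Delta p\in\L2 H$.

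For the \textbf{second estimate}, aiming at the claimed $\W{1,\infty}H\cap\H1 V\cap\L\infty W$ regularity for $p$ and $r$, I would differentiate formally in time (equivalently, test with $\dt p$, $\dt q$, $\dt r$ and exploit the parabolic smoothing), using the bounds already obtained and the time-regularity of $\bph,\bm,\bs$ from \eqref{phi_reg2}--\eqref{mu_reg2} (in particular $\dt\bph\in L^\infty(Q)$, so $\dt(F''(\bph))=F'''(\bph)\dt\bph$ is bounded, and similarly for $f'(\bph)$). Testing \eqref{adj:3} by $-\Delta r$ then gives $r\in\L\infty W$ after using $\B+\ov\CC f(\bph)\ge 0$ and absorbing lower-order terms; a comparison in \eqref{adj:3} gives $\dt r\in\L\infty H$, hence $r\in\W{1,\infty}H\cap\H1 V\cap\L\infty W$. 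For $p$: once $q\in\L\infty H$ is known (from differentiating the $p$-equation in time and a comparison argument), equation \eqref{adj:2} reads $-\eps\dt p-\Delta p=q\in\L\infty H$, elliptic regularity gives $p(t)\in W$ uniformly, i.e. $p\in\L\infty W$, and $\dt p=\eps^{-1}(-\Delta p-q)\in\L\infty H$, with $p\in\H1 V$ from the first estimate's $\eps^{1/2}\dt p\in\L2 H$ sharpened in the same loop. Finally, a comparison in \eqref{adj:1} confirms $\dt(p+\tau q)\in\L\infty H$, consistent with $q\in\H1 H\cap\L\infty H$.

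The main obstacle I anticipate is the \textbf{bookkeeping of the two coupled backward equations for $p$ and $q$}: because \eqref{adj:2} is only a viscous (not parabolic) relation, one cannot treat $q$ as an independent parabolic variable, and the estimates for $q$ must be extracted either by the substitution $q=-\eps\dt p-\Delta p$ (which costs one time-derivative of $p$ and explains why the bounds are \emph{not} uniform in $\eps$) or by a careful choice of test functions that simultaneously exploits \eqref{adj:1} and \eqref{adj:2}. Getting the terminal conditions \eqref{adj:5} to propagate correctly through the Galerkin scheme — in particular handling $\tau q(T)=\bO(\bph(T)-\phO)$, which requires the $H^2$-regularity of $\bph(T)$ — is the delicate technical point; everything else is a routine, if lengthy, adaptation of the linearized-system estimates with the roles of "forward" and "backward" interchanged.
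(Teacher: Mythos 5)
Your proposal is correct and follows essentially the same route as the paper's proof: formal energy estimates on the linear backward system (testing \eqref{adj:1} by $q$, \eqref{adj:2} by a combination involving $-\dt p$, and \eqref{adj:3} by $r$, $-\dt r$ and $-\Delta r$), followed by comparison arguments and classical parabolic regularity to reach $p,r\in \W {1,\infty} H \cap \H1 V \cap \L\infty W$, with uniqueness from linearity — all of which the paper likewise carries out only at the level of formal a priori estimates. The one correction concerns the point you single out as the delicate one: since $\phO$ is only assumed to lie in $\Lx2$ by \ref{ass:op:1}, the terminal value $q(T)=\tau^{-1}\bO(\bph(T)-\phO)$ belongs merely to $H$ (not to $V$ or $H^2(\Omega)$ as you claim), but this is harmless because the energy estimate uses only $\norma{q(T)}$, which is precisely the $\tfrac 1{2\tau}\norma{\bO(\bph(T)-\phO)}^2$ term appearing on the right-hand side of the paper's first estimate.
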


\begin{proof}[Proof of Theorem \ref{THM:ADJ:ept}]
A rigorous proof has to be addressed within an approximation scheme.
Anyhow, since the system is linear and the arguments are standard
we just point out the formal a priori estimates, leaving the details to the reader.

\noindent
{\bf First estimate:}
We multiply \eqref{adj:1} by $q$, \eqref{adj:2} by $-\dt p + p $,
\eqref{adj:3} to which we add to both sides the term $r$ 
by $- \dt r$ and \eqref{adj:3} by $-\Delta r$.
After integrating over $Q_t^T$ and adding the resulting equalities, we obtain
\begin{align*}
	& 
	\frac {\tau} 2 \IO2 {q}
	+  C_0 \int_{Q_t^T}|q|^2
	+ \eps \int_{Q_t^T}|\dt p|^2
	+ \frac \eps2 \IO2 p
	+ \frac 12  \IO2 {\nabla p}
	+ \int_{Q_t^T}|\nabla p|^2
	\\ & \qquad
	+\frac {\B+1}2 \norma{r(t)}^2
	+ \IO2 {\nabla r} + \B\int_{Q_t^T}|\nabla r|^2
	+ \int_{Q_t^T}|\dt r|^2
	+  \int_{Q_t^T}|\Delta r|^2
	\\ & \leq
	\underbrace{\frac 1{2\tau} \norma{\bO (\bph(T)-\phO)}^2
	+ \intQtT  \bQ(\bph-\phQ)q
	- \intQtT \ov\eta \Delta r q
	+ \intQtT (J*q)q}_{=:I_1}
	\\ & \qquad
	\underbrace{- \intQtT  \ov\CC \bs f'(\bph)r q
	+\intQtT  (\ov\P\bs-\A) f'(\bph)p q
	+\intQtT q p
	+ \intQtT \ov\CC f(\bph) r (\dt r+\Delta r)}_{=:I_2}
	\\ & \qquad
	\underbrace{- \intQtT \P f(\bph)p (\dt r+\Delta r)
	- \intQtT \ov\chi q (\dt r+\Delta r)
	- \intQtT r \dt r}_{=:I_3}.
\end{align*} 
By virtue of the regularity of $\bs$, the boundedness of $f$ and $f'$
and Young's inequality, we easily infer that
\begin{align*}
	|I_1| & \leq
	\d \I2 {\Delta r} 
	 + \cd \intQtT (|q|^2+1),
	 \\
	 |I_2|+|I_3| & \leq
	 \d \intQtT (|\dt r|^2+| {\Delta r} |^2)
	 + \cd \intQtT (|p|^2+|q|^2+|r|^2)
\end{align*}
for a positive constant $\d$ yet to be chosen.
Hence, we take $\d$ small enough and Gronwall's lemma along with elliptic regularity, produces
\begin{align*}
	\norma{p}_{\H1 H \cap \L\infty V}
	+ \norma{q}_{\L\infty H }
	+\norma{r}_{\H1 H \cap \L\infty V \cap \L2W}
	\leq M.
\end{align*}

\noindent
{\bf Second estimate:}
A comparison argument in \eqref{adj:2} easily produce an $\L2 H$ bound for $\Delta p$.
Hence, using elliptic regularity theory we easily infer that
\begin{align*}
	\norma{p}_{\L2 W}
	\leq M.
\end{align*}

\noindent
{\bf Third estimate:}
From the above estimate, 
a comparison argument in \eqref{adj:1} leads us to obtain
\begin{align*}
	\norma{\dt q}_{\L2 H}
	\leq M.
\end{align*}

\noindent
{\bf Fourth estimate:}
Notice that \eqref{adj:2} and \eqref{adj:3}
have a parabolic structure in $p$ and $r$ with zero final condition and source term bounded in $\L\infty H$.
Therefore, it easily follows from classical parabolic regularity theory that
\begin{align*}
	\norma{p}_{\W {1,\infty} H \cap \H1 V \cap \L\infty W}
	+ \norma{r}_{\W {1,\infty} H \cap \H1 V \cap \L\infty W}
	\leq C.
\end{align*}

Arguing in a similar fashion as for the linearized system, due to the linearity of the adjoint system \adjsys, the uniqueness directly follows from the above estimates and the proof is concluded.
\end{proof}

\subsection{Optimality conditions}
This section is devoted to the study of necessary conditions for 
optimality for the optimization problem $(CP)_\ept$.

First of all, we employ a classical tool to derive first-order necessary conditions for $(CP)_\ept$.
In fact, provided that $\J$ is sufficiently smooth and recalling the structure of $\Uad$, 
a first-order necessary condition for $\opt\in\Uad$ to be optimal is to verify
the following variational inequality
\begin{align}
	\label{foc:abst}
	\< D\Jred \opt , (\P,\chi,\eta,\CC) - \opt> \geq 0 
	\quad \hbox{for every $(\P,\chi,\eta,\CC) \in \Uad$},
\end{align}
where $D \Jred$ denotes the G\^ateaux derivative of the {\it reduced} cost functional defined as
\begin{align*}
	\Jred (\P,\chi,\eta,\CC) := \J (\S_1(\P,\chi,\eta,\CC),\P,\chi,\eta,\CC), \quad
	(\P,\chi,\eta,\CC)\in\Uad.
\end{align*}
Theorem \ref{THM:FRECHET} allows us to exploit this result to obtain an explicit expression in terms of the linearized variables.

\begin{theorem}[First-order necessary condition for optimality]
\label{THM:OPT:FIRST}
Assume {\bf A1--A7} and {\bf C1--C4}, and
let $\opt$ be an optimal control  for problem $(CP)_\ept$,
with corresponding state $(\bph,\bm,\bs)$.
Then, $\opt$ necessarily satisfies
\begin{align}
	&  \iO\bO  (\bph(T)-\phO) \xi(T)+
	\non \intQ \bQ(\bph - \phQ) \xi
	+ \aP (\ov{\P}-\P_*) (\P - \ov{\P} \,)\\
	\non
	& \quad  + \achi (\ov{\chi}-\chi_*) (\chi - \ov{\chi} )
	+ \aeta (\ov{\eta}-\eta_*) (\eta - \ov{\eta} )
	+ \aC (\ov{\CC}-\CC_*) (\CC -\ov{\CC} ) \geq 0\\
	\label{foc:first}
	&\hbox{for every $(\P,\chi,\eta,\CC) \in \Uad$,}
\end{align}
where $\xi$ is the first component of the unique solution $(\xi,\nu,\z)$
to the linearized system obtained by Theorem \ref{THM:LIN} associated to
${\bh}=( \P - \ov{\P} ,\, \chi - \ov{\chi} , \, \eta - \ov{\eta} , \, \CC -\ov{\CC} )$.
\end{theorem}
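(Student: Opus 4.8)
The plan is to apply the abstract first-order variational inequality \eqref{foc:abst} and then convert the G\^ateaux derivative of $\Jred$ into the explicit expression in terms of the linearized variable $\xi$. First I would observe that under the assumptions {\bf C1--C4} the cost functional $\J$ in \eqref{cost} is of class $C^1$ with respect to all its arguments (the tracking terms are quadratic in $\ph$, the parameter terms are quadratic in the controls), and that $\Uad$ is convex; hence, since $\opt$ minimizes $\Jred$ over $\Uad$, the classical necessary condition \eqref{foc:abst} holds. The content of the theorem is then entirely in computing $\<D\Jred\opt,\bh>$ for the admissible direction $\bh=(\P-\ov\P,\chi-\ov\chi,\eta-\ov\eta,\CC-\ov\CC)$, which is admissible precisely because $(\P,\chi,\eta,\CC)\in\Uad$ and $\Uad$ is convex.

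The main step is the chain rule. Writing $\Jred(\P,\chi,\eta,\CC)=\J(\S_1(\P,\chi,\eta,\CC),\P,\chi,\eta,\CC)$ and using that $\S$ is \Frechet\ differentiable at $\opt$ by Theorem~\ref{THM:FRECHET} with $D\S\opt[\bh]=(\xi,\nu,\z)$, I would differentiate to get
\begin{align*}
	\<D\Jred\opt,\bh> &= \bO\iO(\bph(T)-\phO)\,\xi(T)
	+ \bQ\intQ(\bph-\phQ)\,\xi \\
	&\quad + \aP(\ov\P-\P_*)h_\P
	+ \achi(\ov\chi-\chi_*)h_\chi
	+ \aeta(\ov\eta-\eta_*)h_\eta
	+ \aC(\ov\CC-\CC_*)h_\CC,
\end{align*}
where the first two terms are the partial derivative of $\J$ with respect to $\ph$ paired with $\xi$ (this requires only that the map $\ph\mapsto\frac{\bO}2\norma{\ph(T)-\phO}^2+\frac{\bQ}2\norma{\ph-\phQ}^2$ be $C^1$ on the space $\X$, which is immediate since evaluation at $T$ is continuous from $\H1H\cap\L\infty V$ into $H$), and the remaining terms are the explicit partial derivatives of $\J$ in the control variables. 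Substituting this expression into \eqref{foc:abst} and recalling $h_\P=\P-\ov\P$, $h_\chi=\chi-\ov\chi$, $h_\eta=\eta-\ov\eta$, $h_\CC=\CC-\ov\CC$ yields exactly \eqref{foc:first}.

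The only genuine technical point — and the place I would be careful — is justifying the differentiation of the composite map: one must combine the \Frechet\ differentiability of $\S$ (Theorem~\ref{THM:FRECHET}) with the $C^1$ regularity of $\J$ as a function on $\X\times\erre^4$ via the standard chain rule for \Frechet\ derivatives, and check that the $\ph$-partial of $\J$ is represented by the claimed integral functional against $\xi$. This is routine once one notes that $\X\hookrightarrow C^0([0,T];H)\times(L^2(Q))^2$ continuously, so that both $\xi(T)\in H$ and $\xi\in L^2(Q)$ are controlled by $\norma{(\xi,\nu,\z)}_\X$, making the pairing continuous. No further estimates are needed: the well-posedness and differentiability work has all been done in Theorems~\ref{THM:WP:STRONG}, \ref{THM:LIN}, and~\ref{THM:FRECHET}, so the proof is short.
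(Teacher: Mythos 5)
Your proposal is correct and follows essentially the same route as the paper: the paper's proof likewise invokes the Fr\'echet differentiability of $\S$ from Theorem~\ref{THM:FRECHET}, applies the chain rule to conclude that $\Jred$ is differentiable at $\opt$, and then reads off \eqref{foc:first} from the abstract variational inequality \eqref{foc:abst}. The extra checks you flag (convexity of $\Uad$, continuity of the evaluation $\xi\mapsto\xi(T)$ on $\X$, and the explicit form of the partial derivatives of $\J$) are exactly the routine points the paper leaves implicit.
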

\begin{proof}
By Theorem~\ref{THM:FRECHET} and the usual chain 
rule for \Frechet\--differentiable functions,
it follows immediately that the reduced cost functional 
$\Jred:\Uad\to\erre$ is Fr\'echet-differentiable at $\opt$.
Hence, the optimality of $\opt$ yields directly \eqref{foc:abst},
which in turn reads as \eqref{foc:first}.
\end{proof}

The next step consists in simplifying the necessary conditions 
for the minimizer presented above, by using the adjoint system.
\begin{theorem}[Final first-order necessary conditions for optimality]
\label{THM:OPT:FINAL}
Assume {\bf A1--A7} and {\bf C1--C4},
let $(\ov{\P}, \ov{\chi} , \ov{\eta} , \ov{\CC} )\in\Uad$
be an optimal control for $(CP)_\ept$, and let
$(\bph,\bm,\bs)$ and $(p,q,r)$ be the corresponding state and adjoint variables, respectively.
Then, $\opt$ necessarily verifies
\begin{align}
	\non
	& \intQ (\P - \ov{\P} \,) \bs f(\bph) p
	+ \intQ (\chi - \ov{\chi} ) \bs q
	- \intQ (\eta - \ov{\eta} ) \Delta \bph \, r
	- \intQ (\CC - \ov{\CC} ) \bs f(\bph) \, r
	\\ & \quad
	\non
	+ \aP (\ov{\P}-\P_*)(\P - \ov{\P} \,) 
	+ \achi (\ov{\chi}-\chi_*) (\chi - \ov{\chi} \,)\\
	\label{opt:final}
	&\quad+ \aeta (\ov{\eta}-\eta_*) (\eta - \ov{\eta} \,)
	+ \aC (\ov{\CC}-\CC_*)  (\CC - \ov{\CC} \,)
	\geq 0
	\qquad\text{for every $(\P,\chi,\eta,\CC) \in \Uad$.}
\end{align}
\end{theorem}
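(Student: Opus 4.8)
The plan is to combine the variational inequality \eqref{foc:first} obtained in Theorem~\ref{THM:OPT:FIRST} with the adjoint system \adjsys, eliminating the linearized variable $\xi$ in favour of the terms involving the increment $\bh=(\P-\ov\P,\chi-\ov\chi,\eta-\ov\eta,\CC-\ov\CC)$ and the adjoint variables $(p,q,r)$. The whole argument reduces to a single integration-by-parts identity: I want to show that
\begin{align*}
	\iO \bO(\bph(T)-\phO)\xi(T) + \intQ \bQ(\bph-\phQ)\xi
	&= \intQ h_\P\,\bs f(\bph)\,p + \intQ h_\chi\,\bs\,q \\
	&\quad - \intQ h_\eta\,\Delta\bph\,r - \intQ h_\CC\,\bs f(\bph)\,r,
\end{align*}
after which substitution into \eqref{foc:first} gives exactly \eqref{opt:final}.

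The key steps, in order, are as follows. First I would test the three adjoint equations \eqref{adj:1}--\eqref{adj:3} against the linearized variables $\xi$, $p$ against some combination involving $\nu$, and $r$ against $\zeta$; symmetrically, I would test the three linearized equations \eqref{lin:1}--\eqref{lin:3} against the adjoint variables $p$, $q$, $r$ respectively. Concretely: test \eqref{lin:1} by $p$, \eqref{lin:2} by $q$, \eqref{lin:3} by $r$; and test \eqref{adj:1} by $\xi$, \eqref{adj:2} by $\nu$ (or rather by a term producing $\Delta p$ paired with $\nu$), \eqref{adj:3} by $\zeta$. Then I integrate over $Q$ and sum. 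Second, I carefully match the bilinear terms: the convolution terms $-J*\xi$ against $q$ and $-J*q$ against $\xi$ cancel by the evenness of $J$ (assumption~\ref{ass:5}, using Fubini); the terms with $a\xi$, $F''(\bph)\xi$, $\chi\zeta$ pair up and cancel against their adjoint counterparts; the Laplacian and chemotaxis/active-transport couplings cancel after integration by parts using the boundary conditions \eqref{lin:4} and \eqref{adj:4}. Third — and this is the delicate bookkeeping — the time-derivative terms $\dt(\eps\nu+\xi)$ tested by $p$, and $-\dt(p+\tau q)$ and $-\eps\dt p$ tested by $\xi$ and $\nu$, must be integrated by parts in time; the boundary contributions at $t=0$ vanish because $\xi(0)=\nu(0)=\zeta(0)=0$ by \eqref{lin:5}, while the contributions at $t=T$ are precisely $\bO(\bph(T)-\phO)\xi(T)$ from the terminal conditions \eqref{adj:5} (the $\eps p(T)=0$ and $r(T)=0$ conditions kill the other terminal terms). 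What survives on one side is the pair of tracking terms $\bQ(\bph-\phQ)\xi$ plus the terminal term, and on the other side exactly the four source terms carrying $h_\P,h_\chi,h_\eta,h_\CC$.

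The main obstacle I anticipate is the rigorous justification of all these integration-by-parts manipulations at the stated regularity level: the linearized variables satisfy only $\xi\in H^1(0,T;H)\cap L^\infty(0,T;V)$ and $\nu,\zeta\in H^1(0,T;V^*)\cap L^2(0,T;V)$, while the adjoint variables enjoy $p,r\in W^{1,\infty}(0,T;H)\cap H^1(0,T;V)\cap L^\infty(0,T;W)$ and $q\in H^1(0,T;H)\cap L^\infty(0,T;H)$, so several of the pairings are duality pairings $\langle\cdot,\cdot\rangle$ between $V^*$ and $V$ rather than $L^2$ inner products, and the formula $\frac{d}{dt}\langle u,v\rangle = \langle u',v\rangle + \langle v',u\rangle$ must be invoked in the correct generalized form. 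The cleanest way around this is to perform the computation first on the Galerkin (or time-discretized) approximations used to construct both systems — where everything is smooth and the identity is a finite-dimensional exact calculation — and then pass to the limit using the convergences established in the proofs of Theorems~\ref{THM:LIN} and~\ref{THM:ADJ:ept}. Since the statement of Theorem~\ref{THM:OPT:FINAL} only asserts the inequality \eqref{opt:final}, and since the approach is by now entirely standard for this class of problems, I would carry out the Galerkin-level computation schematically and indicate that the passage to the limit is routine, concluding that \eqref{opt:final} follows from \eqref{foc:first} upon inserting the identity above.
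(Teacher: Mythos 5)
Your proposal is correct and follows essentially the same route as the paper: the paper's proof likewise tests the linearized equations \eqref{lin:1}--\eqref{lin:3} by $p$, $q$, $r$, integrates by parts in time using \eqref{lin:5} and \eqref{adj:5}, rearranges so that the adjoint equations \eqref{adj:1}--\eqref{adj:3} annihilate all terms in $\xi$, $\nu$, $\zeta$, and is left with exactly the identity \eqref{proof:foc} that you state, which combined with \eqref{foc:first} gives \eqref{opt:final}. Your remark about justifying the duality pairings at the approximation level is a reasonable extra precaution, but the paper treats these manipulations as standard and does not elaborate further.
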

\begin{proof}
We note that \eqref{opt:final} directly follows from \eqref{foc:first}, provided to
show the identity
\begin{align}
&  \non\intQ \bQ (\bph-\phQ) \xi + \iO \bO (\bph(T)-\phO) \xi (T) 
\\ & \qquad =
\intQ h_\P \bs f(\bph) p
	+ \intQ h_{\chi} \bs q
	- \intQ h_{\eta} \Delta \bph \, r
	- \intQ h_\CC \bs f(\bph) \, r
	\label{proof:foc}
\end{align}
with $\bh=(\P- \ov{\P}, \chi - \ov{\chi}, \eta - \ov{\eta}, \CC- \ov{\CC}).$
To this end, we 
multiply \eqref{lin:1}--\eqref{lin:3} by $p,q,$ and $r$ in the order,
integrate over $Q$, and sum the equalities to obtain 
\begin{align*}
0& = \intQ p [\dt(\eps  \nu+ \xi)- \Delta \nu-(\ov \P \bs-\A) 
f'(\bph)\xi-\ov \P\z f(\bph)-h_\P \bs f(\bph) ]
\\ 
& \quad + \intQ q [- \nu + \tau\dt \xi
	+a \xi -J * \xi
	+ F''(\bph)\xi
	- \ov\ch \z
	- h_{\ch} \bs]
\\ 
& + \quad \intQ r [\dt \z - \Delta \z	+\B\z 
+ \ov \CC ( \z f(\bph) + \bs f'(\bph)\xi ) + h_\CC \bs f(\bph) + \ov\eta \Delta \xi+ h_{\eta} \Delta \bph].
\end{align*}
The terms involving the time derivatives can be easily handled by integrating by parts and using the initial conditions \eqref{lin:5} and the terminal conditions \eqref{adj:5} to obtain that
\begin{align*}
	 &\intQ p\dt(\eps  \nu+ \xi) 
	 +\intQ \tau q\dt \xi +\intQ \dt \z r \\
	 	 & \quad= - \intQ \eps \dt p \nu 
	 		- \intQ \dt p \xi- \intQ \tau\dt q \xi  
			- \intQ \dt r \z
	 		+ \iO (p + \tau q)(T)\xi(T)
	 	\\ &\quad = - \intQ \eps \dt p \nu - \intQ \dt(p+ \tau q) \xi 
		- \intQ \dt r \z
		+ \iO \bO(\bph(T)-\phO)\xi  (T).
\end{align*}
Moreover, integrating by parts and rearranging the terms we get
\begin{align*}
0& = \intQ \xi [ -\dt (p +\tau q)
	+a q - J*q
	+\ov\eta \Delta r
	+ F''(\bph)q
	+\ov \CC\bs f'(\bph) r
	- (\ov \P\bs {-\A})f'(\bph) p]
\\ 
 &\quad + \intQ \nu [-\eps \dt p	- \Delta p 	- q  ]
+\intQ \z [-\dt r
	- \Delta r
	+ (\B + \ov \CC f(\bph)) r
	- \ov \P f(\bph)p
	- {\ov \ch} q]
\\ & \quad
	-\intQ h_\P \bs f(\bph) p
	- \intQ h_{\chi} \bs q
	+\intQ h_{\eta} \Delta \bph \, r
	+ \intQ h_\CC \bs f(\bph) \, r
	+ \iO \bO(\bph(T)-\phO)\xi  (T).
\end{align*}
Hence, we recall the definition of the adjoint variables \eqref{adj:1}--\eqref{adj:3} to realize that the most part of the above terms simplify and the remaining equality is \eqref{proof:foc}, as we claimed. 
\end{proof}

\section{Asymptotic analysis}
\label{sec:asymp}
The goal of this section is to exploit the results established so far for $\CPept$ in the case $\ept >0$
to show that we can solve the optimal controls $\CPeps, \CPtau, \CPoo$
through asymptotic arguments.
In particular, we aim at passing to the limit as $\eps$ and $\tau$ go to zero, 
both separately and jointly,
in the optimality condition \eqref{opt:final}. 

As the asymptotic analysis for the state system has already been recalled in 
Theorems \ref{THM:PREV:ASY:epszero}, \ref{THM:PREV:ASY:tauzero} and \ref{THM:PREV:ASY:epstautozero} (see also \cite{SS}), 
the first novelty addressed here consists in understanding the asymptotic behavior 
of the adjoint system.
In this direction, we show that the adjoint variables,
depending on $\ept>0$, converge in some topology.
To this end, we begin with obtaining some 
uniform estimates with respect to $\ept$ so to pass to the limit using classical weak and weak-star
compactness arguments. 

The second step consists in approximate the optimal controls of $\CPeps, \CPtau, \CPoo$
by means of sequences of optimal controls of $\CPept$.
A combination of these steps will allow us to rigorously pass to the limit in the optimality conditions \eqref{opt:final}, recovering thus the corresponding ones for $\CPeps, \CPtau,$ and $\CPoo$.

\subsection{Uniform estimates on the adjoint problem}
\label{ssec:est_adj}
In this subsection, we assume to be in the setting
of either Theorem~\ref{THM:PREV:ASY:epszero}, or \ref{THM:PREV:ASY:tauzero}, or \ref{THM:PREV:ASY:epstautozero}.
For every $\eps\in(0,\eps_0)$ and $\tau\in(0,\tau_0)$,
let $(\P_\ept,\chi_\ept,\eta_\ept,\CC_\ept)\in\Uad$ be an admissible control,
and let $(\ov\ph_{\ept}, \ov\m_\ept,\ov\s_\ept)$ and $(p_\ept,q_\ept,r_\ept)$ denote the unique
solutions to the state system \statesys\ 
and the adjoint system \adjsys\ with $\ept>0$, respectively.

First of all, performing the same estimate as in the proof 
of Theorem~\ref{THM:ADJ:ept}, 
noting that $\{\ov\varphi_\ept\}_\ept$ is always
bounded in $C^0([0,T]; H)$ uniformly in both $\eps$ and $\tau$
thanks to Theorems~\ref{THM:PREV:ASY:epszero}, \ref{THM:PREV:ASY:tauzero} and \ref{THM:PREV:ASY:epstautozero},
we have that 
\begin{align}
	& \non
	\frac {\tau} 2 \IO2 {q_\ept}
	+  C_0 \int_{Q_t^T}|q_\ept|^2 
	+ \eps \int_{Q_t^T}|\dt p_\ept|^2
	+ \frac \eps2 \IO2 {p_\ept}
	+ \frac 12 \IO2 {\nabla p_\ept}
	+ \int_{Q_t^T} |{\nabla p_\ept}|^2
	\\ & \quad \non
	+\frac {\B+1}2 \IO2 {r_\ept}
	+ \IO2 {\nabla r_\ept} 
	+ \B  \int_{Q_t^T} | {\nabla r_\ept}|^2 	
	+ \int_{Q_t^T}|\dt r_\ept|^2
	+ \int_{Q_t^T}|\Delta r_\ept|^2
	\\ & \leq \non
	 M\left(\frac {\bO^2}{2\tau} + 1\right) + \delta\int_{Q_t^T}|q_\ept|^2
	- \intQtT \eta_\ept \Delta r_\ept q_\ept
	+M(\delta) \intQtT(|\bs_\ept p_\ept|^2+|\bs_\ept r_\ept|^2)
	\\ & \quad \non
	+ \int_{Q_t^T}(J*q_\ept)q_\ept
	+\delta'\int_{Q_t^T}(|\dt r_\ept|^2 + |\Delta r_\ept|^2)
	+ M(\delta,\delta')\intQtT (|r_\ept|^2 + |p_\ept|^2)
	\\ & \quad \label{est1_adj}
	- \intQtT \chi_\ept q_\ept (\dt r_\ept+\Delta r_\ept),
\end{align} 
where the constants $M$, $\delta$, $\delta'$, $M(\delta)$, and $M(\delta,\delta')$ 
are independent of 
$\eps$ and $\tau$.
The H\"older inequality and the continuous inclusion $V\hookrightarrow L^4(\Omega)$ yield also
\[
  M(\delta) \intQtT(|\bs_\ept p_\ept|^2+|\bs_\ept r_\ept|^2) \leq
  \cd \int_t^T\|\bs_\ept\|_V^2(\|p_\ept\|_V^2+\|r_\ept\|_V^2).
\]
Secondly, taking the mean of \eqref{adj:1} we have 
\begin{align*}
	&-\dt ((p_\ept)_\Omega +\tau (q_\ept)_\Omega)
	+ (F''(\bph_\ept)q_\ept)_\Omega + \CC_\ept(\bs_\ept f'(\bph_\ept) r_\ept)_\Omega\\
	&\qquad= ((\P_\ept\bs_\ept -\A)f'(\bph_\ept) p_\ept)_\Omega +\bQ( (\bph_\ept-\phQ))_\Omega,
\end{align*}
so that testing this latter by $(p_\ept)_\Omega +\tau (q_\ept)_\Omega$ we get
\begin{align}
  \non\frac12|(p_\ept(t))_\Omega +\tau (q_\ept(t))_\Omega|^2
  &\leq\frac{\bO^2}{2}|(\ov\varphi_\ept(T)-\varphi_\Omega)_\Omega|^2
  +\int_t^T\beta_Q^2\|\bph_\ept-\phQ\|_1^2\\
  &\quad\non
  +M(\delta)\int_t^T|(p_\ept)_\Omega +\tau (q_\ept)_\Omega|^2
  (1+\|F''(\bph_\ept)\|^2)+\delta\int_t^T\|q_\ept\|^2\\
  &\quad\label{est2_adj} +
  M\int_t^T(\|\bs_\ept r_\ept\|^2_1
  +\|\bs_\ept p_\ept\|^2_1+\| p_\ept\|^2_1)
\end{align}
where again the constants $M$ and $M(\d)$ are independent of $\eps$ and $\tau$.
Now, since $\tau\leq1$, by the Jensen inequality we have
\[
  \frac18|(p_\ept(t))_\Omega|^2\leq 
  \frac14|(p_\ept(t))_\Omega +\tau (q_\ept(t))_\Omega|^2
  +\frac{\tau}{4|\Omega|}\|q_\ept(t)\|^2,
\]
so that summing \eqref{est1_adj} and \eqref{est2_adj},
using again the fact that $\{\ov\varphi_\ept\}_\ept$ is always
bounded in $C^0([0,T]; H)$ uniformly in both $\eps$ and $\tau$,
and rearranging the terms,
by the Poincar\'e-Wirtinger inequality we infer that
 \begin{align}
	& \non
	m\tau \IO2 {q_\ept}
	+  C_0 \int_{Q_t^T}|q_\ept|^2
	+ \eps \int_{Q_t^T}|\dt p_\ept|^2
	+ \frac \eps2 \IO2 {p_\ept}
	+m\|p_\ept(t)\|_V^2
	\\ & \qquad \non
	+  \int_{Q_t^T} |\nabla p_\ept|^2
	+\frac 12\|r_\ept(t)\|_V^2
	+\B  \int_{Q_t^T} | {\nabla r_\ept}|^2
	+ \int_{Q_t^T}|\dt r_\ept|^2
	+ \int_{Q_t^T}|\Delta r_\ept|^2
	\\ & \leq \non
	 M\left(\frac {\bO^2}{2\tau} + 1\right) 
	 +\delta\int_{Q_t^T}|q_\ept|^2
	 +\delta'\int_{Q_t^T}(|\dt r_\ept|^2 + |\Delta r_\ept|^2)
	 +\int_{Q_t^T}(J*q_\ept)q_\ept\\
         & \qquad \non
         + M(\delta,\delta')\int_t^T (\|r_\ept\|^2 + \|p_\ept\|^2)
	 +\cd\int_t^T\|F''(\bph_\ept)\|^2(\|p_\ept\|^2+\tau\|q_\ept\|^2)\\
	&\qquad \non
	+\cd \int_t^T\|\bs_\ept\|_V^2(\|p_\ept\|_V^2+\|r_\ept\|_V^2)
	- \intQtT \eta_{\eps,\tau} \Delta r_\ept q_\ept
	\\ & \qquad \label{est3_adj}
	- \intQtT \chi_{\eps,\tau} q_\ept (\dt r_\ept+\Delta r_\ept),
\end{align} 
where $\delta,\delta'>0$ are arbitrary, and $m,M, M(\delta), M(\delta,\delta')>0$
are independent of $\eps$ and $\tau$.

\subsection{The optimization problem $(CP)_\tau$}
\label{ssec:eps0}
Here, we solve $\CPtau$ through an asymptotic approach by exploiting the proved results for $\CPept$ by letting $\eps \to 0$. Throughout the whole Section~\ref{ssec:eps0}, we 
assume the following framework:
\[
\tau\in(0,\tau_0) \text{ fixed,} \qquad \eta_{\rm max}=\alpha_\eta=0,
\qquad\text{\eqref{init_eps0}--\eqref{conv_init_eps0}--\eqref{bound_init_eps0}}.
\]
This means that we neglect the term in $\eta$ in the cost functional and in the state system,
implying that all the admissible controls are in the form $(\P,\chi,0,\CC)$
and that \eqref{coeff_eps0} is automatically satisfied:
with a slight abuse of notation, we look at $\Uad$ as a compact 
set in $\erre^3$, and use the symbol $(\P,\chi,\CC)$ for the generic admissible control in $\Uad$.

The first result that we present concerns existence of optimal controls for $\CPtau$.
\begin{theorem}
Assume {\bf A1--A8} and {\bf C1--C3}.
Then, the optimization problem $\CPtau$
admits a solution.
\end{theorem}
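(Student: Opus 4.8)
The plan is to run the direct method of the calculus of variations, closely following the proof of Theorem~\ref{THM:EXISTECE:MIN}, but with the state system \statesys\ replaced by its $\eps=0$ limit: the $\tau>0$, $\eta\equiv0$ system from Theorem~\ref{THM:PREV:ASY:epszero}, which under \ref{ass:1}--\ref{ass:8} admits a unique solution, so that the control-to-state map is well defined on $\Uad$ (now a compact subset of $\erre^3$ in the variables $(\P,\chi,\CC)$). First I would pick a minimizing sequence $\{(\P_n,\chi_n,\CC_n)\}_n\subset\Uad$ for $\J$ (legitimate since $\J\geq0$) and let $(\ph_n,\mu_n,\sigma_n)$ be the unique solution of the $\eps=0$ state system associated with $(\P_n,\chi_n,\CC_n,0)$ and the fixed initial data fulfilling \eqref{init_eps0}.

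The crucial point is to produce bounds on $(\ph_n,\mu_n,\sigma_n)$ that are uniform in $n$. To this end I would revisit the a priori estimates underlying the well-posedness of the limit system in \cite{SS} and observe that the control enters only through the coefficients $\P_n,\chi_n,\CC_n$, which are bounded by $\P_{\rm max},\chi_{\rm max},\CC_{\rm max}$, and through the bounded nonlinearity $f$; hence those estimates are in fact uniform over the compact set $\Uad$. This yields $\{\ph_n\}$ bounded in $\H1 H\cap\L\infty V$, $\{\mu_n\}$ in $\L2 V$, and $\{\sigma_n\}$ in $\H1{V^*}\cap\L2 V\cap L^\infty(Q)$ with $0\leq\sigma_n\leq1$ a.e.~in $Q$; moreover, reading off $F'(\ph_n)=\mu_n-\tau\dt\ph_n-a\ph_n+J*\ph_n+\chi_n\sigma_n$ and using that \ref{ass:8} forces $F'$ to grow at most cubically, together with the embedding $V\hookrightarrow L^6(\Omega)$, I would also extract a uniform bound for $\{F'(\ph_n)\}$ in $L^2(Q)$. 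Next, by compactness of $\Uad$ I pass to a subsequence along which $(\P_n,\chi_n,\CC_n)\to(\ov\P_\tau,\ov\chi_\tau,\ov\CC_\tau)\in\Uad$, and weak/weak-$*$ compactness together with the Aubin--Lions lemma (cf.~\cite{Simon}) provide a limit $(\ov\ph_\tau,\ov\mu_\tau,\ov\sigma_\tau)$ enjoying the regularity of Theorem~\ref{THM:PREV:ASY:epszero}; in particular one gets the strong convergences $\ph_n\to\ov\ph_\tau$ in $C^0([0,T];H)$ and $\sigma_n\to\ov\sigma_\tau$ in $L^2(0,T;H)$, hence, along a further subsequence, a.e.~in $Q$.

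It then remains to identify the limit and to conclude by semicontinuity. Passing to the limit in the variational formulation of the $\eps=0$ system, all linear terms are handled by the weak convergences, while the products $\chi_n\sigma_n$, $(\P_n\sigma_n-\A)f(\ph_n)$ and $\CC_n\sigma_n f(\ph_n)$ are dealt with by combining the strong convergence of $\ph_n$ and $\sigma_n$ with the Lipschitz continuity and boundedness of $f$; for the potential term I would use that $\ph_n\to\ov\ph_\tau$ a.e.~in $Q$ together with the $L^2(Q)$-bound on $\{F'(\ph_n)\}$ to obtain $F'(\ph_n)\wto F'(\ov\ph_\tau)$ in $L^2(Q)$. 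This shows that $(\ov\ph_\tau,\ov\mu_\tau,\ov\sigma_\tau)$ is the state associated with $(\ov\P_\tau,\ov\chi_\tau,\ov\CC_\tau,0)$. Finally, the cost functional \eqref{cost} (with $\aeta=0$, i.e.~in the variables $(\P,\chi,\CC)$) is a sum of squared $L^2$-norms of affine functionals of the state and of squared Euclidean distances of the control, hence sequentially weakly lower semicontinuous --- in fact, thanks to the strong convergences collected above, $\J(\ph_n,\P_n,\chi_n,\CC_n)$ actually converges to $\J(\ov\ph_\tau,\ov\P_\tau,\ov\chi_\tau,\ov\CC_\tau)$ along our subsequence --- so that $(\ov\P_\tau,\ov\chi_\tau,\ov\CC_\tau)$ is a minimizer of $\CPtau$. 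The step I expect to be the main obstacle is the extraction of $n$-uniform estimates for the low-regularity $\eps=0$ state system over $\Uad$, and, relatedly, the passage to the limit in $F'(\ph_n)$, both of which hinge essentially on assumption \ref{ass:8}.
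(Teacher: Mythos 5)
Your proposal is correct and follows essentially the same route as the paper, which simply adapts the direct method of Theorem~\ref{THM:EXISTECE:MIN} to the $\eps=0$ state system, using the compactness of $\Uad$ and the well-posedness/convergence framework of Theorem~\ref{THM:PREV:ASY:epszero}. The extra details you supply (uniformity of the a priori bounds over $\Uad$, the $L^2(Q)$ bound on $F'(\ph_n)$ via \ref{ass:8} and $V\hookrightarrow L^6(\Omega)$, and the weak identification of $F'(\ph_n)$ through a.e.\ convergence) are exactly the points the paper leaves implicit, and they are handled correctly.
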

\begin{proof}
This result follows directly by adapting the direct method used in
the proof of Theorem~\ref{THM:EXISTECE:MIN}, taking into account the 
compactness of $\Uad$ and the convergence Theorem~\ref{THM:PREV:ASY:epszero}.
\end{proof}

After the existence is established, our main goal is to provide some necessary conditions for optimality by letting $\eps\to0$ in \eqref{opt:final} written with the subscripts $\ept$.
Let then $\redopttau\in\Uad$ be an optimal control 
for problem $(CP)_\tau$, and let $(\ov\ph_\tau, \ov\m_\tau,\ov\s_\tau)$
be the corresponding state variables solving \statesys\
with $\eps=0$, in the sense of Theorem~\ref{THM:PREV:ASY:epszero}.
Formally we expect that, as $\eps \to 0$, the optimality condition 
reads
\begin{align*}
	& \intQ (\P - \ov{\P}_\tau \,) \bs_\tau f(\bph_\tau) p_\tau
	- \intQ (\chi - \ov{\chi}_\tau ) \bs_\tau q_\tau
	- \intQ (\CC - \ov{\CC}_\tau ) \bs_\tau f(\bph_\tau) \, r_\tau
	\\ & \quad
	+ \aP (\ov{\P}_\tau-\P_*)(\P- \ov{\P}_\tau \,)
	+ \achi (\ov{\chi}_\tau -\chi_*)(\chi - \ov{\chi}_\tau \,)
	\\ & \quad
	+ \aC (\ov{\CC}_\tau-\CC_*)(\CC - \ov{\CC}_\tau \,)
	\geq 0 \quad \hbox{for every $(\P,\chi,\CC) \in \Uad$,}
\end{align*}
where $(p_\tau,q_\tau,r_\tau)$ stands for some
adjoint variables solving \adjsys\ with $\eps =0$, whose meaning is yet to be defined.
Unfortunately, the situation is slightly more delicate.
In fact, even if we prove that the adjoint variables 
 $(p_\ept,q_\ept,r_\ept)$ 
converge to some limit $(p_\tau,q_\tau,r_\tau)$ in a suitable sense as $\eps \to 0$, 
it is not obvious that every optimal control $\redopttau$
can be recovered as the limit of a sequence of optimal controls $\{\redoptept\}_\eps$ of $\CPept$.

To overcome this issue we follow the same line of argument of \cite{BARBU}(see also \cite{signori2,signori3,signori4} in the context of tumor growth models).
We introduce a different cost functional, called {\it adapted}, depending on
the fixed minimizer $\redopttau$ of $\CPtau$, which is defined as
\begin{align*}
	\Jad(\ph, \P, \chi, \CC) : = \J(\ph, \P, \chi, \CC)  
	+ \tfrac 12 |\P -  {\ov \P}_\tau|^2
	+ \tfrac 12 |\chi - {\ov \chi}_\tau|^2
	+ \tfrac 12 |\CC - {\ov \CC}_\tau|^2.
\end{align*}
Keeping the optimal control $\redopttau$ of $\CPtau$ fixed, note that
$\Jad \equiv \J$ on the minimizers of $\CPtau$.
The main idea behind this local perturbation concerns the fact that for the 
associated optimal control problem, which will be referred to as adapted, 
we can obtain a compactness-type property.
Namely, we prove that every arbitrary minimizer $\redopttau$ of $\CPtau$
can be recovered as limit 
of a sequence of minimizers of $\CPad$, as $\eps \to 0$.
The just mentioned adapted optimal control problem associated with $\ept$ reads as
\begin{align}
	\non
	\CPept^{\rm ad} \quad
	&\hbox{Minimize $\Jad(\ph,\P,\ch,\CC)$ subject to:}\\
	&\hbox{(i) $(\ph,\mu,\s)$ yields a solution to \statesys;}\non
	\\ 
	& \hbox{(ii) $(\P,\chi,\CC) \in \Uad.$}
	\label{CP:adapted}
\end{align}
In a sense to be made rigorous later, we will prove that 
$\CPept^{\rm ad}\searrow \CPtau$ so that
the passage to the limit as $\eps \to 0$ in the variational inequality \eqref{opt:final} can be rigorously performed producing in turn the optimality condition of $\CPtau$.
Since $\CPad$ fulfils the same assumptions of $\CPept$, 
for what we already proved in Section~\ref{sec:CP_ept} we readily infer the following.

\begin{lemma}
\label{LEM:EXISTENCE:ADAPTED}
Assume \ref{ass:1}--\ref{ass:8} and \ref{ass:op:1}--\ref{ass:op:3}.
Then, for every $\eps\in(0,\eps_0)$ and 
for every optimal control $({\ov \P}_\tau,{\ov \chi}_\tau,{\ov \CC}_\tau)\in\Uad$ of $(CP)_\tau$,
the optimization problem $\CPad$ admits a minimizer.
\end{lemma}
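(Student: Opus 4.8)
The plan is to observe that the adapted control problem $\CPad$ is, structurally, an exact copy of $\CPept$: the state system \statesys\ is left untouched, the admissible set $\Uad$ — which in the present framework ($\eta_{\rm max}=\aeta=0$) is a nonempty compact subset of $\erre^3$ — is untouched, and the adapted cost
\[
\Jad(\ph,\P,\chi,\CC)=\J(\ph,\P,\chi,\CC)+\tfrac12|\P-\ov\P_\tau|^2+\tfrac12|\chi-\ov\chi_\tau|^2+\tfrac12|\CC-\ov\CC_\tau|^2
\]
differs from $\J$ only by the addition of a nonnegative, smooth function of the controls alone. In particular $\Jad\geq0$ and it retains the two properties of $\J$ that feed the direct method, namely weak sequential lower semicontinuity in the state variable and continuity in the (finite-dimensional) control variables. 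Consequently, I would simply replay the argument of Theorem~\ref{THM:EXISTECE:MIN} essentially verbatim.

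Concretely: fix $\eps\in(0,\eps_0)$ and an optimal control $(\ov\P_\tau,\ov\chi_\tau,\ov\CC_\tau)\in\Uad$ of $\CPtau$; since $\Jad\geq0$, pick a minimizing sequence $\{(\P_n,\chi_n,\CC_n)\}_n\subset\Uad$ with associated states $\{(\ph_n,\m_n,\s_n)\}_n=\{\S(\P_n,\chi_n,\CC_n)\}_n$ (all with the prescribed initial data for that $\eps$). By compactness of $\Uad$ in $\erre^3$ we may assume $(\P_n,\chi_n,\CC_n)\to(\ov\P,\ov\chi,\ov\CC)\in\Uad$. By the uniform bound \eqref{estimate:strong} of Theorem~\ref{THM:WP:STRONG}, which is independent of the control in $\Uad$, the states are bounded in the regularity spaces appearing there, so along a subsequence they converge weakly-$^*$ in those spaces and, by the Aubin--Lions--Simon lemma (cf.~\cite{Simon}), strongly enough to pass to the limit in the weak formulation of \statesys; this identifies the limit as $\S(\ov\P,\ov\chi,\ov\CC)$. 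Finally, weak lower semicontinuity of the tracking part of $\J$, together with the plain convergence of the remaining finite-dimensional terms of $\Jad$ (continuous in $(\P_n,\chi_n,\CC_n)\to(\ov\P,\ov\chi,\ov\CC)$), yields that $(\ov\P,\ov\chi,\ov\CC)$ together with its state attains the infimum, hence is a minimizer of $\CPad$.

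I do not expect any genuine obstacle here: the only point to keep an eye on is that the perturbation terms in $\Jad$ involve the controls only, so they are continuous — not merely lower semicontinuous — along the convergence of the controls and therefore do not spoil the lower semicontinuity of $\Jad$; moreover they are nonnegative, so $\Jad$ is bounded below and the hypothesis ``the weights are not all zero'' is trivially inherited (indeed reinforced by the new weights $\tfrac12$). The actual purpose of passing to the adapted functional — gaining a compactness property for the optimal controls as $\eps\to0$ — plays no role in this mere existence statement and will be exploited only in the subsequent asymptotic analysis.
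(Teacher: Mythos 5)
Your proposal is correct and follows exactly the route the paper takes: the paper simply observes that $\CPad$ satisfies the same hypotheses as $(CP)_\ept$ and infers existence from the direct-method argument of Theorem~\ref{THM:EXISTECE:MIN}, which is precisely what you replay (with the harmless extra observation that the control-only perturbation terms are continuous and nonnegative). No gap.
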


\begin{lemma}
\label{LEM:OPT:ADAPTED:EPS}
Assume  \ref{ass:1}--\ref{ass:7} and \ref{ass:op:1}--\ref{ass:op:4}, and let
$({\ov \P}_\tau,{\ov \chi}_\tau,{\ov \CC}_\tau)\in\Uad$
be an optimal control for $(CP)_\tau$.
For every $\eps\in(0,\eps_0)$, if
$({\ov \P}_\ept,{\ov \chi}_\ept,{\ov \CC}_\ept)\in\Uad$
is an optimal control for $\CPad$,
then the following first-order necessary condition holds
\begin{align}
	&\non \intQ (\P - \ov{\P}_\ept \,) \bs_\ept f(\bph_\ept) p_\ept
	- \intQ (\chi - \ov{\chi}_\ept ) \bs_\ept q_\ept
	- \intQ (\CC - \ov{\CC}_\ept ) \bs_\ept f(\bph_\ept) \, r_\ept
	\\ & \quad  \non
	+ (\P - \ov{\P}_\ept \,)\big( \aP (\ov{\P}_\ept-\P_*) + (\ov{\P}_\ept -\ov{\P}_\tau)\big)
	+(\chi - \ov{\chi}_\ept \,)\big( \achi (\ov{\chi}_\ept-\chi_*) + (\ov{\chi}_\ept-\ov{\chi}_\tau)\big)
	\\ & \quad
	+ (\CC - \ov{\CC} _\ept\,)\big( \aC (\ov{\CC}_\ept-\CC_*) + (\ov{\CC} _\ept-\ov{\CC}_\tau) \big)
	\geq 0
	\qquad
	\hbox{for every $(\P,\chi,\CC) \in \Uad$,}
	\label{opt:ad:epstozero}
\end{align}
where $(\bph_\ept,\bm_\ept,\bs_\ept)$ and $(p_\ept,q_\ept, r_\ept)$ denote the corresponding unique solutions to \statesys\ and \adjsys\ with $\ept>0$.
\end{lemma}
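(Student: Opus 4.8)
The plan is to recognise the adapted problem $\CPad$ introduced in \eqref{CP:adapted} as an instance of the optimal control problem $\CPept$ already analysed in Section~\ref{sec:CP_ept}, and to rerun that analysis with $\J$ replaced by $\Jad$. Indeed, for the fixed positive parameters $\eps$ and $\tau$, the state system underlying $\CPad$ is still \statesys, so the control-to-state map $\S$ and all of its properties are unchanged; moreover $\Jad$ differs from $\J$ only through the addition of the map $(\P,\chi,\CC)\mapsto\frac12|\P-\ov\P_\tau|^2+\frac12|\chi-\ov\chi_\tau|^2+\frac12|\CC-\ov\CC_\tau|^2$, which is smooth and depends on the control alone. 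In particular $\J$ and $\Jad$ have the same state-dependent part $\frac{\bO}2\|\ph(T)-\phO\|^2+\frac{\bQ}2\|\ph-\phQ\|^2$, so the relevant adjoint system is exactly \adjsys\ written with $(\ov\P,\ov\chi,\ov\eta,\ov\CC)=(\ov\P_\ept,\ov\chi_\ept,0,\ov\CC_\ept)$ — recall that $\eta_{\rm max}=0$ throughout Section~\ref{ssec:eps0}, so $\ov\eta=0$ and the term $\ov\eta\Delta r$ drops out of \eqref{adj:1} — and with state $(\bph_\ept,\bm_\ept,\bs_\ept)$; its unique solvability, with the regularity needed below, is guaranteed by Theorem~\ref{THM:ADJ:ept}.

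The first step is then to introduce the reduced adapted functional $\Jred^{\rm ad}(\P,\chi,\CC):=\Jad(\S_1(\P,\chi,\CC),\P,\chi,\CC)$. By the \Frechet\ differentiability of $\S$ (Theorem~\ref{THM:FRECHET}, which uses \ref{ass:op:4}), the chain rule, and the smoothness of the added quadratic terms, $\Jred^{\rm ad}$ is \Frechet\ differentiable on $\Uad$. Optimality of $\redoptept$ for $\CPad$ thus forces the variational inequality $\<D\Jred^{\rm ad}(\redoptept),(\P,\chi,\CC)-\redoptept>\geq0$ for every $(\P,\chi,\CC)\in\Uad$, cf.~\eqref{foc:abst}. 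Computing this derivative exactly as in the proof of Theorem~\ref{THM:OPT:FIRST} — the only new contribution being the three summands $(\P-\ov\P_\ept)(\ov\P_\ept-\ov\P_\tau)$, $(\chi-\ov\chi_\ept)(\ov\chi_\ept-\ov\chi_\tau)$, $(\CC-\ov\CC_\ept)(\ov\CC_\ept-\ov\CC_\tau)$ coming from the adapted part — I would obtain the intermediate condition
\begin{align*}
& \iO\bO(\bph_\ept(T)-\phO)\,\xi(T)+\intQ\bQ(\bph_\ept-\phQ)\,\xi
+(\P-\ov\P_\ept)\big(\aP(\ov\P_\ept-\P_*)+(\ov\P_\ept-\ov\P_\tau)\big)\\
&\quad+(\chi-\ov\chi_\ept)\big(\achi(\ov\chi_\ept-\chi_*)+(\ov\chi_\ept-\ov\chi_\tau)\big)
+(\CC-\ov\CC_\ept)\big(\aC(\ov\CC_\ept-\CC_*)+(\ov\CC_\ept-\ov\CC_\tau)\big)\geq0
\end{align*}
for every $(\P,\chi,\CC)\in\Uad$, where $(\xi,\nu,\z)$ is the solution of the linearized system \linsys\ (with $\eta=h_\eta=0$) relative to the state $(\bph_\ept,\bm_\ept,\bs_\ept)$ and the increment $\bh=(\P-\ov\P_\ept,\chi-\ov\chi_\ept,\CC-\ov\CC_\ept)$.

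The final step is to eliminate the linearized variable $\xi$ in favour of the adjoint variables $(p_\ept,q_\ept,r_\ept)$. This is carried out verbatim as in the proof of Theorem~\ref{THM:OPT:FINAL}: one multiplies \eqref{lin:1}, \eqref{lin:2}, \eqref{lin:3} by $p_\ept$, $q_\ept$, $r_\ept$ respectively, integrates over $Q$, integrates by parts in time using the initial conditions \eqref{lin:5} and the terminal conditions \eqref{adj:5}, and uses the adjoint equations \eqref{adj:1}--\eqref{adj:3} to cancel every term that couples $(\xi,\nu,\z)$ with $(p_\ept,q_\ept,r_\ept)$. This reproduces the identity \eqref{proof:foc}, now written with the state $(\bph_\ept,\bm_\ept,\bs_\ept)$, with $\ov\eta=0$, and with $\bh=(\P-\ov\P_\ept,\chi-\ov\chi_\ept,\CC-\ov\CC_\ept)$ (so that the $h_\eta\,\Delta\bph\,r$ contribution is absent). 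Substituting this identity into the intermediate inequality above yields precisely \eqref{opt:ad:epstozero}. Since all the tools involved — \Frechet\ differentiability of $\S$, well-posedness of the adjoint system, and the integration-by-parts identity — have already been established in Section~\ref{sec:CP_ept}, I do not anticipate any genuine obstacle here; the one point deserving (routine) care is keeping track of the extra quadratic contributions of $\Jad$, together with the observation that, precisely because these depend on the control alone, they leave the adjoint system unchanged while adding the $(\ov\P_\ept-\ov\P_\tau)$-type terms to the optimality condition.
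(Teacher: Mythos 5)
Your proposal is correct and follows exactly the route the paper intends: the paper gives no separate proof of this lemma, stating only that $\CPad$ ``fulfils the same assumptions of $\CPept$'' so that the results of Section~\ref{sec:CP_ept} apply verbatim, with the extra control-only quadratic terms of $\Jad$ leaving the state and adjoint systems untouched and merely adding the $(\ov{\P}_\ept-\ov{\P}_\tau)$-type contributions to the variational inequality. Your additional bookkeeping (the vanishing of the $\eta$-terms since $\eta_{\rm max}=\alpha_\eta=0$ in this section, and the unchanged duality identity \eqref{proof:foc}) is exactly the routine verification the paper leaves implicit.
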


The sense in which the minimizers of $\CPad$ approximate the ones of $\CPtau$
as $\eps \to 0$ is specified in the following theorem.

\begin{theorem}
\label{THM:CONV:EPS:ADAPTED}
Assume \ref{ass:1}--\ref{ass:8} and \ref{ass:op:1}--\ref{ass:op:4}.
Let $\redopttau\in\Uad$ be an optimal control for $\CPtau$,
with corresponding state $(\ov\varphi_\tau, \ov\mu_\tau, \ov\sigma_\tau)$.
Then, for every family
$\{( {\ov \P}_{\ept},{\ov \chi}_{\ept},{\ov \CC}_{\ept})\}_\eps$
of optimal controls for $(CP)^{\rm ad}_{\ept}$,
with corresponding states
$\{(\bph_{\ept},\bm_{\ept},\bs_{\ept})\}_\eps$,
as $\eps\to0$ it holds that
\begin{align}
	& \non
	\bph_{\ept} \to \bph_\tau \quad \hbox{weakly-$^*$ in $\H1 H \cap \L\infty V,$}
	\\ & \hspace{3cm}
	\hbox{and strongly in $\C0 H,$}
	\label{adapted:conv:1}
	\\ \label{adapted:conv:2}
	&{\ov \P}_{\ept} \to {\ov \P}_\tau, \quad 
	{\ov \chi}_{\ept} \to {\ov \chi}_\tau,
	\quad
	{\ov \CC}_{\ept} \to {\ov \CC}_\tau,
	\\
	& \label{adapted:conv:3}
	\Jad (\bph_{\ept},{\ov \P}_{\ept},
	{\ov \chi}_{\ept},{\ov \CC}_{\ept}) 
	\to \J(\bph_\tau, {\ov \P}_\tau, {\ov \chi}_{\tau},{\ov \CC}_{\tau}).
\end{align}
\end{theorem}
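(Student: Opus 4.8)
The plan is to combine compactness of $\Uad$ in $\erre^3$ with the state stability supplied by Theorem~\ref{THM:PREV:ASY:epszero} and with the minimality of $\redopttau$ for $\CPtau$; the quadratic perturbation built into $\Jad$ is exactly what will force the limit of the optimal controls of the adapted problems to coincide with the \emph{prescribed} control $\redopttau$, rather than with some other, possibly distinct, optimal control of $\CPtau$. Throughout, set $\ov\lambda_\tau:=\J(\ov\ph_\tau,\ov\P_\tau,\ov\chi_\tau,\ov\CC_\tau)$, the optimal value of $\CPtau$.

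First I would fix a family $\{({\ov \P}_{\ept},{\ov \chi}_{\ept},{\ov \CC}_{\ept})\}_\eps$ of minimizers of $\CPad$ and, given an arbitrary sequence $\eps\to0$, extract by compactness of $\Uad$ a non-relabelled subsequence along which ${\ov \P}_{\ept}\to\P'$, ${\ov \chi}_{\ept}\to\chi'$, ${\ov \CC}_{\ept}\to\CC'$ for some $(\P',\chi',\CC')\in\Uad$. The coefficients of \statesys\ at level $\ept$ then converge and, since the assumptions of Theorem~\ref{THM:PREV:ASY:epszero} are in force throughout this subsection, that theorem gives that the associated states $(\bph_\ept,\bm_\ept,\bs_\ept)$ converge, as $\eps\to0$, to the unique solution $(\ph'_\tau,\m'_\tau,\s'_\tau)$ of \statesys\ with $\eps=0$ corresponding to $(\P',\chi',\CC')$, with in particular $\bph_\ept\to\ph'_\tau$ weakly-$^*$ in $\H1 H\cap\L\infty V$ and strongly in $\C0 H$. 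Since $\J$ involves $\ph$ only through its two tracking terms, and the strong $\C0 H$ convergence yields both $\bph_\ept(T)\to\ph'_\tau(T)$ in $H$ and $\bph_\ept\to\ph'_\tau$ in $L^2(Q)$, while the controls converge in $\erre^3$, one obtains
\[
  \lim_{\eps\to0}\Jad(\bph_\ept,{\ov \P}_{\ept},{\ov \chi}_{\ept},{\ov \CC}_{\ept})
  =\J(\ph'_\tau,\P',\chi',\CC')+\tfrac12|\P'-\ov\P_\tau|^2+\tfrac12|\chi'-\ov\chi_\tau|^2+\tfrac12|\CC'-\ov\CC_\tau|^2\,.
\]

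The central step, which I expect to be the main obstacle, is to bound this limit from above by $\ov\lambda_\tau$, by testing the minimality of $({\ov \P}_{\ept},{\ov \chi}_{\ept},{\ov \CC}_{\ept})$ for $\CPad$ against the fixed competitor $\redopttau$. Let $\widehat\ph_\ept$ denote the first component of the solution of \statesys\ at level $\ept$ driven by $\redopttau$, with the same (control-independent) initial data of the present subsection. A further application of Theorem~\ref{THM:PREV:ASY:epszero}, now along the constant coefficient sequence $\redopttau$, gives $\widehat\ph_\ept\to\ov\ph_\tau$ strongly in $\C0 H$; since $\Jad$ coincides with $\J$ at $\redopttau$ by construction, this forces $\Jad(\widehat\ph_\ept,\ov\P_\tau,\ov\chi_\tau,\ov\CC_\tau)\to\J(\ov\ph_\tau,\ov\P_\tau,\ov\chi_\tau,\ov\CC_\tau)=\ov\lambda_\tau$, while minimality gives $\Jad(\bph_\ept,{\ov \P}_{\ept},{\ov \chi}_{\ept},{\ov \CC}_{\ept})\le\Jad(\widehat\ph_\ept,\ov\P_\tau,\ov\chi_\tau,\ov\CC_\tau)$ for every $\eps$; hence $\lim_{\eps\to0}\Jad(\bph_\ept,{\ov \P}_{\ept},{\ov \chi}_{\ept},{\ov \CC}_{\ept})\le\ov\lambda_\tau$. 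On the other hand $(\P',\chi',\CC')\in\Uad$ with state $\ph'_\tau$ is admissible for $\CPtau$, so $\J(\ph'_\tau,\P',\chi',\CC')\ge\ov\lambda_\tau$. Chaining these two facts with the displayed identity forces
\[
  \ov\lambda_\tau+\tfrac12|\P'-\ov\P_\tau|^2+\tfrac12|\chi'-\ov\chi_\tau|^2+\tfrac12|\CC'-\ov\CC_\tau|^2\le\ov\lambda_\tau\,,
\]
so that $(\P',\chi',\CC')=\redopttau$ and, by the uniqueness of the $\eps=0$ solution in Theorem~\ref{THM:PREV:ASY:epszero}, $\ph'_\tau=\ov\ph_\tau$; the same chain also squeezes $\lim_{\eps\to0}\Jad(\bph_\ept,{\ov \P}_{\ept},{\ov \chi}_{\ept},{\ov \CC}_{\ept})=\ov\lambda_\tau=\J(\ov\ph_\tau,\ov\P_\tau,\ov\chi_\tau,\ov\CC_\tau)$, which is precisely \eqref{adapted:conv:3}.

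Finally, since the limits $(\ov\P_\tau,\ov\chi_\tau,\ov\CC_\tau)$ and $\ov\ph_\tau$ identified above do not depend on the extracted subsequence, a routine subsequence-of-subsequences argument promotes all the convergences to hold along the entire family $\eps\to0$, which yields \eqref{adapted:conv:1} and \eqref{adapted:conv:2}. The genuinely delicate point is this comparison with the fixed minimizer: it rests on inserting $\redopttau$ into the control problems at level $\ept>0$ together with the state stability of Theorem~\ref{THM:PREV:ASY:epszero}, and it is exactly why the \emph{adapted} functional had to be introduced in place of $\J$; everything else reduces to compactness in $\erre^3$, continuity of the tracking part of the functional, and bookkeeping.
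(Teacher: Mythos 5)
Your proof is correct and follows essentially the same route as the paper's: extract a convergent subsequence of controls by compactness of $\Uad$, identify the limit state via Theorem~\ref{THM:PREV:ASY:epszero}, and squeeze the adapted cost between the upper bound coming from minimality for $\CPad$ and the lower bound coming from optimality of $\redopttau$ for $\CPtau$, so that the quadratic perturbation forces the limit control to coincide with $\redopttau$ and uniqueness of the $\eps=0$ state identifies the limit state. Your handling of the competitor---inserting the state at level $\ept$ driven by $\redopttau$ and passing to the limit with Theorem~\ref{THM:PREV:ASY:epszero}, rather than writing the minimality inequality directly against the $\eps=0$ state---is in fact slightly more careful than the paper's compressed presentation of that step.
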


\begin{proof}
Since $\Uad$ is a compact subset of $\erre^3$,
by virtue of \eqref{epstozero:1}, \eqref{epstozero:strong:1},
and the Bolzano--Weierstrass theorem,
we infer the existence of $\hat \ph\in \H1 H \cap \L\infty V$ and 
$(\hat \P,\hat\chi,\hat \CC) \in \Uad$ such that, 
along a non-relabelled zero subsequence $\eps_k$, as $k\to\infty$,
\begin{align*}
	& \bph_{\eps_k,\tau} \to \hat\ph
	\quad \hbox{weakly-$^*$ in $\H1 H \cap \L\infty V,$ and strongly in $\C0 H,$}
	\\ &{\ov \P}_{\eps_k,\tau} \to \hat \P, \quad 
	{\ov \chi}_{\eps_k,\tau} \to\hat\chi,
	\quad
	{\ov \CC}_{\eps_k,\tau} \to \hat \CC.
\end{align*}
By Theorem~\ref{THM:PREV:ASY:epszero}, we infer that
$\hat\varphi$ is actually the first component of the state system \statesys\
with $\eps=0$ and parameters $(\hat \P, \hat\chi, \hat \CC)$.
Now, on the one hand the minimality of 
$( {\ov \P}_{\eps_k,\tau},{\ov \chi}_{\eps_k,\tau},{\ov \CC}_{\eps_k,\tau})$ 
for $(CP)_{\eps_k,\tau}^{\rm ad}$ entails that
\begin{align*}
	\Jad (\bph_{\eps_k,\tau}, {\ov \P}_{\eps_k,\tau},{\ov \chi}_{\eps_k,\tau},{\ov \CC}_{\eps_k,\tau})
	\leq 
	\Jad (\bph_{\tau}, {\ov \P}_\tau,{\ov \chi}_\tau,{\ov \CC}_\tau)
	= \J (\bph_{\tau}, {\ov \P}_\tau,{\ov \chi}_\tau,{\ov \CC}_\tau),
\end{align*}
so that passing to the superior limit to both sides leads to
\begin{align*}
	\limsup_{k \to \infty}
	\Jad (\bph_{\eps_k,\tau}, {\ov \P}_{\eps_k,\tau},{\ov \chi}_{\eps_k,\tau},{\ov \CC}_{\eps_k,\tau})
	\leq 
	\J (\bph_{\tau}, {\ov \P}_\tau,{\ov \chi}_\tau,{\ov \CC}_\tau).
\end{align*}
On the other hand, by the lower semincontinuity of $\Jad$, we also have that
\begin{align*}
	&\liminf_{k\to\infty} 
		\Jad(\bph_{\eps_k,\tau}, {\ov \P}_{\eps_k,\tau},
				{\ov \chi}_{\eps_k,\tau},{\ov \CC}_{\eps_k,\tau})
	\geq 
	 \Jad (\hat\ph, \hat \P,\hat\chi,\hat \CC) 
	\\&  \quad
	 =  \J (\hat\ph, \hat \P,\hat\chi,\hat \CC) 
	 + \tfrac 12 (|\hat \P - {\ov \P}_\tau|^2+|\hat \chi - {\ov \chi}_\tau|^2+|\hat \CC - {\ov \CC}_\tau|^2).
\end{align*}
Since $\hat\varphi$ is the first state component of the system with $\eps=0$
and coefficients $(\hat \P, \hat\chi, \hat \CC)$,
combining the above inequalities with the
optimality of $(\bph_{\tau}, {\ov \P}_\tau,{\ov \chi}_\tau,{\ov \CC}_\tau)$ for $\CPtau$ yields
directly
\[
\hat \P = {\ov \P}_\tau, \quad
	\hat \chi = {\ov \chi}_\tau, \quad
	\hat \CC= {\ov \CC}_\tau,
\]
from which also $\hat \ph= \bph_\tau$ by uniqueness of 
the state system \statesys\ with $\eps=0$.
Also, we have the chain of equalities
\begin{align*}
	& \lim_{k\to\infty} \Jad (\bph_{\eps_k,\tau}, {\ov \P}_{\eps_k,\tau},
				{\ov \chi}_{\eps_k,\tau},{\ov \CC}_{\eps_k,\tau})
				= \liminf_{k\to\infty}\Jad (\bph_{\eps_k,\tau}, {\ov \P}_{\eps_k,\tau},
				{\ov \chi}_{\eps_k,\tau},{\ov \CC}_{\eps_k,\tau})
				\\ & \quad
				= \limsup_{k\to\infty}
				\Jad (\bph_{\eps_k,\tau}, {\ov \P}_{\eps_k,\tau},
				{\ov \chi}_{\eps_k,\tau},{\ov \CC}_{\eps_k,\tau})
				= \J (\bph_{\tau}, {\ov \P}_\tau,{\ov \chi}_\tau,{\ov \CC}_\tau).
\end{align*}
As the same argument holds along every arbitrary subsequence $\{\eps_k\}_k$,
by uniqueness of the limits the convergences actually hold along the 
whole sequence $\eps$,
and the proof is concluded.
\end{proof}

\subsubsection{Letting $\eps\to0$ in the adjoint system}
\label{SUB:ADJ:EPSTOZERO}
This section is devoted to discuss and analyze 
the asymptotic behavior of the adjoint system
\adjsys\ as $\eps \to 0$, which will be a key ingredient 
to derive the optimality conditions of $\CPtau$. 
To begin with, let us state the established result.

\begin{theorem}
	\label{THM:ADJ:EPSTOZERO}
	Assume \ref{ass:1}--\ref{ass:8} and \ref{ass:op:1}--\ref{ass:op:4}.
	Let $(\P_\tau,\chi_\tau,\CC_\tau)\in\Uad$, $\{(\P_\ept,\chi_\ept,\CC_\ept)\}_\eps\subset\Uad$
	be such that $(\P_\ept, \chi_\ept, \CC_\ept)\to(\P_\tau,\chi_\tau,\CC_\tau)$ as $\eps\to0$.
	Let $(\bph_\tau, \bm_\tau,\bs_\tau)$ and $(\bph_\ept, \bm_\ept,\bs_\ept)$
	be the unique solutions to the state system \statesys\
	in the cases $\eps=0$ with coefficients $(\P_\tau,\chi_\tau,\CC_\tau)$ and
	$\eps\in(0,\eps_0)$ with coefficients $(\P_\ept,\chi_\ept,\CC_\ept)$,
	as given by Theorems~\ref{THM:PREV:ASY:epszero} and \ref{THM:WP:STRONG},
	respectively.
	Let also $(p_\ept,q_\ept,r_\ept)$ be the unique solution to the adjoint system 
	\adjsys\ with $\eps\in(0,\eps_0)$ and coefficients $(\P_\ept, \chi_\ept, \CC_\ept)$,
	as given by Theorem \ref{THM:ADJ:ept}.
	Then, there exists a triplet $(p_\tau,q_\tau,r_\tau)$, with 
	\begin{align*}
	&p_\tau \in L^\infty(0,T; V)\cap L^2(0,T; W),\qquad q_\tau\in L^\infty(0,T; H),\qquad
	p_\tau+\tau q_\tau \in H^1(0,T; V^*),\\
	&r_\tau\in\H1 H \cap \L\infty V \cap \L2 W,
	\end{align*}
	such that, for every $\alpha \geq 1$ if $d=2$ and $1\leq \alpha <6$ if $d=3$, 
	as $\eps\to0$ it holds 
	\begin{align*}
	p_{\eps,\tau} & \to p_\tau && \hbox{weakly-$^*$ in $\L\infty V\cap \L2 W$},
	\\
	q_{\eps,\tau} & \to q_\tau && \hbox{weakly-$^*$ in $\L\infty H$},
	\\
	r_{\eps,\tau} & \to r_\tau && \hbox{weakly-$^*$ in $\H1 H \cap \L\infty V \cap \L2 W$},
	\\ & && \quad \hbox{strongly in $\C0 {\Lx \alpha} \cap \L2 V$,}
	\\
	p_{\eps,\tau} + \tau q_{\eps,\tau} & \to p_\tau + \tau q_\tau
		&& \hbox{weakly in $\H1 {\Vp}$},
	\\
	\eps p_{\eps,\tau} & \to 0
	&& \hbox{strongly in $\H1 H$}.
	\end{align*}
	Moreover, $(p_\tau,q_\tau,r_\tau)$ is the unique
	weak solution to the adjoint system \adjsys\ with $\eps =0$
	and coefficients $(\P_\tau,\chi_\tau,\CC_\tau)$,
	in the sense that
	\begin{align*}
	& \non
	-\<\dt (p_\tau +\tau q_\tau), v>_{V}
	+ \iO (a q_\tau - J*q_\tau )v
	+ \iO  F''(\bph_\tau)q_\tau v
	\\ & \qquad
	+ \iO  \CC_\tau\bs_\tau f'(\bph_\tau) r_\tau v
	- \iO  \P_\tau\bs_\tau f'(\bph_\tau) p_\tau v
	=\,\iO  	\bQ (\bph_\tau-\phQ) v,
	\\	
	& 	
	\iO \nabla p_\tau\cdot \nabla  w
	- \iO q_\tau w	\,=\,	0,
	\\
	& -\iO \dt r_\tau z
	+ \iO \nabla r_\tau \cdot \nabla z
	+ \iO \CC_\tau f(\bph_\tau) r_\tau z
	- \iO \P_\tau f(\bph_\tau)p_\tau z 
	- \iO \ch_\tau q_\tau z 
	\,=0,
\end{align*}
for every $v,w,z  \in V$, almost everywhere in $(0,T)$, and
\[
	(p_\tau + \tau q_\tau)(T)\,=\, \bO (\bph_\tau(T)-\phO), \qquad r_\tau(T)\,=0.
\]
\end{theorem}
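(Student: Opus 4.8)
The scheme mirrors the one used for the state system: derive bounds on $(p_\ept,q_\ept,r_\ept)$ that are uniform in $\eps$ (for each fixed $\tau$), extract weak/weak-$*$ limits, pass to the limit in the linear adjoint equations, and close by uniqueness. Since $\eta_{\rm max}=0$ here we have $\eta_\ept=0$, so the term $\ov\eta\Delta r$ disappears from \eqref{adj:1} and the corresponding contribution in the right-hand side of \eqref{est3_adj} vanishes. Starting from \eqref{est3_adj}, the surviving ``bad'' terms are treated as follows: using \eqref{adj:2}, i.e.\ $q_\ept=-\eps\dt p_\ept-\Delta p_\ept$, and one integration by parts (the boundary term vanishing by $\dn p_\ept=0$, and $\nabla(J*q_\ept)=(\nabla J)*q_\ept$ with $\norma{(\nabla J)*q_\ept}\le b^*\norma{q_\ept}$ by \ref{ass:5}, while $\eps<\eps_0$ controls the term with $\dt p_\ept$), the convolution contribution $\int_{Q_t^T}(J*q_\ept)q_\ept$ is bounded by $\delta\big(\int_{Q_t^T}|q_\ept|^2+\int_{Q_t^T}|\nabla p_\ept|^2+\eps\int_{Q_t^T}|\dt p_\ept|^2\big)$ plus a $\int_{Q_t^T}|q_\ept|^2$-term with $\eps$-independent coefficient; likewise $\int_{Q_t^T}\chi_\ept q_\ept(\dt r_\ept+\Delta r_\ept)$ is estimated by Young's inequality (with $0\le\chi_\ept\le\chi_{\rm max}$) by $\delta'\big(\int_{Q_t^T}|\dt r_\ept|^2+\int_{Q_t^T}|\Delta r_\ept|^2\big)+M(\delta')\int_{Q_t^T}|q_\ept|^2$. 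By \ref{ass:8} and $V\hookrightarrow\Lx6$ the sequence $\{F''(\bph_\ept)\}$ is bounded in $\L\infty{\Lx3}$ (so $\norma{F''(\bph_\ept(\cdot))}^2\in L^\infty(0,T)$ uniformly), and by Theorem~\ref{THM:PREV:ASY:epszero} $\{\bph_\ept\}$ is bounded in $\C0 H$ and $\{\bs_\ept\}$ in $\L2 V\cap L^\infty(Q)$, uniformly in $\eps$. Choosing $\delta,\delta'$ small to absorb the corresponding terms into the dissipation on the left-hand side, and noting that the coefficient $\norma{\bs_\ept}_V^2$ multiplying $\norma{p_\ept}_V^2+\norma{r_\ept}_V^2$ is only uniformly bounded in $L^1(0,T)$, a Gronwall lemma with $L^1$-in-time coefficients gives, for each fixed $\tau\in(0,\tau_0)$ and uniformly in $\eps\in(0,\eps_0)$,
\[
\norma{p_\ept}_{\L\infty V}+\norma{q_\ept}_{\L\infty H}+\eps^{1/2}\norma{p_\ept}_{\H1 H}+\norma{\nabla p_\ept}_{\L2 H}+\norma{r_\ept}_{\H1 H\cap\L\infty V\cap\L2 W}\le M.
\]
A comparison in \eqref{adj:2} with elliptic regularity (using $\eps\norma{\dt p_\ept}_{\L2 H}\le\eps_0^{1/2}M$) gives $\norma{p_\ept}_{\L2 W}\le M$; a comparison in \eqref{adj:1}, whose other terms are bounded in $\L2{\Vp}$ by the above and $f\in C^2_b(\erre)$, gives $\norma{p_\ept+\tau q_\ept}_{\H1{\Vp}}\le M$; and $\eps p_\ept\to0$ in $\H1 H$ since $\eps\norma{p_\ept}_{\L2 H}+\eps^{1/2}\big(\eps^{1/2}\norma{\dt p_\ept}_{\L2 H}\big)\le\eps M+\eps^{1/2}M$.

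\noindent\textbf{Compactness and passage to the limit.} By reflexivity and Banach--Alaoglu I extract a subsequence (not relabelled) with $p_\ept\wstarto p_\tau$ in $\L\infty V\cap\L2 W$, $q_\ept\wstarto q_\tau$ in $\L\infty H$, $r_\ept\wstarto r_\tau$ in $\H1 H\cap\L\infty V\cap\L2 W$, $p_\ept+\tau q_\ept\wto p_\tau+\tau q_\tau$ in $\H1{\Vp}$, and (whole sequence) $\eps p_\ept\to0$ in $\H1 H$. The bounds on $\{r_\ept\}$ in $\H1 H\cap\L2 W\cap\L\infty V$, the compactness of the embeddings $W\hookrightarrow V\hookrightarrow\Lx\alpha$ (the last one being compact precisely in the stated range of $\alpha$), and the Aubin--Lions--Simon lemma give $r_\ept\to r_\tau$ strongly in $\C0{\Lx\alpha}\cap\L2 V$. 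Using in addition Theorem~\ref{THM:PREV:ASY:epszero} ($\bph_\ept\to\bph_\tau$ in $\C0 H$, hence, interpolating with the $\L\infty V$ bound, in $L^q(Q)$ for every $q<6$; $\bs_\ept\to\bs_\tau$ in $L^\infty(0,T;H)\cap\L2 V$, with $\{\bs_\ept\}$ bounded in $L^\infty(Q)$) together with the boundedness and continuity of $f,f'$ and the growth of $F''$, all nonlinear products converge: $f(\bph_\ept)r_\ept\to f(\bph_\tau)r_\tau$ and $\bs_\ept f'(\bph_\ept)r_\ept\to\bs_\tau f'(\bph_\tau)r_\tau$ strongly in $L^2(Q)$ by dominated convergence, while $f(\bph_\ept)p_\ept$ and $\bs_\ept f'(\bph_\ept)p_\ept$ converge weakly in $L^2(Q)$ against $V$-test functions. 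The only delicate term is $\int_\Omega F''(\bph_\ept)q_\ept v$: for $v\in W\hookrightarrow L^\infty(\Omega)$ I write it as $\int_\Omega(F''(\bph_\ept)v)\,q_\ept$, observe that $F''(\bph_\ept)v\to F''(\bph_\tau)v$ strongly in $L^2(Q)$ (quadratic growth of $F''$, convergence of $\bph_\ept$ in $L^q(Q)$ for $q<6$, $v\in L^\infty$) while $q_\ept\wto q_\tau$ in $L^2(Q)$, so the product passes; the identity extends to all $v\in V$ by density of $W$ in $V$, each term of the limiting first equation being a bounded linear functional of $v\in V$ (note $F''(\bph_\tau)q_\tau\in\L\infty{\Lx{6/5}}\hookrightarrow\L\infty{\Vp}$). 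In the weak formulations of \eqref{adj:1}--\eqref{adj:3} the term $\<\dt(p_\ept+\tau q_\ept),v>$ passes by weak convergence in $\L2{\Vp}$, the Laplacians by weak convergence of gradients, and $\eps\dt p_\ept=\dt(\eps p_\ept)\to0$ strongly in $\L2 H$; the terminal conditions pass using $(p_\ept+\tau q_\ept)(T)\wto(p_\tau+\tau q_\tau)(T)$ in $V^*$, $\bph_\ept(T)\to\bph_\tau(T)$ in $H$, and $r_\ept(T)\to r_\tau(T)$ in $H$. Hence $(p_\tau,q_\tau,r_\tau)$ is a weak solution of \adjsys\ with $\eps=0$ and coefficients $(\P_\tau,\chi_\tau,\CC_\tau)$, with the stated regularity.

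\noindent\textbf{Uniqueness and conclusion.} The limiting system being linear, uniqueness follows from an energy estimate parallel to the first step with $\eps=0$: if $(\tilde p,\tilde q,\tilde r)$ is the difference of two solutions, it solves the homogeneous system with zero terminal data; since $\tilde q=-\Delta\tilde p$ and $\tilde p+\tau\tilde q\in\H1{\Vp}$, the isomorphism $I-\tau\Delta$ gives $\tilde p=(I-\tau\Delta)^{-1}(\tilde p+\tau\tilde q)\in\H1 V$ and then $\tilde q\in\H1{\Vp}$, which makes the duality pairings licit; testing the first equation by $\tilde q$, the second by $\tilde p$, and the third (plus $\tilde r$ on both sides) by $-\dt\tilde r$ and by $-\Delta\tilde r$, integrating over $Q_t^T$, invoking \ref{ass:5} as in Section~\ref{ssec:est_adj}, and applying Gronwall's lemma backward in time yields $\tilde p=\tilde q=\tilde r=0$. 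Since the limit is unique, all the asserted convergences hold along the entire sequence $\eps\to0$ by the usual subsequence argument. \emph{The main obstacle} is the uniform-in-$\eps$ estimate of the first step: one must notice that $\norma{\bs_\ept}_V^2$ is only $L^1$ in time (so only a generalised Gronwall applies) and dispose of the convolution term and of $\int\chi_\ept q_\ept(\dt r_\ept+\Delta r_\ept)$ via \eqref{adj:2}, all with $\eps$-independent constants; the passage to the limit in $F''(\bph_\ept)q_\ept$ in dimension three, handled by the density argument above, is the secondary difficulty.
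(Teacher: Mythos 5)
Your proof is correct and follows essentially the same route as the paper's: uniform-in-$\eps$ bounds obtained from the estimate \eqref{est3_adj} closed by a Gronwall argument (with constants allowed to depend on the fixed $\tau$), the $L^\infty(0,T;L^3(\Omega))$ bound on $F''(\bph_\ept)$ from \ref{ass:8}, comparison in \eqref{adj:2} and \eqref{adj:1} for the $\L2 W$ and $\H1{\Vp}$ bounds, weak/weak-$^*$ compactness, passage to the limit with the term $F''(\bph_\ept)q_\ept$ handled by a strong--weak pairing against $W$-test functions and then density of $W$ in $V$, and uniqueness by linearity. The only (harmless) deviations are that your integration-by-parts treatment of the convolution term via \eqref{adj:2} is not actually needed in this regime, since for fixed $\tau$ the term $\tau\norma{q_\ept(t)}^2$ on the left-hand side lets Gronwall absorb any multiple of $\int_{Q_t^T}|q_\ept|^2$, and that you spell out the uniqueness energy estimate in more detail than the paper does.
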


\begin{proof}[Proof of Theorem \ref{THM:ADJ:EPSTOZERO}]
We use the estimate \eqref{est3_adj}.
First of all,
by \ref{ass:8} and Theorem~\ref{THM:PREV:ASY:epszero}, we have that
$\{F''(\bph_\ept)\}_\eps$ is uniformly bounded in $L^\infty(0,T; L^3(\Omega))$
and $\{\bs_\ept\}_\eps$ is uniformly bounded in $L^2(0,T;V)$: hence,
recalling that $\tau$ is fixed and $\eta=0$,
by the Gronwall lemma along with elliptic regularity theory,
there exists a positive constant $M_\tau$, which may depend on $\tau$ 
but is independent of $\eps$, such that
\begin{align*}
	&\eps^{1/2} \norma{p_{\ept}}_{\H1 H}
	+ \norma{p_{\ept}}_{\L\infty V}
	+ \norma{q_{\ept}}_{\L\infty H}\\
	&\qquad+ \norma{r_{\ept}}_{\H1 H \cap \L\infty V \cap L^2(0,T; W)}
	\leq M_\tau.
\end{align*}
In particular, by the H\"older inequality it follows that
\[
  \norma{F''(\bph_\ept)q_\ept}_{L^\infty(0,T; L^{6/5}(\Omega))} \leq M_\tau.
\]
Secondly, elliptic regularity theory and equation \eqref{adj:2} entail that
\begin{align*}
	\norma{p_\ept}_{\L2 W}
	\leq M_\tau.
\end{align*}
Moreover, since $L^{6/5}(\Omega)\hookrightarrow V^*$, 
a comparison argument in \eqref{adj:1} yields,
by the boundedness of $\{\bs_\ept\}_\eps$ in $L^\infty(Q)$
and the estimates above,
that 
\[
  \norma{p_\ept+\tau q_\ept}_{H^1(0,T; V^*)} \leq M_\tau.
\]
The Banach--Alaoglu theorem and classical compact embedding results (see, e.g., \cite{Simon})
allow us to obtain from the above a priori estimates 
the existence of functions $(p_\tau,q_\tau,r_\tau)$ such that as $\eps \to 0$
it holds, for every $\alpha \geq 1$ if $d=2$ and $1\leq \alpha <6$ if $d=3$, and along a non-relabelled subsequence,
\begin{align*}
	p_\ept & \to p_\tau  &&\hbox{weakly-$^*$ in $\L\infty V \cap \L2 W$},
	\\
	q_\ept & \to q_\tau  &&\hbox{weakly-$^*$ in $\L\infty H$},
	\\
	r_\ept & \to r_\tau  &&\hbox{weakly-$^*$ in $\H1 H \cap \L\infty V \cap \L2 W$},
	\\ & && \quad \hbox{strongly in $\C0 {\Lx \alpha} \cap \L2 V$,}
	\\
	p_\ept + \tau q_\ept & \to p_\tau + \tau q_\tau
		 &&\hbox{weakly in $\H1 {\Vp}$},
	\\
	\eps p_\ept & \to 0
		 &&\hbox{strongly in $\H1 H \cap L^\infty(0,T; V)\cap \L2 W$ }.
\end{align*}
We claim that these limit variables yield a weak solution to the 
adjoint system \adjsys\ in which we formally set $\eps=0$.
In this direction, we just need to justify the passage to the limit as $\eps \to 0$
in the variational formulation for system \adjsys\ written for the triplet $(p_\ept,q_\ept,r_\ept)$,
which reads
\begin{align}
	& \non
	- \iO \dt (p_\ept +\tau q_\ept) v
	+ \iO (a q_\ept - J*q_\ept )v
	+ \iO  F''(\bph_\ept)q_\ept v
	\\ & \quad
	+ \iO  \CC_\ept\bs_\ept f'(\bph_\ept) r_\ept v
	- \iO  \P_\ept\bs_\ept f'(\bph_\ept) p_\ept v
	=\,\iO  	\bQ (\bph_\ept-\phQ) v,
 	\label{adj:vf:1}
	\\	
	\label{adj:vf:2}
	& 	
	- \iO \eps \dt p_\ept w	
	+ \iO \nabla p_\ept \cdot \nabla w
	- \iO q_\ept w	\,=\,	0,
	\\
	\label{adj:vf:3}
	& -\iO \dt r_\ept z
	+ \iO \nabla  r_\ept \cdot \nabla z
	+ \iO \CC_\ept f(\bph_\ept) r_\ept z
	- \iO \P_\ept f(\bph_\ept)p_\ept z 
	- \iO \ch_\ept q_\ept z 
	\,=0,
\end{align}
for every $v , w,z \in V$ and almost every $t \in (0,T)$
and also in the terminal conditions
\begin{align}
	\label{adj:vf:4}
	\eps p_\ept(T)\,=\,0, \quad (p_\ept+\tau q_\ept)(T)\,=\, \bO (\bph_\ept(T)-\phO), \quad r_\ept(T)\,=0.
\end{align}
It is worth noting that since $\eps >0$ the second condition of \eqref{adj:vf:4}
reduces to $\tau q_\ept(T)= \bO (\bph_\ept(T)-\phO)$.
Moreover, from the convergences
 \eqref{epstozero:strong:1}--\eqref{epstozero:strong:2}, we also have that,
possibly after another extraction,
\begin{align}
    \label{strong:eps:1}
	\bph_\ept & \to \bph_\tau \quad \hbox{strongly in $\C0 {\Lx \alpha}$,
	 and $a.e.$ in $Q$},
	\\ 
	  \label{strong:eps:2}
	 \bs_\ept &\to \bs_\tau \quad\text{ strongly in } C^0([0,T]; V^*)\cap L^2(0,T; {\Lx \alpha})\, ,
\end{align}
for every $\alpha \geq 1$ if $d=2$ and $1\leq \alpha <6$ if $d=3$.
Therefore, most part of the above limits are
easy consequence of \eqref{epstozero:strong:1}--\eqref{epstozero:strong:2},
the above estimates and Lebesgue's dominated convergence theorem as well.
For instance, due to the boundedness and continuity of $f'$ we have, e.g., that
$f'(\bph_\ept) \to f'(\bph_\tau)$ $a.e. $ in $Q$ so that
we easily infer that
\begin{align*}
	\int_\Omega  \CC_\ept\bs_\ept f'(\bph_\ept) r_\ept v
	\to 
	\int_\Omega  \CC_\tau\bs_\tau f'(\bph_\tau) r_\tau v
	\quad \hbox{for every $v \in V,$}
\end{align*}
and the other terms can be handled in a similar fashion.
The only term which has to be treated differently is the one involving the potential.
It can be dealt with invoking the almost everywhere convergence \eqref{strong:eps:1},
the weak-$^*$ convergence \eqref{epstozero:1}, 
the continuous embedding $V \hookrightarrow \Lx6$,
and the Severini-Egorov theorem. 
In fact, these properties imply in particular that, as $\eps \to 0$,
\begin{align*}
		F''(\bph_\ept) \to F''(\bph_\tau) \quad
		\hbox{strongly in $L^\beta(Q)$} \quad \hbox{for all $\beta\in[1,3)$}
\end{align*}
so that from the weak-strong convergence principle, we get
\begin{align*}
		F''(\bph_\ept)q _\ept \to F''(\bph_\tau) q _\tau
		\quad
		\hbox{weakly in $\L2 {L^{\gamma}(\Omega)}$ for all $\gamma\in[1,6/5)$}.
\end{align*}
It follows then that 
\[
  \iO  F''(\bph_\ept)q_\ept v \to
  \iO  F''(\bph_\tau)q_\tau v \qquad\forall\,v\in W.
\]
This is enough to pass to the limit in the variational formulation 
\eqref{adj:vf:1}--\eqref{adj:vf:3} as $\eps \to 0$
for every $v\in W$, $w,z  \in V$,
and to obtain the required terminal conditions.
Since at the limit $F''(\bph_\tau)q_\tau \in L^\infty(0,T; L^{6/5}(\Omega))$,
by the density of $W$ in $V$ the variational formulation holds also for all $v\in V$.
Thus, we realize that the limit variables obtained above
yield a weak solution to \adjsys\ in which $\eps$ is set to zero.
By linearity and the estimate \eqref{est3_adj}, we deduce that 
$(p_\tau, q_\tau, r_\tau)$ is the unique weak solution to \adjsys\ with $\eps=0$,
hence also that the convergences above hold along the entire sequence $\eps\to0$,
and the proof is concluded.
\end{proof}

\subsubsection{Letting $\eps\to0$ in the optimality condition}
In this last step, we draw some consequences from the approximation of controls presented in Theorem \ref{THM:CONV:EPS:ADAPTED} and Subsection \ref{SUB:ADJ:EPSTOZERO}
by passing to the limit in the variational inequality \eqref{opt:ad:epstozero}
as $\eps \to 0$. 
This allows to prove the optimality conditions of $\CPtau$ as follows:
\begin{theorem}
\label{THM:AD:OPT:EPS}
Assume \ref{ass:1}--\ref{ass:8} and \ref{ass:op:1}--\ref{ass:op:4}.
Then, every optimal control $({\ov \P}_{\tau},{\ov \chi}_{\tau},{\ov \CC}_{\tau})$ of $\CPtau$
necessarily verifies
\begin{align*}
	& \intQ (\P - \ov{\P}_\tau \,) \bs_\tau f(\bph_\tau) p_\tau
	- \intQ (\chi - \ov{\chi}_\tau ) \bs_\tau q_\tau
	- \intQ (\CC - \ov{\CC}_\tau ) \bs_\tau f(\bph_\tau) \, r_\tau
	\\ & \quad
	+ \aP (\ov{\P}_\tau-\P_*)(\P- \ov{\P}_\tau \,)
	+ \achi (\ov{\chi}_\tau -\chi_*)(\chi - \ov{\chi}_\tau \,)
	+ \aC (\ov{\CC}_\tau-\CC_*)(\CC - \ov{\CC}_\tau \,)
	\geq 0\\
	&\quad \hbox{for every $(\P,\chi,\CC) \in \Uad$,}
\end{align*}
where $(\bph_\tau, \bm_\tau,\bs_\tau)$
and $(p_\tau,q_\tau,r_\tau)$ are the unique solutions 
to \statesys\ and \adjsys\ with $\eps =0$
in the sense of Theorems \ref{THM:PREV:ASY:epszero} and \ref{THM:ADJ:EPSTOZERO}, respectively.
\end{theorem}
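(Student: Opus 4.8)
The plan is to obtain the asserted variational inequality by letting $\eps\to0$ in the first-order condition \eqref{opt:ad:epstozero} for the \emph{adapted} problems $\CPad$, combining the convergence of optimal controls supplied by Theorem~\ref{THM:CONV:EPS:ADAPTED} with the convergence of the adjoint variables supplied by Theorem~\ref{THM:ADJ:EPSTOZERO}. Throughout we stay in the framework fixed at the beginning of Section~\ref{ssec:eps0}, so that $\eta_{\rm max}=\aeta=0$ and admissible controls are triples $(\P,\chi,\CC)\in\Uad$.

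First I would fix an arbitrary optimal control $\redopttau\in\Uad$ of $\CPtau$, with corresponding state $(\bph_\tau,\bm_\tau,\bs_\tau)$, and build upon it the family of adapted problems $\CPad$. By Lemma~\ref{LEM:EXISTENCE:ADAPTED} each $\CPad$ admits an optimal control $\redoptept\in\Uad$; let $(\bph_\ept,\bm_\ept,\bs_\ept)$ and $(p_\ept,q_\ept,r_\ept)$ denote the corresponding state and adjoint variables, so that \eqref{opt:ad:epstozero} holds for every $(\P,\chi,\CC)\in\Uad$ by Lemma~\ref{LEM:OPT:ADAPTED:EPS}. Theorem~\ref{THM:CONV:EPS:ADAPTED} gives $\redoptept\to\redopttau$ and $\bph_\ept\to\bph_\tau$ strongly in $\C0 H$; Theorem~\ref{THM:PREV:ASY:epszero} moreover yields $\bs_\ept\to\bs_\tau$ strongly in $\L2 V$, hence in $L^2(Q)$, and, up to a subsequence, $\bph_\ept\to\bph_\tau$ a.e.\ in $Q$. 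Because $\redoptept\to\redopttau\in\Uad$, Theorem~\ref{THM:ADJ:EPSTOZERO} applies with coefficients $\redoptept$ and produces a limit triplet $(p_\tau,q_\tau,r_\tau)$ --- the unique weak solution of \adjsys\ with $\eps=0$ and coefficients $\redopttau$ --- with $p_\ept\to p_\tau$ weakly-$^*$ in $\L\infty V\cap\L2 W$, $q_\ept\to q_\tau$ weakly-$^*$ in $\L\infty H$, and $r_\ept\to r_\tau$ strongly in $\L2 V$; uniqueness of these limits forces all convergences to hold along the whole sequence $\eps$.

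The core of the argument is then the term-by-term passage to the limit in \eqref{opt:ad:epstozero}, with $(\P,\chi,\CC)\in\Uad$ kept fixed. Writing $\bs_\ept f(\bph_\ept)-\bs_\tau f(\bph_\tau)=\bs_\ept\bigl(f(\bph_\ept)-f(\bph_\tau)\bigr)+\bigl(\bs_\ept-\bs_\tau\bigr)f(\bph_\tau)$ and using that $f$ is bounded and Lipschitz, the a.e.\ convergence of $\bph_\ept$, dominated convergence, and the strong convergence of $\bs_\ept$ in $L^2(Q)$, one gets $\bs_\ept f(\bph_\ept)\to\bs_\tau f(\bph_\tau)$ strongly in $L^2(Q)$ (and $\bs_\ept\to\bs_\tau$ strongly in $L^2(Q)$ as well). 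Pairing these strong limits against the weak limits of $p_\ept$ and $q_\ept$ in $L^2(Q)$ and against the strong limit of $r_\ept$ in $\L2 V$, and using $\redoptept\to\redopttau$, the three integral terms of \eqref{opt:ad:epstozero} converge to the corresponding terms of the claimed inequality. The remaining, finite-dimensional, contributions are continuous in the controls: $\aP(\ov\P_\ept-\P_*)(\P-\ov\P_\ept)\to\aP(\ov\P_\tau-\P_*)(\P-\ov\P_\tau)$ and its $\chi$- and $\CC$-analogues converge directly, while the genuinely adapted corrections $(\P-\ov\P_\ept)(\ov\P_\ept-\ov\P_\tau)$, $(\chi-\ov\chi_\ept)(\ov\chi_\ept-\ov\chi_\tau)$, $(\CC-\ov\CC_\ept)(\ov\CC_\ept-\ov\CC_\tau)$ vanish in the limit precisely because $\redoptept\to\redopttau$. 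Collecting these limits in \eqref{opt:ad:epstozero} gives the asserted inequality, and since $(\P,\chi,\CC)\in\Uad$ and the minimizer $\redopttau$ were arbitrary, this would complete the proof.

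The main obstacle is not a single estimate but the structural point --- bypassed by the adapted-functional device of \cite{BARBU} (see also \cite{signori2,signori3,signori4}) --- that optimal controls of $\CPept$ need not converge to optimal controls of $\CPtau$; once the adapted problems and Theorem~\ref{THM:CONV:EPS:ADAPTED} secure $\redoptept\to\redopttau$, the only genuinely analytic ingredient left is the strong $L^2(Q)$ convergence of the nonlinear products $\bs_\ept f(\bph_\ept)$, and that is exactly what the strong convergence of $\bs_\ept$ together with the a.e.\ convergence of $\bph_\ept$ delivers.
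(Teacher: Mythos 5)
Your proposal is correct and follows essentially the same route as the paper: the paper's own proof is a three-line argument that takes the adapted minimizers from Lemma~\ref{LEM:EXISTENCE:ADAPTED}, invokes the first-order condition \eqref{opt:ad:epstozero} from Lemma~\ref{LEM:OPT:ADAPTED:EPS}, and passes to the limit using the convergences of Theorems~\ref{THM:PREV:ASY:epszero}, \ref{THM:CONV:EPS:ADAPTED}, and \ref{THM:ADJ:EPSTOZERO} together with dominated convergence. Your write-up merely makes explicit the term-by-term limit (strong $L^2(Q)$ convergence of $\bs_\ept f(\bph_\ept)$ paired with the weak/strong limits of the adjoint variables, and the vanishing of the adapted corrections), which is exactly what the paper leaves implicit.
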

\begin{proof}
Let $\{( {\ov \P}_{\eps,\tau},{\ov \chi}_{\eps,\tau},{\ov \CC}_{\eps,\tau})\}_\eps$
be an approximating sequence of minimizers for $\CPad$,
as given by Lemma~\ref{LEM:EXISTENCE:ADAPTED}.
Then, by Lemma~\ref{LEM:OPT:ADAPTED:EPS} the corresponding states
$\{(\bph_{\eps,\tau},\bm_{\eps,\tau},\bs_{\eps,\tau})\}_\eps$
and adjoint variables $\{(p_{\eps,\tau},q_{\eps,\tau},r_{\eps,\tau})\}_\eps$
satisfy \eqref{opt:ad:epstozero}.
By the convergences in
Theorems~\ref{THM:PREV:ASY:epszero}, 
\ref{THM:CONV:EPS:ADAPTED}, and \ref{THM:ADJ:EPSTOZERO}, 
the thesis follows letting $\eps\to0$ in \eqref{opt:ad:epstozero}
using the dominated convergence theorem.
\end{proof}

\subsection{The optimization problem $(CP)_\eps$}
\label{ssec:tau0}
Here, we continue the asymptotic analysis of the optimization problem $(CP)_\ept$,
focusing on the case $\tau \to 0$, and keeping $\eps$ fixed instead. 
Namely, throughout the whole Section~\ref{ssec:tau0} we assume
the following framework:
\begin{align*}
  &\eps\in\left(0,\eps_0\right), \qquad \bO=0, \qquad
  \text{\eqref{init_tau0}--\eqref{bound_init_tau0}}\\
  &\chi_{\rm max} < \sqrt{c_a}, \qquad
   (\chi_{\rm max}+\eta_{\rm max}+4c_a\chi_{\rm max})^2<8c_aC_0,
   \qquad\eta_{\rm max}^2+\chi_{\rm max}^2<\frac{4}{9}C_0.
\end{align*}
This implies that every admissible control $(\P,\chi,\eta,\CC)\in\Uad$
automatically satisfies \eqref{coeff_tau0}.
Moreover, let us remark that the assumption $\bO=0$ is rather unpleasant since
it prevents us to control the tumor distribution at the terminal time $T$. 
However, from \eqref{adj:5} it is clear that this 
compatibility condition has to be imposed in the scenario $\eps>0$ and $\tau=0$.

Existence of optimal controls for $\CPeps$ is given in the following result.
\begin{theorem}
Assume {\bf A1--A7} and {\bf C1--C3}.
Then, the optimization problem $\CPeps$
admits a solution.
\end{theorem}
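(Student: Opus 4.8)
The plan is to adapt the direct method of the calculus of variations, exactly as in the proof of Theorem~\ref{THM:EXISTECE:MIN}, but now using the asymptotic result Theorem~\ref{THM:PREV:ASY:tauzero} to identify the limit state. First I would observe that the cost functional $\J$ is nonnegative, so there is a minimizing sequence $\{(\P_n,\chi_n,\eta_n,\CC_n)\}_n\subset\Uad$; here we keep $\eps\in(0,\eps_0)$ fixed and recall that the present framework forces $\bO=0$, so that the terminal tracking term in \eqref{cost} disappears. For each $n$ we pick an arbitrary $\tau_n\in(0,\tau_0)$ with $\tau_n\to0$ and let $(\ph_n,\m_n,\s_n)$ be the unique strong solution to \statesys\ with parameters $(\P_n,\chi_n,\eta_n,\CC_n)$, coefficient $\tau=\tau_n$, and the fixed initial data; thanks to the standing assumptions the compatibility conditions \eqref{coeff_tau0}--\eqref{init_tau0} hold automatically, so Theorem~\ref{THM:PREV:ASY:tauzero} applies along this sequence.

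Next I would extract convergent subsequences. Since $\Uad$ is compact in $\erre^4$, up to a (non-relabelled) subsequence we have $(\P_n,\chi_n,\eta_n,\CC_n)\to(\ov\P_\eps,\ov\chi_\eps,\ov\eta_\eps,\ov\CC_\eps)\in\Uad$. Because this limit triple together with the fixed data satisfies the hypotheses of Theorem~\ref{THM:PREV:ASY:tauzero}, the convergences \accorpa{tautozero:1}{tautozero:6} and \eqref{tautozero:strong:1}--\eqref{tautozero:strong:2} hold: there is a triplet $(\ph_\eps,\m_\eps,\s_\eps)$ solving \statesys\ with $\tau=0$ (in the weak sense stated there) such that, in particular, $\ph_n\to\ph_\eps$ strongly in $L^2(0,T;H)$ and $\s_n\to\s_\eps$ strongly in $L^2(0,T;H)$. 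This is precisely the convergence needed to pass to the limit in the integral tracking term $\tfrac{\bQ}2\norma{\ph_n-\phQ}_{L^2(Q)}^2$, while the remaining terms of $\J$ are lower semicontinuous (indeed continuous) with respect to the convergence of the finite-dimensional controls.

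Then I would conclude by combining optimality with lower semicontinuity. On one hand, minimality of the sequence gives $\J(\ph_n,\P_n,\chi_n,\eta_n,\CC_n)\to\ov\lam:=\inf(CP)_\eps$. On the other hand, weak (sequential) lower semicontinuity of $\J$ and the convergences above yield $\J(\ph_\eps,\ov\P_\eps,\ov\chi_\eps,\ov\eta_\eps,\ov\CC_\eps)\le\liminf_n\J(\ph_n,\dots)=\ov\lam$. Since $(\ph_\eps,\m_\eps,\s_\eps)$ is admissible for $\CPeps$ (it solves \statesys\ with $\tau=0$) and $(\ov\P_\eps,\ov\chi_\eps,\ov\eta_\eps,\ov\CC_\eps)\in\Uad$, the reverse inequality $\J(\ph_\eps,\dots)\ge\ov\lam$ is automatic, so $(\ov\P_\eps,\ov\chi_\eps,\ov\eta_\eps,\ov\CC_\eps)$ is a minimizer. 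The only point requiring a little care—the ``main obstacle,'' though a mild one—is that the limit state produced by Theorem~\ref{THM:PREV:ASY:tauzero} is a priori obtained only along a subsequence unless the extra conditions $\eta_{\ept}=0$ and $0\le\s_{0,\ept}\le1$ hold; here this is harmless, since we only need existence of \emph{some} admissible limit state to realize the infimum, and the subsequence along which the state converges is exactly the one along which the controls converge. Hence the argument goes through verbatim as for Theorem~\ref{THM:EXISTECE:MIN}, with Theorem~\ref{THM:PREV:ASY:tauzero} replacing the direct passage to the limit in the state system.
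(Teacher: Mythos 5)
Your overall strategy (direct method combined with the compactness of $\Uad$ and the convergences of Theorem~\ref{THM:PREV:ASY:tauzero}) is the one the paper intends, but the way you build the minimizing sequence introduces a genuine gap. The infimum $\ov\lam$ of $\CPeps$ is taken over \emph{admissible pairs}, i.e.\ over controls in $\Uad$ together with states solving \statesys\ with $\tau=0$. You instead take a minimizing sequence of controls $(\P_n,\chi_n,\eta_n,\CC_n)$ and attach to each of them the strong solution of the system with $\tau=\tau_n>0$. These pairs are not admissible for $\CPeps$, and there is no reason why $\J(\ph_n,\P_n,\chi_n,\eta_n,\CC_n)$, with $\ph_n$ the $\tau_n$-state, should converge to $\ov\lam$: your assertion that ``minimality of the sequence gives $\J(\ph_n,\dots)\to\ov\lam$'' is unsupported, because minimality refers to the cost evaluated at $\tau=0$ states, not at the $\tau_n$-states. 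Nor can you recover the $\tau=0$ states of a genuine minimizing sequence as limits of $\tau_n$-solutions: the $\tau=0$ system is in general non-unique (as the paper stresses in this very section), and Theorem~\ref{THM:PREV:ASY:tauzero} only produces \emph{some} limit solution along a subsequence, not a prescribed one. Your argument therefore shows only that the constructed limit pair has cost bounded by $\liminf_n\J$ along your auxiliary sequence, a quantity that may strictly exceed $\ov\lam$.

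The fix is to run the direct method entirely at $\tau=0$: take a minimizing sequence of admissible pairs $(\ph_n,\m_n,\s_n,\P_n,\chi_n,\eta_n,\CC_n)$ for $\CPeps$ itself, use the a priori bounds, uniform over $\Uad$, in the regularity classes of Theorem~\ref{THM:PREV:ASY:tauzero} (namely $\ph_n,\m_n$ bounded in $L^\infty(0,T;H)\cap L^2(0,T;V)$, $\eps\m_n+\ph_n$ in $H^1(0,T;V^*)\cap L^2(0,T;V)$, $\s_n$ in $H^1(0,T;V^*)\cap L^2(0,T;V)$) to extract weakly convergent subsequences, pass to the limit in the weak formulation of the $\tau=0$ system exactly as one does in the $\tau\to0$ asymptotics, and conclude by weak sequential lower semicontinuity of $\J$ together with the compactness of $\Uad$. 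No auxiliary sequence $\tau_n\to0$ is needed; introducing one does not sidestep the non-uniqueness of the limit system, it is precisely what makes your argument break down.
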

\begin{proof}
This result follows directly by adapting the direct method used in
the proof of Theorem~\ref{THM:EXISTECE:MIN}, taking into account the 
compactness of $\Uad$ and the convergence presented in Theorem~\ref{THM:PREV:ASY:tauzero}.
\end{proof}

The main goal is to obtain now necessary conditions for optimality 
of $(CP)_\eps$.
The idea is to proceed on the same line of Section~\ref{ssec:eps0},
by passing to the limit as $\tau\searrow0$ in 
the adjoint problem and in the first 
order conditions for optimality for the adapted optimization problem $\CPad$. 
The main difference with respect to the case
$\eps\searrow0$ is that the state system \statesys\ with $\tau=0$
does not have a unique solution, as stated in Theorem~\ref{THM:PREV:ASY:tauzero}.
Notice that also uniqueness has been established in \cite{SS} as a consequence of a suitable
error estimate between the solutions to the $\ept$ and $\eps$ problems.
However, that result forces the authors to restrict to assume $\eta=0$.
This means that under no additional requirements on the data
(in particular, if $\eta_{\rm max}>0$),
the control-to-state map is not even well-defined when $\tau=0$.
The main problem is that,
despite Theorem~\ref{THM:PREV:ASY:tauzero}, 
for a given minimizer 
$(\varphi_\eps,\P_\eps,\chi_\eps,\eta_\eps,\CC_\eps)$
of $(CP)_\eps$, it is not necessarily true that the state $\varphi_\eps$ can be approximated 
by some corresponding state solutions $\{\varphi_\ept\}_\tau$ 
of the state system with $\eps,\tau>0$
as $\tau\searrow0$. For this reason,
it is important that the adapted cost functional is modified 
accordingly, accounting also for the phase variable.
Namely, in this section,
given a certain minimizer $(\ov\ph_\eps, \ov \P_\eps, \ov\chi_\eps, \ov\eta_\eps, \ov \CC_\eps)$
for $(CP)_\eps$, 
we consider the following adapted cost functional:
\begin{align*}
	 \Jad(\ph, \P, \chi, \eta, \CC) &:= \J(\ph, \P, \chi,\eta, \CC)  
	\!+\tfrac12 \norma{\varphi - \ov\ph_\eps}^2_{L^2(Q)} 
	\!+ \tfrac 12 |\P -  {\ov \P}_\eps|^2
	\\& \quad
	\!+ \tfrac 12 |\chi - {\ov \chi}_\eps|^2
	\!+ \tfrac 12 |\eta - {\ov \eta}_\eps|^2
	\!+ \tfrac 12 |\CC - {\ov \CC}_\eps|^2.
\end{align*}
The adapted optimization problem $(CP)_\ept^{\rm ad}$
is then defined exactly as in \eqref{CP:adapted} with this new definition for the cost functional.

Arguing as in Section~\ref{ssec:eps0}, 
we \sfw ly infer the corresponding of Lemmas~\ref{LEM:EXISTENCE:ADAPTED}--\ref{LEM:OPT:ADAPTED:EPS}
and Theorem~\ref{THM:CONV:EPS:ADAPTED}, 
whose proofs are omitted since can be reproduced in the same fashion.
These results concern solvability of $\CPad$, necessary conditions for $\CPad$,
and approximation $\CPad\searrow(CP)_\eps$ as ${\tau}\searrow 0$.
Let us just point out that since 
the cost functional is corrected 
also with respect to the state variable,
the forcing term of the corresponding adjoint system
has to be corrected too, with no additional effort though.

\begin{lemma}
\label{LEM:EXISTENCE:ADAPTED:TAU}
Assume {\bf A1--A7} and {\bf C1--C3}.
Then, for every $\tau \in (0,\tau_0)$ and for every 
minimizer $(\bph_\eps, \ov \P_\eps, \ov\chi_\eps, \ov\eta_\eps, \ov \CC_\eps)$
of $(CP)_\eps$, 
the optimization problem $\CPad$ admits a minimizer. 
\end{lemma}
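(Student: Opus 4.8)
The plan is to reproduce, almost verbatim, the direct-method argument of Theorem~\ref{THM:EXISTECE:MIN}. Indeed, for $\eps,\tau>0$ fixed the adapted problem $\CPad$ has exactly the same structure as $\CPept$: the admissible set $\Uad$ is the same nonempty compact subset of $\erre^4$, the state system \statesys\ and hence the control-to-state map $\S$ are unchanged, and $\Jad$ differs from $\J$ only by the addition of the five nonnegative terms $\tfrac12\norma{\ph-\ov\ph_\eps}^2_{L^2(Q)}$, $\tfrac12|\P-\ov\P_\eps|^2$, $\tfrac12|\chi-\ov\chi_\eps|^2$, $\tfrac12|\eta-\ov\eta_\eps|^2$, $\tfrac12|\CC-\ov\CC_\eps|^2$, where $(\ov\ph_\eps,\ov\P_\eps,\ov\chi_\eps,\ov\eta_\eps,\ov\CC_\eps)$ is the fixed minimizer of $\CPeps$. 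In particular $\Jad\geq0$, so I would start from a minimizing sequence $\{(\P_n,\chi_n,\eta_n,\CC_n)\}_n\subset\Uad$ with associated states $(\ph_n,\m_n,\s_n):=\S(\P_n,\chi_n,\eta_n,\CC_n)$, all issued from the same initial datum satisfying \eqref{ass:in:data}, so that $\Jad(\ph_n,\P_n,\chi_n,\eta_n,\CC_n)\searrow\inf\Jad$.

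The second step is compactness. Since the bound \eqref{estimate:strong} of Theorem~\ref{THM:WP:STRONG} is uniform over $\Uad$, it is in particular uniform in $n$; combining it with standard compact embedding results (cf.\ \cite[Sec.~8, Cor.~4]{Simon}) and the Bolzano--Weierstrass theorem on the compact set $\Uad$, I would extract a non-relabelled subsequence together with limits $\bph,\bm,\bs$ and $(\ov\P,\ov\chi,\ov\eta,\ov\CC)\in\Uad$ such that $\ph_n\to\bph$ weakly-$^*$ in $\W{1,\infty} V\cap\H1{\Hx2}$ and strongly in $\C0{C^0(\ov\Omega)}$, while $\m_n,\s_n$ converge weakly-$^*$ in the spaces of \eqref{mu_reg2} and $(\P_n,\chi_n,\eta_n,\CC_n)\to(\ov\P,\ov\chi,\ov\eta,\ov\CC)$ in $\erre^4$. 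Passing to the limit in the variational formulation of \statesys\ written for $(\ph_n,\m_n,\s_n)$ --- just as in Theorem~\ref{THM:EXISTECE:MIN}, the strong $\C0{C^0(\ov\Omega)}$-convergence of $\ph_n$ taking care of the nonlinearities $F'(\ph_n)$, $f(\ph_n)$, $f'(\ph_n)$ and of the convolution $J*\ph_n$ --- then identifies $(\bph,\bm,\bs)=\S(\ov\P,\ov\chi,\ov\eta,\ov\CC)$, so that $(\ov\P,\ov\chi,\ov\eta,\ov\CC)$ is admissible with the correct associated state.

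The last step is the weak sequential lower semicontinuity of $\Jad$ along this minimizing sequence. The tracking terms are lower semicontinuous exactly as in Theorem~\ref{THM:EXISTECE:MIN} (here even continuous, by the strong convergence of $\ph_n$); the penalties $\tfrac{\aP}2|\P-\P_*|^2,\dots$ and $\tfrac12|\P-\ov\P_\eps|^2,\dots$ are continuous since $(\P_n,\chi_n,\eta_n,\CC_n)\to(\ov\P,\ov\chi,\ov\eta,\ov\CC)$ in $\erre^4$; and the only genuinely new term, $\tfrac12\norma{\ph-\ov\ph_\eps}^2_{L^2(Q)}$, is continuous along the sequence because $\ph_n\to\bph$ strongly in $\C0{C^0(\ov\Omega)}\hookrightarrow L^2(Q)$. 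Hence $\Jad(\bph,\ov\P,\ov\chi,\ov\eta,\ov\CC)\leq\liminf_{n\to\infty}\Jad(\ph_n,\P_n,\chi_n,\eta_n,\CC_n)=\inf\Jad$, so that $(\ov\P,\ov\chi,\ov\eta,\ov\CC)$, together with $(\bph,\bm,\bs)$, is a minimizer of $\CPad$. I do not foresee any real obstacle here: the sole feature not already present in Theorem~\ref{THM:EXISTECE:MIN} is that $\Jad$ now depends on $\ph$ also through an $L^2(Q)$-norm, but that dependence is weakly lower semicontinuous --- indeed continuous along the minimizing sequence --- so the argument goes through unchanged, all the remaining estimates being routine.
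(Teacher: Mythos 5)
Your proof is correct and follows exactly the route the paper intends: the paper omits the proof of this lemma, stating only that it "can be reproduced in the same fashion" as the direct-method argument of Theorem~\ref{THM:EXISTECE:MIN} and Lemma~\ref{LEM:EXISTENCE:ADAPTED}, which is precisely what you do. Your additional observation that the new term $\tfrac12\norma{\ph-\ov\ph_\eps}^2_{L^2(Q)}$ is handled by the strong convergence of the states is the only genuinely new ingredient, and you treat it correctly.
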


\begin{lemma}
\label{LEM:OPT:ADAPTED:TAU}
Assume {\bf A1--A7} and {\bf C1--C4}, and let
$(\bph_\eps, \ov \P_\eps, \ov\chi_\eps, \ov\eta_\eps, \ov \CC_\eps)$
be a minimizer for $(CP)_\eps$.
For every $\tau\in(0,\tau_0)$,
if $({\ov \P}_\ept,{\ov \chi}_\ept,{{\ov \eta}_\ept,}{\ov \CC}_\ept)$
is an optimal control for $\CPad$, then
the following first-order necessary condition holds
\begin{align}
	&\non
	\intQ (\P - \ov{\P}_\ept \,) \bs_\ept f(\bph_\ept) p_\ept
	- \intQ (\chi - \ov{\chi}_\ept ) \bs_\ept q_\ept
	- \intQ (\CC - \ov{\CC}_\ept ) \bs_\ept f(\bph_\ept) \, r_\ept
	\\ & \quad  \non
	+  (\P - \ov{\P}_\ept \,)\big( \aP(\ov{\P}_\ept-\P_*) + (\ov{\P}_\ept -\ov{\P}_\eps)\big)
	+ (\chi - \ov{\chi}_\ept \,)\big(\achi (\ov{\chi}_\ept-\chi_*) + (\ov{\chi}_\ept-\ov{\chi}_\eps)\big)
	\\ & \quad \non
	+ (\eta - \ov{\eta}_\ept \,)\big(\aeta (\ov{\eta}_\ept-\eta_*) 
	+ (\ov{\eta}_\ept-\ov{\eta}_\eps)\big)\\
	&\quad
	+ (\CC - \ov{\CC} _\ept\,)\big( \aC (\ov{\CC}_\ept-\CC_*) + (\ov{\CC} _\ept-\ov{\CC}_\eps) \big)
	\geq 0 \qquad\text{for every $(\P,\chi,\eta,\CC) \in \Uad$},
	\label{opt:ad:tautozero}
\end{align}
where $(\bph_\ept,\bm_\ept,\bs_\ept)$ and $(p_\ept,q_\ept, r_\ept)$ are
the corresponding unique solutions to the state system \statesys\ and 
the adjoint system \adjsys\ with $\ept>0$
and with respect to the 
coefficients $({\ov \P}_\ept,{\ov \chi}_\ept,{{\ov \eta}_\ept,}{\ov \CC}_\ept)$,
the right-hand side of \eqref{adj:1} 
being modified as $\beta_Q(\bph_\ept-\varphi_Q)+(\ov\varphi_\ept-\bph_\eps)$.
\end{lemma}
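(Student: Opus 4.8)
The plan is to run, almost verbatim, the argument developed for $\CPept$ in Section~\ref{sec:CP_ept}: for fixed $\eps\in(0,\eps_0)$ and $\tau\in(0,\tau_0)$ the adapted problem $\CPad$ lives in exactly the same functional setting, the only differences being that the tracking cost $\J$ is replaced by
\[
  \Jad(\ph,\P,\chi,\eta,\CC)=\J(\ph,\P,\chi,\eta,\CC)
  +\tfrac12\norma{\ph-\bph_\eps}^2_{L^2(Q)}
  +\tfrac12\big(|\P-\ov\P_\eps|^2+|\chi-\ov\chi_\eps|^2+|\eta-\ov\eta_\eps|^2+|\CC-\ov\CC_\eps|^2\big),
\]
the minimizer $(\bph_\eps,\ov\P_\eps,\ov\chi_\eps,\ov\eta_\eps,\ov\CC_\eps)$ of $\CPeps$ entering as fixed data (so that $\bph_\eps$ is simply a fixed element of $L^2(Q)$). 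First I would note that the extra summand is a smooth quadratic functional on $\big(\H1H\cap\L\infty V\big)\times\erre^4$; since the first factor embeds continuously into $L^2(Q)$, the \Frechet\ differentiability of the control-to-state operator $\S$ from Theorem~\ref{THM:FRECHET} together with the chain rule gives that the reduced adapted functional $\Jad^{\rm red}(\P,\chi,\eta,\CC):=\Jad(\S_1(\P,\chi,\eta,\CC),\P,\chi,\eta,\CC)$ is \Frechet\ differentiable on $\Uad$.

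Next, since $\optept$ minimizes $\CPad$ over the convex set $\Uad$, the abstract first-order condition $\<D\Jad^{\rm red}\,\optept,(\P,\chi,\eta,\CC)-\optept>\geq0$ holds for every $(\P,\chi,\eta,\CC)\in\Uad$, and I would make it explicit exactly as in Theorem~\ref{THM:OPT:FIRST}, evaluating $D\S\optept[\bh]=(\xi_\ept,\nu_\ept,\z_\ept)$ through the linearized system \linsys\ of Theorem~\ref{THM:LIN} with increment $\bh=(\P-\ov\P_\ept,\chi-\ov\chi_\ept,\eta-\ov\eta_\ept,\CC-\ov\CC_\ept)$. Differentiating $\Jad$ at $(\bph_\ept,\optept)$: the tracking part of $\J$ contributes $\intQ\bQ(\bph_\ept-\phQ)\xi_\ept$ (the terminal term is absent since $\bO=0$ in this section), the new state penalty $\tfrac12\norma{\ph-\bph_\eps}^2_{L^2(Q)}$ contributes $\intQ(\bph_\ept-\bph_\eps)\xi_\ept$, the tracking-type control terms give the familiar $\aP(\ov\P_\ept-\P_*)(\P-\ov\P_\ept)$, $\achi(\ov\chi_\ept-\chi_*)(\chi-\ov\chi_\ept)$, $\aeta(\ov\eta_\ept-\eta_*)(\eta-\ov\eta_\ept)$, $\aC(\ov\CC_\ept-\CC_*)(\CC-\ov\CC_\ept)$, and the four new quadratic control penalties produce $(\ov\P_\ept-\ov\P_\eps)(\P-\ov\P_\ept)$, $(\ov\chi_\ept-\ov\chi_\eps)(\chi-\ov\chi_\ept)$, $(\ov\eta_\ept-\ov\eta_\eps)(\eta-\ov\eta_\ept)$, $(\ov\CC_\ept-\ov\CC_\eps)(\CC-\ov\CC_\ept)$. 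This is the analogue of \eqref{foc:first} for the adapted problem.

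Then I would introduce the adjoint system, namely \adjsys\ with the right-hand side of \eqref{adj:1} changed to $\bQ(\bph_\ept-\phQ)+(\bph_\ept-\bph_\eps)$ and terminal data \eqref{adj:5} with $\bO=0$; its unique solvability in the regularity class of Theorem~\ref{THM:ADJ:ept} is unaffected by this change, the added forcing $\bph_\ept-\bph_\eps\in C^0([0,T];H)$ being merely a bounded perturbation of the source. Carrying out the duality computation from the proof of Theorem~\ref{THM:OPT:FINAL} --- test \eqref{lin:1}--\eqref{lin:3} by $p_\ept$, $q_\ept$, $r_\ept$, integrate over $Q$, integrate by parts in time using the homogeneous initial data \eqref{lin:5} and the terminal data \eqref{adj:5}, and cancel the operator terms by means of the adjoint equations \eqref{adj:1}--\eqref{adj:3} --- yields the identity
\begin{align*}
  &\intQ\big(\bQ(\bph_\ept-\phQ)+(\bph_\ept-\bph_\eps)\big)\xi_\ept\\
  &\quad=\intQ h_\P\bs_\ept f(\bph_\ept)p_\ept+\intQ h_\chi\bs_\ept q_\ept
  -\intQ h_\eta\Delta\bph_\ept\,r_\ept-\intQ h_\CC\bs_\ept f(\bph_\ept)\,r_\ept,
\end{align*}
the analogue of \eqref{proof:foc} with the modified source and without the $\bO$-term. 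Substituting this into the explicit inequality of the previous step, with $h_\P=\P-\ov\P_\ept$, $h_\chi=\chi-\ov\chi_\ept$, $h_\eta=\eta-\ov\eta_\ept$, $h_\CC=\CC-\ov\CC_\ept$, gives precisely \eqref{opt:ad:tautozero}.

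There is no genuine obstacle here: the statement is a routine transcription to the adapted cost functional of Theorems~\ref{THM:FRECHET}, \ref{THM:LIN}, \ref{THM:ADJ:ept}, \ref{THM:OPT:FIRST} and \ref{THM:OPT:FINAL}, valid because $\eps,\tau$ are fixed and strictly positive throughout (in particular the state and adjoint systems are uniquely solvable, with the separation and stability estimates of Theorem~\ref{THM:WP:STRONG} and Corollary~\ref{COR:SEP} at hand). The only points requiring a word of care are bookkeeping ones: checking that the $L^2(Q)$-penalization of $\ph$ is admissible for the chain rule (it is, through the embedding $\H1H\cap\L\infty V\hookrightarrow L^2(Q)$), that the perturbed adjoint system stays well-posed in the stated class (the perturbation being a bounded source), and tracking the newly generated terms through the duality identity so that the corrections $(\ov\P_\ept-\ov\P_\eps)$, $(\ov\chi_\ept-\ov\chi_\eps)$, $(\ov\eta_\ept-\ov\eta_\eps)$, $(\ov\CC_\ept-\ov\CC_\eps)$ and the modified forcing $\bQ(\bph_\ept-\phQ)+(\bph_\ept-\bph_\eps)$ land in the right places --- which is exactly what the excerpt means when it says these proofs ``can be reproduced in the same fashion.''
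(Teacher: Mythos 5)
Your proposal is correct and is exactly the argument the paper intends: the proof of this lemma is omitted in the text precisely because it is the verbatim transcription of Theorems~\ref{THM:FRECHET}, \ref{THM:OPT:FIRST} and \ref{THM:OPT:FINAL} to the adapted functional, with the extra $L^2(Q)$-penalty on the state feeding the additional source $\bph_\ept-\bph_\eps$ into \eqref{adj:1}, the quadratic control penalties producing the corrections $(\ov\P_\ept-\ov\P_\eps)$, $(\ov\chi_\ept-\ov\chi_\eps)$, $(\ov\eta_\ept-\ov\eta_\eps)$, $(\ov\CC_\ept-\ov\CC_\eps)$, and the terminal tracking term dropping out since $\bO=0$ in this section. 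One cosmetic remark: your duality identity carries $+\intQ h_\chi\bs_\ept q_\ept$, consistent with \eqref{proof:foc} and \eqref{opt:final}, whereas \eqref{opt:ad:tautozero} prints a minus sign on that term --- this is a sign inconsistency already present in the paper's asymptotic statements, not a flaw in your argument.
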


\begin{theorem}
\label{THM:CONV:TAU:ADAPTED}
Assume \ref{ass:1}--\ref{ass:7}, \ref{ass:op:1}--\ref{ass:op:4}, and
let $(\bph_\eps, {\ov \P}_\eps,{\ov \chi}_\eps,{{\ov \eta}_\eps,}{\ov \CC}_\eps)$ 
be a minimizer for $\CPeps$.
Then, for every family of optimal controls 
$\{( {\ov \P}_{\eps,\tau},{\ov \chi}_{\eps,\tau},{{\ov \eta}_{\eps,\tau},}{\ov \CC}_{\eps,\tau})\}_\tau$
for $\CPad$,
with corresponding states
$\{(\bph_{\eps,\tau},\bm_{\eps,\tau},\bs_{\eps,\tau})\}_\tau$, 
as $\tau\to0$ it holds that,
for every $\alpha \geq 1$ if $d=2$ and $1\leq \alpha <6$ if $d=3$,
\begin{align*}
	& \non
	\bph_{\eps,\tau} \to \bph_{{\eps}} \quad \hbox{weakly-$^*$ in $\L\infty H \cap \L2 V,$}
	\\ & \hspace{3cm} 
	\hbox{and strongly in $\L2 {\Lx \alpha},$}
	\\ 
	&{\ov \P}_{\eps,\tau}\to {\ov \P}_{{\eps}}, \quad 
	{\ov \chi}_{\eps,\tau}\to {\ov \chi}_{{\eps}},
	\quad
	{{\ov \eta}_{\eps,\tau}\to {\ov \eta}_{{\eps}},
	\quad}
	{\ov \CC}_{\eps,\tau} \to {\ov \CC}_{{\eps}},
	\\
	& 
	\Jad (\bph_{\eps,\tau},{\ov \P}_{\eps,\tau},
	{\ov \chi}_{\eps,\tau},{{\ov \eta}_{\eps,\tau},}
	{\ov \CC}_{\eps,\tau}) \to 
	\J(\bph_\eps, {\ov \P}_\eps, {\ov \chi}_{\eps} {, {\ov \eta}_{\eps}},{\ov \CC}_{\eps}).
\end{align*}
\end{theorem}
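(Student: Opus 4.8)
The argument follows the template of the proof of Theorem~\ref{THM:CONV:EPS:ADAPTED}, the genuine new feature being that the state system \statesys\ with $\tau=0$ is not known to be uniquely solvable under the present hypotheses (cf.\ Theorem~\ref{THM:PREV:ASY:tauzero}, where uniqueness is obtained only if $\eta=0$), which is exactly why the adapted functional is here augmented by the term $\tfrac12\norma{\varphi-\bph_\eps}_{L^2(Q)}^2$. First I would fix a sequence $\tau_k\searrow0$ and write for brevity $j_k:=\Jad(\bph_{\eps,\tau_k},\ov{\P}_{\eps,\tau_k},\ov{\chi}_{\eps,\tau_k},\ov{\eta}_{\eps,\tau_k},\ov{\CC}_{\eps,\tau_k})$. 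Since $\Uad$ is compact in $\erre^4$ and the standing hypotheses of Section~\ref{ssec:tau0} make Theorem~\ref{THM:PREV:ASY:tauzero} applicable (all controls in $\Uad$ satisfy \eqref{coeff_tau0}), the states $\{\bph_{\eps,\tau_k}\}_k$ are bounded in $L^\infty(0,T;H)\cap L^2(0,T;V)$ uniformly in $k$; hence, along a subsequence, $(\ov{\P}_{\eps,\tau_k},\ov{\chi}_{\eps,\tau_k},\ov{\eta}_{\eps,\tau_k},\ov{\CC}_{\eps,\tau_k})\to(\hat\P,\hat\chi,\hat\eta,\hat\CC)\in\Uad$ and $\bph_{\eps,\tau_k}\to\hat\ph$ weakly-$^*$ in $L^\infty(0,T;H)\cap L^2(0,T;V)$, the latter being strong in $L^2(0,T;\Lx\alpha)$ for the stated range of $\alpha$ by \eqref{tautozero:strong:1} together with a Gagliardo--Nirenberg interpolation and H\"older's inequality in time. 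By Theorem~\ref{THM:PREV:ASY:tauzero}, $\hat\ph$ is then the phase component of a solution of \statesys\ with $\tau=0$ and coefficients $(\hat\P,\hat\chi,\hat\eta,\hat\CC)$, so in particular $(\hat\ph,\hat\P,\hat\chi,\hat\eta,\hat\CC)$ is admissible for $\CPeps$.

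Next I would bracket $\limsup_k j_k$ and $\liminf_k j_k$. For the upper bound, I would use the minimality of the adapted minimizer for $\CPad$ against the competitor furnished by the reference optimal control $(\ov{\P}_\eps,\ov{\chi}_\eps,\ov{\eta}_\eps,\ov{\CC}_\eps)$ of $\CPeps$ together with its state of \statesys\ at level $(\eps,\tau_k)$; letting $k\to\infty$, invoking Theorem~\ref{THM:PREV:ASY:tauzero} for those states, the fact that all five penalization terms of $\Jad$ vanish at the reference minimizer, and the optimality of $(\bph_\eps,\ov{\P}_\eps,\ov{\chi}_\eps,\ov{\eta}_\eps,\ov{\CC}_\eps)$ for $\CPeps$, one obtains $\limsup_k j_k\le\J(\bph_\eps,\ov{\P}_\eps,\ov{\chi}_\eps,\ov{\eta}_\eps,\ov{\CC}_\eps)$. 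For the lower bound, the weak/weak-$^*$ sequential lower semicontinuity of $\J$, the continuity of the finite-dimensional penalizations, and the weak lower semicontinuity of $v\mapsto\norma{v-\bph_\eps}_{L^2(Q)}^2$ give $\liminf_k j_k\ge\Jad(\hat\ph,\hat\P,\hat\chi,\hat\eta,\hat\CC)$, which by definition equals $\J(\hat\ph,\hat\P,\hat\chi,\hat\eta,\hat\CC)+\tfrac12\norma{\hat\ph-\bph_\eps}_{L^2(Q)}^2+\tfrac12\big(|\hat\P-\ov{\P}_\eps|^2+|\hat\chi-\ov{\chi}_\eps|^2+|\hat\eta-\ov{\eta}_\eps|^2+|\hat\CC-\ov{\CC}_\eps|^2\big)$; moreover, by admissibility of $(\hat\ph,\hat\P,\hat\chi,\hat\eta,\hat\CC)$ for $\CPeps$, $\J(\hat\ph,\hat\P,\hat\chi,\hat\eta,\hat\CC)\ge\J(\bph_\eps,\ov{\P}_\eps,\ov{\chi}_\eps,\ov{\eta}_\eps,\ov{\CC}_\eps)$.

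Chaining these inequalities forces each of them to be an equality; in particular every penalization term vanishes, whence $\hat\ph=\bph_\eps$, $\hat\P=\ov{\P}_\eps$, $\hat\chi=\ov{\chi}_\eps$, $\hat\eta=\ov{\eta}_\eps$, $\hat\CC=\ov{\CC}_\eps$, and $j_k\to\J(\bph_\eps,\ov{\P}_\eps,\ov{\chi}_\eps,\ov{\eta}_\eps,\ov{\CC}_\eps)$. Since these limits do not depend on the extracted subsequence, a standard Urysohn-type argument promotes all the convergences to hold along the whole family $\tau\to0$, which is the assertion.

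The main obstacle, absent in the analysis of $\CPtau$ carried out in Section~\ref{ssec:eps0}, is precisely the lack of uniqueness for \statesys\ with $\tau=0$: the control-to-state relation is then only set-valued, so a given minimizer $\bph_\eps$ of $\CPeps$ is not a priori attainable as a limit of states at level $(\eps,\tau)$, and the adapted functional of Section~\ref{ssec:eps0}, which penalizes only the controls, would not suffice to pin down the selected limit state. Penalizing the phase variable as well via $\tfrac12\norma{\varphi-\bph_\eps}_{L^2(Q)}^2$, and organizing the bracketing above around the fixed quintuple $(\bph_\eps,\ov{\P}_\eps,\ov{\chi}_\eps,\ov{\eta}_\eps,\ov{\CC}_\eps)$, is exactly what forces the limit state to coincide with $\bph_\eps$. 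Two minor points should be recorded in passing: the accompanying modification of the adjoint forcing in Lemma~\ref{LEM:OPT:ADAPTED:TAU}, i.e.\ replacing in \eqref{adj:1} the right-hand side $\bQ(\bph_\ept-\varphi_Q)$ by $\bQ(\bph_\ept-\varphi_Q)+(\bph_\ept-\bph_\eps)$, is harmless for the uniform-in-$\tau$ adjoint estimates of Section~\ref{ssec:est_adj} since the extra term is bounded in $L^2(Q)$ uniformly in $\tau$; and the standing requirement $\bO=0$ of Section~\ref{ssec:tau0} is what makes the terminal condition \eqref{adj:5} compatible with $\tau=0$.
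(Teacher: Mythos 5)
Your proof is essentially the paper's: the published argument simply declares the proof analogous to that of Theorem~\ref{THM:CONV:EPS:ADAPTED} and records precisely the two points you highlight, namely that the identification $\hat\ph=\bph_\eps$ now comes from the extra penalization $\tfrac12\norma{\varphi-\bph_\eps}^2_{L^2(Q)}$ rather than from uniqueness of the $\tau=0$ state system, and that the strong convergence in $\L2 {\Lx{\alpha}}$ follows from Theorem~\ref{THM:PREV:ASY:tauzero} together with the compact embedding $V\hookrightarrow\Lx{\alpha}$. The one step you should make explicit (the paper leaves it implicit as well) is the upper bound: your competitor is the pair consisting of the reference control and its state at level $(\eps,\tau_k)$, and by Theorem~\ref{THM:PREV:ASY:tauzero} those states are only known to converge, along a subsequence, to \emph{some} solution of the $\tau=0$ system with coefficients $(\ov{\P}_\eps,\ov{\chi}_\eps,\ov{\eta}_\eps,\ov{\CC}_\eps)$ --- not necessarily to $\bph_\eps$ itself, since uniqueness fails when $\eta_{\rm max}>0$ --- so the state penalization evaluated along this competitor need not vanish in the limit and the asserted inequality $\limsup_k j_k\le\J(\bph_\eps,\ov{\P}_\eps,\ov{\chi}_\eps,\ov{\eta}_\eps,\ov{\CC}_\eps)$ requires a justification beyond what is written.
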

\begin{proof}
  The proof is analogous to the one of Theorem~\ref{THM:CONV:EPS:ADAPTED},
  the only difference being that the identification of the limit $\hat\varphi=\ov\ph_\eps$
  follows from the additional correction in the cost functional
  (and not from the uniqueness of the state system with $\tau=0$,
  which is indeed not true). The strong convergence of $\{\bph_\ept\}_\tau$
  is a consequence of the convergences in Theorem~\ref{THM:PREV:ASY:tauzero}
  and the compact inclusion $V\hookrightarrow L^\alpha(\Omega)$ 
  for every $\alpha \geq 1$ if $d=2$ and $1\leq \alpha <6$ if $d=3$.
\end{proof}

\subsubsection{Letting $\tau\to0$ in the adjoint system}
\label{SUB:ADJ:TAUTOZERO}
In this section we study the passage to the limit as $\tau\searrow0$
in the adjoint system \adjsys\ , where the forcing term 
of \eqref{adj:1} is modified as stated in Lemma~\ref{LEM:OPT:ADAPTED:TAU}.

\begin{theorem}
	\label{THM:ADJ:TAUTOZERO}
	Assume \ref{ass:1}--\ref{ass:8} and \ref{ass:op:1}--\ref{ass:op:4}.
	Let the parameters 
	$(\P_\eps,\chi_\eps,\eta_\eps,\CC_\eps)\in\Uad$ and 
	$\{(\P_\ept, \chi_\ept, \eta_\ept, \CC_\ept)\}_\tau\subset\Uad$
	be 
	such that $(\P_\ept, \chi_\ept, \eta_\ept, \CC_\ept)\to (\P_\eps,\chi_\eps,\eta_\eps,\CC_\eps)$
	 as $\tau\to0$.
	Let $(\bph_\eps,\bm_\eps,\bs_\eps)$ be a solution to the state system \statesys\
	with $\tau=0$ and coefficients $(\P_\eps,\chi_\eps,\eta_\eps,\CC_\eps)$
	as given by Theorem~\ref{THM:PREV:ASY:tauzero},
	and let $(\bph_\ept, \bm_\ept,\bs_\ept)$ be the solution to 
	the state system \statesys\
	with $\ept>0$ and coefficients $(\P_\ept, \chi_\ept, \eta_\ept, \CC_\ept)$
	as given by Theorem~\ref{THM:WP:STRONG}.
	Let also $(p_\ept,q_\ept,r_\ept)$ be the unique solution to
	the adjoint system \adjsys\
	with $\ept >0$, coefficients $(\P_\ept, \chi_\ept, \eta_\ept, \CC_\ept)$,
	and forcing term in \eqref{adj:1} given by
	$\beta_Q(\bph_\ept-\ph_Q)+(\bph_\ept-\bph_\eps)$.
	Then, there exist a triplet $(p_\eps,q_\eps,r_\eps)$, with 
	\begin{align*}
	&p_\eps,r_\eps \in \H1 H\cap L^\infty(0,T; V)\cap L^2(0,T; W),
	\qquad q_\eps\in L^2(0,T; H),
	\end{align*}
	such that as $\tau\to0$,
	for every $\alpha \geq 1$ if $d=2$ and $1\leq \alpha <6$ if $d=3$, it holds that
	\begin{align*}
	p_{\eps,\tau} & \to p_\eps && \hbox{weakly-$^*$ in $\H1 H \cap  \L\infty V \cap \L2 W$},
	\\ & && \quad \hbox{strongly in $\C0 {\Lx \alpha} \cap \L2 V$,}
	\\
	q_{\eps,\tau} & \to q_\eps && \hbox{weakly in $\L2 H$},
	\\
	r_{\eps,\tau} & \to r_\eps && \hbox{weakly-$^*$ in $\H1 H \cap \L\infty V \cap \L2 W$},
	\\ & && \quad \hbox{strongly in $\C0 {\Lx \alpha} \cap \L2 V$,}
	\\
	\tau q_{\eps,\tau} & \to 0 && \hbox{strongly in $\H1 {L^1(\Omega)} \cap \L2 H$}.
\end{align*}
	Moreover, $(p_\eps,q_\eps,r_\eps)$ is the unique
	weak solution to the adjoint system \adjsys\ with $\tau =0$
	and coefficients $(\P_\eps,\chi_\eps,\eta_\eps,\CC_\eps)$,
	in the sense that
	\begin{align*}
	& \non
	-\int_\Omega \partial_t p_\eps  v
	+ \iO (a q_\eps - J*q_\eps )v +\eta_\eps\int_\Omega\Delta r_\eps v
	+ \iO  F''(\bph_\eps)q_\eps v
	\\ & \qquad
	+ \iO  \CC_\eps\bs_\eps f'(\bph_\eps) r_\eps v
	- \iO  \P_\eps\bs_\eps f'(\bph_\eps) p_\eps v
	=\,\iO  	\bQ (\bph_\eps-\phQ) v,
	\\	
	& 	-\eps\int_\Omega \partial_t p_\eps  w+
	\iO \nabla p_\eps\cdot \nabla  w
	- \iO q_\eps w	\,=\,	0,
	\\
	& -\iO \dt r_\eps z
	+ \iO \nabla r_\eps \cdot \nabla z
	+ \iO \CC_\eps f(\bph_\eps) r_\eps z
	- \iO \P_\eps f(\bph_\eps)p_\eps z 
	- \iO \ch_\eps q_\eps z 
	\,=0,
\end{align*}
for every $v,w,z  \in V$, almost everywhere in $(0,T)$, and
\[
	p_\eps(T)\,=\, 0, \qquad r_\eps(T)\,=0.
\]
\end{theorem}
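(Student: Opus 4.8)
The plan is to follow the same blueprint as in the proof of Theorem~\ref{THM:ADJ:EPSTOZERO}, the role of the degenerating parameter now being played by $\tau$ while $\eps\in(0,\eps_0)$ is kept fixed. The natural starting point is the uniform estimate~\eqref{est3_adj}: the decisive feature of the present framework is that $\bO=0$, so that the only quantity on the right-hand side of~\eqref{est3_adj} that could blow up, namely $M\bigl(\bO^2/(2\tau)+1\bigr)$, reduces to a constant independent of $\tau$. The programme is then: (i) absorb the remaining bad terms of~\eqref{est3_adj} and close it into a Gronwall inequality with constants independent of $\tau$; (ii) extract weak/weak-$^*$ limits and upgrade to the announced strong convergences by Aubin--Lions--Simon compactness; (iii) pass to the limit in the weak formulation of~\adjsys\ (with the forcing $\bQ(\bph_\ept-\phQ)+(\bph_\ept-\bph_\eps)$ in~\eqref{adj:1}) and identify the limiting adjoint system with $\tau=0$; (iv) conclude uniqueness, hence convergence of the whole family.

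The core of the argument is step~(i). In~\eqref{est3_adj} the terms $-\intQtT\eta_\ept\Delta r_\ept q_\ept$ and $-\intQtT\chi_\ept q_\ept(\dt r_\ept+\Delta r_\ept)$ have to be absorbed into the good quantities $C_0\intQtT|q_\ept|^2$, $\intQtT|\dt r_\ept|^2$ and $\intQtT|\Delta r_\ept|^2$ \emph{without} any help from the coefficient $m\tau\,\norma{q_\ept(t)}^2$, which now degenerates. This is precisely where the structural smallness conditions imposed at the beginning of Section~\ref{ssec:tau0}, namely $\chi_{\rm max}<\sqrt{c_a}$, $(\chi_{\rm max}+\eta_{\rm max}+4c_a\chi_{\rm max})^2<8c_aC_0$ and $\eta_{\rm max}^2+\chi_{\rm max}^2<\tfrac49 C_0$, enter the scene: a careful Young inequality exploiting these bounds leaves strictly positive coefficients in front of $\intQtT|q_\ept|^2$, $\intQtT|\dt r_\ept|^2$ and $\intQtT|\Delta r_\ept|^2$. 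Together with the bounds on $\{F''(\bph_\ept)\}_\tau$ in $\L\infty{L^3(\Omega)}$ and on $\{\bs_\ept\}_\tau$ in $\L2 V$, uniform in $\tau$ by~\ref{ass:8} and Theorem~\ref{THM:PREV:ASY:tauzero}, Gronwall's lemma and elliptic regularity for~\eqref{adj:2} then yield a constant $M_\eps$, independent of $\tau$, with
\[
\norma{p_\ept}_{\H1 H\cap\L\infty V\cap\L2 W}+\tau^{1/2}\norma{q_\ept}_{\L\infty H}+\norma{q_\ept}_{\L2 H}+\norma{r_\ept}_{\H1 H\cap\L\infty V\cap\L2 W}\le M_\eps.
\]
Observe that $\{q_\ept\}_\tau$ is controlled only in $\L2 H$, in accordance with the regularity announced for $q_\eps$; a comparison in~\eqref{adj:1}, using $F''(\bph_\ept)q_\ept\in\L2{L^{6/5}(\Omega)}$, then bounds $\dt(p_\ept+\tau q_\ept)$ in $\L2{L^{6/5}(\Omega)}$, whence $\tau\dt q_\ept=\dt(p_\ept+\tau q_\ept)-\dt p_\ept$ is bounded in $\L2{L^1(\Omega)}$.

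For step~(ii), the Banach--Alaoglu theorem produces, along a subsequence, limits $(p_\eps,q_\eps,r_\eps)$ with the stated regularity; since $\dt p_\ept,\dt r_\ept$ are bounded in $\L2 H$ and $p_\ept,r_\ept$ in $\L\infty V\cap\L2 W$, the Aubin--Lions--Simon lemma upgrades the convergences of $p_\ept$ and $r_\ept$ to strong ones in $\C0{\Lx\alpha}\cap\L2 V$ for every admissible $\alpha$, while $\tau q_\ept\to0$ follows from $\norma{\tau q_\ept}_{\L\infty H}\le\tau^{1/2}M_\eps\to0$ together with the comparison bound on $\tau\dt q_\ept$. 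In step~(iii) one writes the variational formulation of~\adjsys\ for $(p_\ept,q_\ept,r_\ept)$ and lets $\tau\to0$: most terms pass to the limit by the above strong convergences, by Theorem~\ref{THM:PREV:ASY:tauzero} (in particular $\bph_\ept\to\bph_\eps$ strongly in $\L2 H$, so that the correction $\bph_\ept-\bph_\eps\to0$), by the boundedness and continuity of $f$ and $f'$, and by Lebesgue's theorem; the only delicate term $\iO F''(\bph_\ept)q_\ept v$ is handled exactly as in the proof of Theorem~\ref{THM:ADJ:EPSTOZERO}, i.e.\ the Severini--Egorov theorem gives $F''(\bph_\ept)\to F''(\bph_\eps)$ strongly in $L^\beta(Q)$ for every $\beta<3$, whence $F''(\bph_\ept)q_\ept\wto F''(\bph_\eps)q_\eps$ weakly in $\L2{L^\gamma(\Omega)}$ for every $\gamma<6/5$, which suffices to pass to the limit for test functions $v\in W$ and then, by density and $F''(\bph_\eps)q_\eps\in\L2{L^{6/5}(\Omega)}\hookrightarrow\L2{\Vp}$, for all $v\in V$. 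The terminal conditions survive the limit since $p_\ept(T)=0$ for every $\tau$ (as $\eps>0$, by~\eqref{adj:5}) and $r_\ept(T)=0$. Finally, step~(iv): linearity of the limit system and the estimate~\eqref{est3_adj} read at $\tau=0$ give uniqueness of $(p_\eps,q_\eps,r_\eps)$, so that all the convergences hold along the entire sequence $\tau\to0$.

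The hard part is step~(i): obtaining the a priori estimates \emph{uniformly in $\tau$} once the term $m\tau\,\norma{q_\ept(t)}^2$ of~\eqref{est3_adj} has degenerated forces the chemotaxis and active-transport contributions to be absorbed exclusively through the smallness conditions on $\chi_{\rm max}$ and $\eta_{\rm max}$; this is also the structural reason why $q_\eps$ can only be expected in $\L2 H$ and why the compatibility requirement $\bO=0$ cannot be removed in the regime $\eps>0$, $\tau=0$.
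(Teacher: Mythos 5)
Your overall strategy coincides with the paper's: start from \eqref{est3_adj}, use $\bO=0$ to kill the $1/\tau$ term, close the estimate uniformly in $\tau$, extract limits by compactness, pass to the limit in the weak formulation, and conclude by linearity. However, two concrete points in your step (i) do not hold as written.

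First, you never address the convolution term $\intQtT (J*q_\ept)q_\ept$ on the right-hand side of \eqref{est3_adj}, and this is the genuinely delicate one: since $q_\ept$ is controlled on the left only through $C_0\intQtT|q_\ept|^2$, and $a^*$ may well exceed $C_0$, the naive bound $\intQtT(J*q_\ept)q_\ept\le a^*\intQtT|q_\ept|^2$ does not close, and the smallness conditions on $\chi_{\rm max}$ and $\eta_{\rm max}$ are of no help here. The paper estimates instead $\intQtT(J*q_\ept)q_\ept\le(a^*+b^*)\int_t^T\norma{q_\ept}\,\norma{q_\ept}_{V^*}$ and then uses a comparison in \eqref{adj:2}, namely $\norma{q_\ept}_{V^*}\lesssim\eps\norma{\dt p_\ept}+\norma{p_\ept}_V$, so that after Young the dangerous piece is $\frac{(a^*+b^*)^2}{2\delta}\,\eps^2\intQtT|\dt p_\ept|^2$, absorbed by the term $\eps\intQtT|\dt p_\ept|^2$ on the left precisely because $\eps<\eps_0\le 2C_0/(3(a^*+b^*)^2)$ by {\bf A6}. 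Without this step the Gronwall closure fails; you should make it explicit.

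Second, the uniform bound you invoke for $\{F''(\bph_\ept)\}_\tau$ in $L^\infty(0,T;L^3(\Omega))$ is the one available in the $\eps\to0$ regime, where $\bph_\ept$ is bounded in $\L\infty V$ uniformly. When $\tau\to0$ the phase variable is only bounded in $\L\infty H\cap\L2 V$ uniformly, and the energy bound $F(\bph_\ept)\in L^\infty(0,T;L^1(\Omega))$ together with {\bf A8} gives only $F''(\bph_\ept)$ bounded in $\L\infty H$, plus $L^1(0,T;L^3(\Omega))$ from $\bph_\ept\in\L2{\Lx6}$. The paper interpolates these to a uniform $L^{7/3}(Q)$ bound, so that $F''(\bph_\ept)\to F''(\bph_\eps)$ strongly in $L^\beta(Q)$ only for $\beta<7/3$ and $F''(\bph_\ept)q_\ept\to F''(\bph_\eps)q_\eps$ only weakly in $L^1(Q)$; one must then test \eqref{adj:vf:1} with $v\in W\hookrightarrow L^\infty(\Omega)$ and recover $v\in V$ by density, using that the limit product lies in $L^{6/5}(\Omega)\hookrightarrow V^*$ a.e.\ in time. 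Your exponents ($\beta<3$, products in $L^2(0,T;L^{6/5}(\Omega))$) are not justified here, and accordingly the comparison bound for $\tau\dt q_\ept$ should be taken in $\H1{L^1(\Omega)}$, which is indeed what the statement asserts. With these two repairs the rest of your plan matches the paper's proof.
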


\begin{proof}[Proof of Theorem \ref{THM:ADJ:TAUTOZERO}]
We proceed by pointing out some a priori estimates 
on the adjoint variables uniformly with respect to $\tau$,
using again the estimate \eqref{est3_adj} as a starting point.
First of all, by Theorem~\ref{THM:PREV:ASY:tauzero}
we have that $\{F(\bph_\ept)\}_\tau$
is uniformly bounded in $L^\infty(0,T; L^1(\Omega))$,
so that by {\bf A8}
we have that $\{F''(\bph_\ept)\}_\tau$ is uniformly bounded in $L^\infty(0,T; H)$.
Secondly, by the assumption on the kernel $J$,
the Young inequality, and by comparison in equation \eqref{adj:2} we have 
\begin{align*}
  \int_{Q_t^T}(J*q_\ept)q_\ept&\leq\int_t^T\norma{J*q_\ept}_V\norma{q_\ept}_{V^*}\leq
  (a^*+b^*)\int_t^T\norma{q_\ept}\norma{q_\ept}_{V^*}\\
  &\leq\delta\int_{Q_t^T}|q_\ept|^2+
  \frac{(a^*+b^*)^2}{2\delta}\int_t^T\norma{p_\ept}_{V}^2
  +\frac{(a^*+b^*)^2}{2\delta}\eps^2\int_{Q_t^T}|\partial_t p_\ept|^2
\end{align*}
while the last two terms on the right-hand side of \eqref{est3_adj}
can be bounded as
\begin{align*}
&- \intQtT \eta_{\eps,\tau} \Delta r_\ept q_\ept
	- \intQtT \chi_{\eps,\tau} q_\ept (\dt r_\ept+\Delta r_\ept)\\
&\qquad\leq
	\delta\int_{Q_t^T}|q_\ept|^2 + 
	\frac{3(\eta_\ept^2+\chi_\ept^2)}{4\delta}\int_{Q_t^T}|\Delta r_\ept|^2
	+\frac{3\chi_\ept^2}{4\delta}\int_{Q_t^T}|\dt r_\ept|^2.
\end{align*}
Hence, recalling that $\{\ov\sigma_\ept\}_\tau$ is uniformly bounded in $L^2(0,T; V)$,
all the terms in \eqref{est3_adj} can be 
rearranged and treated by the Gronwall lemma,
provided to fix $\delta,\delta'>0$ such that 
\[
  3\delta<C_0\,, \qquad
  \frac{(a^*+b^*)^2}{2\delta}\eps<1,
  \qquad \frac{3(\eta_\ept^2+\chi_\ept^2)}{4\delta} < 1, \qquad
  \delta'<1-\frac{3(\eta_\ept^2+\chi_\ept^2)}{4\delta}.
\]
An easy computation shows that this is possible if and only if
\[
  \max\left\{\frac{(a^*+b^*)^2}{2}\eps,
  \frac{3(\eta_\ept^2+\chi_\ept^2)}{4} \right\}< \frac{C_0}{3}
\]
which is indeed true by {\bf A6} and the fact that 
 $\eta_{\rm max}^2+\chi_{\rm max}^2<\frac49C_0$.
Hence, \eqref{est3_adj} can be closed uniformly in $\tau$, and we obtain
\begin{align*}
	&\norma{p_{\ept}}_{\H1 H \cap \L\infty V}
	+ \tau ^{1/2}\norma{q_{\ept}}_{\L\infty H}
	\\ & \quad 
	+ \norma{q_{\ept}}_{\L2 H}
	+ \norma{r_{\ept}}_{\H1 H \cap \L\infty V\cap \L2 W}
	\leq M_\eps
\end{align*}
for a positive constant $M_\eps$ which may depend on $\eps$ but it is independent of $\tau.$
Then, elliptic regularity theory and \eqref{adj:2} lead us to infer that
\begin{align*}
	\norma{p_\ept}_{\L2 W}
	\leq  M_\eps.
\end{align*}
Moreover, noting that by the H\"older inequality we have that 
\[
  \norma{F''(\bph_\ept)q_\ept}_{L^2(0,T; L^1(\Omega))}\leq M_\eps,
\]
arguing as in the proof of Theorem~\ref{THM:ADJ:EPSTOZERO},
by a comparison argument in \eqref{adj:1} 
we deduce that
\begin{align*}
	\tau \norma{q_\ept}_{\H1 {L^1(\Omega)}} \leq M_\eps.
\end{align*}
Recalling the continuous embedding $W\hookrightarrow L^\infty(\Omega)$,
Banach--Alaoglu theorem and standard compactness results 
allow us to obtain from the above a priori estimates that 
there exist functions $(p_\eps,q_\eps,r_\eps)$ such that as $\tau \to 0$
it holds, along a non-relabeled subsequence,
\begin{align*}
	p_\ept & \to p_\eps && \hbox{weakly-$^*$ in $\H1 H \cap  \L\infty V \cap \L2 W$},
	\\
	q_\ept & \to q_\eps && \hbox{weakly in $\L2 H$},
	\\
	r_\ept & \to r_\eps && \hbox{weakly-$^*$ in $\H1 H \cap \L\infty V \cap \L2 W$},
	\\ & && \quad \hbox{strongly in $\C0 {\Lx \alpha} \cap \L2 V$,}
	\\
	\tau q_\ept & \to 0 && \hbox{weakly in $\H1 {W^*}$ and strongly in $\L2 H$},
\end{align*}
for every $\alpha \geq 1$ if $d=2$ and $1\leq \alpha <6$ if $d=3$.
Arguing as in the proof of Theorem~\ref{THM:ADJ:EPSTOZERO},
we exploit the above convergences to pass to the limit in
the variational formulation of the adjoint system given by
\eqref{adj:vf:1}--\eqref{adj:vf:3} and in the terminal conditions \eqref{adj:vf:4}.
As a by-product, we obtain that the above limits are a weak solution to \adjsys\ with $\tau=0$.
To this end, note that by \eqref{tautozero:1} and 
\eqref{tautozero:strong:1}--\eqref{tautozero:strong:2},
along a non-relabelled subsequence,
for every $\alpha \geq 1$ if $d=2$ and $1\leq \alpha <6$ if $d=3$, we have that
\begin{align*}
	\bph_\ept\to\bph_\eps \quad\text{a.e.~in $Q$}, \qquad
	\bph_\ept &\to \bph_\eps, \quad \bs_\ept \to \bs_\eps \qquad
	 \hbox{strongly in $\L2 {\Lx \alpha}$}.
\end{align*}
Consequently, all terms in \eqref{adj:vf:1}--\eqref{adj:vf:3} and \eqref{adj:vf:4}
pass to the weak limit as $\tau\to0$. The only delicate term to treat, 
as usual, is the one containing $F''$: let us spend a few words on this.
By continuity of $F''$ it follows that, as $\tau \to 0$,
\[
  F''(\bph_\ept)\to F''(\bph_\eps) \quad\text{a.e.~in $Q$}.
\]
Now, since $\{F(\bph_\ept)\}_\tau$ is bounded in $L^\infty(0,T; L^1(\Omega))$,
by {\bf A8} we know that 
$\{F''(\bph_\ept)\}_\tau$ is bounded in $L^\infty(0,T; H)$.
Furthermore, the boundedness of $\{\varphi_\ept\}_\tau$ in $L^2(0,T; L^6(\Omega))$
and again {\bf A8} ensure also that 
$\{F''(\bph_\ept)\}_\tau$ is uniformly bounded in $L^1(0,T; L^3(\Omega))$.
For any $\vartheta\in(0,1)$, setting 
$\beta_\vartheta\in(2,3)$ such that $\frac1{\beta_\vartheta}
:=\frac\vartheta2+\frac{1-\vartheta}{3}$,
by interpolation we have that 
\[
  \norma{F''(\bph_\ept)}_{\beta_\vartheta}\leq
  \norma{F''(\bph_\ept)}^\vartheta
  \norma{F''(\bph_\ept)}_{3}^{1-\vartheta} \quad\text{a.e.~in $(0,T)$},
\]
from which it follows that 
\[
  \norma{F''(\bph_\ept)}_{L^{\frac1{1-\vartheta}}(0,T; L^{\beta_\vartheta}(\Omega))}\leq M_\eps.
\]
In particular, 
there exists $\ov\vartheta\in (0,1)$ such that 
$\ov\beta:=\beta_{\ov\vartheta}=\frac1{1-\ov\vartheta}\in(2,3)$:
an easy computation yields $\ov\vartheta=\frac47$ and $\ov\beta=\frac73$.
This implies that
\[
  \norma{F''(\bph_\ept)}_{L^{7/3}(Q)}\leq M_\eps.
\]
By the Severini-Egorov theorem we infer that,
for all $\beta\in[1,\frac73)$,
\[
  F''(\bph_\ept)\to F''(\bph_\eps) \quad\text{weakly in $L^{7/3}(Q)$ and
  	strongly in $L^\beta(Q)$}.
\]
In particular, since $\frac73>2$, this implies that 
\[
  F''(\bph_\ept)q_\ept\to F''(\bph_\eps)q_\eps \quad\text{weakly in $L^1(Q)$.}
\]
Since $W\hookrightarrow L^\infty(\Omega)$, this allows to pass to the 
limit as $\tau\to0$ in \eqref{adj:vf:1} for every test function $v\in W$.
Since $F''(\bph_\eps)\in L^1(0,T; L^3(\Omega))$,
at the limit we have that $F''(\bph_\eps)q_\eps\in L^{6/5}(\Omega)\hookrightarrow V^*$
almost everywhere in $(0,T)$, and the variational formulation holds 
also for all $v\in V$ by the density of $W$ in $V$.

Finally, by linearity of the system, the same estimates yield 
also uniqueness of $(p_\eps,q_\eps,r_\eps)$, and the convergences 
holds along the entire sequence $\tau\to0$, as desired.
\end{proof}

\subsubsection{Letting $\tau \to 0$ in the optimality condition}
Lastly, we argue as in  Theorem~\ref{THM:AD:OPT:EPS}
to pass to the limit as $ \tau \to 0$
to establish the necessary conditions for optimality of $\CPeps$.

\begin{theorem}
\label{THM:AD:OPT:TAU}
Assume \ref{ass:1}--\ref{ass:8} and \ref{ass:op:1}--\ref{ass:op:4}.
Then, every minimizer 
$(\bph_\eps, {\ov \P}_\eps, {\ov \chi}_\eps, {\ov \eta}_\eps, {\ov \CC}_\eps)$ of $\CPeps$
necessarily satisfies
\begin{align*}
	& \intQ (\P - \ov{\P}_\eps \,) \bs_\eps f(\bph_\eps) p_\eps
	- \intQ (\chi - \ov{\chi}_\eps ) \bs_\eps q_\eps
	- \intQ (\CC - \ov{\CC}_\eps ) \bs_\eps f(\bph_\eps) \, r_\eps
	\\ & \quad
	+ \aP (\ov{\P}_\eps-\P_*)(\P- \ov{\P}_\eps \,)
	+ \achi (\ov{\chi}_\eps -\chi_*)(\chi - \ov{\chi}_\eps \,)
	\\ & \quad
	+ \aeta(\ov{\eta}_\eps -\eta_*)(\eta - \ov{\eta}_\eps \,)
	+ \aC (\ov{\CC}_\eps-\CC_*)(\CC - \ov{\CC}_\eps \,)
	\geq 0
	\qquad \hbox{for every $(\P,\chi,\eta,\CC) \in \Uad$,}
\end{align*}
where $(\bph_\eps, \bm_\eps,\bs_\eps)$ is a solution to 
the state system \statesys\ and
and $(p_\eps,q_\eps,r_\eps)$ is the unique weak solution to
the adjoint system \adjsys\
with $\tau=0$ and coefficients 
$({\ov \P}_\eps, {\ov \chi}_\eps,{\ov \eta}_\eps, {\ov \CC}_\eps)$,
in the sense of Theorems \ref{THM:PREV:ASY:tauzero} and 
Theorem \ref{THM:ADJ:TAUTOZERO}, respectively.
\end{theorem}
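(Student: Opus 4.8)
The plan is to follow the same scheme used for $\CPtau$ in Theorem~\ref{THM:AD:OPT:EPS}, now letting $\tau\searrow0$ within the adapted framework of Section~\ref{ssec:tau0}. First I would fix the minimizer $(\bph_\eps, {\ov \P}_\eps, {\ov \chi}_\eps, {\ov \eta}_\eps, {\ov \CC}_\eps)$ of $\CPeps$ and, for each $\tau\in(0,\tau_0)$, select an optimal control $({\ov \P}_\ept,{\ov \chi}_\ept,{\ov \eta}_\ept,{\ov \CC}_\ept)$ of the associated adapted problem $\CPad$, whose existence is granted by Lemma~\ref{LEM:EXISTENCE:ADAPTED:TAU}. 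Denote by $(\bph_\ept,\bm_\ept,\bs_\ept)$ and $(p_\ept,q_\ept,r_\ept)$ the corresponding state and adjoint variables, the latter solving \adjsys\ with $\ept>0$ and with the forcing term in \eqref{adj:1} replaced by $\beta_Q(\bph_\ept-\varphi_Q)+(\ov\varphi_\ept-\bph_\eps)$, as prescribed in Lemma~\ref{LEM:OPT:ADAPTED:TAU}; the first-order condition \eqref{opt:ad:tautozero} then holds for every $\tau\in(0,\tau_0)$, and the goal is to pass to the limit in it.

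Next I would invoke the two approximation results already established. Theorem~\ref{THM:CONV:TAU:ADAPTED} provides ${\ov \P}_\ept\to{\ov \P}_\eps$, ${\ov \chi}_\ept\to{\ov \chi}_\eps$, ${\ov \eta}_\ept\to{\ov \eta}_\eps$, ${\ov \CC}_\ept\to{\ov \CC}_\eps$ and $\bph_\ept\to\bph_\eps$ strongly in $\L2{\Lx\alpha}$, whence $\ov\varphi_\ept-\bph_\eps\to0$ in $\L2H$; hence the adjoint systems fit the setting of Theorem~\ref{THM:ADJ:TAUTOZERO}, which yields $p_\ept\to p_\eps$, $r_\ept\to r_\eps$ weakly-$^*$ in $\H1H\cap\L\infty V\cap\L2W$ and strongly in $\C0{\Lx\alpha}\cap\L2V$, together with $q_\ept\to q_\eps$ weakly in $\L2H$, where $(p_\eps,q_\eps,r_\eps)$ is the unique weak solution of \adjsys\ with $\tau=0$ and coefficients $({\ov \P}_\eps,{\ov \chi}_\eps,{\ov \eta}_\eps,{\ov \CC}_\eps)$. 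Combining these with the uniform bound $\norma{\bs_\ept}_{\L2V\cap\L\infty H}\leq M_\eps$ coming from Theorem~\ref{THM:PREV:ASY:tauzero} and with $\norma{f}_\infty<\infty$ by \ref{ass:op:4}, I would pass to the limit termwise in \eqref{opt:ad:tautozero}: the terms quadratic in the controls converge by Theorem~\ref{THM:CONV:TAU:ADAPTED}, while the extra contributions $({\ov \P}_\ept-{\ov \P}_\eps)$, $({\ov \chi}_\ept-{\ov \chi}_\eps)$, $({\ov \eta}_\ept-{\ov \eta}_\eps)$, $({\ov \CC}_\ept-{\ov \CC}_\eps)$ introduced by the adapted functional tend to $0$ and thus leave no trace in the limit. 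For the space-time integrals, the term $\intQ(\P-{\ov \P}_\ept)\,\bs_\ept f(\bph_\ept)\,p_\ept$ converges because $(\P-{\ov \P}_\ept)\to(\P-{\ov \P}_\eps)$, $f(\bph_\ept)\to f(\bph_\eps)$ a.e.\ in $Q$ with a uniform sup-bound, and $\bs_\ept\to\bs_\eps$, $p_\ept\to p_\eps$ strongly in $\L2H$, so the product converges in $L^1(Q)$ by the dominated convergence theorem; the term with $r_\ept$ is handled identically, and the term $\intQ(\chi-{\ov \chi}_\ept)\,\bs_\ept q_\ept$ uses that $(\chi-{\ov \chi}_\ept)\bs_\ept\to(\chi-{\ov \chi}_\eps)\bs_\eps$ strongly in $\L2H$ against the weak $\L2H$ convergence of $q_\ept$. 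Putting everything together produces exactly the asserted variational inequality.

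The genuinely delicate part of the argument — the uniform-in-$\tau$ estimates on the adjoint variables, in particular the treatment of the $F''(\bph_\ept)q_\ept$ term, which requires an interpolation step to obtain $F''(\bph_\ept)$ bounded in $L^{7/3}(Q)$ and hence $F''(\bph_\ept)q_\ept$ converging weakly in $L^1(Q)$ — has already been carried out in Theorem~\ref{THM:ADJ:TAUTOZERO}, so no new obstruction appears at this stage. The only point I would have to watch is the consistent pairing of the weak and strong modes of convergence in the various products of the optimality condition, which is straightforward given the list of convergences recalled above.
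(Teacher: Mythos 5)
Your proposal is correct and follows essentially the same route as the paper, which likewise reduces the claim to combining Lemmas~\ref{LEM:EXISTENCE:ADAPTED:TAU}--\ref{LEM:OPT:ADAPTED:TAU} with the convergences of Theorems~\ref{THM:PREV:ASY:tauzero}, \ref{THM:CONV:TAU:ADAPTED}, and \ref{THM:ADJ:TAUTOZERO} and passing to the limit in \eqref{opt:ad:tautozero} by dominated convergence. Your termwise discussion of the weak/strong pairings and of the vanishing of the adaptation terms is a more explicit rendering of the same argument, with no gap.
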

\begin{proof}
  The proof is analogous to the one of Theorem~\ref{THM:AD:OPT:EPS},
  using Lemma~\ref{LEM:EXISTENCE:ADAPTED:TAU} and
  \ref{LEM:OPT:ADAPTED:TAU}, and
  Theorems~\ref{THM:PREV:ASY:tauzero}, 
  \ref{THM:CONV:TAU:ADAPTED}, and \ref{THM:ADJ:TAUTOZERO} instead.
\end{proof}

\subsection{The optimization problem $(\ov{CP})$}
\label{ssec:ept0}
In this last section we deal with 
the optimization problem $(\ov{CP})$, by letting 
$\ept\to0$ jointly. Since most of the ideas have already been 
explained and motivated in detail in the previous 
Sections~\ref{ssec:eps0}--\ref{ssec:tau0}, we proceed here
more quickly, avoiding technical details for brevity.
Throughout the whole Section~\ref{ssec:ept0}, we 
assume the following framework:
\begin{align*}
  &\eta_{\rm max}=\alpha_{\rm max}=0, \qquad\beta_\Omega\varphi_\Omega\in V,
  \qquad
  \text{\eqref{init_tau0}--\eqref{bound_init_tau0}},\\
  &\chi_{\rm max} < \sqrt{c_a}, \qquad
   (\chi_{\rm max}+\eta_{\rm max}+4c_a\chi_{\rm max})^2<8c_aC_0,
   \qquad\eta_{\rm max}^2+\chi_{\rm max}^2<\frac{4}{9}C_0.
\end{align*}
As in Section~\ref{ssec:eps0}, since $\eta_{\rm max}=0$, 
we shall consider $\Uad$ as a subset of $\erre^3$ instead, 
and write $(\P,\chi,\CC)\in\Uad$ for the generic admissible control.

As usual, existence of optimal controls for $(\ov{CP})$
 is given in the following first result.
\begin{theorem}
Assume {\bf A1--A8} and {\bf C1--C3}.
Then, the optimization problem $(\ov{CP})$
admits a solution.
\end{theorem}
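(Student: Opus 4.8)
The plan is to run, almost verbatim, the direct method from the proof of Theorem~\ref{THM:EXISTECE:MIN}, replacing the role of Theorem~\ref{THM:WP:STRONG} by the limiting well-posedness statement of Theorem~\ref{THM:PREV:ASY:epstautozero} (recall that in this section $\eta_{\rm max}=0$, so $\Uad\subset\erre^3$ and every admissible control trivially satisfies \eqref{coeff_ept0}). Since $\J\geq0$, we first pick a minimizing sequence $\{(\P_n,\chi_n,\CC_n)\}_n\subset\Uad$ and denote by $\{(\ph_n,\m_n,\s_n)\}_n$ the associated unique solutions of the state system \statesys\ with $\eps=\tau=0$, all sharing the same initial data $(\ph_0,\s_0)$. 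The a priori estimates underlying Theorem~\ref{THM:PREV:ASY:epstautozero} depend on the control only through its range in the compact set $\Uad$, and they are stable under $\ept\to0$; hence there is $K>0$, independent of $n$, such that
\begin{align*}
  &\norma{\ph_n}_{\H1\Vp \cap \L\infty H \cap \L2 V}
  + \norma{\m_n}_{\L2 V}\\
  &\qquad + \norma{\s_n}_{\H1\Vp \cap \L2 V \cap L^\infty(Q)}
  + \norma{F(\ph_n)}_{\L\infty{\Lx1}}
  \leq K.
\end{align*}
By assumption \textbf{A8}, the last bound makes $\{\ph_n\}_n$ bounded in $\L\infty{\Lx4}$, and since $|F''(r)|\leq C_F(1+|r|^2)$ forces $|F'(r)|\leq C(1+|r|^3)$, it also makes $\{F'(\ph_n)\}_n$ bounded in $\L\infty{\Lx{4/3}}$.

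Because $\Uad$ is a compact subset of $\erre^3$, by standard weak/weak-$^*$ compactness and Aubin--Lions--Simon type results (see, e.g., \cite{Simon}), we extract a non-relabelled subsequence together with limits $(\bph,\bm,\bs)$ and $\redopt\in\Uad$ such that, as $n\to\infty$,
\begin{align*}
  &\ph_n\to\bph
  &&\hbox{weakly-$^*$ in $\H1\Vp \cap \L\infty H \cap \L2 V$, strongly in $\C0\Vp \cap \L2 H$, a.e.~in $Q$,}\\
  &\m_n\to\bm &&\hbox{weakly in $\L2 V$,}\\
  &\s_n\to\bs &&\hbox{weakly-$^*$ in $\H1\Vp \cap \L2 V \cap L^\infty(Q)$, strongly in $\C0\Vp \cap \L2 H$,}\\
  &\P_n\to\ov\P, \quad \chi_n\to\ov\chi, \quad \CC_n\to\ov\CC.
\end{align*}
In particular $0\leq\bs\leq1$ a.e.~in $Q$, and, by the continuous embedding $\H1\Vp\cap\L2 V\hookrightarrow\C0 H$ and the strong convergence in $\C0\Vp$, we get $\ph_n(T)\to\bph(T)$ weakly in $H$. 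Passing to the limit in the weak formulation of \statesys\ with $\eps=\tau=0$ is then routine: the boundedness and continuity of $f$ give $f(\ph_n)\to f(\bph)$ strongly in every $L^p(Q)$, $p<\infty$, which together with the weak convergence of $\{\s_n\}_n$ handles all reaction terms; the convolution term is linear and passes to the limit; and in the pointwise relation $\m=a\ph-J*\ph+F'(\ph)-\chi\s$ the only delicate contribution is $F'(\ph_n)$, for which the a.e.~convergence of $\{\ph_n\}_n$ together with the uniform $\L\infty{\Lx{4/3}}$ bound yields $F'(\ph_n)\to F'(\bph)$ weakly in $L^{4/3}(Q)$ via a Severini--Egorov argument. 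Hence $(\bph,\bm,\bs)$ solves the limiting state system with control $\redopt$, and by the uniqueness part of Theorem~\ref{THM:PREV:ASY:epstautozero} it is \emph{the} corresponding state.

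Finally, $\J$ is weakly sequentially lower semicontinuous along these convergences: the tracking term $\tfrac{\bQ}{2}\norma{\ph-\phQ}_{L^2(Q)}^2$ is continuous for the strong $\L2 H$ convergence, the terminal term $\tfrac{\bO}{2}\norma{\ph(T)-\phO}^2$ is lower semicontinuous for the weak $H$ convergence of $\ph_n(T)$, and the control penalisation is continuous. Therefore
\[
  \J(\bph,\redopt) \leq \liminf_{n\to\infty}\J(\ph_n,\P_n,\chi_n,\CC_n),
\]
and since the right-hand side is, by construction, the infimum of $\J$ over $(\ov{CP})$, while $\redopt$ together with $(\bph,\bm,\bs)$ is admissible, we conclude that $\redopt$ is a minimizer. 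The only step requiring genuine care — and the only place where assumption \textbf{A8} is really used — is the identification of the weak limit of $\{F'(\ph_n)\}_n$ in the pointwise identity for $\m$: \textbf{A8} upgrades the $L^1$ bound on $F(\ph_n)$ to a reflexive bound on $F'(\ph_n)$, which combined with the a.e.~convergence of $\{\ph_n\}_n$ delivers the required weak identification.
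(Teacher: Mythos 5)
Your proposal is correct and follows essentially the same route as the paper, which disposes of this theorem in one line by invoking the direct method of Theorem~\ref{THM:EXISTECE:MIN} together with the compactness of $\Uad$ and the convergences/uniqueness of Theorem~\ref{THM:PREV:ASY:epstautozero}. You have simply written out in full the details the paper leaves implicit (uniform a priori bounds over $\Uad$, identification of $F'(\bph)$ via \textbf{A8} and a.e.\ convergence, weak lower semicontinuity of the terminal term), and these details are sound.
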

\begin{proof}
This result follows directly by adapting the direct method used in
the proof of Theorem~\ref{THM:EXISTECE:MIN}, taking into account the 
compactness of $\Uad$ and the convergence pointed out in Theorem~\ref{THM:PREV:ASY:epstautozero}.
\end{proof}

Now, we investigate necessary conditions for optimality.
First of all, note that for every admissible control $(\P,\chi,\CC)$,
the state system \statesys\ with $\eps=\tau=0$
admits a unique solution by Theorem~\ref{THM:PREV:ASY:epstautozero}.
Consequently, when introducing the adapted cost functional,
by contrast with Section~\ref{ssec:tau0},
it is not necessary here to use a perturbation with respect to the phase variable.

Nevertheless, looking at the final condition \eqref{adj:5}
and taking formally $\eps=\tau=0$, we immediately see that 
$p_\ept(T)=0$ for all $\eps\in(0,\eps_0)$ 
while at the limit $p(T)=\beta_\Omega(\ov\varphi(T)-\varphi_\Omega)$.
This immediately suggests that if $\beta_\Omega>0$,
we cannot pass to the joint limit $\ept\to0$
in the adjoint problem \adjsys\ as it is. 
At an intuitive level for the moment, 
the limit adjoint problem \adjsys\ with $\eps=\tau=0$
is still well-posed also when $\beta_\Omega>0$.
These heuristic considerations
suggest that the right assumption is to keep a generic $\beta_\Omega\geq0$,
but to modify the final condition for $p_\ept$ at the approximate level in a smart way,
in order to recover the compatibility condition 
$p_\ept(T)=\beta_\Omega(\ov\varphi(T)-\varphi_\Omega)$ also when $\ept>0$.
In order to do this, we introduce a correction in the adapted cost functional,
depending on a suitable combination of
the terminal values of both the variables $\varphi$ and $\mu$.

For a given optimal control $(\ov \P,\ov\chi,\ov \CC)$ of $(\ov{CP})$
and for all $\eps\in(0,\eps_0)$ and $\tau\in(0,\tau_0)$,
the idea is to set 
\begin{align*}
	\Jad(\ph,\mu, \P, \chi, \CC) &: = \J(\ph, \P, \chi, \CC)
	+\left(\eps\mu(T), \beta_\Omega(\varphi(T)-\varphi_\Omega)\right)_H  \\
	&\qquad+ \tfrac 12 |\P -  \ov \P|^2
	+ \tfrac 12 |\chi - \ov \chi|^2
	+ \tfrac 12 |\CC - \ov \CC|^2
\end{align*}
and define the adapted optimization problem $(CP)_\ept^{\rm ad}$ as in \eqref{CP:adapted}.

It is clear that the optimality condition for $\CPoo$ 
follows similar lines of the previous sections,
by firstly obtaining the approximating sequence of optimal controls of minimizers of
$\CPad$ and then pass to the limit as $\ept \to 0$.
The major difference is the nature of the correction in the adapted cost 
functional, which yields a correction in the terminal values of the adapted adjoint system
at $\ept>0$: for this reason, we recall such corrected adjoint system explicitly
in Lemma~\ref{LEM:OPT:ADAPTED:EPT} below.
The proof of first-order conditions for optimality at the level $\ept>0$
follows, {\em mutatis mutandis}, the proof of Theorem~\ref{THM:OPT:FIRST},
and is omitted for brevity.

The following results concern existence of optimal controls
and first-order necessary conditions for $(CP)_\ept^{\rm ad}$, and the 
convergence $(CP)_\ept^{\rm ad}\searrow(\ov{CP})$ as $\ept\to0$.
\begin{lemma}
\label{LEM:EXISTENCE:ADAPTED:EPT}
Assume \ref{ass:1}--\ref{ass:8} and \ref{ass:op:1}--\ref{ass:op:4}. 
Then, for every optimal control $(\ov \P, \ov\chi, \ov \CC)$ for $(\ov{CP})$,
for every $\eps \in (0,\eps_0)$ and $\tau\in(0,\tau_0)$ 
the optimization problem $\CPad$ admits a solution. 
\end{lemma}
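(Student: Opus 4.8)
The plan is to argue by the direct method of the calculus of variations, exactly as in Theorem~\ref{THM:EXISTECE:MIN}, the only genuinely new point being the presence of the coupling term $\bigl(\eps\mu(T),\bO(\varphi(T)-\phO)\bigr)_H$ in $\Jad$, which will have to be handled by a continuity argument rather than mere lower semicontinuity. First I would check that $\Jad$ is bounded from below on $\Uad$: for $\eps,\tau$ fixed, the uniform bound~\eqref{estimate:strong} yields $\norma{\m(T)}+\norma{\ph(T)}\leq K$ for every state $(\ph,\m,\s)=\S(\P,\chi,\CC)$ issued from an admissible control, hence
\[
\bigl(\eps\m(T),\bO(\ph(T)-\phO)\bigr)_H\geq-\eps\,\bO\,K\bigl(K+\norma{\phO}\bigr),
\]
and, $\J$ and the quadratic penalties being nonnegative, $\ov\lambda:=\inf_{\Uad}\Jad$ is a finite real number. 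I would then pick a minimizing sequence $\{(\P_n,\chi_n,\CC_n)\}_n\subset\Uad$ with corresponding states $\{(\ph_n,\m_n,\s_n)\}_n=\{\S(\P_n,\chi_n,\CC_n)\}_n$, all issued from the fixed initial datum $(\ph_0,\m_0,\s_0)$, so that $\Jad(\ph_n,\m_n,\P_n,\chi_n,\CC_n)\searrow\ov\lambda$.

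Next I would invoke~\eqref{estimate:strong} — uniform in $n$ thanks to the structure of $\Uad$ — together with the compactness of $\Uad$ in $\erre^3$ and standard Aubin--Lions--Simon compactness results, to extract a non-relabelled subsequence along which $\ph_n\to\bph$ weakly-$^*$ in $\W{1,\infty}V\cap\H1{\Hx2}$ and strongly in $\C0{C^0(\ov\Omega)}$, $\m_n\to\bm$ and $\s_n\to\bs$ weakly-$^*$ in $\H1 H\cap\L\infty V\cap\L2 W\cap L^\infty(Q)$ (hence in particular $\m_n\to\bm$ strongly in $\C0 H$, by $V\hookrightarrow\hookrightarrow H$), and $(\P_n,\chi_n,\CC_n)\to(\ov\P,\ov\chi,\ov\CC)\in\Uad$. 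Passing to the limit in the variational formulation of~\statesys\ written for $(\ph_n,\m_n,\s_n)$ — routine, since all the nonlinearities are governed by the separation estimate of Corollary~\ref{COR:SEP} and the convergences just listed — then identifies $(\bph,\bm,\bs)=\S(\ov\P,\ov\chi,\ov\CC)$, so that $(\ov\P,\ov\chi,\ov\CC)$ is an admissible control of $\CPad$ with corresponding state $(\bph,\bm,\bs)$.

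Finally I would pass to the limit in the cost functional. The contributions coming from $\J$ are lower semicontinuous under the above convergences, exactly as in Theorem~\ref{THM:EXISTECE:MIN}, and the quadratic penalties converge since $(\P_n,\chi_n,\CC_n)\to(\ov\P,\ov\chi,\ov\CC)$. The one step using more than weak convergence — and the only real obstacle — is the coupling term: from $\ph_n(T)\to\bph(T)$ strongly in $H$ and $\m_n(T)\to\bm(T)$ strongly in $H$ (a by-product of the $\C0 H$ convergence of $\m_n$; weak convergence of $\m_n(T)$ in $H$ would already be enough) one obtains
\[
\bigl(\eps\m_n(T),\bO(\ph_n(T)-\phO)\bigr)_H\longrightarrow\bigl(\eps\bm(T),\bO(\bph(T)-\phO)\bigr)_H,
\]
i.e.\ this term is in fact continuous along the minimizing sequence. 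Hence $\Jad(\bph,\bm,\ov\P,\ov\chi,\ov\CC)\leq\liminf_n\Jad(\ph_n,\m_n,\P_n,\chi_n,\CC_n)=\ov\lambda$, which forces equality and shows that $(\ov\P,\ov\chi,\ov\CC)$ is a minimizer of $\CPad$. This is precisely why the strong $\C0 H$ convergence of $\m_n$ — slightly more than was needed in Theorem~\ref{THM:EXISTECE:MIN} — must be recorded here.
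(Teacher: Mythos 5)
Your proof is correct and follows essentially the route the paper intends: the lemma is stated without proof precisely because, as in Lemma~\ref{LEM:EXISTENCE:ADAPTED}, the direct method of Theorem~\ref{THM:EXISTECE:MIN} carries over, and you have correctly identified and handled the only genuinely new point, namely that the sign-indefinite coupling term $\left(\eps\m(T),\bO(\ph(T)-\phO)\right)_H$ requires both a lower bound via \eqref{estimate:strong} and convergence along the minimizing sequence, the latter following from the strong $\C0 H$ compactness of $\{\m_n\}_n$ (or even just weak convergence of $\m_n(T)$ paired with the strong convergence of $\ph_n(T)$).
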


\begin{lemma}
\label{LEM:OPT:ADAPTED:EPT}
Assume \ref{ass:1}--\ref{ass:8} and \ref{ass:op:1}--\ref{ass:op:4},
and let $(\ov \P, \ov\chi, \ov \CC)$ be an optimal control  for $(\ov{CP})$.
For every $\eps\in(0,\eps_0)$ and $\tau\in(0,\tau_0)$,
if $({\ov \P}_\ept,{\ov \chi}_\ept,{\ov \CC}_\ept)$ is an optimal control of $\CPad$, 
then the following first-order necessary condition holds
\begin{align}
	&\non \intQ (\P - \ov{\P}_\ept \,) \bs_\ept f(\bph_\ept) p_\ept
	- \intQ (\chi - \ov{\chi}_\ept ) \bs_\ept q_\ept
	- \intQ (\CC - \ov{\CC}_\ept ) \bs_\ept f(\bph_\ept) \, r_\ept
	\\ &\quad \non
	+ (\P - \ov{\P}_\ept \,)\big( \aP (\ov{\P}_\ept-\P_*) + (\ov{\P}_\ept -\ov{\P})\big)
	+(\chi - \ov{\chi}_\ept \,)\big( \achi (\ov{\chi}_\ept-\chi_*) + (\ov{\chi}_\ept-\ov{\chi})\big)
	\\ & \quad
	+ (\CC - \ov{\CC} _\ept\,)\big( \aC (\ov{\CC}_\ept-\CC_*) + (\ov{\CC} _\ept-\ov{\CC}) \big)
	\geq 0
	\qquad
	\hbox{for every $(\P,\chi,\CC) \in \Uad$,}
	\label{opt:ad:epttozero}
\end{align}
where $(\bph_\ept,\bm_\ept,\bs_\ept)$ is the unique solution to \statesys\
with $\ept>0$ and parameters $({\ov \P}_\ept,{\ov \chi}_\ept,{\ov \CC}_\ept)$, 
and $(p_\ept,q_\ept, r_\ept)$ is the unique solution to the following 
adapted adjoint system
\begin{align}
	& \non\label{adj:1_ad}
	-\dt (p_\ept +\tau q_\ept)
	+a q_\ept - J*q_\ept
	+ F''(\bph_\ept)q_\ept
	\\ & \qquad
	+ \ov\CC\bs f'(\bph_\ept) r_\ept
	- (\ov\P\bs_\ept -\A)f'(\bph_\ept) p_\ept \,=\, 	\bQ (\bph_\ept-\phQ)
	\qquad&&\text{in } Q,
	\\	
	\label{adj:2_ad}
	& 	-\eps \dt p_\ept	- \Delta p_\ept 	- q_\ept 	\,=\,	0
	\qquad&&\text{in } Q,
	\\
	\label{adj:3_ad}
	& -\dt r_\ept
	- \Delta r_\ept
	+ (\B + \ov\CC f(\bph_\ept)) r_\ept
	- \ov\P f(\bph_\ept)p_\ept
	- \ov\ch q_\ept
	\,=0
	\qquad&&\text{in } Q,
	\\
	&\dn p_\ept\,=\,\dn r_\ept \,=\,0
	\qquad&&\text{on } \Sigma,
	\label{adj:4_ad}
	\\ 
	& \non\eps p_\ept(T) \,=\,\eps\beta_\Omega(\ov\varphi_\ept(T)-\varphi_\Omega),
	\\
	&\qquad 
	(p_\ept+\tau q_\ept)(T)\,=\, \bO (\bph_\ept(T)-\phO) + 
	\eps\beta_\Omega\ov\mu_\ept(T), \qquad
	r_\ept(T)\,=0
	\qquad&&\text{in } \Omega.
	\label{adj:5_ad}
\end{align}
\Accorpa\adjsysad {adj:1_ad} {adj:5_ad}
\end{lemma}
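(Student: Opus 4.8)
The plan is to mirror the two-step derivation of Theorems~\ref{THM:OPT:FIRST} and \ref{THM:OPT:FINAL}, adjusting it to the extra term $(\eps\mu(T),\beta_\Omega(\varphi(T)-\varphi_\Omega))_H$ in $\Jad$ and to the correspondingly modified terminal data \eqref{adj:5_ad}. As a preliminary step one checks that the adapted adjoint system \adjsysad\ is well posed: it has the very structure of \adjsys, the only novelty being the nonhomogeneous terminal conditions $\eps p_\ept(T)=\eps\beta_\Omega(\bph_\ept(T)-\varphi_\Omega)$ and $(p_\ept+\tau q_\ept)(T)=\bO(\bph_\ept(T)-\phO)+\eps\beta_\Omega\ov\mu_\ept(T)$. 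Since $\bph_\ept\in C^0([0,T];H^2(\Omega))$ and $\ov\mu_\ept\in C^0([0,T];H)$ by Theorem~\ref{THM:WP:STRONG}, and $\beta_\Omega\varphi_\Omega\in V$ by the standing assumptions of Section~\ref{ssec:ept0}, these data enjoy enough regularity (in particular $\eps p_\ept(T)\in V$ and $(p_\ept+\tau q_\ept)(T)\in H$) to run the a priori estimates of Theorem~\ref{THM:ADJ:ept} verbatim; the nonzero terminal values contribute only harmless, $\eps$- and $\tau$-dependent, boundary terms, so existence, uniqueness and the claimed regularity of $(p_\ept,q_\ept,r_\ept)$ follow.

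Next I would obtain the abstract optimality inequality. Since $\CPad$ fulfils the same hypotheses as $(CP)_\ept$, the control-to-state map $\S$ is Fr\'echet differentiable by Theorem~\ref{THM:FRECHET}, and the map $(\P,\chi,\CC)\mapsto(\eps\bm(T),\beta_\Omega(\bph(T)-\varphi_\Omega))_H$ is Fr\'echet differentiable with derivative in the direction $\bh$ equal to $\eps\beta_\Omega(\nu(T),\bph_\ept(T)-\varphi_\Omega)_H+\eps\beta_\Omega(\bm_\ept(T),\xi(T))_H$, where $(\xi,\nu,\z)$ solves the linearized system \linsys\ (with $\eta=h_\eta=0$) associated with $\bh$, given by Theorem~\ref{THM:LIN}. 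Hence the reduced adapted functional is Fr\'echet differentiable at $({\ov \P}_\ept,{\ov \chi}_\ept,{\ov \CC}_\ept)$, and its minimality yields, for every $(\P,\chi,\CC)\in\Uad$ and $\bh=(\P-{\ov \P}_\ept,\chi-{\ov \chi}_\ept,\CC-{\ov \CC}_\ept)$,
\begin{align*}
&\iO\bO(\bph_\ept(T)-\phO)\xi(T)+\intQ\bQ(\bph_\ept-\phQ)\xi
+\eps\beta_\Omega\iO(\bph_\ept(T)-\varphi_\Omega)\nu(T)+\eps\beta_\Omega\iO\bm_\ept(T)\,\xi(T)\\
&\qquad+\big(\aP({\ov \P}_\ept-\P_*)+({\ov \P}_\ept-\ov\P)\big)h_\P
+\big(\achi({\ov \chi}_\ept-\chi_*)+({\ov \chi}_\ept-\ov\chi)\big)h_{\chi}\\
&\qquad+\big(\aC({\ov \CC}_\ept-\CC_*)+({\ov \CC}_\ept-\ov\CC)\big)h_\CC\ \geq\ 0.
\end{align*}

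Then I would eliminate the linearized triplet as in the proof of Theorem~\ref{THM:OPT:FINAL}: test \eqref{lin:1}--\eqref{lin:3} by $p_\ept$, $q_\ept$ and $r_\ept$, integrate over $Q$, sum, and integrate by parts in space and time. All volume terms containing $\xi,\nu,\z$ cancel by virtue of \eqref{adj:1_ad}--\eqref{adj:3_ad}; using $\xi(0)=\nu(0)=\z(0)=0$ together with $r_\ept(T)=0$, the only surviving boundary contributions are $\iO\eps p_\ept(T)\nu(T)+\iO(p_\ept+\tau q_\ept)(T)\xi(T)$, which by \eqref{adj:5_ad} equal $\eps\beta_\Omega\iO(\bph_\ept(T)-\varphi_\Omega)\nu(T)+\iO\big(\bO(\bph_\ept(T)-\phO)+\eps\beta_\Omega\bm_\ept(T)\big)\xi(T)$. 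This is precisely the first line of the inequality above, and one arrives at the analogue of \eqref{proof:foc},
\begin{align*}
&\intQ\bQ(\bph_\ept-\phQ)\xi+\iO\bO(\bph_\ept(T)-\phO)\xi(T)+\eps\beta_\Omega\iO(\bph_\ept(T)-\varphi_\Omega)\nu(T)+\eps\beta_\Omega\iO\bm_\ept(T)\,\xi(T)\\
&\qquad=\intQ h_\P\bs_\ept f(\bph_\ept)p_\ept+\intQ h_{\chi}\bs_\ept q_\ept-\intQ h_\CC\bs_\ept f(\bph_\ept)r_\ept.
\end{align*}
Substituting this identity into the variational inequality and recalling $\bh=(\P-{\ov \P}_\ept,\chi-{\ov \chi}_\ept,\CC-{\ov \CC}_\ept)$ gives exactly \eqref{opt:ad:epttozero}.

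I expect the only genuinely delicate point to be the bookkeeping in this last step, namely checking that the two corrections built into \eqref{adj:5_ad} --- the nonzero value of $\eps p_\ept(T)$ and the addend $\eps\beta_\Omega\ov\mu_\ept(T)$ in $(p_\ept+\tau q_\ept)(T)$ --- reproduce \emph{exactly} the two additional boundary terms arising from differentiating $(\eps\mu(T),\beta_\Omega(\varphi(T)-\varphi_\Omega))_H$ in $\Jad$; this matching is what dictates the precise form of the modified terminal data. All the remaining ingredients (the Fr\'echet-differentiability computation, the integrations by parts, and the existence of a minimizer, obtained by the direct method as in Theorem~\ref{THM:EXISTECE:MIN}) are routine and follow the arguments already developed in Section~\ref{sec:CP_ept}.
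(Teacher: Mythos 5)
Your proposal is correct and follows exactly the route the paper intends: the paper omits this proof, stating that it follows \emph{mutatis mutandis} the arguments of Theorems~\ref{THM:OPT:FIRST} and~\ref{THM:OPT:FINAL}, and your reconstruction --- Fr\'echet differentiation of the extra term $(\eps\mu(T),\beta_\Omega(\varphi(T)-\varphi_\Omega))_H$ followed by the duality computation in which the modified terminal data \eqref{adj:5_ad} absorb precisely the two new boundary contributions at $t=T$ --- is the intended argument. (The only discrepancy is the sign of the term $\intQ(\chi-\ov{\chi}_\ept)\,\bs_\ept q_\ept$: your derivation yields $+$, consistent with \eqref{proof:foc} and \eqref{opt:final}, whereas the displayed \eqref{opt:ad:epttozero} carries a $-$; this inconsistency is already present in the paper between Sections~3 and~4 and is not a gap in your argument.)
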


\begin{theorem}
\label{THM:CONV:EPT}
Assume \ref{ass:1}--\ref{ass:8} and \ref{ass:op:1}--\ref{ass:op:4}.
Let $(\ov \P, \ov \chi, \ov \CC)$ be an optimal control for $\CPoo$,
with corresponding state $(\bph, \bm, \bs)$.
Then, for every family 
$\{( {\ov \P}_{\ept},{\ov \chi}_{\ept},{\ov \CC}_{\ept})\}_{\ept}$ of optimal controls
for $(CP)_\ept^{\rm ad}$, 
with corresponding states
$\{(\bph_{\ept},\bm_{\ept},\bs_{\ept})\}_\ept$,
as $\ept\to0$ it holds that, 
for every $\alpha \geq 1$ if $d=2$ and $1\leq \alpha <6$ if $d=3$,
\begin{align*}
	& \non
	\bph_{\ept} \to \bph \quad \hbox{weakly$^*$ in $\L\infty H \cap \L2 V$,}
	\\ & \hspace{3cm}
	\hbox{strongly in $\C0 \Vp \cap \L2 {\Lx \alpha}$},
	\\ 
	&{\ov \P}_{\ept} \to {\ov \P}_\tau, \quad 
	{\ov \chi}_{\ept}\to {\ov \chi}_\tau,
	\quad
	{\ov \CC}_{\ept} \to {\ov \CC}_\tau,
	\\
	& 
	\Jad (\bph_{\ept},{\ov \P}_{\ept},
	{\ov \chi}_{\ept},{\ov \CC}_{\ept}) \to \J(\bph, {\ov \P}, {\ov \chi},{\ov \CC}).
\end{align*}
\end{theorem}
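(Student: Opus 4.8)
The plan is to mirror the argument used for Theorem~\ref{THM:CONV:EPS:ADAPTED}, with the only real change being that now both relaxation parameters degenerate simultaneously and the adapted functional contains the extra boundary term $\left(\eps\mu(T),\beta_\Omega(\varphi(T)-\varphi_\Omega)\right)_H$. First I would fix an arbitrary family $\{(\ov\P_\ept,\ov\chi_\ept,\ov\CC_\ept)\}_\ept$ of minimizers for $(CP)_\ept^{\rm ad}$, with corresponding states $\{(\bph_\ept,\bm_\ept,\bs_\ept)\}_\ept$. Since $\Uad$ is a compact subset of $\erre^3$ and, by Theorem~\ref{THM:PREV:ASY:epstautozero}, the phase components $\{\bph_\ept\}_\ept$ are uniformly bounded in $L^\infty(0,T;H)\cap\L2 V$ (recall all the controls here satisfy \eqref{coeff_ept0} thanks to the running assumptions), I can extract along a suitable subsequence $(\eps_k,\tau_k)\to(0,0)$ limits $(\hat\P,\hat\chi,\hat\CC)\in\Uad$ and $\hat\ph\in L^\infty(0,T;H)\cap\L2 V$ with $\bph_{\eps_k,\tau_k}\wstarto\hat\ph$ and strongly in $\C0\Vp\cap\L2{\Lx\alpha}$, using \eqref{epstautozero:1} and \eqref{epstautozero:strong:1} together with the Aubin--Lions lemma and the compact embedding $V\hookrightarrow L^\alpha(\Omega)$. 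By the uniqueness statement in Theorem~\ref{THM:PREV:ASY:epstautozero}, $\hat\ph$ is exactly the first component of the state solving \statesys\ with $\eps=\tau=0$ and parameters $(\hat\P,\hat\chi,\hat\CC)$.

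Next I would run the standard $\liminf/\limsup$ sandwich. On one side, minimality of $(\ov\P_\ept,\ov\chi_\ept,\ov\CC_\ept)$ for $(CP)_\ept^{\rm ad}$ against the competitor $(\ov\P,\ov\chi,\ov\CC)$ gives
\[
\Jad(\bph_\ept,\bm_\ept,\ov\P_\ept,\ov\chi_\ept,\ov\CC_\ept)\leq \Jad(\widetilde\ph_\ept,\widetilde\mu_\ept,\ov\P,\ov\chi,\ov\CC),
\]
where $(\widetilde\ph_\ept,\widetilde\mu_\ept,\widetilde\s_\ept):=\S_\ept(\ov\P,\ov\chi,\ov\CC)$. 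Here I must check that the right-hand side converges to $\J(\bph,\ov\P,\ov\chi,\ov\CC)$: the tracking terms pass to the limit by the state convergences of Theorem~\ref{THM:PREV:ASY:epstautozero} applied to the fixed control $(\ov\P,\ov\chi,\ov\CC)$, the control penalty terms vanish, and the extra boundary term $\left(\eps\widetilde\mu_\ept(T),\beta_\Omega(\widetilde\ph_\ept(T)-\varphi_\Omega)\right)_H$ tends to $0$ because $\eps^{1/2}\widetilde\mu_\ept$ is bounded in $L^\infty(0,T;H)$ and $\eps\widetilde\mu_\ept\to0$ strongly in $C^0([0,T];H)$ by \eqref{epstautozero:5}, so the product is $O(\eps^{1/2})$. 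On the other side, lower semicontinuity of $\J$ with respect to the above convergences and the fact that the boundary correction and the control penalties in $\Jad$ are (weakly) lower semicontinuous yield
\[
\liminf_k \Jad(\bph_{\eps_k,\tau_k},\bm_{\eps_k,\tau_k},\ov\P_{\eps_k,\tau_k},\ov\chi_{\eps_k,\tau_k},\ov\CC_{\eps_k,\tau_k})\geq \J(\hat\ph,\hat\P,\hat\chi,\hat\CC)+\tfrac12\big(|\hat\P-\ov\P|^2+|\hat\chi-\ov\chi|^2+|\hat\CC-\ov\CC|^2\big),
\]
again using that $\eps_k\bm_{\eps_k,\tau_k}(T)\to0$ in $H$ so the boundary term contributes $0$ in the limit. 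Combining the two inequalities with the optimality of $(\ov\P,\ov\chi,\ov\CC)$ for $\CPoo$ forces $\hat\P=\ov\P$, $\hat\chi=\ov\chi$, $\hat\CC=\ov\CC$, hence $\hat\ph=\bph$ by uniqueness of the limit state; moreover all the inequalities collapse to equalities, giving $\Jad(\bph_{\eps_k,\tau_k},\ldots)\to\J(\bph,\ov\P,\ov\chi,\ov\CC)$. Since the argument applies along every subsequence and the limits are uniquely identified, the convergences hold along the whole family $\ept\to0$, which is the assertion.

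The main obstacle I anticipate is not the abstract scheme — which is by now routine — but the careful bookkeeping of the extra boundary correction term $\left(\eps\mu(T),\beta_\Omega(\varphi(T)-\varphi_\Omega)\right)_H$ in both directions of the sandwich: one has to be sure that the available convergences in Theorem~\ref{THM:PREV:ASY:epstautozero}, specifically $\eps\mu_\ept\to0$ strongly in $C^0([0,T];H)$ together with the uniform bound $\eps^{1/2}\|\mu_{0,\ept}\|\leq M_0$ from \eqref{bound_init_ept0} propagated in time, genuinely give that this term vanishes in the limit rather than merely staying bounded, so that it neither helps nor hurts the lower-semicontinuity estimate. A secondary technical point is that, unlike in Section~\ref{ssec:tau0}, no phase-variable correction is added to $\Jad$ here, which is justified precisely because the limit state system with $\eps=\tau=0$ is uniquely solvable (Theorem~\ref{THM:PREV:ASY:epstautozero}); one should remark on this so the reader sees why the simpler adapted functional suffices.
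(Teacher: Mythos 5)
Your proposal is correct and follows essentially the same route as the paper, whose proof of this theorem is literally the one-line remark that it is analogous to Theorem~\ref{THM:CONV:EPS:ADAPTED} with the convergences of Theorem~\ref{THM:PREV:ASY:epstautozero}. Your expansion is faithful and in fact more careful on the two points that actually need attention here: you correctly phrase the minimality inequality against the competitor control through its state at level $\ept>0$ (rather than plugging the limit state directly into $\Jad$), and you correctly verify that the extra correction term $\left(\eps\mu(T),\beta_\Omega(\varphi(T)-\varphi_\Omega)\right)_H$ vanishes in both directions of the sandwich via \eqref{epstautozero:5} and the uniform $L^\infty(0,T;H)$ bound on $\eps^{1/2}\mu_\ept$.
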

\begin{proof}
  The proof is analogous to Theorem~\ref{THM:CONV:EPS:ADAPTED},
  by using the convergences of Theorem~\ref{THM:PREV:ASY:epstautozero}.
\end{proof}

\subsubsection{Letting $(\eps,\tau)\to(0,0)$ in the adjoint system}
We focus here on the passage to the limit as $(\eps,\tau)\to(0,0)$
in the adjoint system \adjsys.

\begin{theorem}
\label{THM:ADJ:EPSTAUTOZERO}
	Assume \ref{ass:1}--\ref{ass:8} and \ref{ass:op:1}--\ref{ass:op:4}.
	Let $(\P,\chi,\CC)\in\Uad$, $\{(\P_\ept,\chi_\ept,\CC_\ept)\}_\ept\subset\Uad$
	be such that $(\P_\ept,\chi_\ept,\CC_\ept)\to(\P,\chi,\CC)$ as $\ept\to0$.
	Let $(\bph,\bm,\bs)$ and $(\bph_\ept,\bm_\ept,\bs_\ept)$ be the unique solutions
	to the state system \statesys\ in the cases $\eps=\tau=0$ with coefficients
	$(\P,\chi,\CC)$, and $\eps\in(0,\eps_0)$ and $\tau\in(0,\tau_0)$
	with coefficients $(\P_\ept,\chi_\ept,\CC_\ept)$, 
	as given by Theorems~\ref{THM:PREV:ASY:epstautozero} and 
	\ref{THM:WP:STRONG}, respectively.
	Let also $(p_\ept,q_\ept,r_\ept)$ be the unique solution to the adapted adjoint system 
	\adjsysad\ with $\eps\in(0,\eps_0)$, $\tau\in(0,\tau_0)$,
	and coefficients $(\P_\ept,\chi_\ept,\CC_\ept)$, as given by Theorem~\ref{THM:ADJ:ept}.
	Then, there exists a triplet $(p,q,r)$, with
	\[
	p\in H^1(0,T; V^*)\cap L^2(0,T; W),\quad
	q\in L^2(0,T; H),\quad
	r\in \H1 H \cap \L\infty V \cap \L2 W,
	\]
	such that, 
	for every $\alpha \geq 1$ if $d=2$ and $1\leq \alpha <6$ if $d=3$, 
	along any double sequence 
	\[
	(\eps,\tau)\to(0,0) \qquad \text{such that}\qquad\limsup_{(\ept)\to(0,0)}\frac\eps\tau<+\infty,
	\]
	it holds
	\begin{align*}
	p_{\ept} & \to p && \hbox{weakly in $\H1 \Vp \cap \L2 W$},
	\\
	q_{\ept} & \to q && \hbox{weakly in $\L2 H$},
	\\
	r_{\ept} & \to r && \hbox{weakly-$^*$ in $\H1 H \cap \L\infty V \cap \L2 W$},
	\\ & && \quad \hbox{strongly in $\C0 {\Lx \alpha} \cap \L2 H$,}
	\\
	\eps p_{\ept} & \to 0
	&& \hbox{strongly in $\H1 H \cap \L\infty \Vp$},
	\\
	\tau  q_{\ept} & \to 0
	&& \hbox{strongly in $\L\infty V$}.
	\end{align*}
	Moreover, $(p,q,r)$ is the unique weak solution to the adjoint system
	\adjsys\ with $\eps =\tau =0$ and coefficients $(\P,\eta, \CC)$, in the sense that
		\begin{align*}
	& \non
	-\<\dt p , v>_{V}
	+ \iO (a q - J*q +F''(\bph) )v
	+ \iO  \CC\bs f'(\bph) rv
	- \iO  \P\bs f'(\bph) pv
	=\iO  \bQ (\bph-\phQ) v,
	\\	
	& 	
	\iO \nabla p\cdot \nabla  w
	- \iO q w	\,=\,	0,
	\\
	& -\iO \dt r z
	+ \iO \nabla r \cdot \nabla z
	+ \iO \CC f(\bph) r z
	- \iO \P f(\bph)p z 
	- \iO \ch q z 
	\,=0,
\end{align*}
for every $v,w,z  \in V$, almost everywhere in $(0,T)$, and
\[
	p(T)\,=\, \bO (\bph(T)-\phO), \qquad r(T)\,=0.
\]
\end{theorem}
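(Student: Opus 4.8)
The plan is to follow the same two-step scheme used in the proofs of Theorems~\ref{THM:ADJ:EPSTOZERO} and \ref{THM:ADJ:TAUTOZERO}: first establish a priori bounds on the adapted adjoint variables $(p_\ept,q_\ept,r_\ept)$ that are uniform with respect to the double sequence $(\eps,\tau)\to(0,0)$ with $\limsup\eps/\tau<+\infty$, and then pass to the limit in the variational formulation of \adjsysad. The starting point is again the uniform estimate \eqref{est3_adj} from Subsection~\ref{ssec:est_adj}, which simplifies here since $\eta_{\eps,\tau}=0$. As in the proof of Theorem~\ref{THM:ADJ:TAUTOZERO}, one exploits \ref{ass:8} together with the uniform $L^\infty(0,T;L^1(\Omega))$ bound on $\{F(\bph_\ept)\}$ and the $\L2{\Lx6}$ bound on $\{\bph_\ept\}$ from Theorem~\ref{THM:PREV:ASY:epstautozero} to control $\{F''(\bph_\ept)\}$ in $L^\infty(0,T;H)\cap L^1(0,T;\Lx3)$, uses the $\L2V$ bound on $\{\bs_\ept\}$, and invokes \ref{ass:6} together with $\chi_{\rm max}^2<\tfrac49 C_0$ to absorb the contributions $\intQtT(J*q_\ept)q_\ept$ and $\intQtT\chi_{\eps,\tau}q_\ept(\dt r_\ept+\Delta r_\ept)$ (the first one via comparison in \eqref{adj:2_ad}, exactly as in Theorem~\ref{THM:ADJ:TAUTOZERO}).

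The crucial new point is the handling of the terminal conditions \eqref{adj:5_ad} of the \emph{adapted} adjoint system. Since in this section $\beta_\Omega$ need not vanish, the terminal contribution $\tfrac\tau2\norma{q_\ept(T)}^2$ that appears on the right-hand side of the energy identity would formally diverge like $\tau^{-1}$; this is precisely the reason for the correction $\bigl(\eps\mu(T),\beta_\Omega(\varphi(T)-\varphi_\Omega)\bigr)_H$ in $\Jad$. Indeed, eliminating $p_\ept(T)=\beta_\Omega(\bph_\ept(T)-\varphi_\Omega)$ from \eqref{adj:5_ad} gives
\[
\tau q_\ept(T)=(p_\ept+\tau q_\ept)(T)-p_\ept(T)=\eps\beta_\Omega\bm_\ept(T),
\]
so that $\tfrac\tau2\norma{q_\ept(T)}^2=\tfrac{\eps^2\beta_\Omega^2}{2\tau}\norma{\bm_\ept(T)}^2\leq C\,\tfrac\eps\tau$, which is bounded along the prescribed double sequences thanks to the uniform $L^\infty(0,T;H)$ bound on $\{\eps^{1/2}\mu_\ept\}$ (equivalently \eqref{bound_init_ept0} propagated in time). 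The remaining terminal terms $\tfrac\eps2\norma{p_\ept(T)}^2$ and $\tfrac12\norma{\nabla p_\ept(T)}^2$ are controlled through $p_\ept(T)=\beta_\Omega(\bph_\ept(T)-\varphi_\Omega)$, the assumption $\beta_\Omega\varphi_\Omega\in V$, and the uniform $C^0([0,T];H)$ bound on $\{\bph_\ept\}$ recalled in Subsection~\ref{ssec:est_adj}. Closing \eqref{est3_adj} by Gronwall's lemma, together with elliptic regularity and a comparison in \eqref{adj:1_ad} (using $F''(\bph_\ept)q_\ept\in\L2{L^1(\Omega)}\hookrightarrow\L2\Vp$), then yields uniform bounds for $p_\ept$ in $\L\infty V\cap\L2 W$, for $q_\ept$ in $\L2 H$, for $r_\ept$ in $\H1 H\cap\L\infty V\cap\L2 W$, for $p_\ept+\tau q_\ept$ in $\H1\Vp$, together with the stated convergences $\eps p_\ept\to0$ and $\tau q_\ept\to0$.

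By the Banach--Alaoglu theorem and standard compactness results (cf.~\cite{Simon}) one extracts a subsequence along which $(p_\ept,q_\ept,r_\ept)\to(p,q,r)$ in the asserted topologies, with in particular $r_\ept\to r$ strongly in $\C0{\Lx\alpha}\cap\L2 H$. Combining this with the state convergences of Theorem~\ref{THM:PREV:ASY:epstautozero} (so that, along a further subsequence, $\bph_\ept\to\bph$ a.e.\ in $Q$ and strongly in $\L2{\Lx\alpha}$ and in $\C0\Vp$, $\bs_\ept\to\bs$ strongly in $\C0\Vp\cap\L2{\Lx\alpha}$, and $\eps\mu_\ept\to0$ in $\C0H$), the boundedness and continuity of $f$ and $f'$, and Lebesgue's dominated convergence theorem, all terms of the variational formulation of \adjsysad\ pass to the limit, as do the terminal conditions: $(p_\ept+\tau q_\ept)(T)\to p(T)$ in $\Vp$ against $\beta_\Omega(\bph(T)-\varphi_\Omega)$ (since $\eps\bm_\ept(T)\to0$ in $H$) and $r_\ept(T)=0\to r(T)=0$. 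The only delicate term is $F''(\bph_\ept)q_\ept$, handled exactly as in Theorem~\ref{THM:ADJ:TAUTOZERO}: interpolating between the $L^\infty(0,T;H)$ and $L^1(0,T;\Lx3)$ bounds for $\{F''(\bph_\ept)\}$ produces a uniform $\LQ{7/3}$ bound, the a.e.\ convergence and the Severini--Egorov theorem give $F''(\bph_\ept)\to F''(\bph)$ strongly in $L^\beta(Q)$ for every $\beta\in[1,\tfrac73)$, whence $F''(\bph_\ept)q_\ept\wto F''(\bph)q$ weakly in $L^1(Q)$; this allows passing to the limit against every $v\in W\hookrightarrow L^\infty(\Omega)$, and then against all $v\in V$ by density, since at the limit $F''(\bph)q\in L^1(0,T;L^{6/5}(\Omega))\hookrightarrow L^1(0,T;\Vp)$. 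Finally, linearity of the limit system and the $(\eps,\tau)=(0,0)$ analogue of \eqref{est3_adj} give uniqueness of $(p,q,r)$, so that the convergences hold along the entire double sequence. The main obstacle is the first step, specifically the verification that the terminal boundary term stays bounded: this is exactly where the structural hypothesis $\limsup\eps/\tau<+\infty$ and the precise algebraic form of the adapted cost functional are used.
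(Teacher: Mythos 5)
Your proposal is correct and follows essentially the same route as the paper: the same energy estimate \eqref{est3_adj}, the same key observation that the adapted terminal conditions \eqref{adj:5_ad} yield $\tau q_\ept(T)=\eps\beta_\Omega\bm_\ept(T)$ so that $\tfrac\tau2\norma{q_\ept(T)}^2\leq M\,\eps/\tau$ is controlled precisely by the scaling hypothesis, and the same interpolation/Severini--Egorov treatment of $F''(\bph_\ept)q_\ept$. The only slip is the embedding $L^1(\Omega)\hookrightarrow V^*$, which fails for $d=2,3$; the comparison bound on $\partial_t(p_\ept+\tau q_\ept)$ should be stated via $L^1(\Omega)\hookrightarrow W^*$ (as the paper does, using $W\hookrightarrow L^\infty(\Omega)$), which is still sufficient since one passes to the limit against $v\in W$ first and concludes for $v\in V$ by density once $F''(\bph)q\in L^{6/5}(\Omega)\hookrightarrow V^*$ is known at the limit.
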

\begin{proof}[Proof of Theorem \ref{THM:ADJ:EPSTAUTOZERO}]
We perform on the adapted adjoint system \adjsysad\ the same first estimate
of the proof of Theorem~\ref{THM:ADJ:ept}, getting
\begin{align*}
	& 
	\frac {\tau} 2 \IO2 {q_\ept}
	+  C_0 \int_{Q_t^T}|q_\ept|^2
	+ \eps \int_{Q_t^T}|\dt p_\ept|^2
	+ \frac \eps2 \IO2 {p_\ept}
	+ \frac 12  \IO2 {\nabla p_\ept}
	+ \int_{Q_t^T}|\nabla p_\ept|^2
	\\ & \qquad
	+\frac {\B+1}2 \norma{r_\ept(t)}^2
	+ \IO2 {\nabla r_\ept} + \B\int_{Q_t^T}|\nabla r_\ept|^2
	+ \int_{Q_t^T}|\dt r_\ept|^2
	+  \int_{Q_t^T}|\Delta r_\ept|^2
	\\ & \leq\frac {\tau} 2 \|q_\ept(T)\|^2
	+\frac\eps 2\|p_\ept(T)\|^2	
	+\frac12\|\nabla p_\ept(T)\|^2
	+ \intQtT  \bQ(\bph_\ept-\phQ)q_\ept
	+ \intQtT (J*q_\ept)q_\ept
	\\ & \qquad
	- \intQtT  \ov\CC_\ept \bs f'(\bph_\ept)r_\ept q_\ept
	+\intQtT  (\ov\P_\ept\bs_\ept-\A) f'(\bph_\ept)p_\ept q_\ept
	+\intQtT q_\ept p_\ept
	\\ & \qquad
	+ \intQtT \ov\CC_\ept f(\bph_\ept) r_\ept (\dt r_\ept+\Delta r_\ept)
	- \intQtT \ov\P_\ept f(\bph_\ept)p_\ept (\dt r_\ept+\Delta r_\ept)
	\\ & \qquad
	- \intQtT \ov\chi_\ept q_\ept (\dt r_\ept+\Delta r_\ept)
	- \intQtT r_\ept \dt r_\ept.
\end{align*} 
We show only how to bound the first three terms on the right-hand side, 
as all the other terms on the right-hand side can be treated exactly in the same way 
as in Section~\ref{ssec:est_adj} and in the proof of Theorem~\ref{THM:ADJ:TAUTOZERO},
using Theorem~\ref{THM:PREV:ASY:epstautozero}.
To this end,
taking into account the modified
terminal conditions \eqref{adj:5_ad} we have 
\[
  p_\ept(T)=\beta_\Omega(\ov\varphi_\ept(T)-\varphi_\Omega) \in V, \qquad
  q_\ept(T)= \frac\eps\tau\beta_\Omega\ov\mu_\ept(T).
\]
It follows that 
\begin{align*}
 &  \frac {\tau} 2 \|q_\ept(T)\|^2
  	+\frac\eps 2\|p_\ept(T)\|^2
	+\frac12\|\nabla p_\ept(T)\|^2
	\\ & \quad =
	\frac{\beta_\Omega^2}2\frac{\eps^2}{\tau}\|\ov\mu_\ept(T)\|^2
	  +\frac\eps 2\|\beta_\Omega(\ov\varphi_\ept(T)-\varphi_\Omega)\|^2
	+\frac12\|\nabla\beta_\Omega(\ov\varphi_\ept(T)-\varphi_\Omega)\|^2.
\end{align*}
Now, by Theorem~\ref{THM:PREV:ASY:epstautozero}
we have that $\{\eps^{1/2}\ov\mu_\ept\}_\ept$ is uniformly bounded in $C^0([0,T]; H)$: hence,
thanks also to the regularity 
$\beta_\Omega\varphi_\Omega\in V$, we deduce that there exists 
$M>0$, independent of $\ept$, such that 
\[
  \frac {\tau} 2 \|q_\ept(T)\|^2
  +\frac\eps 2\|p_\ept(T)\|^2
  +\frac12\|\nabla p_\ept(T)\|^2 \leq M(1+ \eps + \tfrac\eps\tau).
\]
Consequently, the scaling $\limsup \frac\eps\tau<+\infty$ on $(\eps,\tau)$ yields that
\[
  \frac {\tau} 2 \|q_\ept(T)\|^2
    +\frac\eps 2\|p_\ept(T)\|^2
  +  \frac12\|\nabla p_\ept(T)\|^2 \leq M.
\]
As we anticipated above, all the remaining terms on the right-hand side
can be treated exactly as in Section~\ref{ssec:est_adj} and in the proof of Theorem~\ref{THM:ADJ:TAUTOZERO}, so that 
we infer that there exists a positive constant $M$,
independent of both $\eps$ and $\tau$, such that
\begin{align*}
	&\eps^{1/2} \norma{p_{\ept}}_{\H1 H}
	+ \norma{p_{\ept}}_{\L\infty V}
	+ \tau ^{1/2}\norma{q_{\ept}}_{\L\infty H}
	+ \norma{q_{\ept}}_{\L2 H}
	\\ & \qquad 
	+ \norma{r_{\ept}}_{\H1 H \cap \L\infty V\cap L^2(0,T; W)}
	\leq M.
\end{align*}
Moreover, elliptic regularity theory and \eqref{adj:2}--\eqref{adj:3} leads to 
\begin{align*}
	\norma{p_\ept}_{\L2 W}
	\leq  M,
\end{align*}
while by comparison in \eqref{adj:1}, 
as in the proof of Theorem~\ref{THM:ADJ:TAUTOZERO}, we infer that
\begin{align*}
	\norma{p_\ept+\tau q_\ept}_{\H1 {L^1(\Omega)}}
	\leq M.
\end{align*}
By the usual (weak) compactness criteria, we infer 
the existence of functions $(p, q, r)$ such that as $\ept \to 0$
it holds, 
for every $\alpha \geq 1$ if $d=2$ and $1\leq \alpha <6$ if $d=3$,
and along a non-relabelled subsequence,
\begin{align*}
	p_\ept & \to p && \hbox{weakly in $\L2 W$},
	\\
	q_\ept& \to q && \hbox{weakly in $\L2 H$},
	\\
	p_\ept+\tau q_\ept& \to p && \hbox{weakly in $\H1 {W^*}$},
	\\
	r_\ept & \to r && \hbox{weakly-$^*$ in $\H1 H \cap \L\infty V \cap \L2 W$},
	\\ & && \quad \hbox{strongly in $\C0 {\Lx \alpha} \cap \L2 V$,}
	\\
	\eps p_\ept & \to 0
	&& \hbox{strongly in $\H1 H \cap \L2 W$},
	\\
	\tau q_\ept & \to 0
	&& \hbox{strongly in $\L\infty H$}.
\end{align*}
Moreover, since by Theorem~\ref{THM:PREV:ASY:epstautozero}
we have that, for every $\alpha \geq 1$ if $d=2$ and $1\leq \alpha <6$ if $d=3$,
\begin{align*}
	\bph_\ept \to \bph \quad \hbox{strongly in $\C0 \Vp \cap \L2 {\Lx \alpha}$},
\end{align*}
we can pass to the limit as $\ept\to0$ in
the variational formulation \eqref{adj:vf:1}--\eqref{adj:vf:4},
treating the term with $F''$
as in the proof of Theorem~\ref{THM:ADJ:TAUTOZERO}, and conclude.
The uniqueness of the weak solution $(p,q,r)$ follows from linearity 
and the estimates already performed.
\end{proof}

\subsubsection{Letting $(\eps,\tau)\to(0,0)$ in the optimality condition}
Here, we conclude the asymptotic analysis by letting $\ept\to0$ in
the optimality condition for $(CP)_\ept^{\rm ad}$, and
proving the corresponding necessary conditions for $(\ov{CP})$.
\begin{theorem}
\label{THM:AD:OPT:EPT}
Assume \ref{ass:1}--\ref{ass:8} and \ref{ass:op:1}--\ref{ass:op:4}.
Then, every optimal control $({\ov \P}, {\ov \chi}, {\ov \CC})$ of $\CPoo$
necessarily satisfies
\begin{align*}
	& \intQ (\P - \ov{\P} \,) \bs f(\bph)  p
	- \intQ (\chi - \ov{\chi} ) \bs q
	- \intQ (\CC - \ov{\CC} ) \bs f(\bph) \,  r
	\\ & \quad
	+ \aP (\ov{\P}-\P_*)(\P- \ov{\P}\,)
	+ \achi (\ov{\chi}-\chi_*)(\chi - \ov{\chi} \,)
	+ \aC (\ov{\CC}-\CC_*)(\CC - \ov{\CC} \,)
	\geq 0
	\\ & \quad\text{for every $(\P,\chi,\CC) \in \Uad$},
\end{align*}
where $(\bph, \bm,\bs)$
and $(p,q, r)$ are the unique solutions to \statesys\ and \adjsys\ with 
$\eps=\tau =0$
in the sense of Theorems~\ref{THM:PREV:ASY:epstautozero} and 
\ref{THM:ADJ:EPSTAUTOZERO}, respectively.
\end{theorem}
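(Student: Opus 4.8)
The proof will follow the same blueprint as Theorems~\ref{THM:AD:OPT:EPS} and~\ref{THM:AD:OPT:TAU}: first realize the fixed optimal control of $\CPoo$ as the limit of minimizers of the adapted problems $\CPad$, then pass to the limit in their first-order conditions by exploiting the adjoint convergence of Theorem~\ref{THM:ADJ:EPSTAUTOZERO}.

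\textbf{Step 1.} I would fix an arbitrary optimal control $(\ov\P,\ov\chi,\ov\CC)$ of $\CPoo$ and, with the adapted functional $\Jad$ built above, invoke Lemma~\ref{LEM:EXISTENCE:ADAPTED:EPT} to pick, for each $\eps\in(0,\eps_0)$ and $\tau\in(0,\tau_0)$, a minimizer $(\ov\P_\ept,\ov\chi_\ept,\ov\CC_\ept)$ of $\CPad$. Lemma~\ref{LEM:OPT:ADAPTED:EPT} then provides the variational inequality \eqref{opt:ad:epttozero}, where $(\bph_\ept,\bm_\ept,\bs_\ept)$ solves \statesys\ and $(p_\ept,q_\ept,r_\ept)$ solves the adapted adjoint system \adjsysad. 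From here on I would restrict attention to double sequences $(\eps,\tau)\to(0,0)$ with $\limsup\eps/\tau<+\infty$, the scaling under which the nonstandard terminal data in \eqref{adj:5_ad} can be controlled.

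\textbf{Step 2.} Next I would assemble the relevant convergences: Theorem~\ref{THM:CONV:EPT} gives $(\ov\P_\ept,\ov\chi_\ept,\ov\CC_\ept)\to(\ov\P,\ov\chi,\ov\CC)$, $\bph_\ept\to\bph$, and convergence of the adapted cost values to $\J(\bph,\ov\P,\ov\chi,\ov\CC)$; Theorem~\ref{THM:PREV:ASY:epstautozero} gives $\bph_\ept\to\bph$ and $\bs_\ept\to\bs$ in $\C0\Vp\cap\L2{\Lx\alpha}$, with $\bs_\ept$ bounded in $L^\infty(Q)$ and $\eps\bm_\ept(T)\to0$; and Theorem~\ref{THM:ADJ:EPSTAUTOZERO} gives $p_\ept\to p$ weakly in $\H1\Vp\cap\L2 W$, $q_\ept\to q$ weakly in $\L2 H$, and $r_\ept\to r$ strongly in $\L2 H$. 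Since $f$ is bounded and continuous, one gets $\bs_\ept f(\bph_\ept)\to\bs f(\bph)$ strongly in every $L^p(Q)$, $p<\infty$, while the spurious scalar corrections $(\ov\P_\ept-\ov\P)$, $(\ov\chi_\ept-\ov\chi)$, $(\ov\CC_\ept-\ov\CC)$ in \eqref{opt:ad:epttozero} tend to $0$.

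\textbf{Step 3.} Then I would pass to the limit termwise in \eqref{opt:ad:epttozero}: each integral is a product of a factor converging strongly in $L^2(Q)$ against a factor converging weakly or strongly in $L^2(Q)$ — respectively $\bs_\ept f(\bph_\ept)p_\ept$, $\bs_\ept q_\ept$, and $\bs_\ept f(\bph_\ept)r_\ept$ — hence they converge, and the finite-dimensional terms pass trivially. The outcome is exactly the asserted inequality for $(\ov\P,\ov\chi,\ov\CC)$ with $(p,q,r)$ the unique weak solution to \adjsys\ with $\eps=\tau=0$ furnished by Theorem~\ref{THM:ADJ:EPSTAUTOZERO}; since the chosen optimal control was arbitrary, this concludes the proof.

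\textbf{Expected main obstacle.} The genuine difficulty is not the limit passage in \eqref{opt:ad:epttozero}, which is routine once the above convergences are in hand, but rather that everything rests on the correct design of $\Jad$: the correction $\bigl(\eps\bm(T),\bO(\bph(T)-\phO)\bigr)_H$ is precisely what produces the modified final conditions \eqref{adj:5_ad}, hence the boundary term $\tfrac\tau2\|q_\ept(T)\|^2+\tfrac\eps2\|p_\ept(T)\|^2+\tfrac12\|\nabla p_\ept(T)\|^2$ in the energy estimate for the adapted adjoint problem, which is bounded uniformly in $(\eps,\tau)$ only under $\limsup\eps/\tau<+\infty$. This is what makes Theorem~\ref{THM:ADJ:EPSTAUTOZERO} applicable, and since $\eps\bm_\ept(T)\to0$ strongly it is also what guarantees that the correction evaporates in the limit, leaving $\Jad\to\J$ and the clean terminal condition $p(T)=\bO(\bph(T)-\phO)$.
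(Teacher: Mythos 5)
Your proposal is correct and follows essentially the same route the paper intends: the paper omits an explicit proof of Theorem~\ref{THM:AD:OPT:EPT}, leaving it as the analogue of Theorems~\ref{THM:AD:OPT:EPS} and~\ref{THM:AD:OPT:TAU}, namely combining Lemmas~\ref{LEM:EXISTENCE:ADAPTED:EPT}--\ref{LEM:OPT:ADAPTED:EPT} with the convergences of Theorems~\ref{THM:PREV:ASY:epstautozero}, \ref{THM:CONV:EPT}, and \ref{THM:ADJ:EPSTAUTOZERO} and passing to the limit in \eqref{opt:ad:epttozero}. Your explicit restriction to double sequences with $\limsup\eps/\tau<+\infty$ (harmless, since the limiting inequality is independent of the sequence) and your identification of the terminal-condition correction as the real crux are both accurate.
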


\section*{Acknowledgments}
\noindent
This research was supported by the Italian Ministry of Education, University and Research
(MIUR): Dipartimenti di Eccellenza Program (2018–2022) – Dept. of Mathematics “F. Casorati”, University of Pavia.
In addition, this research has been performed in the framework of the project Fondazione Cariplo-Regione Lombardia  MEGAsTAR
``Matema\-tica d'Eccellenza in biologia ed ingegneria come acceleratore
di una nuova strateGia per l'ATtRattivit\`a dell'ateneo pavese''. The present paper
also benefits from the support of the GNAMPA (Gruppo Nazionale per l'Analisi Matematica, la Probabilit\`a e le loro Applicazioni)
of INdAM (Istituto Nazionale di Alta Matematica).
LS is funded by the Austrian Science Fund (FWF) through the Lise-Meitner 
grant M 2876.

\vspace{3truemm}
\footnotesize

\begin{thebibliography}{10}

\bibitem{Armstrong}
N.~Armstrong, K.~Painter, and J.~Sherratt.
\newblock {A continuum approach to modelling cell-cell adhesion}.
\newblock {\em J. Theor. Biol.}, 243(1):98--113, 2006.

\bibitem{BARBU}
V.~Barbu.
\newblock Necessary conditions for nonconvex distributed control problems
  governed by elliptic variational inequalities.
\newblock {\em Journal of Mathematical Analysis and Applications},
  80(2):566--597, 1981.

\bibitem{bed-rod-bert}
J.~Bedrossian, N.~Rodr\'{\i}guez, and A.~L. Bertozzi.
\newblock Local and global well-posedness for aggregation equations and
  {P}atlak-{K}eller-{S}egel models with degenerate diffusion.
\newblock {\em Nonlinearity}, 24(6):1683--1714, 2011.

\bibitem{Borsi}
I.~Bosi, A.~Fasano, M.~Primicerio, and T.~Hillen.
\newblock {A non-local model for cancer stem cells and the tumour growth
  paradox}.
\newblock Math. Med. Biol. doi:10.1093/imammb/dqv037, 2015.

\bibitem{cahn-hill}
J.~W. Cahn and J.~E. Hilliard.
\newblock Free energy of a nonuniform system. {I}. interfacial free energy.
\newblock {\em The Journal of Chemical Physics}, 28(2):258--267, 1958.

\bibitem{CRW}
C.~Cavaterra, E.~Rocca, and H.~Wu.
\newblock Long-time dynamics and optimal control of a diffuse interface model
  for tumor growth.
\newblock {\em Appl. Math. Optim.}, pages 1--49, 2019.

\bibitem{Chaplin}
M.~Chaplain, M.~Lachowicz, Z.~Szyma\'{n}ska, and D.~Wrzosek.
\newblock {Mathematical modelling of cancer invasion: The importance of
  cell-cell adhesion and cell-matrix adhesion}.
\newblock {\em Math. Models Methods Appl. Sci.}, 21(4):719--743, 2011.

\bibitem{col-gil-hil}
P.~Colli, G.~Gilardi, and D.~Hilhorst.
\newblock On a {C}ahn-{H}illiard type phase field system related to tumor
  growth.
\newblock {\em Discrete Contin. Dyn. Syst.}, 35(6):2423--2442, 2015.

\bibitem{col-gil-mar}
P.~Colli, G.~Gilardi, G.~Marinoschi, and E.~Rocca.
\newblock Sliding mode control for a phase field system related to tumor
  growth.
\newblock {\em Appl. Math. Optim.}, 79:647--670, 2019.

\bibitem{col-gil-roc-spr}
P.~Colli, G.~Gilardi, E.~Rocca, and J.~Sprekels.
\newblock Vanishing viscosities and error estimate for a {C}ahn-{H}illiard type
  phase field system related to tumor growth.
\newblock {\em Nonlinear Anal. Real World Appl.}, 26:93--108, 2015.

\bibitem{col-gil-roc-spr2}
P.~Colli, G.~Gilardi, E.~Rocca, and J.~Sprekels.
\newblock Asymptotic analyses and error estimates for a {C}ahn-{H}illiard type
  phase field system modelling tumor growth.
\newblock {\em Discrete Contin. Dyn. Syst. Ser. S}, 10(1):37--54, 2017.

\bibitem{col-gil-roc-spr-control}
P.~Colli, G.~Gilardi, E.~Rocca, and J.~Sprekels.
\newblock Optimal distributed control of a diffuse interface model of tumor
  growth.
\newblock {\em Nonlinearity}, 30(6):2518--2546, 2017.

\bibitem{col-gil-spr-fraccontrol}
P.~Colli, G.~Gilardi, and J.~Sprekels.
\newblock A distributed control problem for a fractional tumor growth model.
\newblock {\em Mathematics}, 7(9):792, 2019.

\bibitem{col-signori-spre}
P.~Colli, A.~Signori, and J.~Sprekels.
\newblock Optimal control of a phase field system modelling tumor growth with
  chemotaxis and singular potentials.
\newblock {\em Appl. Math. Optim.}, pages 1--33, 2019.

\bibitem{crist}
V.~Cristini, X.~Li, J.~S. Lowengrub, and S.~M. Wise.
\newblock Nonlinear simulations of solid tumor growth using a mixture model:
  invasion and branching.
\newblock {\em Journal of Mathematical Biology}, 58(4):723, 2008.

\bibitem{CL}
V.~Cristini and J.~Lowengrub.
\newblock {\em Multiscale Modeling of Cancer: An Integrated Experimental and
  Mathematical Modeling Approach}.
\newblock Cambridge University Press, 2010.

\bibitem{Dai}
M.~Dai, E.~Feireisl, E.~Rocca, G.~Schimperna, and M.~E. Schonbek.
\newblock Analysis of a diffuse interface model of multispecies tumor growth.
\newblock {\em Nonlinearity}, 30(4):1639--1658, 2017.

\bibitem{DRST}
E.~{Davoli}, H.~{Ranetbauer}, L.~{Scarpa}, and L.~{Trussardi}.
\newblock {Degenerate nonlocal {C}ahn-{H}illiard equations: well-posedness,
  regularity and local asymptotics}.
\newblock {\em Ann. Inst. H. Poincar\'{e} Anal. Non Lin\'{e}aire},
37(3):627--651, 2020. 

\bibitem{DST2}
E.~{Davoli}, L.~{Scarpa}, and L.~{Trussardi}.
\newblock {Local asymptotics for nonlocal convective {C}ahn-{H}illiard
  equations with $W^{1,1}$ kernel and singular potential}.
\newblock {\em arXiv Preprint: arXiv:1911.12770}, 2019.

\bibitem{DST}
E.~{Davoli}, L.~{Scarpa}, and L.~{Trussardi}.
\newblock {Nonlocal-to-local convergence of {C}ahn-{H}illiard equations:
  Neumann boundary conditions and viscosity terms}.
\newblock {\em Arch. Ration. Mech. Anal.}, to appear, 2020. arXiv:1908.00945

\bibitem{DellaPorta2}
F.~Della~Porta and M.~Grasselli.
\newblock {Convective nonlocal Cahn--Hilliard equations with reaction terms}.
\newblock {\em Discrete Contin. Dyn. Sys. B}, 20(5):1529--1553, 2015.

\bibitem{eben-garcke1}
M.~Ebenbeck and H.~Garcke.
\newblock Analysis of a {C}ahn-{H}illiard-{B}rinkman model for tumour growth
  with chemotaxis.
\newblock {\em Journal of Differential Equations}, 266(9):5998 -- 6036, 2019.

\bibitem{eben-garcke2}
M.~Ebenbeck and H.~Garcke.
\newblock On a {C}ahn-{H}illiard-{B}rinkman model for tumor growth and its
  singular limits.
\newblock {\em SIAM Journal on Mathematical Analysis}, 51(3):1868--1912, 2019.

\bibitem{eben-lam}
M.~Ebenbeck and K.~F. Lam.
\newblock Weak and stationary solutions to a {C}ahn-{H}illiard-{B}rinkman model
  with singular potentials and source terms.
\newblock {\em arXiv e-prints}, 2019.
\newblock page arXiv:1909.02289.

\bibitem{FG1}
S.~Frigeri, C.~G. Gal, and M.~Grasselli.
\newblock On nonlocal
  {C}ahn{\textendash}{H}illiard{\textendash}{N}avier{\textendash}{S}tokes
  systems in two dimensions.
\newblock {\em Journal of Nonlinear Science}, 26(4):847--893, 2016.

\bibitem{frig-grass-roc}
S.~Frigeri, M.~Grasselli, and E.~Rocca.
\newblock On a diffuse interface model of tumour growth.
\newblock {\em European J. Appl. Math.}, 26(2):215--243, 2015.

\bibitem{frig-lam-roc}
S.~Frigeri, K.~F. Lam, and E.~Rocca.
\newblock On a diffuse interface model for tumour growth with non-local
  interactions and degenerate mobilities.
\newblock In {\em Solvability, regularity, and optimal control of boundary
  value problems for {PDE}s}, volume~22 of {\em Springer INdAM Ser.}, pages
  217--254. Springer, Cham, 2017.

\bibitem{frig-lam-rocca-schim}
S.~Frigeri, K.~F. Lam, E.~Rocca, and G.~Schimperna.
\newblock On a multi-species {C}ahn-{H}illiard-{D}arcy tumor growth model with
  singular potentials.
\newblock {\em Comm. in Math. Sci.}, 16:821--856, 2018.

\bibitem{frig-lam-signori}
S.~Frigeri, K.~F. Lam, and A.~Signori.
\newblock Strong well-posedness and inverse identification problem of a
  non-local phase field tumor model with degenerate mobilities.
\newblock {\em arXiv Preprint: arXiv:2004.04537}, 2020.

\bibitem{fritzlocal}
M.~Fritz, E.~A. Lima, V.~Nikoli{\'c}, J.~T. Oden, and B.~Wohlmuth.
\newblock Local and nonlocal phase-field models of tumor growth and invasion
  due to ecm degradation.
\newblock {\em arXiv Preprint: arXiv:1906.07788}, 2019.

\bibitem{FLOW}
M.~Fritz, E.~A. B.~F. Lima, J.~Tinsley~Oden, and B.~Wohlmuth.
\newblock On the unsteady {D}arcy-{F}orchheimer-{B}rinkman equation in local
  and nonlocal tumor growth models.
\newblock {\em Mathematical Models and Methods in Applied Sciences},
  29(09):1691--1731, 2019.

\bibitem{gal-gior-grass}
C.~G. Gal, A.~Giorgini, and M.~Grasselli.
\newblock The nonlocal {C}ahn-{H}illiard equation with singular potential:
  well-posedness, regularity and strict separation property.
\newblock {\em J. Differential Equations}, 263(9):5253--5297, 2017.

\bibitem{garcke-lam4}
H.~Garcke and K.~F. Lam.
\newblock Global weak solutions and asymptotic limits of a
  {C}ahn-{H}illiard-{D}arcy system modelling tumour growth.
\newblock {\em AIMS Mathematics}, 1(Math-01-00318):318, 2016.

\bibitem{garcke-lam2}
H.~Garcke and K.~F. Lam.
\newblock Analysis of a {C}ahn-{H}illiard system with non-zero {D}irichlet
  conditions modeling tumor growth with chemotaxis.
\newblock {\em Discrete Contin. Dyn. Syst.}, 37(8):4277--4308, 2017.

\bibitem{garcke-lam}
H.~Garcke and K.~F. Lam.
\newblock Well-posedness of a {C}ahn-{H}illiard system modelling tumour growth
  with chemotaxis and active transport.
\newblock {\em European J. Appl. Math.}, 28(2):284--316, 2017.

\bibitem{garcke-lam3}
H.~Garcke and K.~F. Lam.
\newblock On a {C}ahn-{H}illiard-{D}arcy system for tumour growth with solution
  dependent source terms.
\newblock In E.~Rocca, U.~Stefanelli, L.~Truskinovsky, and A.~Visintin,
  editors, {\em Trends in Applications of Mathematics to Mechanics}, pages
  243--264, Cham, 2018. Springer International Publishing.

\bibitem{garcke-lam-roc}
H.~Garcke, K.~F. Lam, and E.~Rocca.
\newblock Optimal control of treatment time in a diffuse interface model of
  tumor growth.
\newblock {\em Appl. Math. Optim.}, 2017.

\bibitem{garcke-lam-signori}
H.~Garcke, K.~F. Lam, and A.~Signori.
\newblock On a phase field model of {C}ahn-{H}illiard type for tumour growth
  with mechanical effects.
\newblock {\em Nonlinear Analysis: Real World Applications}, 57:103192, 2021.

\bibitem{garcke}
H.~Garcke, K.~F. Lam, E.~Sitka, and V.~Styles.
\newblock A {C}ahn-{H}illiard-{D}arcy model for tumour growth with chemotaxis
  and active transport.
\newblock {\em Math. Models Methods in Appl. Sci.}, 26(06):1095--1148, 2016.

\bibitem{Gerisch}
A.~Gerisch and M.~Chaplain.
\newblock {Mathematical modelling of cancer cell invasion of tissue: Local and
  non-local models and the effect of adhesion}.
\newblock {\em J. Theor. Biol.}, 250:684--704, 2008.

\bibitem{GL}
G.~Giacomin and J.~L. Lebowitz.
\newblock Exact macroscopic description of phase segregation in model alloys
  with long range interactions.
\newblock {\em Phys. Rev. Lett.}, 76:1094--1097, 1996.

\bibitem{GL1}
G.~Giacomin and J.~L. Lebowitz.
\newblock Phase segregation dynamics in particle systems with long range
  interactions. {I}. macroscopic limits.
\newblock {\em Journal of Statistical Physics}, 87:37--61, 1997.

\bibitem{GL2}
G.~Giacomin and J.~L. Lebowitz.
\newblock Phase segregation dynamics in particle systems with long range
  interactions {II}: Interface motion.
\newblock {\em SIAM Journal on Applied Mathematics}, 58(6):1707--1729, 1998.

\bibitem{hawk2}
A.~Hawkins-Daarud, K.~G. van~der Zee, and J.~Tinsley~Oden.
\newblock Numerical simulation of a thermodynamically consistent four-species
  tumor growth model.
\newblock {\em Int. J. Numer. Methods Biomed. Eng.}, 28(1):3--24, 2012.

\bibitem{hil-zee}
D.~Hilhorst, J.~Kampmann, T.~N. Nguyen, and K.~G. Van Der~Zee.
\newblock Formal asymptotic limit of a diffuse-interface tumor-growth model.
\newblock {\em Mathematical Models and Methods in Applied Sciences},
  25(06):1011--1043, 2015.

\bibitem{kahle-lam}
C.~Kahle and K.~F. Lam.
\newblock Parameter identification via optimal control for a
  {C}ahn-{H}illiard-chemotaxis system with a variable mobility.
\newblock {\em Appl Math Optim}, 82:63--104, 2020.

\bibitem{kahle-lam-latz}
C.~Kahle, K.~F. Lam, J.~Latz, and E.~Ullmann.
\newblock Bayesian parameter identification in {C}ahn-{H}illiard models for
  biological growth.
\newblock {\em SIAM/ASA Journal on Uncertainty Quantification}, 7(2):526--552,
  2019.

\bibitem{Lee}
C.~Lee, M.~Hoopes, J.~Diehl, W.~Gilliland, G.~Huxel, E.~Leaver, K.~McCann,
  J.~Umbanhowar, and A.~Mogilner.
\newblock {Non-local concepts and models in biology}.
\newblock {\em J. Theor. Biol.}, 210:201--219, 2001.

\bibitem{MRT18}
S.~Melchionna, H.~Ranetbauer, L.~Scarpa, and L.~Trussardi.
\newblock From nonlocal to local {C}ahn-{H}illiard equation.
\newblock {\em Adv. Math. Sci. Appl.}, 28(2):197--211, 2019.

\bibitem{Melchionna}
S.~Melchionna and E.~Rocca.
\newblock {On a nonlocal Cahn--Hilliard equation with a reaction term}.
\newblock {\em Adv. Math. Sci. Appl.}, 24(2):461--497, 2014.

\bibitem{MRS}
A.~Miranville, E.~Rocca, and G.~Schimperna.
\newblock On the long time behavior of a tumor growth model.
\newblock {\em Journal of Differential Equations}, 267(4):2616 -- 2642, 2019.

\bibitem{orr-roc-scar}
C.~{Orrieri}, E.~{Rocca}, and L.~{Scarpa}.
\newblock Optimal control of stochastic phase-field models related to tumor
  growth.
\newblock {\em ESAIM Control Optim. Calc. Var.}, 2020.
\newblock doi:10.1051/cocv/2020022.

\bibitem{SS}
L.~Scarpa and A.~Signori.
\newblock On a class of non-local phase-field models for tumor growth with
  possibly singular potentials, chemotaxis, and active transport.
\newblock {\em arXiv Preprint: arXiv:2002.12702}, 2020.

\bibitem{signori1}
A.~Signori.
\newblock Optimal distributed control of an extended model of tumor growth with
  logarithmic potential.
\newblock {\em Appl. Math. Optim.}, 2018.

\bibitem{signori3}
A.~Signori.
\newblock Optimal treatment for a phase field system of {C}ahn-{H}illiard type
  modeling tumor growth by asymptotic scheme.
\newblock {\em Math. Control Relat. Fields}, 2019.

\bibitem{signori5}
A.~Signori.
\newblock Penalisation of long treatment time and optimal control of a tumour
  growth model of {C}ahn-{H}illiard type with singular potential.
\newblock {\em arXiv Preprint: arXiv:1906.03460}, pages 1--25, 2019.

\bibitem{signori4}
A.~Signori.
\newblock Vanishing parameter for an optimal control problem modeling tumor
  growth.
\newblock {\em Asymptot. Anal.}, pages 1--24, 2019.

\bibitem{signori2}
A.~Signori.
\newblock Optimality conditions for an extended tumor growth model with double
  obstacle potential via deep quench approach.
\newblock {\em Evol. Equ. Control Theory}, 9:193, 2020.

\bibitem{Simon}
J.~Simon.
\newblock Compact sets in the space {$L^p(0,T;B)$}.
\newblock {\em Ann. Mat. Pura Appl. (4)}, 146:65--96, 1987.

\bibitem{SRLC}
Z.~Szyma\'{n}ska, C.~Rodrigo, M.~Lachowicz, and M.~Chaplain.
\newblock {Mathematical modelling of cancer invasion of tissue: The role and
  effect of nonlocal interactions}.
\newblock {\em Math. Models Methods Appl. Sci.}, 19(2):257--281, 2009.

\bibitem{WLFC}
S.~Wise, J.~Lowengrub, H.~Frieboes, and V.~Cristini.
\newblock Three-dimensional multispecies nonlinear tumor growth-{I}: Model and
  numerical method.
\newblock {\em Journal of Theoretical Biology}, 253(3):524 -- 543, 2008.

\end{thebibliography}
\bibliographystyle{abbrv}
\def\cprime{$'$}

\end{document}